\declaretheorem[style=definition,qed=$\dashv$,numberwithin=section]{definition}
\declaretheorem[style=definition,qed=,sibling=definition]{remark}
\declaretheorem[style=definition,qed=$\dashv$, sibling = definition]{lemma-definition}
\declaretheorem[style=theorem, sibling = definition]{theorem}
\declaretheorem[style=theorem, sibling = definition]{lemma}
\declaretheorem[style=theorem, sibling = definition]{proposition}
\declaretheorem[style=theorem, sibling = definition]{claim}
\declaretheorem[style=theorem, sibling = definition]{corollary}
\declaretheorem[style=theorem, sibling = definition]{question}
\declaretheorem[style=definition, sibling = definition]{example}
\declaretheorem[style=definition, sibling = definition]{convention}
\newcommand{\Q}{\mathbb{Q}}
\newcommand{\QQ}{\Q}
\renewcommand{\AA}{\mathbb{A}}
\newcommand{\Z}{\mathbb{Z}}
\newcommand{\ZZ}{\Z}
\newcommand{\N}{\mathbb{N}}
\newcommand{\NN}{\mathbb{N}}
\newcommand{\R}{\mathbb{R}}
\newcommand{\RR}{\R}
\newcommand{\PP}{\mathbb{P}}
\newcommand{\cM}{\mathcal{M}}
\newcommand{\cO}{\mathcal{O}}
\newcommand{\Def}{{(1,2)}}
\newcommand{\texto}[2]{\xymatrix{{}\save[]+<0pc,{#2}> *\txt{#1} \restore}}
\def\cL{{\mathcal L}}
\def\cA{{\mathcal A}}
\def\cC{{\mathcal C}}
\def\BB{{\mathbb B}}
\def\bb{{\mathbf b}}
\def\cB{{\mathcal B}}
\newcommand\cD{\mathcal{D}}
\newcommand\cE{\mathcal{E}}
\newcommand\cF{\mathcal{F}}
\newcommand\cS{\mathcal{S}}
\newcommand\cT{\mathcal{T}}
\newcommand\cU{\mathcal{U}}
\newcommand\cV{\mathcal{V}}
\newcommand\cW{\mathcal{W}}
\newcommand\cKK{\mathbf{L}}
\newcommand{\sH}{\mathscr{H}}
\newcommand{\sM}{\mathscr{M}}
\newcommand{\DD}{\mathbb{D}}
\newcommand\wc{{\mkern 2mu\cdot\mkern 2mu}}
\newcommand\va{|\wc|}
\newcommand\nm{\|\wc\|}
\newcommand{\an}{\mathrm{an}}
\newcommand{\red}{\mathrm{red}}
\renewcommand{\sp}{\mathrm{sp}}
\newcommand{\Rad}{\mathrm{Rad}}
\newcommand{\DDD}{\mathrm{Def}}
\newcommand{\simto}{\xrightarrow{\sim}}
\newcommand{\res}{\mathrm{res}}
\newcommand{\id}{\mathrm{id}}
\newcommand{\D}{{(1,2)}}
\newcommand{\cLr}{\mathcal{L}_\mathrm{ring}}
\newcommand\eps{\varepsilon}
\newcommand{\wt}{\widetilde}
\newcommand{\ov}{\overline}
\DeclareMathOperator{\ho}{\hat{\otimes}}
\DeclareMathOperator{\card}{card}
\renewcommand{\le}{\leqslant}
\renewcommand{\ge}{\geqslant}
\renewcommand{\epsilon}{\varepsilon}
\title{Definable sets of Berkovich curves}
\author{Pablo Cubides Kovacsics }
\author{J\'er\^ome Poineau}
\address{Pablo Cubides Kovacsics, TU Dresden, Fachrichtung Mathematik, Institut f\"ur Algebra, 01062 Dresden, Zellescher Weg 12--14, 
Willersbau Zi. C 114 }
\email{pablo.cubides\_\,kovacsics@tu-dresden.de}
\address{J\'er\^ome Poineau, Universit\'e de Caen, Laboratoire de math\'ematiques Nicolas Oresme, CNRS UMR 6139,
14032 Caen cedex, France }
\email{jerome.poineau@unicaen.fr}
\thanks{The authors were supported by the ERC project TOSSIBERG (grant agreement 637027).}
\subjclass[2010]{14G22 (primary), 12J25, 03C98 (secondary)}
\keywords{Berkovich space, stable completion, algebraically closed valued field, radial set, topological ramification}
\begin{document}

\begin{abstract}
In this article, we functorially associate definable sets to $k$-analytic curves, and definable maps to analytic morphisms between them, for a large class of $k$-analytic curves. Given a $k$-analytic curve $X$, our association allows us to have definable versions of several usual notions of Berkovich analytic geometry such as the branch emanating from a point and the residue curve at a point of type 2. We also characterize the definable subsets of the definable counterpart of $X$ and show that they satisfy a bijective relation with the radial subsets of $X$. As an application, we recover (and slightly extend) results of Temkin concerning the radiality of the set of points with a given prescribed multiplicity with respect to a morphism of $k$-analytic curves.  

In the case of the analytification of an algebraic curve, our construction can also be seen as an explicit version of Hrushovski and Loeser's theorem on iso-definability of curves. However, our approach can also be applied to strictly $k$-affinoid curves and arbitrary morphisms between them, which are currently not in the scope of their setting.   
\end{abstract}

\maketitle
\setcounter{tocdepth}{1}
\tableofcontents

\newcounter{eqn}
\normalem


\section{Introduction}

Let $(k,\va)$ be a complete rank 1 non-archimedean algebraically closed non-trivially valued field. In this article we further develop the interplay between Berkovich spaces and the model theory of algebraically closed valued fields by functorially associating a definable set to a $k$-analytic curve in a large class, including analytifications of algebraic curves as well as strictly $k$-affinoid curves. Our approach is direct and geometric: it is based on the local structure of Berkovich curves (or equivalently the semistable reduction theorem for curves), which enables us, in particular, to find definable counterparts of several usual notions in Berkovich analytic geometry such as the branch emanating from a point, the residue curve at a point of type~2, etc. In addition, the concrete nature of our construction also allows us to provide an explicit description of the definable subsets of our model-theoretic version of Berkovich curves.

\medbreak

Our results are deeply inspired by the foundational work of Ehud Hrushovski and Fran\c{c}ois Loeser~\cite{HL}. Let us recall that, given an algebraic variety~$X$, they introduced a model-theoretic avatar of its Berkovich analytication~$X^\an$, denoted by $\widehat{X}$ and called the \emph{stable completion of $X$}. This model-theoretic setting allows them to deduce, among others, striking results about the homotopy type of~$X^\an$, under quasi-projectivity assumptions but removing assumptions from~\cite{Berkovich_contractible} such as smoothness, compactness and the existence of a polystable model. 

A crucial property of the space $\widehat{X}$ is that it carries a strict pro-definable structure, that is $\widehat{X}$ is a projective limit of definable sets with definable surjective transition maps. When $X$ is a curve, Hrushovski and Loeser prove that~$\widehat{X}$ is in fact \emph{iso-definable} (\cite[Theorem 7.1.1]{HL}), namely in pro-definable bijection with a definable set. However, their proof is not constructive and one cannot explicitly extract from their arguments a particular definable set to identify~$\widehat{X}$ with. As a result, in the case of the analytification of an algebraic curve, our construction can also be seen as an explicit version of Hrushovski and Loeser's result on iso-definability of curves. Note that we show (Theorem \ref{thm:prodef_facade}) that the definable set we associate to $X^\an$ is in pro-definable bijection with $\widehat{X}$. Although restricted to one-dimensional spaces, an interesting advantage of our approach is that it can handle some non-algebraic curves (and non-algebraic morphisms between them), which currently lie beyond the scope of~\cite{HL}. In addition, our methods make no use of elimination of imaginaries in algebraically closed valued fields, which plays an important role in \cite{HL}. 

In the setting of Berkovich curves, Michael Temkin introduced a notion of ``radial set'' in his work about ramification of finite morphisms of $k$-analytic curves~\cite{temkin_2017_metric}. The explicit nature of our construction allows us to characterize the definable subsets of the definable set associated to a $k$-analytic curve $X$ (Theorem \ref{thm:definablecurves}) and to deduce that they are in canonical bijection with radial subsets of $X$ (Theorem \ref{thm:radialdef}). As an application, we are able to recover (and slightly extend), \textit{via} model-theoretic methods, one of the main results of~\cite{temkin_2017_metric}: given a flat morphism $h\colon X\to Y$ of strictly $k$-analytic curves of relative dimension~0, the set $N_{h,d}:=\{x\in X :  \deg_x(h)=d\}$ is radial (Theorem~\ref{thm:radiality_curves}).

It is worthy to note that the relation between definability and radiality is not new. Using results from \cite{HL}, John Welliaveetil has recently studied it in \cite{W}. His results are somehow complementary to ours. On the one hand, he only works with definable sets which are definably path-connected, a restriction which is not present in our approach. On the other hand, some of his results hold in families, a step which, even if it might work in our setting, has not been developed in this article. 

\medbreak

The article is laid out as follows. In Section \ref{sec:prelim} we fix the notation and provide the needed background both on Berkovich spaces and on the model theory of algebraically closed valued fields. Section \ref{sec:bricks} provides a local-global analysis of $k$-analytic curves which will constitute the core of our construction. The definable set associated to a Berkovich curve is introduced in Section \ref{sec:functorial}, in which we also prove the functoriality of our construction. Definable subsets of curves are described in Sections \ref{sec:definable} and \ref{sec:def_of_curves}. Radiality and definability are discussed in Section~\ref{sec:radiality}. Finally, the comparison with Hrushovski-Loeser spaces is presented in Section \ref{sec:HL}.

\subsection*{Further directions}

There are at least two interesting topics for further research in the direction of this article. The first one is to extend the construction to fields of higher rank in the spirit of~\cite{FR}. The second is to show that the construction can be made uniform in families.

\subsection*{Acknowledgements}

We are grateful to the referee for a careful thorough reading of our paper and for many useful suggestions. In particular, we would like to thank him/her 
for suggesting us the content of Remark \ref{rem:deformation}.

\section{Preliminaries}\label{sec:prelim}

Throughout this article, $(k,\va)$ will be a complete rank 1 non-archimedean algebraically closed non-trivially valued field. We denote by~$k^\circ$ its valuation ring, by~$k^{\circ\circ}$ its maximal ideal and by~$\tilde{k}$ its residue field. We denote by $|k|$ and $|k^\times|$ the images of~$k$ and~$k^\times$ respectively by the absolute value~$\va$.

We set $\overline{\R}_{\ge 0} := \R_{\ge0} \cup\{+\infty\}$ and $\overline\R_{> 0} := \R_{>0} \cup\{+\infty\}$. For~$a\in k$ and~$r\in \R_{\ge 0}$, we let 
\[
D_{k}(a,r):=\{x\in k :   |x-a|\leqslant r\} 
\] 
denote the closed disc centered at~$a$ of radius~$r$, and for~$r\in \overline\R_{>0}$, we let 
\[
D_{k}^-(a,r):=\{x\in k :   |x-a|< r\} 
\] 
denote the open disc centered at~$a$ of radius~$r$. We will often remove~$k$ from the notation when no confusion arises. Note that points of~$k$ are closed discs of radius~$0$ and that $k$ is an open disc of radius~$+\infty$. 

A \emph{Swiss cheese} is a set of the form~$B\setminus (\bigcup_{i=1}^m B_i)$ where~$B$ is either~$k$ or a disc (open or closed), each~$B_i$ is a disc (open or closed) properly contained in~$B$ such that~$B_i\cap B_j=\emptyset$ for~$i\neq j$ and all discs have radius in~$|k|$.

\subsection{Berkovich spaces}

Recall that the Berkovich affine line~$\AA_k^{1,\an}$ is the space of multiplicative seminorms on the polynomial ring in one variable~$k[T]$ which extend the norm on $k$. Given~$x\in \AA_k^{1,\an}$, $\sH(x)$ denotes the completion of the fraction field of~$k[T]/\mathrm{ker}(x)$, and is called the \emph{completed residue field at $x$}. The valued field extension~$\sH(x)|k$ determines the four possible types of points of the line (see~\cite[Section~1.4.4]{rouge} for more details). Note that this classification can also be applied to any given analytic curve. In the case of the Berkovich affine line, we will use the~$\eta$ notation to denote points of type 1, 2 and 3 which we briefly recall. For~$a\in k$ and~$r\in \RR_{\geqslant 0}$, we let~$\eta_{a,r}$ denote the seminorm defined by 
\[
\sum_{i=1}^n a_i(T-a)^i \mapsto \max\{|a_i|r^i :   1\leqslant i\leqslant n\}. 
\]
The point~$\eta_{a,r}$ is of type 1 if~$r=0$, of type 2 if~$r\in |k^\times|$ and of type 3 if~$r\in \RR_{>0}\setminus |k^\times|$. Since $k$~is algebraically closed, all points of type 1,2 or 3 are of the form~$\eta_{a,r}$ for some~$a\in k$ and~$r\in \RR_{\geqslant 0}$. Points of type 4 are all remaining points. Note that if~$k$ is maximally complete, no point of type 4 exists.  

We will fix from now on a point at infinity~$\infty$ in~$\PP^1(k)$ and a coordinate~$T$ on $\AA^1_{k} = \PP^1_{k} \setminus \{\infty\}$.

\medbreak

For $a\in k$ and $r \in \R_{\ge 0}$, we denote the closed Berkovich disc with center~$a$ and radius~$r$ by 
\[\DD_{k}(a,r) := \{x\in \AA^{1,\an}_{k}  :  |(T-a)(x)|\le r\}\] 
and, for $a\in k$ and $r \in \overline\R_{> 0}$, we denote the open Berkovich disc with center~$a$ and radius~$r$ by 
\[\DD_{k}^-(a,r) := \{x\in \AA^{1,\an}_{k}  :  |(T-a)(x)|< r\}.\]
In particular, the affine line is an open disc of infinite radius. We set $\DD_{k} := \DD_{k}(0,1)$ and  $\DD^-_{k} := \DD_{k}^-(0,1)$. We will often remove~$k$ from the notation when no confusion arises. 

\medbreak

\begin{definition}\label{def:nice} 
A \emph{$k$-analytic curve} is a purely one-dimensional separated reduced $k$-analytic space. A \emph{nice curve} is a curve that is isomorphic to the complement of finitely many $k$-rational points in a compact strictly $k$-analytic curve.
\end{definition}

Let~$X$ be a $k$-analytic curve. For~$i\in\{1,2,3,4\}$, we let~$X^{(i)}$ denote the set of points of type~$i$ in~$X$. For~$i,j\in\{1,2,3,4\}$, we let~$X^{(i,j)}$ denote~$X^{(i)}\cup X^{(j)}$, etc. For example, we have
\[\DD_{k}^\Def = \{\eta_{a,r} \in \AA_k^{1,\an}  :  a\in k^\circ, r \in |k^\times| \cap [0,1]\}\]
and
\[(\DD_{k}^-)^\Def = \{\eta_{a,r} \in \AA_k^{1,\an}  :  a\in k^{\circ\circ}, r \in |k^\times| \cap [0,1)\}.\]
To simplify notation, we will write $\AA_k^{1,(i)}$ instead of $\AA_k^{1,\an,(i)}$, etc. 

In what follows, we will often identify the set~$X^{(1)}$ of points of type~1 with the set~$X(k)$ of $k$-rational points.

\subsubsection{Residue curves and branches}\label{sec:residuecurve}

Let~$X$ be a $k$-analytic curve. Let $x\in X$ be a point of type~2. The residue field~$\widetilde{\sH(x)}$ of~$\sH(x)$ has transcendence degree~1 over~$\tilde k$, hence it is the function field of a well-defined connected smooth projective curve over~$\tilde k$. We call the latter the \emph{residue curve at~$x$} and denote it by~$\cC_{x}$. 

The set of branches emanating from~$x$ is defined as 
\[\cB_{x} := \varprojlim_{U} \pi_{0}(U\setminus\{x\}),\] 
where~$U$ runs through the neighborhoods of~$x$ in~$X$. If~$x$ belongs to the interior of~$X$, there is a natural bijection between the set of branches and the set of closed points of the residue curve (see~\cite[4.2.11.1]{Duc-book}). We denote it by $\beta_{x} : \cB_{x} \simto \cC_{x}(\tilde k)$.

Let $x\in X^{(2)}$. A connected affinoid domain~$V$ of~$X$ is said to be a \emph{tube centered at~$x$} if it contains~$x$ and if each connected component of~$V\setminus \{x\}$ is an open disc. Let~$V$ be a tube centered at~$x$. Then, to each connected component~$E$ of~$V\setminus \{x\}$, one may associate a unique branch~$\beta_{E}$ emanating from~$x$. We denote by
\[\rho_{V}\colon V \to \cC_{x}\]
the map that sends~$x$ to the generic point of~$\cC_{x}$ and whose restriction to~$E$ is the constant map with value~$\beta_{x}(b_{E})$, for each connected component~$E$ of~$V\setminus\{x\}$. 

We denote by~$\cU_{V}$ the image of~$\rho_{V}$. It is a Zariski-open subset of~$\cC_{x}$. Note that~$V$ cannot contain all the branches emanating from~$x$ (otherwise it would be boundary-free), hence~$\cU_{V}$ is a proper subset of~$\cC_{x}$. In particular, it is an affine curve over~$\tilde k$.

Let us make this construction more explicit in the case where $X=\PP^{1,\an}_{k}$ and $x=\eta_{0,1}$. Then the closed unit disc $V = \DD_{k}$ is a tube centered at~$\eta_{0,1}$. Denoting by~$t$ the image of~$T$ in $\widetilde{\sH(\eta_{0,1})}$, we have $\widetilde{\sH(\eta_{0,1})} = \tilde k(t)$, hence $\cC_{\eta_{0,1}} = \PP^1_{\tilde k}$, and the natural map $\cB_{\eta_{0,1}} \to \pi_{0}(\PP^{1,\an}_{k}\setminus\{\eta_{0,1}\})$ is a bijection. It is not difficult to check that, in this case, the map~$\rho_{V}$ coincides with the reduction map $\red \colon \DD_{k} \to \AA^1_{\tilde{k}}$ from~\cite[\S~2.4]{rouge}. In particular, we have $\cU_{V} = \AA^1_{\tilde{k}}$.

Let $f \colon X \to Y$ be a morphism from~$X$ to a $k$-analytic curve~$Y$ that is finite at~$x$. Set $y:=f(x)$. We have an induced finite morphism $\tilde f_{x} \colon \cC_{x} \to \cC_{y}$. Moreover, by \cite[Th\'eor\`eme~4.3.13]{Duc-book} for each $a \in \cC_{x}(\tilde k)$, the ramification index~$e_{a}$ of~$\tilde f_{x}$ at~$a$ coincides with the degree of~$f$ on the corresponding branch (\emph{i.e.} the degree of the restriction of~$f$ to the connected component of $U\setminus\{x\}$ for every small enough neighborhood~$U$ of~$x$ in~$X$).

\subsubsection{Triangulations}

In this subsection we recall different concepts from~\cite{Duc-book} and~\cite{temkinETAL_2016_Morphisms} 
that will be needed in the following sections. For convenience, we slightly change some of the definitions and notation used in those references. Nonetheless, it will be very easy to switch between our setting and theirs.

From now on, by an interval $I$, we mean a topological space homeomorphic to a non-empty interval of~$\R$ (closed, open, or semi-open). Graphs will be denoted by the letter $\Gamma$. The set of vertices of a graph~$\Gamma$ will be denoted by $S$ and its set of edges by $E$, each edge being isomorphic to an open interval. We do not require edges to have endpoints. In particular, we allow the graph with one edge and no vertices. The arity of a vertex $x\in S$ is the number of edges to which $x$ is attached. 

\begin{definition}[Triangulation]
Let~$X$ be a strictly $k$-analytic curve. A \emph{triangulation} of~$X$ is a locally finite subset~$S$ of~$X^{(1,2)}$ such that
\begin{enumerate}[$i)$]
\item $S$ meets every connected component of~$X$;
\item $X \setminus S$ is a disjoint union of open discs and open annuli (possibly punctured open discs).
\end{enumerate}

We consider the affine line (resp. the punctured affine line) to be an open disc (resp. an open annulus).
\end{definition}

Remark that any triangulation necessarily contains the singular points of~$X$, its boundary points and its points  of type~2 where the residue curve has positive genus.

Let~$X$ be a strictly $k$-analytic curve endowed with a triangulation~$S$. The set~$\cA$ of connected components of $X\setminus S$ that are annuli is locally finite. Recall that the skeleton~$\Gamma_{A}$ of an open annulus~$A$ is defined as the set of points with no neighborhoods isomorphic to an open disc. It is homeomorphic to an open interval. We define the \emph{skeleton} of the triangulation~$S$ to be
\[\Gamma_{S} := S \cup \bigcup_{A\in \cA} \Gamma_{A}.\]
It is a locally finite graph. We define its set of vertices as~$S$ and its set of edges as $E_{S} := \{\Gamma_{A}  :  A\in \cA\}$ (in a loose sense, since some edges may have only one or no endpoints). Remark that $\Gamma_{S}$ contains no points of type~4 and that $X \setminus \Gamma_{S}$ is a disjoint union of open discs.

There exists a deformation retraction from the curve~$X$ to its skeleton~$\Gamma_{S}$. 

\begin{lemma}\label{lem:taudisc}
The map
\[\begin{array}{cccccl}
\tau_{\DD}^{(1,2,3)} \colon & [0,1] &\times& \DD_{k}^{(1,2,3)} & \to &  \DD_{k}^{(1,2,3)}\\
& (t&,&\eta_{a,r}) & \mapsto & \eta_{a,\max(r,t)} 
\end{array}\]
is well-defined and continuous. It extends by continuity to a map $\tau_{\DD} \colon [0,1] \times \DD_{k}  \to   \DD_{k}$ that is a deformation retraction of~$\DD_{k}$ onto~$\{\eta_{0,1}\}$.
\qed
\end{lemma}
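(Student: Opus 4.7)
My plan is to first establish well-definedness, continuity, and the image containment for $\tau_{\DD}^{(1,2,3)}$ directly from the formula; then extend continuously to all of $\DD_k$ by approximation through type 1, 2, 3 points; and finally check the three conditions for a deformation retraction onto $\{\eta_{0,1}\}$. For well-definedness, I would recall that $\eta_{a,r} = \eta_{a', r'}$ iff $r = r'$ and $|a - a'| \leqslant r$; since $|a - a'| \leqslant r \leqslant \max(r, t)$, one still has $\eta_{a, \max(r, t)} = \eta_{a', \max(r, t)}$. The image lies in $\DD_{k}^{(1,2,3)}$ because $\max(|a|, \max(r, t)) \leqslant 1$ whenever $\max(|a|, r) \leqslant 1$ and $t \leqslant 1$. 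For continuity, I use that the Berkovich topology is the coarsest making all evaluations $x \mapsto |P(x)|$ continuous; by algebraic closedness of $k$ it suffices to treat $P = T - b$, and then
\[|(T-b)(\tau_{\DD}^{(1,2,3)}(t, \eta_{a,r}))| = \max(|a-b|, r, t) = \max(|(T-b)(\eta_{a,r})|, t)\]
is the maximum of two jointly continuous functions of $(t, \eta_{a,r})$.

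To extend to $\DD_k$, I exploit density of type 1, 2, 3 points. Given $x \in \DD_k$ and $t \in [0,1]$, I define, for $P \in k[T]$ factored as $P = c \prod_{i} (T - b_i)$ via algebraic closedness,
\[\nu_{t,x}(P) := |c| \prod_{i} \max(|(T-b_i)(x)|, t).\]
On type 1, 2, 3 points $y$, the identity above gives $\nu_{t, y}(P) = |P(\tau_{\DD}^{(1,2,3)}(t, y))|$. Thus for any net $y_\alpha \to x$ of type 1, 2, 3 points in $\DD_k$, the seminorms of $\tau_{\DD}^{(1,2,3)}(t, y_\alpha)$ converge pointwise to $\nu_{t, x}$, using continuity of the evaluations $|(T - b_i)(\cdot)|$ and of $\max$ and multiplication. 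Multiplicativity and the ultrametric triangle inequality both pass to pointwise limits of seminorms, so $\nu_{t, x}$ is a multiplicative seminorm on $k[T]$ extending $\va$ on constants, hence defines a point $\tau_{\DD}(t, x) \in \AA_k^{1, \an}$; this point lies in $\DD_k$ since $\nu_{t, x}(T) = \max(|T(x)|, t) \leqslant 1$. Continuity of $\tau_{\DD}$ on $[0,1] \times \DD_k$ then follows from joint continuity of $(t, x) \mapsto \nu_{t, x}(P)$ for each fixed $P$.

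For the deformation retraction, $\nu_{0, x}(P) = |P(x)|$ gives $\tau_{\DD}(0, \cdot) = \id$, while $\nu_{1, x}(T - b) = \max(|(T - b)(x)|, 1)$ equals $\max(|b|, 1) = |(T - b)(\eta_{0, 1})|$ in both regimes $|b| \leqslant 1$ (since $|(T - b)(x)| \leqslant \max(|T(x)|, |b|) \leqslant 1$ on $\DD_k$) and $|b| > 1$ (by the strict triangle inequality $|(T - b)(x)| = |b|$), yielding $\tau_{\DD}(1, \cdot) \equiv \eta_{0, 1}$. For fixedness at the target, $|(T - b)(\eta_{0, 1})| = \max(|b|, 1) \geqslant 1 \geqslant t$ forces $\nu_{t, \eta_{0, 1}}(T - b) = |(T - b)(\eta_{0, 1})|$, hence $\tau_{\DD}(t, \eta_{0, 1}) = \eta_{0, 1}$. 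The main obstacle I anticipate is the extension to type 4 points, which admit no explicit $\eta_{a, r}$ description; the device that unlocks it is the observation that the formal seminorm $\nu_{t, x}$ is automatically a genuine Berkovich point as a pointwise limit of multiplicative seminorms along any type 1, 2, 3 approximation, sidestepping a direct verification of the ultrametric inequality.
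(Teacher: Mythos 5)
Your proof is correct. The paper states this lemma with no proof at all (it is recorded as a standard fact, hence the immediate \qed), so there is no argument of the authors to compare yours against; what you give is the natural and complete verification. The two points a referee would look at are exactly the ones you handle: the reduction of continuity to linear factors via multiplicativity and algebraic closedness, and the extension to type~4 points via the explicit seminorm $\nu_{t,x}(P)=|c|\prod_i\max(|(T-b_i)(x)|,t)$, whose multiplicativity and ultrametric inequality you correctly obtain as pointwise limits along a net of type $(1,2,3)$ points converging to $x$ (such nets exist by density of type~2 points, and uniqueness of the continuous extension follows from this density together with the Hausdorff property of $\DD_{k}$). The checks that $\tau_{\DD}(0,\cdot)=\mathrm{id}$, $\tau_{\DD}(1,\cdot)\equiv\eta_{0,1}$ and $\tau_{\DD}(t,\eta_{0,1})=\eta_{0,1}$ are all accurate, so the deformation retraction claim is fully justified.
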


Let $x\in S^{(2)}$. Denote by~$W_{S,x}$ the union of the connected components of $X\setminus\{x\}$ that are discs with boundary point~$x$. Assume that~$W_{S,x}$ misses at least one branch emanating from~$x$. Then, there exists a continuous injective map $f \colon W_{S,x} \to \DD_{k}$ sending~$x$ to~$\eta_{0,1}$. Since~$f$ is a homeomorphism onto its image, the map~$\tau_{\DD}$ induces a deformation retraction $\tau_{S,x}\colon [0,1] \times W_{S,x} \to W_{S,x}$ of~$W_{S,x}$ onto~$\{x\}$. Moreover, $\tau_{\DD}$ does not depend on~$f$. 

If~$W_{S,x}$ contains every branch emanating from~$x$, we can still prove the existence of the deformation retraction~$\tau_{S,x}$ by covering~$W_{S,x}$ by two smaller subsets of the kind we had before and applying the argument to them.

\begin{lemma}\label{lem:tauannulus}
Let $u<v \in \overline\R_{\ge 0}$ and consider the annulus $A := \DD^-_{k}(0,v) \setminus \DD_{k}(0,u)$. The map
\[\begin{array}{cccccl}
\tau_{A}^{(1,2,3)} \colon & [0,1] &\times& A^{(1,2,3)} & \to &  A^{(1,2,3)}\\
& (t &,&\eta_{a,r} \text{ with } r\le |a|) & \mapsto & \eta_{a,\max(r,t|a|)} 
\end{array}\]
is well-defined and continuous. It extends by continuity to a map $\tau_{A} \colon [0,1] \times A  \to  A$ that is a deformation retraction of~$A$ onto $\Gamma(A) = \{\eta_{0,r} \,:\, r\in (u,v)\}$.
\qed
\end{lemma}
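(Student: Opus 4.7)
The plan is to mirror the proof of Lemma \ref{lem:taudisc} and exploit the skeleton structure of the open annulus $A$.

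First I would establish well-definedness of $\tau_A^{(1,2,3)}$. Every $x \in A^{(1,2,3)}$ admits a representation $x = \eta_{a,r}$ with $r \le |a|$: if initially $r > |a|$, then $x = \eta_{0,r}$, and if moreover $r \in |k^\times|$ (so $x$ is of type~2), one replaces the center by any $a \in k$ with $|a| = r$. The only exceptions are the type~3 points $\eta_{0,r}$ of $\Gamma(A)$, for which I extend the formula by fixing them pointwise, which is consistent with the desired retraction property. Given two valid representations $\eta_{a,r} = \eta_{a',r}$ with $r \le |a|, |a'|$, ultrametricity forces $|a| = |a'|$ and $|a - a'| \le r \le \max(r, t|a|)$, hence $\eta_{a, \max(r, t|a|)} = \eta_{a', \max(r, t|a'|)}$. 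A direct computation then gives $|T(\eta_{a, \max(r, t|a|)})| = |a| \in (u,v)$, so the image lies in $A^{(1,2,3)}$.

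Continuity on $A^{(1,2,3)}$ I would obtain by factoring the map through the continuous parametrization $(a,r) \mapsto \eta_{a,r}$ from $k \times \R_{\ge 0}$ to the Berkovich line, together with the continuous dependence of $(a,s) \mapsto |P(\eta_{a,s})|$ on $(a,s)$ for every $P \in k[T]$ (via the Gauss-norm formula in terms of Taylor coefficients of $P$ at $a$). Across the skeleton, a sequence $\eta_{a_n, r_n}$ (with $r_n \le |a_n|$) converging to $\eta_{0, r} \in \Gamma(A)$ necessarily has $|a_n| \to r$, so the images $\eta_{a_n, \max(r_n, t|a_n|)}$ converge to $\eta_{0, r}$, matching the extended definition.

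For the extension to type~4 points, I would use that $A^{(1,2,3)}$ is dense in $A$ and $A$ is Hausdorff: any $x \in A^{(4)}$ is a limit of points in $A^{(1,2,3)}$, and the established continuity forces $\tau_A^{(1,2,3)}(t, x_n)$ to converge to a point depending only on $x$, defining $\tau_A(t, x)$. The retraction properties then follow from direct checks: $\tau_A(0, x) = x$ since $\max(r, 0) = r$; $\tau_A(1, x) = \eta_{a, |a|} = \eta_{0, |a|} \in \Gamma(A)$ on type~1, 2, 3 points and by continuity elsewhere; and $\tau_A(t, y) = y$ for $y \in \Gamma(A)$ directly from the formula at type~2 skeleton points and by continuity for type~3 and type~4 skeleton points. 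The main obstacle is the bookkeeping around $\Gamma(A)$, where the normalization $r \le |a|$ degenerates (notably at type~3 skeleton points, which admit no representation $\eta_{a,r}$ with $0 < r = |a|$); this is resolved by the observation that such points must be fixed by $\tau_A$ in any case, so continuous extension handles them, and the remainder of the argument is a routine adaptation of the disc case in Lemma \ref{lem:taudisc}.
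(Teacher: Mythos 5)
The paper offers no proof of this lemma (it is stated with a \qed, like Lemma~\ref{lem:taudisc}), so there is nothing to compare against except the routine verification it expects; your outline is that verification, and its genuinely delicate points are handled correctly: you notice that type~3 points of $\Gamma(A)$ admit no representation $\eta_{a,r}$ with $r\le|a|$ and must be decreed fixed, you prove independence of the center via the ultrametric inequality (indeed $r\le\min(|a|,|a'|)$ and $|a-a'|\le r$ force $|a|=|a'|$), you compute $|T|$ of the image to land in $A^{(1,2,3)}$, and the retraction identities are checked correctly.

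Two of your justifications, however, are not valid as written and would need to be replaced. First, continuity of $\tau_A^{(1,2,3)}$ does not follow from continuity of $(t,a,r)\mapsto\eta_{a,\max(r,t|a|)}$ on parameter space: the parametrization $(a,r)\mapsto\eta_{a,r}$ is neither injective nor a quotient map onto $A^{(1,2,3)}$ (and misses the type~3 skeleton points), and a net converging in the Berkovich topology need not lift to a convergent net of centers and radii. The correct argument is the one you sketch only across the skeleton: work with nets $(t_n,x_n)\to(t,x)$ (sequences do not suffice, the topology is not metrizable in general) and use monotonicity of seminorms in the radius. At a skeleton point squeeze $\eta_{a_n,r_n}\le\eta_{a_n,s_n}\le\eta_{a_n,|a_n|}=\eta_{0,|a_n|}$ with $|a_n|\to r$; at a point $x=\eta_{a,r}$ with $r<|a|$, note that eventually $x_n$ lies in $\DD^-(a,\rho)$ for any $\rho\in(r,|a|)$, so $|a_n|=|a|$ and $\eta_{a_n,s_n}\le\eta_{a,\max(\rho,t_n|a|)}$, and let $\rho\downarrow r$. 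Second, the extension to type~4 points cannot be obtained from ``dense subset plus Hausdorff target'': a continuous map on a dense subspace need not extend continuously, so this step, as stated, proves nothing. Instead, use that a type~4 point $x$ lies in a single connected component $\DD^-(a,|a|)$ of $A\setminus\Gamma(A)$ and is the filtered intersection of nested closed discs $\DD(a_i,\rho_i)$; define $\tau_A(t,x):=x$ if $t|a|\le\inf_i\rho_i$ and $\tau_A(t,x):=\eta_{a_i,t|a|}$ (for any $i$ with $\rho_i\le t|a|$) otherwise, and verify continuity of the resulting map on $[0,1]\times A$ by the same squeeze estimates. With these two repairs your plan goes through.
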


Denote by~$\cA_{S}$ the set of connected components of~$X\setminus S$ that are annuli. Let us now consider the map 
\[\begin{array}{cccl}
\tau_{S} \colon & [0,1] \times X &\to& X\\
& (t,y) & \mapsto & 
\begin{cases} 
\tau_{S,x}(t,y) & \text{if } y\in W_{x} \text{ for some } x\in S^{(2)};\\
\tau_{A}(t,y) & \text{if } y\in A \text{ for some } A \in \cA_{S};\\
y & \text{if } y\in S^{(1)}.
\end{cases}
 \end{array}\]
 It is a deformation retraction of~$X$ onto~$\Gamma_{S}$.
 
\begin{convention}
In this article, all triangulations will be assumed to be finite except in Section~\ref{sec:radiality}. In particular, all skeleta will be finite graphs.
\end{convention}

\bigbreak

Let~$X$ be a nice curve. By definition, there exists a compact strictly $k$-analytic curve~$\ov X$ containing~$X$ such that $\ov X \setminus X$ is a finite set of $k$-rational points. Such a curve will be called a \emph{compactification} of~$X$. Let us fix such a compactification~$\ov X$. Note that~$X$ is dense in~$\overline X$.

We will denote by~$\widetilde X$ the normalisation of~$\overline X$ and by $n_{X} \colon \widetilde X \to \overline X$ the corresponding morphism (see \cite[Section~5.2]{Ducros_Excellence} for details). The curve~$\widetilde X$ is compact and quasi-smooth (which is the same as rig-smooth or geometrically regular, see  \cite[Chapter~5]{Ducros_Families} for a complete reference). We will denote by~$s(\ov X)$ the singular locus of~$\ov X$. Since~$\ov X$ is reduced and~$k$ is algebraically closed, $\ov X$ is generically smooth, hence $s(\ov X)$ is a finite set of $k$-rational points. Moreover, the morphism $\wt X \setminus n_{X}^{-1}(s(\ov X)) \to \ov X \setminus s(\ov X)$ induced by~$n_{X}$ is an isomorphism.

\begin{lemma}\label{lem:Swtov}
Let~$S$ be a triangulation of~$X$. Then $\overline S := S \cup (\overline X \setminus X)$ is a triangulation of~$\overline X$ with skeleton $\Gamma_{\overline S} = \Gamma_{S} \cup (\overline X \setminus X)$. 

Let~$S'$ be a triangulation of~$\overline X$ containing $\overline X \setminus X$. Then $S' \cap X$ is a triangulation of~$X$ with skeleton $\Gamma_{S' \cap X} = \Gamma_{S'}\cap X$.

Let~$S'$ be a triangulation of~$\ov X$. Then $n_{X}^{-1}(S')$ is a triangulation of~$\wt X$ and~$n_{X}$ induces an isomorphism $\Gamma_{n_{X}^{-1}(S')} \setminus n_{X}^{-1}(s(\ov X)) \simto \Gamma_{S'} \setminus s(\ov X)$.

Let $S''$ be a triangulation of~$\wt X$ containing $n_{X}^{-1}(s(\ov X))$. Then $n_{X}(S'')$ is a triangulation of~$\ov X$ and~$n_{X}$ induces an isomorphism $\Gamma_{S''} \setminus n_{X}^{-1}(s(\ov X))) \simto \Gamma_{n_{X}(S'')} \setminus s(\ov X)$.
\qed
\end{lemma}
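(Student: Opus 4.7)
The strategy is to treat all four statements in parallel by exploiting two ingredients: $\ov X \setminus X$ is a finite set of type-$1$ points, and the normalization morphism $n_X \colon \wt X \to \ov X$ restricts to an isomorphism above $\ov X \setminus s(\ov X)$, where $s(\ov X)$ is a finite set of $k$-rational points that is contained in every triangulation of $\ov X$ (by the remark following the definition of triangulation). In each case I would verify the triangulation conditions~(i)--(ii) and then read off the skeleton description from the bijection between annular components on the two sides of a simple set-theoretic equality.

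For claim~(1), set $\ov S := S \cup (\ov X \setminus X)$. Then $\ov S \subset \ov X^{(1,2)}$ is finite and satisfies $\ov X \setminus \ov S = X \setminus S$, so condition~(ii) for $\ov S$ is exactly condition~(ii) for $S$. Every connected component of $\ov X$ contains a connected component of $X$ (removing finitely many $k$-rational points from a connected quasi-smooth $k$-analytic curve keeps it connected), and such a component meets $S \subset \ov S$, giving condition~(i). Since the annular components of $\ov X \setminus \ov S$ are exactly those of $X \setminus S$, we read off $\Gamma_{\ov S} = \ov S \cup \bigcup_{A} \Gamma_A = \Gamma_S \cup (\ov X \setminus X)$. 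Claim~(2) is the mirror image: since $\ov X \setminus X \subset S'$, one has $X \setminus (S' \cap X) = \ov X \setminus S'$, so~(ii) and the skeleton identity are immediate, and~(i) follows by the same component-by-component analysis, using that each component of $X$ is obtained from a component of $\ov X$ by removing points of $\ov X \setminus X$.

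For claim~(3), the inclusion $s(\ov X) \subset S'$ guarantees that $\ov X \setminus S'$ lies in the locus over which $n_X$ is an isomorphism. Therefore $n_X$ induces a homeomorphism $\wt X \setminus n_X^{-1}(S') \simto \ov X \setminus S'$, so the former is a disjoint union of open discs and open annuli; moreover $n_X^{-1}(S')$ is finite and contained in $\wt X^{(1,2)}$, and it meets every component of $\wt X$ because $n_X$ is finite and surjective on every component. The skeleton statement then reduces to the fact that annular components correspond bijectively under $n_X$ and that their skeleta lie entirely in the isomorphism locus $\wt X \setminus n_X^{-1}(s(\ov X))$. Claim~(4) is obtained by running the same argument in the opposite direction: the hypothesis $n_X^{-1}(s(\ov X)) \subset S''$ ensures $n_X(S'') \supset s(\ov X)$, so $\ov X \setminus n_X(S'')$ again lies in the isomorphism locus, and the triangulation conditions for $n_X(S'')$ together with the skeleton identification follow.

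The main obstacle is really only the verification of condition~(i) (``meets every connected component'') in each of the four cases; this is where one needs the density of $X$ in $\ov X$, the connectivity of a connected $k$-analytic curve after removing finitely many $k$-rational points, and the fact that $n_X$ is finite and surjective on each component. Once condition~(i) is settled, the description of complements and the bijection of annular components yields both condition~(ii) and the claimed equalities and isomorphisms of skeleta as bookkeeping.
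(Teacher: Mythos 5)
The paper states this lemma without proof (it is left as a routine verification), so there is no argument of the authors to compare against; judged on its own terms, your proposal follows the natural route — the set-theoretic identities $\overline X\setminus\overline S=X\setminus S$ and $X\setminus(S'\cap X)=\overline X\setminus S'$, plus the facts that $s(\overline X)$ lies in every triangulation of $\overline X$ and that $n_X$ is an isomorphism away from $s(\overline X)$ — but both places where you wave at condition $(i)$ contain genuine gaps, and you correctly identified condition $(i)$ as the only non-bookkeeping content. For the second claim, your assertion that ``each component of $X$ is obtained from a component of $\overline X$ by removing points of $\overline X\setminus X$'' is false in general: the removed points may be singular points of $\overline X$ (nothing in the definition of a nice curve or of a compactification forbids this), and deleting a node disconnects. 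More importantly, even when the assertion holds it does not give what you need: the component of $\overline X$ is only guaranteed to meet $S'$, and it may meet it exclusively inside $\overline X\setminus X$, so you cannot conclude that the corresponding component of $X$ meets $S'\cap X$. The honest argument is that a component $X_0$ of $X$ missing $S'$ would, by maximality, be a whole connected component of $\overline X\setminus S'$, hence an open disc or annulus — and this can actually happen in a degenerate case ($X=\PP^{1,\an}_k\setminus\{0\}$, $\overline X=\PP^{1,\an}_k$, $S'=\{0\}$ gives $S'\cap X=\emptyset$), so the literal statement needs the harmless proviso that $S'$ meets every component of $X$; your argument silently skates over exactly this delicate point.

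For the third and fourth claims, justifying condition $(i)$ by ``$n_X$ is finite and surjective on every component'' is insufficient: a connected component $\widetilde X_0$ of $\widetilde X$ maps onto (essentially) an irreducible component of $\overline X$, not onto a connected component, whereas condition $(i)$ for $S'$ only guarantees that $S'$ meets each connected component of $\overline X$. The repair uses the very hypothesis you record at the outset: either $n_X(\widetilde X_0)$ contains a point of $s(\overline X)\subseteq S'$, and then $\widetilde X_0$ meets $n_X^{-1}(S')$; or $n_X(\widetilde X_0)\subseteq\overline X\setminus s(\overline X)$, in which case it is closed (finiteness of $n_X$) and open (image of the open set $\widetilde X_0$ under the isomorphism over the complement of $s(\overline X)$), hence a union of connected components of $\overline X$, each of which meets $S'$. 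With these two points fixed, the remaining steps you describe — equality of complements, the bijection of annular components, the identity $n_X^{-1}(n_X(S''))=S''$ needed in the fourth claim, and the resulting skeleton identifications — are correct as stated.
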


\begin{theorem}\label{thm:semistable}
Let~$X$ be a nice curve. Let~$S_{0}$ be a finite subset of~$X^{(1,2)}$. Then, there exists a finite triangulation~$S$ of~$X$ containing~$S_{0}$.
\end{theorem}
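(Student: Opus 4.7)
The plan is to transfer the statement to the compact quasi-smooth setting, where it reduces to the semistable reduction theorem for strictly $k$-analytic curves. Let $\ov X$ be a compactification of~$X$, and let $n_{X}\colon \wt X\to\ov X$ denote the normalization. First I form the finite set
\[
T_{0}:= n_{X}^{-1}\bigl(S_{0}\cup(\ov X\setminus X)\cup s(\ov X)\bigr),
\]
which is contained in~$\wt X^{(1,2)}$ because $S_{0}\subseteq X^{(1,2)}$, while $\ov X\setminus X$ and $s(\ov X)$ consist of $k$-rational points whose fibres under~$n_{X}$ are finite sets of type~$1$ points.

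Since $\wt X$ is compact, quasi-smooth and strictly $k$-analytic, I next invoke the semistable reduction theorem for curves, in the form allowing a prescribed finite subset of type~$1$ or~$2$ vertices to be included (see \cite{Duc-book}). This produces a finite triangulation~$T$ of~$\wt X$ with $T_{0}\subseteq T$. Two applications of Lemma~\ref{lem:Swtov} then descend~$T$ back to~$X$: since $T\supseteq n_{X}^{-1}(s(\ov X))$, the image $S':=n_{X}(T)$ is a finite triangulation of~$\ov X$; and since $S'\supseteq \ov X\setminus X$, the intersection $S:=S'\cap X$ is a finite triangulation of~$X$. By construction $S_{0}\subseteq S$, which is what we wanted.

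The main obstacle is the semistable reduction input: the mere existence of a finite triangulation of~$\wt X$ is the classical statement, while the strengthened form permitting a prescribed set of type~$1$--$2$ vertices (which is what lets us include~$T_{0}$) is the substantive ingredient. If only the bare existence of a finite triangulation is taken for granted, one can instead start from any such triangulation~$T'$ of~$\wt X$ and add the points of $T_{0}\setminus T'$ one at a time, using at each step that inserting a type~$1$ or type~$2$ point into an open disc or open annulus cuts it into a disjoint union of open discs and open annuli, so that the triangulation property is preserved.
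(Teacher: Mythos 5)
Your proof is correct and follows essentially the same route as the paper: use Lemma~\ref{lem:Swtov} to pass to the compactification and its normalisation, reducing to the compact quasi-smooth case, and then invoke the semistable reduction theorem (\cite[Theorem~4.3.1]{rouge} or \cite[Th\'eor\`eme~5.1.14]{Duc-book}). You simply spell out the bookkeeping (the set $T_{0}$, the two descents via Lemma~\ref{lem:Swtov}) and the refinement argument for including the prescribed points, which the paper leaves implicit in its citation.
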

\begin{proof}
Lemma~\ref{lem:Swtov} shows that we may replace~$X$ by~$\wt X$, hence assume that~$X$ is compact and quasi-smooth. In this case, the result follows from the semistable reduction theorem (see \cite[Theorem~4.3.1]{rouge} or \cite[Th\'eor\`eme~5.1.14]{Duc-book}). 
\end{proof}

\begin{definition}\label{def:triang-refinement} Let $X$ be a strictly $k$-analytic curve and let $S_1,S_2$ be two triangulations of~$X$. We say that \emph{$S_2$ refines $S_1$} if $S_1\subseteq S_2$.
\end{definition}

Note that if $S_2$ refines $S_1$, the associated skeleta $\Gamma_{S_{1}}$ and $\Gamma_{S_{2}}$ have the same homotopy type. 

\begin{lemma}\label{lem:ref-arity} Let $X$ be strictly $k$-analytic curve and $S_1,S_2 \subseteq X$ be two triangulations of $X$ such that $S_2$ refines $S_1$. If $S_1\neq S_2$, then there is $x\in S_2\setminus S_1$ of arity $\leqslant 2$. Moreover, for every such $x$, $S_2\setminus \{x\}$ is a triangulation that refines $S_1$. 
\end{lemma}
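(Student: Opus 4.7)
I would prove both parts by a local analysis, component by component of $X \setminus S_{1}$, using that $\Gamma_{S_{1}} \subseteq \Gamma_{S_{2}}$ as subsets of $X$ and that both graphs are deformation retracts of $X$. Because $S_{1} \neq S_{2}$, there is a component $U$ of $X \setminus S_{1}$ with $(S_{2} \setminus S_{1}) \cap U \neq \emptyset$, and I would study the finite subgraph $G := \Gamma_{S_{2}} \cap \overline{U}$: its vertex set is $((S_{2} \setminus S_{1}) \cap U) \cup (S_{1} \cap \partial U)$, and its edges are the skeleta of annulus components of $X \setminus S_{2}$ contained in~$U$. A quick connectedness argument shows any such annulus must lie in $U$, so the arity of a vertex $x \in (S_{2} \setminus S_{1}) \cap U$ in $G$ coincides with its arity in~$\Gamma_{S_{2}}$. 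Since $\Gamma_{S_{1}}$ and $\Gamma_{S_{2}}$ share their first Betti number and $\Gamma_{S_{1}} \subseteq \Gamma_{S_{2}}$, no new loops appear: $G$ is a finite tree if $U$ is a disc or an annulus with two distinct boundary points, and contains exactly one loop if $U$ is a self-loop annulus. A finite tree with $\geq 2$ vertices has $\geq 2$ leaves, so in the disc case one of them must be a new vertex of arity~$1$; in the annulus case, either a leaf lies in $S_{2} \setminus S_{1}$ (arity~$1$) or every new vertex subdivides the skeleton/loop of~$U$ and has arity~$2$. In all cases we find $x \in (S_{2} \setminus S_{1}) \cap U$ of arity $\leq 2$ in~$\Gamma_{S_{2}}$.

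\textbf{Correctness.} I would next fix $x \in S_{2} \setminus S_{1}$ of arity $a \leq 2$ and set $T := S_{2} \setminus \{x\}$. Then $S_{1} \subseteq T$, so $T$ refines $S_{1}$ and meets every component of~$X$. It remains to see that $(X \setminus S_{2}) \cup \{x\}$ is a disjoint union of open discs and open annuli. All components of $X \setminus S_{2}$ not meeting $x$ are unchanged; only the merged component
\[
W_{x} = \{x\} \cup A_{1} \cup \cdots \cup A_{a} \cup \bigcup_{i} D_{i},
\]
where $A_{1}, \dots, A_{a}$ are the annulus-branches and the $D_{i}$ the disc-branches of $X \setminus S_{2}$ at~$x$, is new. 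Since $x \notin S_{1}$, the remark after the triangulation definition forces $x$ to be an interior non-singular point with $\cC_{x} \cong \PP^{1}_{\tilde{k}}$ when $x$ is of type~$2$. Moreover $a \geq 1$: the branch at $x$ pointing toward the boundary of its ambient component of $X \setminus S_{1}$ is always of annulus type. Applying the residue-curve dictionary of Section~\ref{sec:residuecurve}, the branches at $x$ biject with the closed points of~$\cC_{x}$, and $W_{x}$ corresponds, via its residue, to $\cC_{x}$ with the $a$ points associated to $A_{1}, \dots, A_{a}$ removed. For $a = 1$ this leaves $\AA^{1}_{\tilde{k}}$, recognizing $W_{x}$ as an open disc; for $a = 2$ it leaves $\mathbb{G}_{m,\tilde{k}}$, recognizing $W_{x}$ as an open annulus (of self-loop type exactly when $A_{1}$ and $A_{2}$ share their other endpoint). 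The type~$1$ case is automatic: necessarily $a = 1$ and $W_{x}$ is a punctured open disc, i.e.\ an open annulus under the paper's convention.

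\textbf{Main obstacle.} The delicate step is the final identification, where one must pass from the residue-level description to the conclusion that $W_{x}$ is genuinely an open disc or open annulus. This relies on the local structure theorem for Berkovich curves at type~$2$ points, already implicit in the construction of triangulations via the semistable reduction theorem. The existence step is mainly combinatorial and rests on the preservation of the homotopy type of the skeleton under refinement, as noted just before the lemma.
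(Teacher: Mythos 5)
Your existence argument is, in substance, the paper's own: the paper splits into the case where some new vertex lies off $\Gamma_{S_1}$ (it then sits in a disc component of $X\setminus\Gamma_{S_1}$, where no cycle of $\Gamma_{S_2}$ can live because the two skeleta have the same homotopy type, so the piece of $\Gamma_{S_2}$ there is a finite tree and an extremal vertex has arity $1$) and the case $S_2\subseteq\Gamma_{S_1}$ (every new vertex then subdivides an open edge of $\Gamma_{S_1}$ and has arity $2$). Your component-by-component tree/leaf bookkeeping inside the components $U$ of $X\setminus S_1$ uses exactly the same two inputs (homotopy invariance of the skeleton under refinement, absence of loops inside disc components), so this part is essentially the same proof, just organised differently.

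For the second assertion the paper gives no proof at all ("left to the reader"), so your sketch goes beyond it, and the overall strategy (analyse the merged component $W_x$, use that $x\notin S_1$ forces $x$ to be interior, non-singular and of genus $0$, and that $a\geqslant 1$) is the right one; but the decisive step is looser than it should be. First, "$W_x$ corresponds, via its residue, to $\cC_x$ with the $a$ points removed, recognizing $W_x$ as a disc/annulus" is not an argument: the reduction map of Section~2 is defined on tubes, and knowing which branches the pieces occupy does not by itself determine the isomorphism type of $W_x$. What makes the identification work is that each disc branch $D_i$ satisfies $\overline{D_i}=D_i\cup\{x\}$ (a disc component of $X\setminus S_2$ has a single boundary point), hence is a full connected component of $X\setminus\{x\}$; combined with the local model at a smooth, interior, genus-$0$ type-$2$ point and the fact that each annulus is attached along a single branch, this gives $W_x\cong$ disc (resp.\ annulus) when $a=1$ (resp.\ $a=2$). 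More economically, since $S_1\subseteq S_2\setminus\{x\}$, $W_x$ is the component of $U\setminus\bigl((S_2\cap U)\setminus\{x\}\bigr)$ containing $x$, where $U$ is the disc or annulus component of $X\setminus S_1$ you already introduced, and there the verification is an explicit computation in a standard disc or annulus. Second, you tacitly assume the annulus branches come from distinct annuli having only one end at $x$; an annulus component with both ends at $x$ would make $W_x$ contain a circle, and the conclusion would fail. This configuration cannot occur, but it must be excluded: any cycle of $\Gamma_{S_2}$ lies in $\Gamma_{S_1}$ (same contractibility argument as above), and a circle through $x\notin S_1$ meeting no point of $S_1$ would have to sit inside a single open edge of $\Gamma_{S_1}$, which is impossible. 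Third, a harmless slip: in the type-$1$ case the unique adjacent component is a disc punctured at $x$, so $W_x$, which contains $x$, is an open disc, not a punctured disc; either way the triangulation condition holds, but the statement as written is incorrect.
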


\begin{proof} We split in cases.

\textbf{Case 1:} Suppose there is $y\in S_2\cap (X\setminus \Gamma_{S_{1}})$. Then, the connected component $E$ of~$y$ in $X\setminus \Gamma_{S_{1}}$ is isomorphic to an open disc. Since $\Gamma_{S_{1}}$ and $\Gamma_{S_{2}}$ have the same homotopy type, there cannot be a loop in $\Gamma_{S_{2}}$ containing $y$. Therefore, $E\cap S_2$ must contain a point $x$ of arity $1$ in $\Gamma_{S_{2}}$.  

\textbf{Case 2:} Suppose $S_2\subseteq \Gamma_{S_{1}}$. For $x\in S_2\setminus S_1$, since $\Gamma_{S_{1}}\setminus S_1$ is a disjoint union of open intervals, $x$ is contained in only one of such intervals. Therefore its arity is 2.   

We leave to the reader the proof of the final assertion of the lemma. 
\end{proof}

\begin{definition}[Compatibility]
Let $f\colon X_1\to X_2$ be a morphism of strictly $k$-analytic curves. A pair of triangulations $(S_1,S_2)$, where $S_i$ is a triangulation of $X_i$, is said to be \emph{$f$-compatible} if we have $f^{-1}(S_{2}) = S_{1}$ and $f^{-1}(E_{S_{2}}) = E_{S_{1}}$.
\end{definition}

In the compact case, compatible triangulations always exist, as a consequence of Theorem~\ref{thm:semistable} (see the proof of the simultaneous semistable reduction theorem in~\cite[Section~3.5.11]{temkinETAL_2016_Morphisms} for more details). 

\begin{theorem}
Let $f\colon X_1\to X_2$ be a morphism of compact strictly $k$-analytic curves of relative dimension~0. Then, there exists an $f$-compatible pair of triangulations $(S_{1},S_{2})$. 

Moreover, we may assume that~$S_{1}$ (resp. $S_{2}$) contains any given finite subset of~$X_{1}^{(1,2)}$ (resp. $X_{2}^{(1,2)}$).
\qed
\end{theorem}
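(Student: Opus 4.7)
The plan is to reduce the statement to the semistable reduction theorem (Theorem~\ref{thm:semistable}) combined with a local decomposition result for finite morphisms over open discs and open annuli, and to absorb all the prescribed data into the target triangulation from the start.

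First, applying Theorem~\ref{thm:semistable} to $X_{2}$, I would pick a finite triangulation $T_{2}$ of $X_{2}$ containing $F_{2}\cup f(F_{1})$, where $F_{1}\subseteq X_{1}^{(1,2)}$ and $F_{2}\subseteq X_{2}^{(1,2)}$ are the prescribed subsets. Since $f$ is a proper morphism of relative dimension zero between compact curves, it is finite; in particular, for any finite $S\subseteq X_{2}^{(1,2)}$ the preimage $f^{-1}(S)$ is a finite subset of $X_{1}^{(1,2)}$, because a finite extension of valued fields preserves both the transcendence degree of the residue field over~$\tilde{k}$ and the rational rank of the value group over~$|k^{\times}|$.

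The crucial step is to enlarge $T_{2}$ so that the preimage of every connected component of the complement of the new triangulation is a disjoint union of open discs and open annuli. The key local input is the following: for every finite morphism $g\colon Y\to V$ from a compact strictly $k$-analytic curve to an open disc or open annulus $V$, there exists a finite subset $\Sigma_{V}\subseteq V^{(1,2)}$ such that every connected component of the preimage of each connected component of $V\setminus \Sigma_{V}$ is an open disc or an open annulus, the annuli being exactly those components that dominate the skeleton of their image. This local decomposition is essentially the simultaneous semistable reduction theorem proved in~\cite[Section~3.5.11]{temkinETAL_2016_Morphisms}. Applying it to the restriction of~$f$ to each of the finitely many components of $X_{2}\setminus T_{2}$ and taking the union of the resulting finite sets yields a finite set $\Sigma_{2}\subseteq X_{2}^{(1,2)}$. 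Invoking Theorem~\ref{thm:semistable} once more, I choose a finite triangulation $S_{2}$ of $X_{2}$ containing $T_{2}\cup \Sigma_{2}$ and set $S_{1} := f^{-1}(S_{2})$. By construction, $S_{1}$ meets every connected component of~$X_{1}$ (each component of $X_{1}$ must surject onto a union of components of~$X_{2}$, which $S_{2}$ meets), every connected component of $X_{1}\setminus S_{1}$ is an open disc or annulus, and the annulus components of $X_{1}\setminus S_{1}$ map onto annulus components of $X_{2}\setminus S_{2}$, yielding both $f^{-1}(S_{2}) = S_{1}$ and $f^{-1}(E_{S_{2}}) = E_{S_{1}}$. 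The inclusions $F_{1}\subseteq f^{-1}(f(F_{1}))\subseteq f^{-1}(T_{2})\subseteq S_{1}$ and $F_{2}\subseteq T_{2}\subseteq S_{2}$ deliver the ``moreover'' statement.

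The main obstacle is the local decomposition result invoked in the second step, which is far from trivial: in positive residue characteristic, wild ramification along the skeleton requires a careful analysis. This is exactly the content of~\cite[Section~3.5.11]{temkinETAL_2016_Morphisms}. Once granted, the rest is a direct finite manipulation, with no back-and-forth iteration needed since the target triangulation is enriched in a single round.
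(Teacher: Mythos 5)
Your overall strategy is consistent with the paper, which in fact gives no argument at all for this statement beyond pointing to the simultaneous semistable reduction theorem of \cite[Section~3.5.11]{temkinETAL_2016_Morphisms}; like the paper, you delegate the hard local content to that reference. The genuine gap lies in the bookkeeping you add on top, precisely at the step you advertise as avoiding any back-and-forth: after producing $\Sigma_2$ you invoke Theorem~\ref{thm:semistable} once more to get a triangulation $S_2\supseteq T_2\cup\Sigma_2$ and set $S_1:=f^{-1}(S_2)$. Theorem~\ref{thm:semistable} gives no control over the extra points it inserts, and compatibility is \emph{not} preserved under refining the target and taking preimages. Concretely, suppose a disc component of the preimage of a disc component of $X_2\setminus(T_2\cup\Sigma_2)$ maps like $z\mapsto z^2\colon \DD^-(0,1)\to\DD^-(0,1)$ (residue characteristic $\neq 2$); nothing in the local statement you quote forbids this. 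If the completion to $S_2$ happens to add a type~2 point $p=\eta_{b,s}$ with $s<|b|<1$, its preimage in that disc consists of the two incomparable points $\eta_{\pm\beta,\,s|b|^{-1/2}}$ with $\beta^2=b$, and the component of the disc upstairs lying above both of them is an open disc minus two disjoint closed discs --- neither an open disc nor an open annulus. So $f^{-1}(S_2)$ need not be a triangulation, and the assertion that compatibility holds ``by construction'' fails exactly here. The repair is either to require the local input in its full strength (the components of $V\setminus\Sigma_V$ themselves, and not only their preimages, are discs and annuli, so that $T_2\cup\Sigma_2$ is already a triangulation and no second completion is needed), or to run an iteration/termination argument; either way this is the actual content of \cite[Section~3.5.11]{temkinETAL_2016_Morphisms}, not a one-line consequence of it.

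A second problem is the claim ``since $f$ is a proper morphism of relative dimension zero between compact curves, it is finite''. This is false, and the theorem does not assume finiteness: the inclusion of a strictly affinoid subdomain (say $\DD_k\hookrightarrow \DD_k(0,r)$ with $r>1$, or a closed disc inside $\PP^{1,\an}_k$) is a morphism of compact strictly $k$-analytic curves of relative dimension~0 with nonempty relative boundary, hence not finite. Finiteness of $f^{-1}(S)$ survives (fibers are compact and zero-dimensional, hence finite), but the later uses of finiteness do not: a component of $X_1$ need not surject onto a component of $X_2$, so your argument that $S_1$ meets every component of $X_1$ breaks; the restriction of $f$ over a piece of $X_2\setminus T_2$ need not be finite, so the cited local result cannot be applied verbatim (and, as written, your local statement speaks of a finite morphism from a \emph{compact} curve onto an \emph{open} disc or annulus, which is impossible --- you mean the restriction of $f$ over the open piece); and you must additionally force $f(\partial X_1)\subseteq S_2$, since every triangulation of $X_1$ must contain the boundary points of $X_1$ and these need not lie over $\partial X_2$ when $f$ has relative boundary. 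These points are fixable (enlarge $T_2$ at the outset by $f(\partial X_1)$ and by the image of one type~2 point from each component of $X_1$, and reduce the local analysis to the finite case with some care), but as written the proof both asserts a false finiteness statement and relies on it.
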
 

We now prove a similar result under slightly less restrictive hypotheses.

\begin{definition}
A morphism $f\colon X_1\to X_2$ of nice curves is said to be \emph{compactifiable} if there exist compactifications~$\ov X_{1}$ and~$\ov X_{2}$ of~$X_{1}$ and~$X_{2}$ and a morphism $\ov f\colon  \ov X_1\to \ov X_2$ such that the diagram
\[\begin{tikzcd}
X_{1} \arrow[hook]{r}\arrow{d}{f}& \ov X_{1}\arrow{d}{\ov f}\\
X_{2} \arrow[hook]{r}& \ov X_{2}
\end{tikzcd}\]
commutes. 
\end{definition}

\begin{remark}\label{rem:compactifiable}
Let~$C_{1}$ be a reduced algebraic curve over~$k$. Then, it admits a (unique) compactification~$\ov C_{1}$ such that $\ov C_{1} \setminus C_{1}$ is a finite set of smooth $k$-rational points. In particular, the local ring at each point of $\ov C_{1} \setminus C_{1}$ is a discrete valuation ring. Let~$C_{2}$ be an algebraic curve over~$k$ and let~$\ov C_{2}$ be any compactification of it. Then the valuative criterion of properness ensures that any morphism $f \colon C_{1} \to C_{2}$ extends to a morphism $\ov f \colon \ov C_{1} \to \ov C_{2}$. In particular, the morphism $f^\an \colon C_{1}^\an \to C_{2}^\an$ is compactificable (with compactification $\ov{f}^\an$).
\end{remark}

\begin{remark}\label{rem:flat}
Let $f\colon X_1\to X_2$ be a compactifiable morphism of nice curves of relative dimension~0 and let $\ov f\colon  \ov X_1\to \ov X_2$ be a compactification of it. Then $\bar f$~is still of relative dimension~0, since curves and their compactifications only differ by finitely many $k$-rational points. 

By normalizing around the points of $\ov X_{2} \setminus X_{2}$, we get another compactification $\ov X_{2}'$ with the property that $\ov X_{2}' \setminus X_{2}$ contains only smooth $k$-points. Similarly, we can define another compactification $\ov X_{1}'$ of~$X_{1}$. The universal property of normalization ensures that~$\ov f$ extends to a morphism $\ov f'\colon  \ov X_1'\to \ov X_2'$. Since the points in $\ov X_{2}' \setminus X_{2}$ are smooth, the morphism~$\ov f'$ is flat above those points. In particular, if~$f$ is flat, then~$\ov f'$ is flat. 
\end{remark}

\begin{corollary}\label{cor:compact_compatible}

Let $f\colon X_1\to X_2$ be a compactifiable morphism of nice curves of relative dimension~0. Then, there exists an $f$-compatible pair of triangulations $(S_{1},S_{2})$. 

Moreover, we may assume that~$S_{1}$ (resp. $S_{2}$) contains any given finite subset of~$X_{1}^{(1,2)}$ (resp. $X_{2}^{(1,2)}$).
\qed
\end{corollary}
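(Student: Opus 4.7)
The strategy is to reduce to the compact case stated just above, using the compactification provided by the hypothesis. Fix a compactification $\ov f\colon \ov X_1 \to \ov X_2$ of~$f$ and set $T_i := \ov X_i \setminus X_i$, which is a finite set of $k$-rational points in~$\ov X_i$ by definition of a nice curve. By Remark~\ref{rem:flat}, the morphism~$\ov f$ still has relative dimension~$0$, so the compact case theorem can be applied to it.

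Given the finite subsets $S_{1,0} \subseteq X_1^{(1,2)}$ and $S_{2,0} \subseteq X_2^{(1,2)}$ that I wish to prescribe, I will set $A_1 := T_1 \cup S_{1,0}$ and $A_2 := T_2 \cup S_{2,0} \cup \ov f(A_1)$. Both are finite subsets of $\ov X_i^{(1,2)}$ because $\ov f$ is finite (hence preserves the types $1$ and $2$) between compact curves of relative dimension~$0$. Applying the compact theorem to $\ov f$ with these prescribed subsets then produces an $\ov f$-compatible pair of triangulations $(\ov S_1, \ov S_2)$ of $(\ov X_1, \ov X_2)$ with $\ov S_i \supseteq A_i$; in particular $\ov S_i \supseteq T_i$. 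I then define $S_i := \ov S_i \cap X_i$. Lemma~\ref{lem:Swtov} guarantees that each $S_i$ is a triangulation of~$X_i$, and that its edge set coincides with $E_{\ov S_i}$, since deleting the finitely many boundary points $T_i$ from $\ov X_i\setminus \ov S_i$ does not alter its connected components.

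To verify $f$-compatibility, I will restrict the identities $\ov f^{-1}(\ov S_2) = \ov S_1$ and $\ov f^{-1}(E_{\ov S_2}) = E_{\ov S_1}$ to~$X_1$. Using that $\ov f^{-1}(X_2) \supseteq X_1$ (from commutativity of the compactifying square) together with $T_i \cap X_i = \emptyset$, this yields $f^{-1}(S_2) = \ov f^{-1}(\ov S_2)\cap X_1 = S_1$ and $f^{-1}(E_{S_2}) = E_{S_1}$. The ``moreover'' clause follows immediately from $S_{i,0} \subseteq A_i\cap X_i \subseteq S_i$. The only point requiring care is the preservation of the skeletal structure under restriction from $\ov X_i$ to~$X_i$, and this is precisely why $T_i$ (and $\ov f(T_1)$) was incorporated into $A_i$; no substantive obstacle remains, the argument being essentially bookkeeping between the compact and nice settings.
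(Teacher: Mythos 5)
Your proposal is correct and is essentially the paper's intended argument: the corollary is left as an immediate consequence of the compact case, obtained exactly as you do by passing to the compactification $\ov f$ (which has relative dimension~0 by Remark~\ref{rem:flat}), applying the compact theorem with the boundary points $T_i=\ov X_i\setminus X_i$ added to the prescribed finite sets, and restricting back via Lemma~\ref{lem:Swtov}. One small inaccuracy: $\ov f$ need not be finite, but relative dimension~0 already guarantees that types~1 and~2 are preserved; moreover, adding $\ov f(A_1)$ to $A_2$ is unnecessary, since the compact statement lets you prescribe finite subsets on both sides directly.
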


We end this section with two lemmas that will be useful later.

\begin{lemma}\label{lem:kpoints}
Let $a_{0},b_{0} \in k$ and $r_{0},s_{0} \in \overline\R_{>0}$. Let $h\colon \DD^-(a_{0},r_{0})\to \DD^-(b_{0},s_{0})$ be a non-constant morphism. Let $r\in (0,r_{0})$, $s\in (0,s_{0})$ and $a,b\in k$ with $|a-a_{0}|<r_{0}$ and $|b-b_{0}|<s_{0}$.

Then, the following conditions are equivalent:
\begin{enumerate}[$i)$]
\item $h(\eta_{a,r}) = \eta_{b,s}$;
\item $h(\DD(a,r)) = \DD(b,s)$;
\item $h(D(a,r)) = D(b,s)$.
\end{enumerate}
In particular, the image of~$\DD(a,r)$ is a closed disc.
\end{lemma}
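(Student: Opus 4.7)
My plan is to reduce all three conditions to a single classical image computation for analytic functions on a closed disc.

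First I would note that the hypotheses $|a-a_0|<r_0$ and $r<r_0$ ensure $\DD(a,r)\subset \DD^-(a_0,r_0)$, so $h$ restricts to a $k$-analytic morphism on the closed disc~$\DD(a,r)$, and the restriction can be expanded as a convergent power series
\[
h(T)= h(a) + \sum_{i\ge 1} a_i (T-a)^i.
\]
Since $\DD^-(a_0,r_0)$ is connected and $h$ is non-constant, this series cannot reduce to its constant term, so the quantity
\[
\rho := \max_{i\ge 1} |a_i|\, r^i
\]
is a well-defined strictly positive real number.

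The heart of the argument is the standard fact (a direct consequence of the Newton polygon / Weierstrass preparation theorem over the algebraically closed complete field~$k$) that for such a non-constant series on a closed disc one has simultaneously
\[
h(D(a,r)) = D(h(a),\rho),\qquad h(\DD(a,r)) = \DD(h(a),\rho),\qquad h(\eta_{a,r}) = \eta_{h(a),\rho}.
\]
The Berkovich version is obtained from the classical one either by computing $|h(T)-b'|_{\eta_{a,r}} = \max(|h(a)-b'|,\rho)$ on an arbitrary coordinate $T-b'$ and invoking surjectivity onto the Shilov boundary, or by observing that $h$ induces a finite covering $\DD(a,r) \to \DD(h(a),\rho)$. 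I would cite this standard result rather than reprove it.

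With these three identities in hand, each of $(i)$, $(ii)$, $(iii)$ becomes, respectively, the assertion that $\eta_{h(a),\rho}=\eta_{b,s}$, that $\DD(h(a),\rho)=\DD(b,s)$, or that $D(h(a),\rho)=D(b,s)$. But each of these equalities is, in turn, equivalent to the single concrete pair of conditions
\[
\rho = s\quad\text{and}\quad |h(a)-b|\le s,
\]
since a closed (Berkovich or classical) disc over an algebraically closed valued field determines and is determined by its radius and any of its center candidates. This proves the three-way equivalence, and the ``in particular'' clause is already contained in the second of the displayed identities. I do not anticipate a real obstacle here: the whole content of the lemma is the classical image formula, and the rest is bookkeeping between the three incarnations (type~2 point, Berkovich disc, classical disc) of the same data.
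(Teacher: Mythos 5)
Your argument is correct, but it is organized differently from the paper's. You reduce all three conditions to a normal form via the classical mapping theorem for analytic maps of discs: with $\rho=\max_{i\ge 1}|a_i|r^i$ one has $h(D(a,r))=D(h(a),\rho)$, $h(\DD(a,r))=\DD(h(a),\rho)$ and $h(\eta_{a,r})=\eta_{h(a),\rho}$, after which each of $i)$--$iii)$ is equivalent to ``$\rho=s$ and $|h(a)-b|\le s$''. The paper instead proves the implications directly: for the key step $i)\Rightarrow ii)$ it does not invoke the mapping theorem but reproves the needed surjectivity at the level of arbitrary Berkovich points, by passing to the complete field $K=\sH(\eta_{0,s})$, picking $z\in\DD(0,s)(K)$ above $\eta_{0,s}$, and comparing the Newton polygons of $H-y$ and $H-z$; and for $iii)\Rightarrow i)$ it uses, as you do implicitly, that over a field with value group dense in $\R_{>0}$ a (Berkovich) disc is determined by its rational points. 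Both routes ultimately rest on the same Newton polygon/Weierstrass circle of ideas, so the difference is one of packaging: your version is shorter and makes the symmetry of the three formulations transparent, at the cost of citing the image formula (including its Berkovich-disc incarnation, for which your density-plus-compactness or finite-covering remark is the small extra step the paper's extension-field computation supplies); the paper's version is self-contained and, by working over $\sH(\eta_{0,s})$, handles points of all types of the target disc in one stroke. Two minor points you should make explicit if you write this up: non-constancy of $h$ on the subdisc $\DD(a,r)$ follows from the identity theorem for the global power series on $\DD^-(a_0,r_0)$ (so $\rho>0$ and the maximum is attained), and the equivalence of the classical-disc equality $D(h(a),\rho)=D(b,s)$ with ``$\rho=s$ and $|h(a)-b|\le s$'' uses the density of $|k^\times|$ in $\R_{>0}$, which holds here because $k$ is algebraically closed and non-trivially valued.
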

\begin{proof}
$i) \implies ii)$ Recall that we can define a partial order on the points of a Berkovich disc~$\DD^-(\alpha,\rho)$ by setting $x\le y$ when we have $|f(x)|\le |f(y)|$ for each $f\in \cO(\DD^-(\alpha,\rho))$. Remark that the order is preserved by morphisms of discs, since functions pull back. In what follows, we will consider this order on the discs $\DD^-(a_{0},r_{0})$ and $\DD^-(b_{0},s_{0})$.

Let $x\in \DD(a,r)$. We then have $x\le \eta_{a,r}$, hence $h(x) \le h(\eta_{a,r}) = \eta_{b,s}$, and it follows that $h(x) \in \DD(b,s)$. We have proven that $h(\DD(a,r)) \subseteq \DD(b,s)$.

Let us now prove the converse inclusion. Up to changing coordinates, we may assume that $a_{0}=a=0$ and $b_{0}=b=0$. The morphism~$h$ is then given by a power series $H \in k[\![ T]\!]$ of radius of convergence at least~$r_{0}$. Set $c = H(0)$. We have $|c|<s_{0}$.

Recall that the Newton polygon of a power series $F = \sum_{n\ge 0} f_{n}\, T^n \in k[\![ T]\!]$ of radius of convergence at least~$R$ is defined as the lower convex hull of the set of points $\{(n, -\log(|f_{n}|))  :  n\ge 0\}$ and that, for any $\rho \in (0,R)$, $F$ has a root of absolute value~$\rho$ if, and only if, the Newton polygon has an edge of slope~$\log(\rho)$. 

Set $K := \sH(\eta_{0,s})$. Let $z \in \DD(0,s)(K)$ whose image in~$\DD(0,s)$ is~$\eta_{0,s}$. By assumption, the power series $H - z \in  K[\![ T]\!]$ has a zero of absolute value~$r$ (over~$\eta_{0,r}$) in some extension of~$K$, hence the corresponding Newton polygon has an edge of slope~$\log(r)$.

Let $y\in \DD(0,s)$. Let~$L$ be a complete valued extension of~$K$ such that $y\in \DD(0,s)(L)$. We want to prove that $y \in h(\DD(0,r))$. If $H(0) =y$, then we are done, so we assume that this is not the case. It is enough to prove that the power series $H - y \in L[\![ T]\!]$ has a zero (in some extension of~$L$) with absolute value less than or equal to~$r$, or equivalently that the Newton polygon of~$H-y$ has an edge with slope less than or equal to~$\log(r)$. Since the Newton polygons of~$H-y$ and~$H-z$ have the same endpoint, it is enough to prove that the former lies above the latter.

The coefficients of the power series~$H-y$ and~$H-z$ are the same except for the constant ones, which are~$c-y$ and~$c-z$ respectively. To prove the result, it is enough to show that $|c-y| \le |c-z|$. If $|c|>s$, then we have $|c-y|=|c-z|=|c|$, and we are done. If $|c|\le s$, then we have $|c-y| \le s$ and $|c-z| = s$, because~$z$ lies over~$\eta_{0,s}$, so we are done too.

\smallbreak

Note that the final part of the statement follows from this implication. Indeed, since~$h$ is quasi-finite, the image of the point~$\eta_{a,r}$ is of type~2 or~3, hence of the form~$\eta_{b',s'}$, so we have $h(\DD(a,r)) = \DD(b',s')$.

\smallbreak

$ii) \implies iii)$ This is obvious.

\smallbreak

$iii) \implies i)$ There exist $s'\in (0,s_{0})$ and $b'\in k$ with $|b'-b_{0}|<s_{0}$ such that $h(\eta_{a,r}) = \eta_{b',s'}$. It follows from the former implications that we have $D(b',s') = h(D(a,r)) = D(b,s)$. Over a field whose valued group is dense in~$\R_{>0}$, 
a Berkovich disc is characterized by its rational points. We deduce that $\DD(b',s') = \DD(b,s)$ and $\eta_{b',s'} = \eta_{b,s}$.
\end{proof}

\begin{lemma}\label{lem:kpointsannuli}
Let $a_{1},a_{2} \in k$ and $r_{1},r_{2},r'_{1},r'_{2} \in \overline\R_{\ge 0}$ with $r_{1}<r'_{1}$ and $r_{2}<r'_{2}$. For $i=1,2$, consider the annulus $A_{i} := \{x\in  \AA_k^{1,\an}  :  r_{i} < |(T-a_{i})(x)| < r'_{i}\}$ with skeleton $\Gamma_{A_{i}} = \{\eta_{a_{i},s}  :  r_{i}< s<r'_{i}\}$. Let $h \colon A_{1} \to A_{2}$ be a surjective morphism such that $h^{-1}(\Gamma_{A_{2}}) = \Gamma_{A_{1}}$. 

Then, for $s_{1} \in (r_{1},r'_{1}) \cap |k^\times|$ and $s_{2} \in (r_{2},r'_{2})\cap |k^\times|$, the following conditions are equivalent:
\begin{enumerate}[$i)$]
\item $h(\eta_{a_{1},s_{1}}) = \eta_{a_{2},s_{2}}$;
\item $h(\DD(a_{1},s_{1}) \setminus \DD^-(a_{1},s_{1})) = \DD(a_{2},s_{2}) \setminus \DD^-(a_{2},s_{2})$;
\item $h(D(a_{1},s_{1}) \setminus D^-(a_{1},s_{1})) = D(a_{2},s_{2}) \setminus D^-(a_{2},s_{2})$.
\end{enumerate}
and, for $b_{1},b_{2} \in k$ with $r_{1}<|b_{1}-a_{1}|<r'_{1}$ and $r_{2}<|b_{2}-a_{2}|<r'_{2}$, $s_{1} \in [0,|b_{1}-a_{1}|)$ and $s_{2} \in [0,|b_{2}-a_{2}|)$, the following conditions are equivalent:
\begin{enumerate}[$i')$]
\item $h(\eta_{b_{1},s_{1}}) = \eta_{b_{2},s_{2}}$;
\item $h(\DD(b_{1},s_{1})) = \DD(b_{2},s_{2})$;
\item $h(D(b_{1},s_{1})) = D(b_{2},s_{2})$.
\end{enumerate}
\end{lemma}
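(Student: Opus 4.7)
The plan is to treat the two equivalences separately and, as far as possible, to reduce each to the disc statement Lemma~\ref{lem:kpoints}. The basic input in both cases is that, because $h^{-1}(\Gamma_{A_{2}})=\Gamma_{A_{1}}$ and $h$ is continuous, $h$ sends each connected component of $A_{1}\setminus\Gamma_{A_{1}}$ (an open disc) into a single connected component of $A_{2}\setminus\Gamma_{A_{2}}$, and it sends the boundary point on $\Gamma_{A_{1}}$ to the boundary point on $\Gamma_{A_{2}}$.

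For the equivalence of $i')$, $ii')$, $iii')$, the key observation is that, since $s_{i}<|b_{i}-a_{i}|$, both $\eta_{b_{1},s_{1}}$ and $\DD(b_{1},s_{1})$ lie inside the open disc component $\DD^-(b_{1},|b_{1}-a_{1}|)$ of $A_{1}\setminus\Gamma_{A_{1}}$. Under each of the three conditions one checks directly that $h$ sends at least one point of that component into $\DD^-(b_{2},|b_{2}-a_{2}|)$ --- namely $\eta_{b_{2},s_{2}}$ under $i')$, any point of $\DD(b_{2},s_{2})$ under $ii')$, or the $k$-point $b_{2}$ under $iii')$ --- so the whole source component must land in $\DD^-(b_{2},|b_{2}-a_{2}|)$. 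The restriction $h\colon \DD^-(b_{1},|b_{1}-a_{1}|)\to\DD^-(b_{2},|b_{2}-a_{2}|)$ is non-constant since $h$ is finite, and the three conditions then coincide verbatim with those of Lemma~\ref{lem:kpoints}, giving the equivalence.

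For the first equivalence, set $\Sigma_{i}:=\DD(a_{i},s_{i})\setminus\DD^-(a_{i},s_{i})\subset A_{i}$; this is the disjoint union of $\{\eta_{a_{i},s_{i}}\}$ with the open disc components $\DD^-(c,s_{i})$ ($|c-a_{i}|=s_{i}$) of $A_{i}\setminus\Gamma_{A_{i}}$ branching at $\eta_{a_{i},s_{i}}$. I would prove the cycle $i)\Rightarrow ii)\Rightarrow iii)\Rightarrow i)$. The inclusion $h(\Sigma_{1})\subseteq\Sigma_{2}$ in $i)\Rightarrow ii)$ is immediate from the component-to-component property, while the reverse inclusion uses the residue curve analysis of Section~\ref{sec:residuecurve}: since $s_{i}\in|k^{\times}|$, the point $\eta_{a_{i},s_{i}}$ is of type~2 with residue curve $\cC_{\eta_{a_{i},s_{i}}}\cong\PP^{1}_{\tilde k}$, and the induced finite morphism $\tilde h$ on residue curves is surjective and maps the two skeleton branches (corresponding to $\{0,\infty\}$) into themselves, so every branch at $\eta_{a_{2},s_{2}}$ going into $\Sigma_{2}$ is the image of a branch at $\eta_{a_{1},s_{1}}$ going into $\Sigma_{1}$. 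The implication $ii)\Rightarrow iii)$ combines the fact that $h$ sends $k$-rational points to $k$-rational points with the observation that the fiber of $h$ over a $k$-point is a finite $k$-analytic space, hence consists of $k$-points only by algebraic closedness of $k$. For $iii)\Rightarrow i)$, pick a $k$-point $c$ with $|c-a_{1}|=s_{1}$ (available since $s_{1}\in|k^{\times}|$); by $iii)$ we have $h(c)\in D(a_{2},s_{2})\setminus D^-(a_{2},s_{2})$, and the commutativity $h\circ\tau_{A_{1}}(1,\cdot)=\tau_{A_{2}}(1,\cdot)\circ h$ --- forced by the component-to-component structure of $h$ --- yields $h(\eta_{a_{1},s_{1}})=h(\tau_{A_{1}}(1,c))=\tau_{A_{2}}(1,h(c))=\eta_{a_{2},s_{2}}$.

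The main obstacle I foresee is upgrading ``the right branches exist'' to ``each source disc component actually surjects onto its target disc component'' in $i)\Rightarrow ii)$. To handle this I would use that $h$ restricted to the preimage of a target disc component of $A_{2}\setminus\Gamma_{A_{2}}$ is finite over that component, that each source disc component is clopen in this preimage and therefore itself finite over the target, and finally that a finite non-constant morphism between connected open $k$-analytic discs is surjective, being both open (by flatness of finite morphisms between quasi-smooth curves) and closed. Once this principle is in place, the rest of the argument is essentially bookkeeping.
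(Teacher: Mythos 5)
Your handling of the primed equivalences, of $ii)\Rightarrow iii)$, and of $iii)\Rightarrow i)$ is essentially correct and close in spirit to the paper's proof: the component-to-component property plus Lemma~\ref{lem:kpoints} for $i'),ii'),iii')$, and the fact that the attaching point of a component maps to the attaching point of its image component (your retraction identity) for $iii)\Rightarrow i)$. One small slip: the restriction of $h$ to a disc component is non-constant not ``since $h$ is finite'' --- finiteness is nowhere a hypothesis --- but because a constant restriction would send the attaching skeleton point into the open target component, contradicting $h^{-1}(\Gamma_{A_2})=\Gamma_{A_1}$ (or simply because $h$ is surjective and $A_1$ is connected).

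The genuine gap is in your resolution of the obstacle you yourself flag in $i)\Rightarrow ii)$. You ``use that $h$ restricted to the preimage of a target disc component is finite over that component'', but this is neither a hypothesis nor free: a non-constant morphism of open discs need not be finite (a bounded analytic function on an open disc over $k$ can have infinitely many zeroes), so the premise of your chain ``clopen inside a finite morphism, hence finite, hence open and closed, hence surjective'' is unestablished --- it is in fact true in this situation, but proving it is a nontrivial properness argument that you do not supply. The surjectivity $h(E)=E'$ you need can be obtained with no finiteness at all: writing $E=\bigcup_{r<s_1}\DD(c,r)$, the last assertion of Lemma~\ref{lem:kpoints} gives $h(\DD(c,r))=\DD(h(c),\rho_r)$, and continuity of $h$ at $\eta_{a_1,s_1}$ together with $i)$ forces $\rho_r\to s_2$, whence $h(E)=\DD^-(h(c),s_2)=E'$. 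Alternatively --- and this is the paper's route --- the whole residue-curve detour is unnecessary: $h$ is surjective and, by \cite[3.6.24]{Duc-book}, injective on $\Gamma_{A_1}$, so any preimage of a point of $\DD(a_2,s_2)\setminus\DD^-(a_2,s_2)$ lies in a component attached at a skeleton point mapping to $\eta_{a_2,s_2}$, which must be $\eta_{a_1,s_1}$; this gives the reverse inclusion in one step. Your residue-curve argument (surjectivity of $\tilde h$ on $\PP^1_{\tilde k}$ plus the branch correspondence) is a legitimate substitute for that citation, provided you also note that $h$ is finite at $\eta_{a_1,s_1}$ (true, since $A_1$ is boundaryless and $h$ quasi-finite, so $\tilde h$ is defined), but as written the per-component surjectivity step is where the proof breaks.
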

\begin{proof}
By assumption, we have $h^{-1}(\Gamma_{A_{2}}) = \Gamma_{A_{1}}$. It follows that the connected components of $A_{1}\setminus \Gamma_{A_{1}}$ (which are discs) are sent to connected components of $A_{2}\setminus \Gamma_{A_{2}}$ (which are discs). The equivalence between $i')$, $ii')$ and~$iii')$ now follows from Lemma~\ref{lem:kpoints}.

$i) \implies ii)$ For $i=1,2$, the union of the connected components of $A_{i}\setminus \Gamma_{A_{i}}$ whose closure contains~$\eta_{a_{i},s_{i}}$ is equal to $\DD(a_{i},s_{i})\setminus \DD^-(a_{i},s_{i})$. We deduce that $h(\DD(a_{1},s_{1}) \setminus \DD^-(a_{1},s_{1})) \subseteq \DD(a_{2},s_{2}) \setminus \DD^-(a_{2},s_{2})$. By \cite[3.6.24]{Duc-book}, the restriction of~$h$ to~$\Gamma_{A_{1}}$ is injective, and the other inclusion follows.

\smallbreak

$ii) \implies iii)$ This is obvious.

\smallbreak

$iii) \implies i)$ The union of the connected components of $A_{1}\setminus \Gamma_{A_{1}}$ containing $D(a_{1},s_{1}) \setminus D^-(a_{1},s_{1})$, which is equal to $\DD(a_{1},s_{1}) \setminus \DD^-(a_{1},s_{1})$ is sent by~$h$ to the union of the connected components of $A_{2}\setminus \Gamma_{A_{2}}$ containing $D(a_{2},s_{2}) \setminus D^-(a_{2},s_{2})$, which is equal to $\DD(a_{2},s_{2}) \setminus \DD^-(a_{2},s_{2})$. By continuity, the boundary of the former set, which is $\{\eta_{a_{1},s_{1}}\}$ is sent to the boundary of the latter, which is $\{\eta_{a_{2},s_{2}}\}$.
\end{proof}

\subsection{Model theory of algebraically closed valued fields}\label{subsec:mod}

Let $\cL_{0}$ be a first order one-sorted language. We write $\cL_{0}$-structures in bold letters like $\mathbf{M}$ or $\mathbf{k}$ and we let $M$ and $k$ be their underlying universes. We also use the notation $(\mathbf{k}, \cL_{0})$ to indicate that $\mathbf{k}$ is an $\cL_{0}$-structure. All multi-sorted languages $\cL$ that will be considered in this article will be reducts of $\cL_{0}^{\mathrm{eq}}$ for some one-sorted language~$\cL_{0}$, and we suppose that $\cL$ contains the home sort from $\cL_{0}$. As such, we will also write $\cL$-structures with bold letters like $\mathbf{M}$ or $\mathbf{k}$ where $M$ and $k$ denote respectively the universe of the home sort $\cL_{0}$. We write $a\in \mathbf{k}$ and $C\subseteq \mathbf{k}$ to say that $a$ is an element of \emph{some} sort in $\cL$ and that $C$ is a subset of the (disjoint) union of all sorts. 

Let $\mathbf{k}$ be a (possibly multi-sorted) $\cL$-structure and let $\theta(x)$ be a formula where $x$ is a tuple of variables ranging over possibly different sorts. We let $\theta(\mathbf{k})$ denote the set of tuples in $\mathbf{k}$ that satisfy $\theta$. For sorts $S_1,\ldots, S_n$ in $\cL$, a subset $X\subseteq S_1\times\cdots \times S_n$ is $\cL$-definable if $X=\theta(\mathbf{k})$ for some $\cL$-formula with parameters in $\mathbf{k}$. In particular when $X$ is $\cL$-definable (over $\mathbf{k}$) and $\mathbf{k}'$ is an elementary extension of $\mathbf{k}$, we use $X(\mathbf{k}')$ to denote the set of tuples in $\mathbf{k}'$ that satisfy the formula that defines $X$. If needed, we sometimes write $X(\mathbf{k})$ redundantly to express that we work over the points of $X$ in $\mathbf{k}$. Note that we use $X(\mathbf{k})$ and not $X(k)$, as the latter expression will usually have a different meaning (see Convention \ref{convention.alg}).

\subsubsection{Algebraic structure} 

We will study the valued field $(k,|\cdot|)$ as a first order structure using different languages. The first one is the three-sorted language~$\cL_{3}$ defined by:
\begin{itemize}
\item a sort $\mathbf{VF}$ for the valued field $k$ (the home sort) in the language of rings $\cLr:=\{+,-,\cdot,0,1\}$, 
\item a sort $\mathbf{VG}$ for $|k|$ (that is, the value group $|k^\times|$ extended with an infinitely small element denoted by 0) in the language $\{\leqslant, \cdot, 1, 0\}$, 
\item a sort $\mathbf{RF}$ for the residue field $\tilde{k}$ in the language $\cLr$,
\item the valuation $|\cdot|\colon k\to |k|$ map and the residue map $\res(x,y)\colon K^2\to \tilde{k}$, sending $(x,y)$ to the residue of $xy^{-1}$ if $|x|\leqslant |y|\neq 0$, and to 0 otherwise. 
\end{itemize}

We will also consider a language $\cL_\mathbb{B}$ extending~$\cL_3$ in which we add a sort for the interpretable set of closed discs with radius in $|k|$. Formally, consider the $\cL_3$-definable equivalence relation $\sim$ on $\mathbf{VF}\times \mathbf{VG}$ given by 
\[
(a,r)\sim (b,s) \Leftrightarrow r=s \wedge |a-b|\leqslant r\Leftrightarrow D(a,r)=D(b,s).
\]
We let the set $\mathbb{B}$ denote the quotient~$(\mathbf{VF}\times \mathbf{VG})/\sim$. The language~$\cL_\mathbb{B}$ corresponds to~$\cL_3$ extended by a new sort for~$\mathbb{B}$ and a symbol $\mathbf{b}$ for the quotient map $\mathbf{b}\colon (\mathbf{VF}\times \mathbf{VG}) \to \mathbb{B}$. 

\medbreak

Let $k$ be a valued field and $\mathbf{k}$ be its associated $\cL_\BB$-structure. The set $\mathbb{B}(\mathbf{k})$ is in bijection with $\AA_k^{1,\Def}$ via the map $(a,r)/{\sim} \mapsto \eta_{a,r}$. Using this identification, in what follows we will treat $\AA_k^{1,\Def}$ as the $\cL_\BB$-definable set $\BB(\mathbf{k})$ and we will use the notation $\eta_{a,r}$ to denote its elements. The reader should think of $\AA_k^{1,\Def}$ as the $\cL_\BB$-definable avatar of the Berkovich affine line. This identification will be further strengthened in Section \ref{sec:HL}. As usual, when $\mathbf{k}$ is clear form the context, we will write $\BB$ instead of $\BB(\mathbf{k})$.  

The canonical injection from $k$ to $\BB$ sending $a$ to $\eta_{a,0}$ and the map $\eta_{a,r}\mapsto r$ are clearly $\cL_\BB$-definable. Similarly, the map $\red\colon(\mathbb{D}_k^{\D}\setminus\{\eta_{0,1}\}) \to \tilde{k}$ given by $\eta_{a,r}\mapsto \res(a,1)$ is $\cL_\BB$-definable. Note that the function $\red$ agrees with the usual reduction map in the sense of~\cite[Section~2.4]{rouge}. 

\begin{remark}\label{rem:auto} For every automorphism $\sigma\colon \AA_k^1\to\AA_k^1$, the restriction of $\sigma^\an$ to $\AA_k^{1,\Def}$ is $\cL_\BB$-definable. Indeed, $\sigma^\an$ can be defined by 
\[
\sigma^\an(\eta_{a,r}) = \eta_{b,s} \Leftrightarrow \sigma(D(a,r))=D(b,s). 
\] 
\end{remark}
The theory of algebraically closed valued fields does not eliminate imaginaries in the languge $\cL_\BB$. By a result of Haskell, Hrushovski and Macpherson \cite{HHM}, it does in the so-called \emph{geometric language} which we now recall. The geometric language, denoted $\cL_\mathcal{G}$, corresponds to the extension of $\cL_3$ by adding the following sorts:
\begin{itemize} 
\item for each $n>0$, a sort $S_n$ of $k^\circ$-lattices in $k^n$ (free $k^\circ$-submodules of rank $n$), which corresponds to the quotient $\mathrm{GL}_n(k)/\mathrm{GL}_n(k^\circ)$, 
\item for each $n>0$, a sort $T_n$ for the union of all quotients $s/k^{\circ\circ} s$, where $s$ is an $k^\circ$-lattice in $k^n$ (they are $\tilde{k}$-vector spaces of dimension $n$). 
\end{itemize}

We finish this subsection with the following convention concerning algebraic varieties. 

\begin{convention} \label{convention.alg}
By elimination of imaginaries in algebraically closed fields, all varieties (algebraic and projective) and all finite algebraic morphisms over $k$ (resp. over $\tilde{k}$) may be identified with $\cLr$-definable sets defined over $k$ (resp. over $\tilde{k}$). If $X$ is a variety, we let $X(k)$ be some $\cLr$-definable set which is in bijection with the set of $k$-points of $X$. The association $X \mapsto X(k)$ is functorial up to definable bijections. We refer the reader to \cite[Remark 3.10]{pillay_ACF} and \cite[Chapter II, Propositions 2.6 and 4.10]{hartshorne}  for the necessary background on this identification. 
\end{convention}

\subsubsection{Analytic structure}

We will also consider expansions of $(k,\cL_3)$ and $(k,\cL_\mathbb{B})$ by adding analytic structure to the valued field sort $\mathbf{VF}$ as defined in \cite{CL} by Cluckers and Lipshitz. Following their terminology, an analytic structure is given by a \emph{separated Weierstrass system}, that is, a family $\bigcup_{m,n} A_{m,n}$ of rings $A_{m,n}\subseteq k^\circ[\![X,Y]\!]$ with $X=(X_1,\ldots,X_m)$, $Y=(Y_1,\ldots,Y_n)$, satisfying further analytic properties such as Weierstrass preparation and division theorems, among others (see \cite[Section 4.1]{CL}). Here we will work over the analytic structure defined in \cite[Example (3), Section 4.4]{CL} which goes back to Lipshitz \cite{L}. The key property for our purposes is that each $A_{m,n}$ contains $k^\circ\{X_1,\ldots,X_m\}$, the ring of power series with coefficients in $k^\circ$ which converge to 0. The language $\cL^\an$ corresponds to the language of rings $\cLr$ together with a function symbol for each $f\in \bigcup_{m,n} A_{m,n}$ and a new unary function symbol $\cdot^{-1}$. We interpret the language $\cL^\an$ on $k$ as follows:

\begin{itemize}
\item the symbol $\cdot^{-1}$ is interpreted as the function sending $x\in k^\times$ to its multiplicative inverse $x^{-1}$ and $0^{-1}:=0$ by convention; 
\item to each $f\in A_{m,n}$ the corresponding function symbol $f^{\cL^{\an}}$ is interpreted by 
\[
f^{\cL^{\an}}(x,y)=
\begin{cases}
f(x,y) & \text{ if } (x,y)\in (k^\circ)^m\times (k^{\circ\circ})^n\\
0 & \text{ otherwise. }
\end{cases}
\]
\end{itemize}
The languages $\cL_3^\an$ and $\cL_\mathbb{B}^\an$ correspond respectively to the extensions of $\cL_3$ and $\cL_\mathbb{B}$ in which $\cLr$ is replaced by $\cL^\an$ in the valued field sort $\mathbf{VF}$. They are interpreted in $k$ in the obvious way. 

Let $\cA$ be a strictly $k$-affinoid algebra. By definition, it admits a presentation of the form
\[\cA = k\{T_{1},\dotsc,T_{n}\}/(f_{1},\dotsc,f_{m}),\]
with $f_{1},\dotsc,f_{m} \in k\{T_{1},\dotsc,T_{n}\}$. We let $f$ denote the tuple $(f_1,\ldots,f_m)$ and we treat $f$ as a given presentation of $\cA$.  

\begin{definition}
Let $\cA = k\{T_{1},\dotsc,T_{n}\}/(f_{1},\dotsc,f_{m})$ be a strictly $k$-affinoid algebra. 

\begin{enumerate}[$i)$]
\item The given presentation is said to be \emph{Lipshitz} if $f_{1},\dotsc,f_{m} \in k^\circ\{T_{1},\dotsc,T_{n}\}$.
\item The given presentation is said to be \emph{distinguished} if the spectral norm on~$\cA$ coincides with the residue norm induced by the supremum norm on the Tate algebra $k\{T_{1},\dotsc,T_{n}\}$.
\end{enumerate}
\end{definition}

\begin{lemma}\phantomsection\label{lem:presentation}
\begin{enumerate}[$i)$]
\item Any strictly $k$-affinoid algebra admits a Lipshitz presentation.
\item Any reduced strictly $k$-affinoid algebra admits a distinguished Lipshitz presentation.
\end{enumerate}
\end{lemma}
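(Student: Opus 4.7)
The plan is to dispatch (i) by a direct rescaling argument and to reduce (ii) to the classical existence of distinguished presentations for reduced strictly $k$-affinoid algebras.

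For (i), start from an arbitrary presentation $\cA = k\{T_{1},\dotsc,T_{n}\}/(f_{1},\dotsc,f_{m})$. Because $k$ is algebraically closed and non-trivially valued, the group $|k^\times|$ is divisible and non-trivial, hence dense in $\R_{>0}$. For each $i$ one can therefore pick $\lambda_{i}\in k^\times$ with $|\lambda_{i}| \cdot \|f_{i}\|_{\mathrm{Gauss}} \leqslant 1$, where $\|\cdot\|_{\mathrm{Gauss}}$ denotes the Gauss (that is, supremum-of-coefficients) norm on the Tate algebra. The rescaled generators $\lambda_{i} f_{i}$ then lie in $k^\circ\{T_{1},\dotsc,T_{n}\}$ and generate the same ideal as the $f_{i}$'s, yielding the desired Lipshitz presentation of $\cA$.

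For (ii), the key observation is that whether a presentation is distinguished depends only on the kernel of the surjection $k\{T_{1},\dotsc,T_{n}\} \twoheadrightarrow \cA$ and not on the particular choice of generators for that kernel, since the residue norm is defined as an infimum over all possible lifts. Consequently, once any distinguished presentation of $\cA$ is available, applying the scaling trick from (i) to its defining relations produces a distinguished Lipshitz presentation. The existence of a distinguished presentation for a reduced strictly $k$-affinoid algebra is classical (see Bosch--G\"untzer--Remmert, \emph{Non-Archimedean Analysis}, 6.4.3/3); its proof combines Noether normalization with the fact that, under reducedness, the spectral seminorm on $\cA$ is a genuine norm whose value on any element can be matched by the supremum norm of a suitable lift, after possibly enlarging the presentation by finitely many power-bounded elements of $\cA$.

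The only real obstacle is therefore the existence of a distinguished presentation in (ii); the Lipshitz refinement is essentially cosmetic and rests solely on the density of $|k^\times|$ in $\R_{>0}$. An alternative route, avoiding an external reference, would be to construct a distinguished Lipshitz presentation directly by choosing topological $k^\circ$-algebra generators of the subring $\cA^\circ$ of power-bounded elements and taking the $f_{j}$'s to be lifts to $k^\circ\{T_{1},\dotsc,T_{n}\}$ of relations among those generators; this makes the Lipshitz property automatic, but one still needs a BGR-type argument to ensure that the resulting quotient norm coincides with the spectral norm.
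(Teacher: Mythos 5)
Your proposal is correct and follows essentially the same route as the paper: part (i) by rescaling the relations $f_i$ by constants so they lie in $k^\circ\{T_1,\dotsc,T_n\}$, and part (ii) by invoking the classical BGR result that a reduced strictly $k$-affinoid algebra admits a distinguished presentation and then applying the same rescaling, which preserves distinguishedness since the residue norm depends only on the surjection and not on the chosen generators of its kernel. The only differences are cosmetic (a slightly different BGR reference number and your optional sketch of an alternative construction).
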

\begin{proof}
Let $\cA = k\{T_{1},\dotsc,T_{n}\}/(f_{1},\dotsc,f_{m})$ be a strictly $k$-affinoid algebra. Multiplying the elements $f_{1},\dotsc,f_{m}$ by a constant does not change the algebra~$\cA$, which proves point~i).

Assume that~$\cA$ is reduced. By \cite[Theorem 6.4.3/1 and Proposition 6.2.1/4]{BGR}, it admits a distinguished presentation, which we can turn into a distinguished Lipshitz presentation as before.
\end{proof}

Suppose from now on that $\cA = k\{T_{1},\dotsc,T_{n}\}/(f_{1},\dotsc,f_{m})$ is a strictly $k$-affinoid algebra with a Lipshitz presentation. Set $X := \cM(\cA)$. We associate to this data the $\cL^\an$-definable subset 
\[
\{(x_{1},\dotsc,x_{n}) \in k^n  :  |x_{i}| \le 1, f_{j}(x_{1},\dotsc,x_{n})=0, 1\le i\le n, 1\le j\le m \}.
\]
The given presentation induces a bijection between this set and the set~$X(k)$ of $k$-rational points of~$X$. We will therefore abuse notation and denote the former set by~$X_f(k)$ to keep in mind that it depends on the choice of a presentation of~$\cA$.  

Let $\cB = k\{U_{1},\dotsc,U_{p}\}/(g_{1},\dotsc,g_{q})$ be a strictly $k$-affinoid algebra with a Lipshitz presentation $g=(g_{1},\dotsc,g_{q})$. Set $Y := \cM(\cB)$ and define $Y_g(k)$ as before.

A bounded morphism $\varphi^* \colon \cB \to \cA$ is determined by the data of $h_{1} = \varphi(U_{1}),\dotsc,h_{p} = \varphi(U_{p})$ in~$\cA$. Note that we have 
\begin{equation}\label{eq:hi}
\begin{cases}
\forall i\in \{1,\dotsc,p\},\ \|h_{i}\|_{\sp} \le 1;\\
\forall j \in \{1,\dotsc,q\},\ g_{j}(h_{1},\dotsc,h_{p}) = 0,
\end{cases}
\end{equation}
where~$\nm_{\sp}$ denotes the spectral seminorm on~$\cA$. 

Conversely, given~$h_{1},\dotsc,h_{p} \in \cA$ satisfying~\eqref{eq:hi}, there exists a bounded morphism~$\varphi^* \colon \cB \to \cA$ such that, for each~$i\in \{1,\dotsc,p\}$, $\varphi(U_{i}) = h_{i}$. 

Let~$\varphi\colon X\to Y$ denote the morphism induced by~$\varphi^*$ and~$\varphi_{fg}\colon X_f(k)\to Y_g(k)$ be the map induced by~$\varphi$. 

\begin{lemma}\label{lem:mapdistinguished}
Assume that the presentation of~$\cA$ is distinguished. Then, the map $\varphi_{fg} \colon X_f(k) \to Y_g(k)$ is $\cL^\an$-definable. 
\end{lemma}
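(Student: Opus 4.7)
The strategy is to realize $\varphi_{fg}$ as componentwise evaluation of $p$ analytic functions taken from the Weierstrass system $k^\circ\{T_{1},\dotsc,T_{n}\}$. Concretely, the plan is to show that each $h_{i} = \varphi^{*}(U_{i}) \in \cA$ admits a lift $H_{i}$ lying already in the integral Tate algebra $k^\circ\{T_{1},\dotsc,T_{n}\}$. Granted such lifts, each $H_{i}$ appears among the function symbols of $\cL^\an$, and the graph of $\varphi_{fg}$ inside $X_{f}(k)\times Y_{g}(k)$ is cut out by the $\cL^\an$-formula
\[
\bigwedge_{i=1}^{p} y_{i} = H_{i}^{\cL^\an}(x_{1},\dotsc,x_{n}).
\]

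To produce the lifts, I would combine the distinguished hypothesis with the first half of~\eqref{eq:hi}. The bound $\|h_{i}\|_{\sp} \le 1$ together with $\|\cdot\|_{\sp} = \|\cdot\|_{\text{res}}$ on $\cA$ gives $\|h_{i}\|_{\text{res}} \le 1$, where $\|\cdot\|_{\text{res}}$ denotes the quotient norm induced by the Gauss norm on $k\{T_{1},\dotsc,T_{n}\}$. The main obstacle is that this infimum over lifts is not \emph{a priori} attained: a bare $\varepsilon$-approximate lift would only sit in the Tate algebra rather than in its integral subring. I would bypass this via the classical fact (see, e.g.,~\cite[\S6.4.3]{BGR}) that for an affinoid algebra endowed with a distinguished presentation, the image of $k^\circ\{T_{1},\dotsc,T_{n}\}$ in $\cA$ is exactly $\cA^\circ := \{h \in \cA  :  \|h\|_{\sp}\le 1\}$; applied to each $h_{i}$, this yields the desired lift $H_{i} \in k^\circ\{T_{1},\dotsc,T_{n}\}$.

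Finally, I would verify that the displayed formula really captures $\varphi_{fg}$. Independence from the choice of lifts is automatic, since two lifts of a given $h_{i}$ differ by an element of $(f_{1},\dotsc,f_{m})$, which vanishes on $X_{f}(k)$. The second half of~\eqref{eq:hi}, namely $g_{j}(h_{1},\dotsc,h_{p}) = 0$ in~$\cA$, lifts to $g_{j}(H_{1},\dotsc,H_{p}) \in (f_{1},\dotsc,f_{m})$, which also vanishes at every $x \in X_{f}(k)$. Combined with $|H_{i}(x)| \le \|H_{i}\|_{\sup} \le 1$, this guarantees that $(H_{1}(x),\dotsc,H_{p}(x)) \in Y_{g}(k)$. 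Since the Weierstrass system underlying $\cL^\an$ contains $k^\circ\{T_{1},\dotsc,T_{n}\}$, each $H_{i}^{\cL^\an}$ is a legitimate function symbol of $\cL^\an$, and the displayed formula is an $\cL^\an$-formula, giving the claimed definability of $\varphi_{fg}$.
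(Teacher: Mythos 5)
Your proposal is correct and follows essentially the same route as the paper: both arguments use the distinguished hypothesis to lift each $h_{i}$ to some $H_{i}\in k^\circ\{T_{1},\dotsc,T_{n}\}$ (the paper phrases this as choosing a representative with $\|H_{i}\|=\|h_{i}\|_{\sp}\le 1$, which amounts to the same standard fact about distinguished presentations that you invoke), and then observe that $\varphi_{fg}$ is given coordinatewise by the function symbols $H_{i}^{\cL^\an}$, hence is $\cL^\an$-definable. Your extra checks (independence of the choice of lifts and that the image lands in $Y_{g}(k)$) are correct and only make explicit what the paper leaves implicit.
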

\begin{proof}
Let $i\in \{1,\dotsc,p\}$. Since the presentation of~$\cA$ is distinguished, we may find a representative $H_{i} \in k\{T_{1},\dotsc,T_{n}\}$ of $h_{i}\in \cA$ with $\|H_{i}\| = \|h_{i}\|_{\sp}$. Since $\|h_{i}\|_{\sp} \le 1$, we have $H_{i} \in k^\circ\{T_{1},\dotsc,T_{n}\}$ which are elements of $A_{m,0}$.

The map $\varphi_{fg} \colon X_f(k) \to Y_g(k)$ now coincides with the map
\[(x_{1},\dotsc,x_{n}) \in X_f(k) \mapsto (H_{1}(x_{1},\dotsc,x_{n}),\dotsc,H_{p}(x_{1},\dotsc,x_{n})) \in Y_g(k),\]
which is $\cL^\an$-definable.
\end{proof}

\begin{corollary} The map $\varphi_{fg} \colon X_f(k) \to Y_g(k)$ induced by~$\varphi$ is $\cL^\an$-definable.
\end{corollary}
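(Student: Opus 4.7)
My plan is to reduce to the distinguished case already handled in Lemma \ref{lem:mapdistinguished}. Since the $k$-analytic curves under consideration are reduced, the algebra~$\cA$ is reduced, so Lemma~\ref{lem:presentation}$(ii)$ provides a distinguished Lipshitz presentation
\[\cA = k\{T'_{1},\dotsc,T'_{n'}\}/(f'_{1},\dotsc,f'_{m'})\]
with $f' = (f'_{1},\dotsc,f'_{m'})$. Write $X_{f'}(k)$ for the associated $\cL^\an$-definable subset of~$k^{n'}$.

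The key step is to show that the two sets~$X_f(k)$ and $X_{f'}(k)$ attached to the same algebra~$\cA$ are in $\cL^\an$-definable bijection. Viewed as a bounded morphism $(\cA, f') \to (\cA, f)$, the identity is encoded by representatives in $k^\circ\{T'_{1},\dotsc,T'_{n'}\}$ of the images of $T_{1},\dotsc,T_{n}$, and Lemma~\ref{lem:mapdistinguished}, applied to this identity (whose source $(\cA,f')$ is distinguished), yields an $\cL^\an$-definable map $\alpha\colon X_{f'}(k) \to X_f(k)$. Geometrically, $\alpha$ is a bijection: it is the identity on $k$-rational points of the single analytic space $X = \cM(\cA)$ read through two different embeddings into affine space. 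Since the inverse of an $\cL^\an$-definable bijection is obtained by swapping the two argument blocks in the defining formula, $\alpha^{-1}\colon X_f(k) \to X_{f'}(k)$ is also $\cL^\an$-definable.

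Now apply Lemma~\ref{lem:mapdistinguished} once more, this time to the morphism $\varphi^*\colon \cB \to \cA$ with source~$\cB$ and presentation~$f'$ on the target~$\cA$: since $f'$ is distinguished, the induced map $\varphi_{f'g}\colon X_{f'}(k) \to Y_g(k)$ is $\cL^\an$-definable. By construction of the maps $\alpha$ and $\varphi_{fg}, \varphi_{f'g}$ from the same underlying morphism $\varphi\colon X \to Y$, we have the factorisation
\[\varphi_{fg} = \varphi_{f'g} \circ \alpha^{-1}.\]
Hence $\varphi_{fg}$ is $\cL^\an$-definable as a composition of two $\cL^\an$-definable maps.

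The only potentially delicate point is the $\cL^\an$-definability of the change-of-presentation map~$\alpha$, but this is precisely what Lemma~\ref{lem:mapdistinguished} is designed to produce once the source carries a distinguished presentation; everything else is formal manipulation of definable maps and their inverses.
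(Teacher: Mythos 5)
Your mechanism — reduce to Lemma~\ref{lem:mapdistinguished} by producing an $\cL^\an$-definable change-of-presentation bijection whose inverse is again definable, then factor $\varphi_{fg}=\varphi_{f'g}\circ\alpha^{-1}$ — is exactly the mechanism of the paper's proof, and those steps are sound. The one genuine gap is your opening claim that $\cA$ is reduced. The corollary is stated in the general setting ``Suppose from now on that $\cA = k\{T_{1},\dotsc,T_{n}\}/(f_{1},\dotsc,f_{m})$ is a strictly $k$-affinoid algebra with a Lipshitz presentation'', with no reducedness hypothesis, and it is what justifies Convention~\ref{convention.an}, which is likewise formulated for arbitrary strictly $k$-affinoid algebras; appealing to ``the $k$-analytic curves under consideration'' imports a hypothesis that is not available at this point (no curve, nor even one-dimensionality, is in play here). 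Since Lemma~\ref{lem:presentation}~ii) only produces a distinguished Lipshitz presentation for \emph{reduced} algebras, your first step breaks down whenever $\cA$ has nilpotents, so as written you prove only the reduced case of the statement.

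The fix is a small perturbation of your argument, and is what the paper does: instead of a second presentation of $\cA$ itself, take a distinguished Lipshitz presentation $h$ of the reduction $\cA_\red$, set $X' := \cM(\cA_\red)$ and let $\psi\colon X' \to X$ be the natural morphism, which is a bijection on $k$-rational points because $\cA$ and $\cA_\red$ differ only by nilpotents. Lemma~\ref{lem:mapdistinguished} applied to $\psi$ (source presentation $h$ distinguished) shows that $\psi_{hf}\colon X'_{h}(k)\to X_{f}(k)$ is $\cL^\an$-definable, bijective, with $\cL^\an$-definable inverse; applied to the composite morphism $X'\to X\to Y$ (i.e.\ to $\cB\to\cA\to\cA_\red$) it shows that $\varphi_{fg}\circ\psi_{hf}$ is $\cL^\an$-definable; and then $\varphi_{fg}=(\varphi_{fg}\circ\psi_{hf})\circ\psi_{hf}^{-1}$. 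When $\cA$ happens to be reduced this collapses to exactly your argument, so your proposal is correct in that case but needs the passage to $\cA_\red$ to cover the statement as given.
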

\begin{proof}
Let~$\cA_\red$ be the reduction of~$\cA$. By Lemma~\ref{lem:presentation}, $\cA_\red$ admits a distinguished Lipshitz presentation $h$ that we use from now on. Set $X' := \cM(\cA_\red)$ and let $\psi\colon X'\to X$ be the natural morphism. 

By Lemma~\ref{lem:mapdistinguished}, the map $\psi_{hf} \colon X_h'(k) \to X_f(k)$ is  $\cL^\an$-definable. Note that it is also bijective and that its inverse is necessarily $\cL^\an$-definable too. 

By Lemma~\ref{lem:mapdistinguished} again, the map $\varphi_{fg} \circ \psi_{hf} \colon X_h'(k) \to Y_g(k)$ is  $\cL^\an$-definable. The result now follows by writing $\varphi_{fg} = (\varphi_{fg} \circ \psi_{hf}) \circ \psi_{hf}^{-1}$.
\end{proof}

\begin{convention}\label{convention.an} 
Thanks to those results, in the rest of the text, we will not choose presentations for the affinoid algebras we consider. We will implicitly assume that they are endowed with Lipshitz presentations and, in this case, the associated definable sets will only depend on those presentations up to definable bijections. In particular if $\cA$ is a strictly $k$-affinoid algebra and $X:=\cM(\cA)$ we will write $X(k)$ to denote the $\cL^\an$-definable set $X_f(k)$ for some Lipshitz presentation $f$ of $\cA$.   
\end{convention}

\subsection{Common results:}

We refer the reader to \cite[Theorem 7.1]{HHM} and \cite[Theorem 4.5.15]{CL} for the proof of the following results. 

\begin{theorem}\label{thm:QE} The theories of $(k,\cL_3)$ and $(k,\cL_3^\an)$ have quantifier elimination. \qed
\end{theorem}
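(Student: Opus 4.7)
The statement is classical in both the algebraic and analytic settings, so the plan is to sketch the standard approaches rather than reprove a well-known result. For the algebraic part, concerning $(k,\cL_{3})$, I would use a back-and-forth argument. Working in a sufficiently saturated model, one takes a partial $\cL_{3}$-isomorphism between finitely generated substructures of two ACVF's and extends it element by element. The value group sort is handled using divisibility of $|k^\times|$, the residue field sort using algebraic closedness of $\tilde k$, and the valued field sort using Hensel's lemma plus the fact that the algebraic closure of a valued field carries a unique valuation. The crucial technical input is that transcendence can always be transferred: given a transcendental element in the valued field of one model, one finds a transcendental element in the other with the prescribed valuation and residue data, which works because the induced structures on $\mathbf{VG}$ and $\mathbf{RF}$ are sufficiently rich.

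For the analytic part $(k,\cL_{3}^\an)$, the starting point is the same back-and-forth, but one must also handle the new function symbols coming from the separated Weierstrass system $\bigcup_{m,n}A_{m,n}$. The key reduction is Weierstrass preparation and division for separated power series, which allows one to rewrite every analytic term appearing in a formula, on a suitable polydisc, as a unit times a polynomial in one of the variables. Combined with the Weierstrass division theorem, this lets one eliminate existential quantifiers over $\mathbf{VF}$: an existentially quantified analytic equation is reduced modulo units to a polynomial equation, and the algebraic quantifier elimination then applies. Iterating yields QE for the full analytic language.

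The hard part, and what requires the full machinery of \cite{CL}, is verifying that Weierstrass preparation applies \emph{uniformly in parameters} so that the reduction of analytic terms to polynomials can be carried out inside a first-order formula; this is exactly why one needs the axiomatic notion of a separated Weierstrass system (closure under the natural operations, Weierstrass change of variables, etc.) rather than ad hoc manipulations on convergent power series. Since the detailed verifications are already carried out in \cite[Theorem 7.1]{HHM} and \cite[Theorem 4.5.15]{CL}, I would simply invoke those results for the proof as written in the paper.
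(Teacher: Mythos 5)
Your proposal is correct and matches the paper, which gives no proof of its own and simply cites \cite[Theorem 7.1]{HHM} for $(k,\cL_3)$ and \cite[Theorem 4.5.15]{CL} for $(k,\cL_3^\an)$, exactly as you do after your (accurate) sketch of the standard back-and-forth and Weierstrass-division arguments. Nothing further is needed.
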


\begin{theorem}[{\cite[Theorem 1.6]{LR}}]\label{thm:cmin} The structure $(k,\cL_3^\an)$ is \emph{$C$-minimal}, that is, every $\cL_3^\an$-definable subset of $X\subseteq k$ is a finite disjoint union of Swiss cheeses (and this condition also holds in every elementarily equivalent structure). \qed
\end{theorem}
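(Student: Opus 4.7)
The plan is to derive $C$-minimality from the quantifier elimination result of Theorem~\ref{thm:QE} combined with a structural analysis of the atomic analytic conditions in one variable, following the strategy of Lipshitz--Robinson.

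First I would apply Theorem~\ref{thm:QE} to reduce to the case of a quantifier-free $\cL_3^\an$-formula $\varphi(x)$ with $x$ ranging over $\mathbf{VF}$. Since the residue field and value group sorts are auxiliary and appear under the terms $\res(\cdot,\cdot)$ and $|\cdot|$, after eliminating sort-changing terms a quantifier-free formula in a single valued-field variable can be written as a Boolean combination of atomic conditions of one of two shapes: either $f(x)=0$ or $|f(x)|\,\square\,|g(x)|$, where $\square \in \{<,\le,=\}$ and $f,g$ are terms built from the symbols of the separated Weierstrass system together with the inverse operation $\cdot^{-1}$. Since the Boolean closure of the class of finite disjoint unions of Swiss cheeses is itself the class of finite disjoint unions of Swiss cheeses (complements of discs are Swiss cheeses, and intersections/differences of Swiss cheeses may be recut into finitely many Swiss cheese pieces), it suffices to prove that each such atomic set is a finite disjoint union of Swiss cheeses.

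Next I would handle the analytic terms. Because $\cdot^{-1}$ is piecewise analytic (it equals $0$ on $\{0\}$ and is analytic on $\{|x|\ge \rho\}$ for any $\rho>0$), by partitioning $k$ into finitely many annuli and a point one reduces each analytic term to a convergent power series on a closed disc (using restrictions to the standard coverings built into $A_{m,n}$). Applying Weierstrass preparation inside the separated Weierstrass system (a cornerstone of the Cluckers--Lipshitz setup, see \cite{CL}) on each piece, I can write each occurring analytic function as $u(x)\cdot P(x)$ with $u$ a unit of constant absolute value on the piece and $P(x)$ a polynomial in $x$. The atomic conditions $f(x)=0$ and $|f(x)|\,\square\,|g(x)|$ then become, on each piece of the partition, conditions of the same shape but involving only polynomials (the unit factors only contribute a rescaling that is constant on the piece).

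Finally I would invoke the classical, purely algebraic $C$-minimality of ACVF to finish. For polynomial $f,g \in k[x]$, the zero set of $f$ is finite, and the level set $\{x\in k :  |f(x)|\square |g(x)|\}$ is a finite disjoint union of Swiss cheeses (this follows from factoring $f,g$ into linear factors, since each $|x-a|\le r$ defines a closed disc and finite Boolean combinations of discs yield finite disjoint unions of Swiss cheeses; radii can always be taken in $|k|$ because $k$ is algebraically closed and the relevant radii appear as $|a-b|$ for roots $a,b$). Intersecting back with the finitely many pieces of the partition preserves the Swiss cheese property, and taking the finite Boolean combination specified by $\varphi$ yields the desired presentation of $\varphi(\mathbf{k})$. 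The same argument works in any elementarily equivalent structure since every step is expressible uniformly. The main obstacle is the careful book-keeping in the global-to-local reduction of step two: one must ensure that the partition into pieces on which Weierstrass preparation applies is itself $\cL_3^\an$-definable and finite, which is precisely what the closure conditions on a separated Weierstrass system guarantee.
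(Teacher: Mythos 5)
The paper itself offers no argument for this statement: Theorem~\ref{thm:cmin} is imported directly from Lipshitz--Robinson \cite[Theorem 1.6]{LR}, so your proposal has to be judged against that literature rather than against an in-paper proof. Your outline does follow the intended strategy (quantifier elimination via Theorem~\ref{thm:QE}, reduction of atomic conditions to norm comparisons of one-variable $\cL^\an$-terms, normalization of terms, and the algebraic Swiss-cheese analysis at the end); the omission of residue-sort atomic formulas is only a gloss, since conditions involving $\res$ can be rewritten as norm comparisons of terms after case splitting.

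The genuine gap is your second step. A one-variable $\cL^\an$-term is not a restricted power series: it is an arbitrary composition of the function symbols from the $A_{m,n}$, the ring operations and $\cdot^{-1}$, with inner terms substituted into both the $X$- and the $Y$-variables and with the default-value-$0$ convention outside the natural domains. A single partition of $k$ into finitely many standard annuli and a point does not make such a term analytic on each piece, and Weierstrass preparation applies to elements of the $A_{m,n}$, not to composite terms; the loci on which a nested term behaves analytically are themselves cut out by norm conditions on lower-complexity terms, so the honest argument is an induction on term complexity in which one proves \emph{simultaneously} that the normalizing partition can be chosen to consist of Swiss cheeses (or domains of controlled shape). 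That induction is precisely the content of \cite{L}, \cite{LR} and \cite{CL}. If instead you shortcut this by citing the term structure theorem \cite[Theorem 5.5.3]{CL} (which the present paper does use, but only later, in Proposition~\ref{prop:terms}, where Theorem~\ref{thm:cmin} is already available), you obtain a partition into pieces that are merely \emph{definable}; concluding that those pieces are finite unions of Swiss cheeses is exactly the $C$-minimality you are trying to prove, so the argument as written is either incomplete or circular. In particular your closing claim that the closure conditions on a separated Weierstrass system ``guarantee'' the required partition misses the point: definability and finiteness of the partition are not the issue, its Swiss-cheese shape is, and that is where the real work of Lipshitz--Robinson lies.
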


The following are some consequences of quantifier elimination (which also follow from $C$-minimality, see~\cite{HM} or~\cite{C}).  

\begin{lemma}\label{lem:min} Consider a structure $(k,\cL_3^\an)$. Then 
\begin{enumerate}[$i)$]
\item the value group is o-minimal, that is, every $\cL_3^\an$-definable set $Y\subseteq |k|$ is a finite union of intervals and points (intervals with endpoints in $|k|\cup \{+\infty\}$). 
\item The residue field is strongly minimal, that is, every $\cL_3^\an$-definable set $Y\subseteq \tilde{k}$ (respectively $Y\subseteq C(\tilde{k})$ for $C$ an algebraic curve over $\tilde{k}$) is either finite or cofinite.  
\end{enumerate}
\qed
\end{lemma}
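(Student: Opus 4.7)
The plan is to deduce both assertions directly from quantifier elimination in $\cL_3^\an$ (Theorem~\ref{thm:QE}), by bookkeeping which terms can take a value in the sorts $\mathbf{VG}$ and $\mathbf{RF}$ when the free variable ranges over that sort. The key general principle is that, since the variable belongs to a single sort, a quantifier-free defining formula simplifies drastically, and the induced structure on $\mathbf{VG}$ (resp.\ $\mathbf{RF}$) reduces to that of a pure divisible ordered monoid (resp.\ pure algebraically closed field), with extra constants coming from $k$.

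For (i), I would take an $\cL_3^\an$-definable $Y\subseteq |k|$ and, by QE, a quantifier-free defining formula $\phi(y)$ with $y$ of sort $\mathbf{VG}$. The only function symbols whose output sort is $\mathbf{VG}$ are the monoid operation on $\mathbf{VG}$ itself and the valuation $|\cdot|\colon \mathbf{VF}\to \mathbf{VG}$; since the valuation takes a $\mathbf{VF}$-input, no subterm $|t|$ can involve $y$. Thus every $\mathbf{VG}$-term in $y$ simplifies to $y^n \cdot c$ with $n\in\mathbb{N}$ and $c\in |k|$ a closed term in the parameters. Each atomic formula in $y$ then has the shape $y^n c \mathrel{\square} y^m c'$ with $\square \in\{\le,=\}$. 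Since $k$ is algebraically closed, $|k^\times|$ is divisible, so each such atomic set is a point or an interval with endpoints in $|k|\cup\{+\infty\}$; a finite Boolean combination preserves this property.

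For (ii), the same strategy applies to an $\cL_3^\an$-definable $Y\subseteq \tilde k$. The only function symbols of output sort $\mathbf{RF}$ are the ring operations on $\mathbf{RF}$ and $\res\colon \mathbf{VF}^2\to \mathbf{RF}$, and $\res$ cannot contain the $\mathbf{RF}$-variable~$z$. Hence every atomic formula in $z$ reduces to $p(z)=0$ for some $p\in \tilde k[z]$, so $Y$ is a Boolean combination of zero sets of polynomials over an algebraically closed field and is therefore finite or cofinite. For the curve case, I would invoke Convention~\ref{convention.alg} to present $C(\tilde k)$ as an $\cLr$-definable subset of some $\tilde k^m$; the same purity observation shows that the $\cL_3^\an$-structure induced on $\tilde k$ coincides with the pure $\cLr$-structure of an algebraically closed field (with extra constants), and then the classical strong minimality of an irreducible algebraic curve over $\tilde k$ yields the conclusion.

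The only real subtlety is the purity step — namely, verifying that the analytic function symbols, which live entirely on $\mathbf{VF}$, produce no new induced definable predicates on $\mathbf{VG}$ or $\mathbf{RF}$ beyond those visible in the pure theories DOAG and ACF. This reduces to tracking sorts: any $\mathbf{VG}$- or $\mathbf{RF}$-term involving the free variable must reach the variable directly through the monoid or ring operations on its own sort, never through an analytic or algebraic $\mathbf{VF}$-term, because the free variable is not of sort $\mathbf{VF}$. Once this bookkeeping is made explicit, both o-minimality of the value group and strong minimality of the residue field (and of an algebraic curve over it) are immediate.
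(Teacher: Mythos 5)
Your argument is correct and is essentially the route the paper intends: the lemma is stated as an immediate consequence of quantifier elimination (Theorem~\ref{thm:QE}), equivalently of $C$-minimality with references to the literature, and your sort-tracking of $\mathbf{VG}$- and $\mathbf{RF}$-terms simply spells out why QE yields purity (stable embeddedness) of the value group and residue field, hence o-minimality and strong minimality. The only point worth flagging is that strong minimality of $C(\tilde k)$ requires $C$ irreducible (as are the residue curves used in the paper), which you correctly note.
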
 
In fact, one can show both $|k|$ and $\tilde{k}$ are \emph{stably embedded}: every $\cL_3^\an$-definable subset of $|k|^n$ is already definable in the language $\{\leqslant,\cdot,0,1\}$, and analogously, every $\cL_3^\an$-definable subset of $(\tilde{k})^n$ is already $\cLr$-definable.

\section{Local and global structure of curves}\label{sec:bricks}

In this section, we will state structure results for $k$-analytic curves. Those results are mostly well-known and derive from the semistable reduction theorem (see for instance \cite[Theorem~4.3.1]{rouge}). In~\cite{Duc-book}, Antoine Ducros provided a thorough reference on those questions, including full purely analytic proofs. We heavily borrow from his presentation and our proofs are often inspired by his. The main novelty of our presentation lies in the formalism we introduce, that fits our needs, and in making more precise the categories in which we can find the various isomorphisms coming into play, which will be crucial for us in the following.

\subsection{Algebraic curves}

\begin{convention}
In what follows, an algebraic curve will be a separated reduced purely one-dimensional scheme of finite type over~$k$.
\end{convention}

Note that the analytification of an algebraic curve is nice in the sense of Definition~\ref{def:nice}.

\subsubsection{Local structure}\label{sec:localalgebraic}

Let~$C$ be an algebraic curve over~$k$. We will prove local structure results around smooth points of~$C^\an$. Note that, since~$C^\an$ is reduced and~$k$ is algebraically closed, each point of type~2, 3 or~4 is automatically smooth.

We will denote by~$\sM$ the sheaf of meromorphic functions on~$C$. Note that, in our case, a meromorphic function is locally a quotient of regular functions.

\begin{proposition}\label{prop:type1}
Let $x\in C^\an$ be a smooth point of type~1. Let~$U$ be a neighborhood of~$x$ in~$C^\an$. There exist a Zariski-open subset~$O$ of~$C$, a morphism $f \colon O \to \AA^1_{k}$ and an open neighborhood~$V$ of~$x$ in~$U$ such that
\begin{enumerate}[$i)$]
\item $V \subseteq O^\an$;
\item $f^\an(V)$ is an open disc with radius in~$|k^\times|$ in~$\AA^{1,\an}_{k}$;
\item $f^\an_{|V}$ induces an isomorphism onto its image.
\end{enumerate}
\end{proposition}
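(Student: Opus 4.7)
The plan is to build~$f$ from a uniformizer at~$x$ in the local ring~$\cO_{C,x}$ and then invoke the local-isomorphism property of \'etale morphisms at rigid points in order to extract the desired~$V$.

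First, since~$x$ is a smooth point of type~1, it is a smooth closed $k$-rational point of the algebraic curve~$C$, and the local ring $\cO_{C,x}$ is a discrete valuation ring. I pick a uniformizer $t \in \cO_{C,x}$ and, after shrinking, represent~$t$ as a regular function on a Zariski-open smooth connected neighborhood~$O$ of~$x$ in~$C$. This yields a morphism $f \colon O \to \AA^1_{k}$ with $f(x)=0$ (up to translation).

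Next, since~$t$ is a uniformizer at the smooth $k$-rational point~$x$, the differential~$df$ is nonzero at~$x$, so~$f$ is \'etale at~$x$; equivalently, the induced map on completed local rings is an isomorphism $k[\![ T ]\!] \simto k[\![ t ]\!]$. Analytification preserves \'etaleness, so $f^{\an}$ is \'etale at~$x$ in the Berkovich sense. By the standard local structure theorem for \'etale morphisms of analytic spaces at rigid points, there exist open neighborhoods $V_{0} \subseteq O^{\an} \cap U$ of~$x$ and $W_{0} \subseteq \AA^{1,\an}_{k}$ of~$f(x)$ such that $f^{\an}$ restricts to an isomorphism $V_{0} \simto W_{0}$.

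Finally, $f(x)$ is a point of type~1 in~$\AA^{1,\an}_{k}$, and since~$k$ is algebraically closed and non-trivially valued, $|k^\times|$ is dense in~$\R_{>0}$; hence the open Berkovich discs $\DD^{-}(f(x),r)$ with $r \in |k^\times|$ form a fundamental system of neighborhoods of~$f(x)$. Pick such a disc contained in~$W_{0}$ and set $V := (f^{\an}|_{V_{0}})^{-1}(\DD^{-}(f(x),r))$. Then $V \subseteq V_{0} \subseteq O^{\an}\cap U$, the image $f^{\an}(V) = \DD^{-}(f(x),r)$ is an open disc with radius in~$|k^\times|$, and $f^{\an}|_{V}$ is a restriction of the isomorphism $V_{0} \simto W_{0}$, hence an isomorphism onto its image. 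I expect the main obstacle to be the appeal to the local isomorphism statement for \'etale maps at rigid points, since it is the one step relying on a nontrivial result from analytic \'etale theory rather than elementary manipulations; everything else (producing the uniformizer, verifying \'etaleness, choosing the disc) is routine.
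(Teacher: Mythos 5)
Your proof is correct and takes essentially the same route as the paper: produce an étale morphism $f\colon O\to \AA^1_{k}$ at the smooth point (the paper gets it directly from smoothness, you construct it from a uniformizer, which amounts to the same thing), apply the inverse function theorem for étale morphisms at the $k$-rational point, and then shrink to an open disc of radius in $|k^\times|$ using density of $|k^\times|$ in $\R_{>0}$. No gaps.
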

\begin{proof}
Since~$C$ is smooth at~$x$, there exists a Zariski-open subset~$O$ of~$C$ containing~$x$ and an \'etale morphism $f \colon O \to \AA^1_{k}$. By the inverse function theorem, $f^\an$ induces an isomorphism around~$x$ and the result follows.
\end{proof}

\begin{lemma}\label{lem:densityHx}
Let $x \in C^\an$ and $\eps>0$. For each $\alpha_{0} \in \sH(x)$, there exists $\alpha\in \sM(C)$ such that $\alpha$ is regular at~$x$, non-constant around~$x$ and satisfies $|\alpha(x) - \alpha_{0}| < \eps$.
\end{lemma}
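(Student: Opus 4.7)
The argument splits naturally into a density step followed by a perturbation step. The key content is that meromorphic functions on~$C$ regular at~$x$ are dense in~$\sH(x)$; once this is established, enforcing non-constancy is routine.

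First I would establish density. Let $\xi\in C$ denote the image of~$x$ under the reduction map $C^\an\to C$. If~$x$ is of type~1, then~$\xi$ is a closed point, $\sH(x)=k$, and the surjection $\cO_{C,\xi}\twoheadrightarrow k$ shows that meromorphic functions regular at~$x$ realize every element of~$\sH(x)$. If~$x$ is of type~2,~3, or~4, then $x$ is a smooth point and therefore lies on a unique irreducible component~$C_0$ of~$C$; in this case~$\xi$ is the generic point of~$C_0$, the kernel of the seminorm of~$x$ on $\cO_{C,\xi}=K(C_0)$ is zero, and by the very definition of~$\sH(x)$ as the valued completion of this local field, $K(C_0)$ is dense in~$\sH(x)$. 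Moreover, every rational function on~$C_0$ is regular at~$x$, since poles can only occur at closed points. In either case, one can pick $\beta\in\sM(C)$ regular at~$x$ with $|\beta(x)-\alpha_0|<\eps$.

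Next I would exhibit a non-constant meromorphic function regular at~$x$. Choose an irreducible component~$C_0$ of~$C$ through~$x$ and an affine open $O=\mathrm{Spec}(A)\subseteq C_0$ containing the image of~$x$. Since~$A$ is a reduced one-dimensional $k$-algebra of finite type, it contains a non-constant element~$\gamma_0$; viewed as a rational function on~$C_0$ and extended by zero on the remaining components, it yields $\gamma\in\sM(C)$ regular at~$x$. The analytic morphism $\gamma^\an\colon C_0^\an\to\AA^{1,\an}_k$ attached to a non-constant algebraic morphism has finite fibres, so the fibre containing~$x$ is finite; since~$x$ is not isolated in~$C^\an$ (the curve being one-dimensional at~$x$), $\gamma$ is non-constant in every neighborhood of~$x$.

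Finally I conclude by perturbation. If~$\beta$ is already non-constant around~$x$, set $\alpha:=\beta$. Otherwise, using that $|k^\times|$ is dense in~$\R_{>0}$, pick $\lambda\in k^\times$ with $|\lambda\gamma(x)|<\eps$ and set $\alpha:=\beta+\lambda\gamma$. Then $\alpha$ is regular at~$x$ and non-constant around~$x$, as the sum of a locally constant function and a locally non-constant one, while the ultrametric inequality gives $|\alpha(x)-\alpha_0|\le\max(|\beta(x)-\alpha_0|,|\lambda\gamma(x)|)<\eps$. The main obstacle would be the density claim, but it is essentially built into the construction of~$\sH(x)$ as a valued completion, so the argument remains short once one has set up the case split by type.
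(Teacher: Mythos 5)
Your overall route is the same as the paper's: after localizing to an affine chart, the density of rational (meromorphic) functions in $\sH(x)$ is essentially the definition of the completed residue field --- the paper phrases this as density of the total ring of fractions of $A$ in $\sH(x)$ --- and non-constancy is then arranged by adding a small multiple of a fixed non-constant function, a perturbation the paper leaves implicit. The density step (constants suffice for type~1 since $\sH(x)=k$; $K(C_0)$ is dense for types 2,~3,~4 because $\sH(x)$ is the completion of $\mathrm{Frac}(A/\ker x)=K(C_0)$) and the ultrametric estimate at the end are fine.

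The one step that does not hold as written is the claim that $\gamma$, obtained from a non-constant $\gamma_0$ on an affine open of the single component $C_0$ and \emph{extended by zero} on the remaining components, is ``regular at $x$''. This is correct whenever $x$ lies on only one component --- in particular for all points of type 2, 3, 4 (which are smooth) and these are the only points at which the paper uses the lemma --- but the statement allows arbitrary $x\in C^\an$, and at a type-1 point $\xi$ where several components meet the extension by zero is usually not regular: at a node, for instance, $\cO_{C,\xi}$ consists of pairs of germs on the two branches agreeing at $\xi$, so $(\gamma_0,0)$ is regular there only if $\gamma_0(\xi)=0$ (and for worse singularities even that is insufficient). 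The repair is immediate: take an affine open $O$ of $C$ (not of $C_0$) containing $\xi$; since $O\cap C_0$ is closed in $O$, the restriction $\cO(O)\to\cO(O\cap C_0)$ is surjective, so a non-constant element of $\cO(O\cap C_0)$ lifts to $\gamma_0\in\cO(O)$, which is regular at $\xi$, non-constant around $x$ by your finite-fibre argument, and defines an element of $\sM(C)\cong\prod_i K(C_i)$ after any extension to the components missing $O$. With that adjustment (or with the remark that only smooth $x$ are needed in the sequel) your proof is complete and coincides in substance with the paper's.
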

\begin{proof}
We may reduce to the case where~$C$ is affine, say $C = \textrm{Spec}(A)$. In this case, the result follows from the density of the total ring of fractions of~$A$ in~$\sH(x)$.
\end{proof}

\begin{proposition}\label{prop:type4}
Let $x\in C^\an$ be a smooth point of type~4. Let~$U$ be a neighborhood of~$x$ in~$C^\an$. There exist a Zariski-open subset~$O$ of~$C$, a morphism $f \colon O \to \AA^1_{k}$ and an open neighborhood~$V$ of~$x$ in~$U$ such that
\begin{enumerate}[$i)$]
\item $V \subseteq O^\an$;
\item $f^\an(V)$ is an open disc with radius in~$|k^\times|$ in~$\AA^{1,\an}_{k}$;
\item $f^\an_{|V}$ induces an isomorphism onto its image.
\end{enumerate}
\end{proposition}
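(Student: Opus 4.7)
The plan is to imitate the proof of Proposition~\ref{prop:type1}, substituting the \'etale-map argument (unavailable since~$x$ is not a $k$-rational point) by an approximation-plus-perturbation argument that relies on Lemma~\ref{lem:densityHx}. The starting point is the local structure of quasi-smooth Berkovich curves, which guarantees that a smooth point of type~$4$ admits an open analytic neighborhood $W \subseteq U$ together with an analytic isomorphism $\psi \colon W \simto \DD^-_{k}(a_{0},r_{0})$ for some $a_{0}\in k$ and $r_{0}\in |k^\times|$, with $\psi(x)$ of type~$4$ (see \cite{Duc-book}). Write $t := \psi^{*}T \in \cO(W)$ for the pullback of the standard coordinate.

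Next I would apply Lemma~\ref{lem:densityHx} to the element $t(x) \in \sH(x)$ to produce $\alpha \in \sM(C)$, regular and non-constant at~$x$, with $|(\alpha - t)(x)|$ arbitrarily small. Let $O \subseteq C$ be a Zariski-open neighborhood of~$x$ on which $\alpha$ is regular, and set $f := \alpha \colon O \to \AA^1_{k}$. Since the analytic function $\alpha - t \in \cO(W \cap O^\an)$ is continuous and takes a very small value at~$x$, after shrinking I can find an open sub-disc $V = \psi^{-1}(\DD^-_{k}(b,r))$ of $W \cap O^\an$ containing~$x$, with $b\in k$ and $r\in |k^\times|$, such that the sup-norm satisfies $\|\alpha - t\|_{V} < r$.

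The crux of the argument is then to show that this smallness forces $f^\an_{|V}$ to be an analytic isomorphism onto the open disc $\DD^-_{k}(\alpha(b),r)$. Transporting via~$\psi$, this amounts to the following perturbation statement: for $h = T + g \in \cO(\DD^-_{k}(b,r))$ with $\|g\| < r$, the map $h^\an$ induces an analytic isomorphism from $\DD^-_{k}(b,r)$ onto $\DD^-_{k}(b + g(b),r)$. Injectivity and surjectivity on $K$-points, for any complete valued extension $K|k$, follow from a Newton-polygon argument applied to the power series $h - c$, entirely analogous to the one performed in the proof of Lemma~\ref{lem:kpoints}; the analyticity of the inverse can be obtained by a standard contracting fixed-point construction in $\cO(\DD^-_{k}(b + g(b),r))$.

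The main obstacle is precisely this perturbation step: one must verify not only that $h^\an$ is an analytic bijection but also that the image is exactly an open disc whose radius still lies in~$|k^\times|$, as required by condition~ii). The careful choice $b \in k$ and $r \in |k^\times|$ made in the second step renders this automatic, since $b + g(b) \in k$ and the radius is unaffected by a perturbation strictly dominated by~$r$. Once this is in place, conditions i), ii) and iii) of the statement follow immediately by construction.
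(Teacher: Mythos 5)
Your argument is correct, but it routes the key step differently from the paper. Both proofs begin the same way: Ducros' local structure theorem (\cite[Th\'eor\`eme~4.5.4]{Duc-book}) gives a chart of a neighborhood of~$x$ onto an open disc, and Lemma~\ref{lem:densityHx} supplies $\alpha\in\sM(C)$, regular and non-constant at~$x$, approximating the coordinate~$t$. The paper then argues qualitatively: using \cite[Lemma~3.1.6]{temkin_transcendence} it chooses~$\alpha$ so that $k(\alpha(x))$ remains dense in~$\sH(x)$, deduces that $f$ induces an isomorphism $\sH(f^\an(x))\simto\sH(x)$, and concludes that $f^\an$ is a local isomorphism by \cite[Theorem~3.4.1]{bleu}. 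You instead make the approximation quantitative: transported to the disc, $f$ becomes $T+g$ with $\|g\|<r$ on a subdisc of radius $r\in|k^\times|$, and a Newton polygon/contraction argument in the spirit of Lemma~\ref{lem:kpoints} shows directly that such a perturbation is an isomorphism onto $\DD_k^-(b+g(b),r)$. Your route is more elementary and self-contained (no topological transcendence degree, no \'etale-type local isomorphism criterion) and produces an explicit~$V$ together with its image; its cost is the norm bookkeeping, and two points should be made explicit. First, the inequality $\|\alpha-t\|_{V}<r$ is attainable precisely because $x$ is of type~4: every disc of the chart containing $\psi(x)$ has radius greater than $\rho:=\inf_{c\in k}|(T-c)(\psi(x))|>0$, so you must fix the chart first and take $\eps<\rho$ in Lemma~\ref{lem:densityHx}; otherwise shrinking~$V$ shrinks~$r$ along with it (this is exactly where the type~4 hypothesis enters, consistently with the fact that Proposition~\ref{prop:type1} requires a separate argument). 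Second, bijectivity on $K$-points for all complete extensions~$K$ plus a fixed-point inverse should be upgraded to an isomorphism of analytic spaces, for instance by checking via the Newton polygon that over each closed subdisc of radius $s<r$ the map is finite of degree~$1$ onto $\DD_k(b+g(b),s)$, hence an isomorphism, and passing to the increasing union. Both points are routine, so the proposal stands as a valid alternative proof.
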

\begin{proof}
By \cite[Th\'eor\`eme~4.5.4]{Duc-book}, there exists a isomorphism $f_{0} \colon U' \to D$ from a neighborhood~$U'$ of~$x$ in~$U$ to a disc~$D$. In particular, there exists $\alpha_{0} \in \sH(x)$ such that $k(\alpha_{0})$ is dense in~$\sH(x)$. By~\cite[Lemma~3.1.6]{temkin_transcendence} 
and Lemma~\ref{lem:densityHx}, there exists $\alpha \in \sM(C)$ such that $\alpha$ is non-constant, regular at~$x$ and $k(\alpha(x))$ is still dense in~$\sH(x)$.

The element~$\alpha$ induces a non-constant morphism $f \colon O \to \PP^1_{k}$, where~$O$ is a Zariski-open subset of~$C$ containing~$x$. Note that it is \'etale at~$x$. Up to composing by an automorphism of~$\PP^1_{k}$, we may assume that $f^\an(x) \ne \infty$. Up to replacing~$O$ by $O\setminus f^{-1}(\infty)$, we may assume that we have a morphism $f \colon O \to \AA^1_{k}$. By construction, $f$~induces an isomorphism $\sH(f^\an(x)) \simto \sH(x)$. 
By \cite[Theorem~3.4.1]{bleu}, it is an isomorphism around~$x$ and the result follows.
\end{proof}

\begin{proposition}\label{prop:type3}
Let $x\in C^\an$ be a point of type~3. Let~$U$ be a neighborhood of~$x$ in~$C^\an$. There exists a Zariski-open subset~$O$ of~$C$, a morphism $f \colon O \to \AA^1_{k}$ and an open neighborhood~$V$ of~$x$ in~$U$ such that
\begin{enumerate}[$i)$]
\item $V \subseteq O^\an$;
\item $f^\an(V)$ is an open annulus with radii in~$|k^\times|$ in~$\AA^{1,\an}_{k}$;
\item $f^\an_{|V}$ induces an isomorphism onto its image.
\end{enumerate}
\end{proposition}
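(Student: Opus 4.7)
The plan is to mirror the proof of Proposition~\ref{prop:type4} almost verbatim, replacing the role of the disc by that of an annulus. The input that changes is the local model around~$x$: for a type~3 point we get a neighborhood isomorphic to an open annulus, and accordingly $\sH(x)$ is no longer an immediate extension of~$k$ but satisfies $\widetilde{\sH(x)} = \tilde k$ and $|\sH(x)^\times| = |k^\times|\cdot r^\ZZ$ for some $r \in \R_{>0} \setminus |k^\times|$. Everything else runs identically.

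First, I would invoke the analogue of \cite[Th\'eor\`eme~4.5.4]{Duc-book} for type~3 points to produce a neighborhood $U' \subseteq U$ of~$x$ together with an isomorphism $f_{0} \colon U' \simto A_{0}$ onto an open annulus~$A_{0}$. Pulling back the standard coordinate yields an element $\alpha_{0} \in \sH(x)$ such that $k(\alpha_{0})$ is dense in~$\sH(x)$. Then, combining \cite[Lemma~3.1.6]{temkin_transcendence} with Lemma~\ref{lem:densityHx}, I would choose $\alpha \in \sM(C)$ non-constant, regular at~$x$, and close enough to~$\alpha_{0}$ in~$\sH(x)$ that $k(\alpha(x))$ is still dense in~$\sH(x)$.

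The function~$\alpha$ then defines a non-constant morphism $f \colon O \to \PP^{1}_{k}$ on a Zariski-open subset $O$ of~$C$ containing~$x$. Composing with a suitable automorphism of~$\PP^{1}_{k}$ and removing $f^{-1}(\infty)$, we obtain $f \colon O \to \AA^{1}_{k}$ with $f^\an(x) \ne \infty$. The density of $k(\alpha(x))$ in~$\sH(x)$ yields an isomorphism $\sH(f^\an(x)) \simto \sH(x)$, and by \cite[Theorem~3.4.1]{bleu} the map~$f^\an$ restricts to an isomorphism from some open neighborhood $V' \subseteq U'$ of~$x$ onto an open neighborhood~$W'$ of~$f^\an(x)$.

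It then remains to shrink~$W'$ to an annulus with radii in~$|k^\times|$. Since~$f^\an$ is an isomorphism near~$x$, the point $f^\an(x)$ is again of type~3, hence of the form $\eta_{a,r}$ for some $a\in k$ and $r \in \R_{>0} \setminus |k^\times|$. Because~$k$ is algebraically closed and non-trivially valued, $|k^\times|$ is dense in~$\R_{>0}$, so I can pick $r_{1}<r<r_{2}$ in~$|k^\times|$ such that the open annulus $A := \{y \in \AA^{1,\an}_{k} : r_{1} < |(T-a)(y)| < r_{2}\}$ is contained in~$W'$. Setting $V := (f^\an)^{-1}(A) \cap V'$ then satisfies conditions $i)$, $ii)$ and~$iii)$. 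The delicate step is the approximation in the second paragraph: one must ensure that a sufficiently small perturbation of~$\alpha_{0}$ still generates a dense subfield of~$\sH(x)$, which is precisely what Temkin's lemma on topological ramification provides.
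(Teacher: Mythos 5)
Your argument is correct and matches the paper's proof: the paper likewise invokes \cite[Th\'eor\`eme~4.5.4]{Duc-book} to get an annular neighborhood of the type~3 point and then says to ``argue as in the preceding proof'' of Proposition~\ref{prop:type4}, which is exactly the density/approximation argument via Lemma~\ref{lem:densityHx} and \cite[Lemma~3.1.6]{temkin_transcendence} followed by \cite[Theorem~3.4.1]{bleu} that you carry out. Your final shrinking to an annulus with radii in $|k^\times|$ is the step the paper leaves implicit in ``the result follows,'' and you fill it in correctly.
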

\begin{proof}
By \cite[Th\'eor\`eme~4.5.4]{Duc-book}, there exists an isomorphism $f_{0} \colon U' \to A$ from a neighborhood~$U'$ of~$x$ in~$U$ to an annulus~$A$. We can then argue as in the preceding proof.
\end{proof}

In order to handle points of type~2, we will need the following result.

\begin{lemma}\label{lem:liftresiduemorphism}
Let $x\in C^\an$ be a point of type~2. Let $\tilde f \colon \cC_{x} \to \PP^1_{\tilde k}$ be a finite morphism. There exist a Zariski-open subset~$O$ of~$C$ and a morphism $f \colon O \to \AA^1_{k}$ such that
\begin{enumerate}[$i)$]
\item $x\in O^\an$, $f^\an(x) = \eta_{0,1}$ and $f^\an$ is finite at~$x$;
\item $\tilde f_{x} = \tilde f$.
\end{enumerate}
\end{lemma}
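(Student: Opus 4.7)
The plan is to construct the desired $f$ by descending a carefully chosen meromorphic function on $C$ that ``algebraizes'' $\tilde f$. The input $\tilde f \colon \cC_{x} \to \PP^1_{\tilde k}$ corresponds, via pullback of a standard affine coordinate $t$ on $\PP^1_{\tilde k}$, to an element $\tilde\alpha \in \widetilde{\sH(x)}$ that is transcendental over $\tilde k$ (since $\tilde f$ is non-constant) and such that $\tilde k(\tilde\alpha) \hookrightarrow \widetilde{\sH(x)}$ is a finite extension of fields (since $\tilde f$ is finite).

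First, lift $\tilde\alpha$ to an element $\alpha_{0} \in \sH(x)^\circ$ of norm one with $\widetilde{\alpha_{0}} = \tilde\alpha$, using surjectivity of the residue map $\sH(x)^\circ \to \widetilde{\sH(x)}$. Then apply Lemma~\ref{lem:densityHx} with any $\eps < 1$ to produce $\alpha \in \sM(C)$ regular at~$x$, non-constant in a neighborhood, and satisfying $|\alpha(x) - \alpha_{0}| < 1$, so that $\widetilde{\alpha(x)} = \tilde\alpha$ and in particular $|\alpha(x)| = 1$. Let $O$ be the Zariski-open locus of regularity of $\alpha$ on the union of irreducible components of $C$ meeting $x$; then $\alpha$ defines a morphism $f \colon O \to \AA^1_{k}$ with $x \in O^\an$.

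Second, verify the desired properties. For any $P(T) = \sum a_{i} T^i \in k[T]$, transcendence of $\widetilde{\alpha(x)} = \tilde\alpha$ over $\tilde k$ yields $|P(\alpha(x))| = \max_{i} |a_{i}|$, so $f^\an(x) = \eta_{0,1}$. The induced extension of residue fields $\widetilde{\sH(\eta_{0,1})} = \tilde k(t) \hookrightarrow \widetilde{\sH(x)}$ sends $t$ to $\tilde\alpha$, which is precisely the function-field map attached to $\tilde f$; hence $\tilde f_{x} = \tilde f$.

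Third, finiteness of $f^\an$ at~$x$: since $x$ and $\eta_{0,1}$ are both of type~$2$ over $k$ (so both extensions have the same value group $|k^\times|$) and the induced extension of residue fields $\tilde k(\tilde\alpha) \hookrightarrow \widetilde{\sH(x)}$ is finite, the extension of completed residue fields $\sH(\eta_{0,1}) \hookrightarrow \sH(x)$ is finite; combined with the fact that $f^\an$ is non-constant around $x$, this implies (by the analytic local structure of morphisms of curves, \emph{cf.}\ \cite[Th\'eor\`eme~4.3.13]{Duc-book}) that $f^\an$ is finite at~$x$. I expect this last transition---passing from finiteness of the residue-field extension to genuine geometric finiteness of $f^\an$ at~$x$---to be the main subtlety, requiring an explicit invocation of the local structure theorem near a type-$2$ point.
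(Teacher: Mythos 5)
Your construction is essentially the paper's own: set $\tilde\alpha := \tilde f^{*}(t)$, use the density statement (Lemma~\ref{lem:densityHx}) to produce $\alpha\in\sM(C)$ with $|\alpha(x)|=1$ and $\widetilde{\alpha(x)}=\tilde\alpha$, and verify $f^\an(x)=\eta_{0,1}$ and $\tilde f_{x}=\tilde f$ by the Gauss-norm computation; those parts are correct and match the paper.

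The gap is in your finiteness step. The inference ``both points are of type~$2$ (so the value groups agree) and the residue extension $\tilde k(t)\hookrightarrow\widetilde{\sH(x)}$ is finite, hence $\sH(\eta_{0,1})\hookrightarrow\sH(x)$ is finite'' is not a valid principle for complete valued fields: $\sH(\eta_{0,1})$ is not maximally complete, so it admits complete extensions of infinite (even transcendental) degree with the same residue field and the same value group, and nothing in your argument excludes such behaviour for the embedding at hand. The finiteness is true here for a reason already present in your setup: $x$ lies on a single irreducible component~$C_{0}$ of~$C$ and $\alpha$ is non-constant on~$C_{0}$, so $k(T)\hookrightarrow k(C_{0})$, $T\mapsto\alpha$, is a finite field extension; since $\sH(\eta_{0,1})$ and $\sH(x)$ are the completions of $k(T)$ and $k(C_{0})$ for the rank~$1$ valuations given by $\eta_{0,1}$ and $x$, one gets $[\sH(x):\sH(\eta_{0,1})]\le[k(C_{0}):k(T)]<\infty$. (Even more directly: $f$ restricted to $C_{0}\cap O$ is a dominant morphism of algebraic curves, hence finite over a nonempty Zariski-open $U\subseteq\AA^{1}_{k}$, and $\eta_{0,1}\in U^{\an}$ because it is not a rigid point, so $f^\an$ is finite over a neighbourhood of $\eta_{0,1}$, in particular at~$x$.) Finally, the reference you invoke for the last transition is not the right one: \cite[Th\'eor\`eme~4.3.13]{Duc-book} concerns morphisms already known to be finite at the point. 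What you need --- that finiteness of $\sH(x)/\sH(f^\an(x))$ together with $x\in\mathrm{Int}(C^\an/\AA^{1,\an}_{k})$ (automatic, since analytifications of algebraic morphisms have empty relative boundary) implies finiteness of $f^\an$ at~$x$ --- is \cite[Theorem~3.4.1]{bleu}, the result the paper uses for the analogous step in the proof of Proposition~\ref{prop:type4}.
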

\begin{proof}
Denote by~$t$ the image of~$T$ in~$\widetilde{\sH(\eta_{0,1})}$. Remark that $\tilde k (t) \simeq\widetilde{\sH(\eta_{0,1})}$.

Set $\tilde \alpha := \tilde f^*(t)$. Choose $\alpha \in \sM(C)$ such that $|\alpha(x)| =1$ and $\widetilde{\alpha(x)} = \tilde\alpha$. There exists a Zariski-open subset~$O$ of~$C$ and a morphism $f \colon O \to \PP^1_{k}$ such that $f^\ast(T) = \alpha$. As in the proof of Proposition~\ref{prop:type4}, up to composing by an automorphism of~$\PP^1_{k}$ and shrinking~$O$, we may assume that we have a morphism $f \colon O \to \AA^1_{k}$. The required properties are now satisfied by construction.
\end{proof}

The following result is essentially a reformulation of \cite[Th\'eor\`eme~4.4.15]{Duc-book} in a more precise form. We include the proof for the reader's convenience.

\begin{proposition}\label{prop:type2generic}
Let $x\in C^\an$ be a point of type~2. Let~$U$ be a neighborhood of~$x$ in~$C^\an$. There exist a Zariski-open subset~$O$ of~$C$, a morphism $f \colon O \to \AA^1_{k}$ and an affinoid domain~$V$ of~$U$ with Shilov boundary~$\{x\}$ such that
\begin{enumerate}[$i)$]
\item $V \subseteq O^\an$;
\item $f^\an(x) = \eta_{0,1}$ and $f^\an(V)$ is equal to~$\DD_{k}$ deprived of finitely many open unit discs;
\item for each connected component~$E$ of~$V\setminus\{x\}$, $f^\an(E)$ is a connected component of $f^\an(V)\setminus\{\eta_{0,1}\}$ and the morphism~$f^\an$ induces an isomorphism between~$E$ and~$f^\an(E)$.
\end{enumerate}
In particular, every connected component of~$V\setminus\{x\}$ is an open disc.
\end{proposition}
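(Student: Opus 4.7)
The plan is to apply Lemma~\ref{lem:liftresiduemorphism} with a well-chosen residue morphism and then exploit the local structure of finite morphisms at type-$2$ points as in \cite[Th\'eor\`eme~4.4.15]{Duc-book}. First I would pick a non-constant separable morphism $\tilde f\colon \cC_{x}\to\PP^{1}_{\tilde k}$; such a morphism exists because $\cC_{x}$ is a smooth connected projective curve over the algebraically closed field~$\tilde k$. Its ramification locus $R\subseteq\cC_{x}(\tilde k)$ is then finite; set $R':=\tilde f(R)\cup\{\infty\}\subseteq\PP^{1}_{\tilde k}(\tilde k)$. By Lemma~\ref{lem:liftresiduemorphism}, $\tilde f$ lifts to a Zariski-open subset~$O$ of~$C$ containing~$x$ and a morphism $f\colon O\to\AA^{1}_{k}$ such that $f^\an(x)=\eta_{0,1}$, $f^\an$ is finite at~$x$, and $\tilde f_{x}=\tilde f$. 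Choosing for each $\bar c\in R'\setminus\{\infty\}$ a lift $c\in k^\circ$, I consider
\[W' \,:=\, \DD_{k}\,\setminus\,\bigcup_{\bar c\in R'\setminus\{\infty\}}\DD^{-}(c,1),\]
which is a connected strictly $k$-affinoid subdomain of~$\DD_{k}$ with Shilov boundary~$\{\eta_{0,1}\}$ and whose connected components of $W'\setminus\{\eta_{0,1}\}$ are the open residue discs $\DD^{-}(c,1)$ for $\bar c\in\AA^{1}_{\tilde k}\setminus R'$.

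Using the finiteness of~$f^\an$ at~$x$, I would then pick a strictly affinoid neighborhood~$U_{0}$ of~$x$ inside $U\cap O^\an$ on which $f^\an$ is finite and such that~$x$ is its only preimage of~$\eta_{0,1}$ in~$U_{0}$; such a~$U_{0}$ exists because the (finitely many) other preimages of~$\eta_{0,1}$ in any affinoid neighborhood of~$x$ on which $f^\an$ is finite can be separated from~$x$ by disjoint affinoid neighborhoods. I then let~$V$ be the connected component of $U_{0}\cap (f^\an)^{-1}(W')$ containing~$x$: it is a connected affinoid domain of~$U$ contained in~$O^\an$, and $f^\an|_{V}\colon V\to W'$ is finite with image a closed one-dimensional subspace of the connected space~$W'$, hence equal to~$W'$. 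Since~$x$ is the unique preimage in~$V$ of the Shilov boundary point~$\eta_{0,1}$ of~$W'$, the Shilov boundary of~$V$ reduces to $\{x\}$; this proves $(i)$ and~$(ii)$.

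For~$(iii)$, the construction of~$W'$ ensures that every branch of~$x$ lying in~$V$ corresponds to a point $a\in\cC_{x}(\tilde k)$ with $\tilde f(a)\in\AA^{1}_{\tilde k}\setminus R'$, which is unramified for~$\tilde f$, so $e_{a}=1$. Combining this with the relationship between branch degrees and ramification indices recalled in Section~\ref{sec:residuecurve} and with the local structure theorem for finite morphisms at type-$2$ points \cite[Th\'eor\`eme~4.4.15]{Duc-book}, one deduces that every connected component~$E$ of $V\setminus\{x\}$ corresponds to a single such branch~$a$, that $E$ is an open disc, and that the restricted morphism $f^\an|_{E}\colon E\to\DD^{-}(\tilde f(a),1)$ is an isomorphism onto the corresponding component of $W'\setminus\{\eta_{0,1}\}$. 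This yields~$(iii)$ together with the last assertion of the proposition. The crux of the argument, and the reason for starting with a generically étale residue morphism~$\tilde f$ and excising from~$\DD_{k}$ precisely the residue classes above $R'\setminus\{\infty\}$, is to guarantee that all branches of~$x$ retained in~$V$ are unramified; this forces the degree of~$f^\an$ on each component of $V\setminus\{x\}$ to equal~$1$ and thus the restriction there to be an isomorphism.
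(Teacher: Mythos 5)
Your overall strategy (choose a generically \'etale $\tilde f\colon \cC_x\to\PP^1_{\tilde k}$, lift it with Lemma~\ref{lem:liftresiduemorphism}, and aim at $\DD_k$ minus the residue discs over the branch locus) is the same starting point as the paper's, but the way you produce~$V$ has a genuine gap. You fix the removed residue discs once and for all (those over $\tilde f(R)\cup\{\infty\}$), and then force $V\subseteq U$ by intersecting with a small affinoid neighborhood $U_0\subseteq U$ of~$x$. With that definition, the claim that $f^\an|_{V}\colon V\to W'$ is finite with image all of~$W'$ is unjustified and in fact false: $f^\an(U_0)$ need not contain~$W'$, and "closed and one-dimensional inside the connected space $W'$" does not imply "equal to $W'$" (a closed sub-annulus of $\DD_k$ is closed, one-dimensional and proper). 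Concretely, take $C=\AA^1_k$, $f=\mathrm{id}$, $x=\eta_{0,1}$ (so $\tilde f=\mathrm{id}$ is unramified and $W'=\DD_k$) and $U=\{y\,:\,1/2<|T(y)|<2\}$. Any affinoid neighborhood $U_0\subseteq U$ of~$x$ meets $\DD_k$ inside $\{1/2<|T|\le 1\}$, so your $V$ has image a proper closed subset of $\DD_k$ (it misses $D(0,1/2)$), the piece of $V\setminus\{x\}$ lying in the residue disc of~$0$ is a semi-open annulus rather than an open disc, and the Shilov boundary of~$V$ contains a second point (an $\eta_{0,r}$ coming from the inner boundary of~$U_0$). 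Thus $(ii)$, $(iii)$, the Shilov-boundary statement and the final assertion all fail for the set you construct; the problem is structural, not a fixable detail, because no cutting by a small $U_0$ can make the image equal to the whole of your fixed~$W'$.

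The missing idea is that the finite set of removed open unit discs must be allowed to depend on~$U$, not only on the ramification of~$\tilde f$. This is exactly how the paper proceeds: it first shrinks to a neighborhood $U'$ of~$x$ on which $f^\an$ is finite onto an open set and each component of $U'\setminus\{x\}$ carries a single branch, and then, for an arbitrary finite set~$\cF$ of components of $\DD_k\setminus\{\eta_{0,1}\}$, considers $V_\cF:=\DD_k\setminus\bigcup_{E\in\cF}E$ and takes $V$ to be the connected component of $(f^\an)^{-1}(V_\cF)$ through~$x$. A compactness argument (every neighborhood of $\eta_{0,1}$ in $\DD_k$ contains some $V_\cF$) shows that for $\cF$ large enough this component is automatically contained in~$U'$, while also ensuring $\tilde f$ is unramified over the retained residue classes; no intersection with a small affinoid is needed, so the image really is of the form "$\DD_k$ minus finitely many open unit discs" and the branch-by-branch degree-$1$ argument you sketch in $(iii)$ then applies. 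If you replace your fixed $W'$ and the cut by~$U_0$ with this "enlarge $\cF$ until the preimage component fits inside $U'$" step, the rest of your argument goes through.
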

\begin{proof}
Let $\tilde f \colon \cC_{x} \to \PP^1_{\tilde k}$ be a finite generically \'etale morphism and lift it to a morphism $f \colon O \to \AA^1_{k}$ as in Lemma~\ref{lem:liftresiduemorphism}. 

Let~$U'$ be a connected open neighborhood of~$x$ in~$O^\an$. Up to restricting~$U'$, we may assume that~$f^\an$ induces a finite morphism from~$U'$ to an open subset of $\AA^{1,\an}_{k}$ and that each connected component of $U'\setminus\{x\}$ contains only one branch emanating from~$x$ (see \cite[Theorem~4.5.4]{Duc-book}). In this case, the degree of the restriction of~$f^\an$ to such a connected component is equal to the ramification index of~$\tilde f$ on the corresponding rational point.

Denote by~$\cE$ the set of connected components of $\DD_{k} \setminus \{\eta_{0,1}\}$. For each finite subset~$\cF$ of~$\cE$, the set $V_{\cF} := \{\eta_{0,1}\} \cup \bigcup_{E \in \cE\setminus \cF} E$ is an affinoid domain of~$\DD_{k}$. More precisely, it is equal to~$\DD_{k}$ deprived of finitely many open unit discs. Moreover, each neighborhood of~$\eta_{0,1}$ contains such a set $V_{\cF}$. In particular, by choosing~$\cF$ big enough, we may assume that  the connected component~$V$ of $(f^\an)^{-1}(V_{\cF})$ containing~$x$ is contained in~$U'$ and that the morphism~$\tilde f$ is \'etale, hence unramified, over $\cU_{x} := \tilde{f}^{-1}(\cU_{\cF})$, where~$\cU_{\cF}$ denotes the complement of the image of~$\cF$ in $\cC_{\eta_{0,1}} = \PP^1_{\tilde k}$. The result follows.
\end{proof}

We give names to the notions we have just introduced.

\begin{definition}
An \emph{algebraic open disc} is the data of a Zariski-open subset~$O$ of~$C$, an open subset~$V$ of~$C^\an$ and a morphism $f \colon O \to \AA^1_{k}$ such that
\begin{enumerate}[$i)$]
\item $V \subseteq O^\an$;
\item $f^\an(V)$ is an open disc with radius in~$|k^\times|$ in~$\AA^{1,\an}_{k}$;
\item $f^\an_{|V}$ induces an isomorphism onto its image.
\end{enumerate}

An \emph{algebraic open annulus} is the data of a Zariski-open subset~$O$ of~$C$, an open subset~$V$ of~$C^\an$ and a morphism $f \colon O \to \AA^1_{k}$ such that
\begin{enumerate}[$i)$]
\item $V \subseteq O^\an$;
\item $f^\an(V)$ is an open annulus with radii in~$|k|$ in~$\AA^{1,\an}_{k}$;
\item $f^\an_{|V}$ induces an isomorphism onto its image.
\end{enumerate}

An \emph{algebraic tube} centered at a point~$x\in C^\an$ of type~2 is the data of a Zariski-open subset~$O$ of~$C$, an affinoid domain~$V$ of~$C^\an$ with Shilov boundary~$\{x\}$ and a morphism $f \colon O \to \AA^1_{k}$ such that
\begin{enumerate}[$i)$]
\item $V \subseteq O^\an$;
\item $f^\an(x) = \eta_{0,1}$ and $f^\an(V)$ is equal to~$\DD_{k}$ possibly deprived of finitely many open unit discs;
\item for each connected component~$E$ of~$V\setminus\{x\}$, $f^\an(E)$ is a connected component of $f^\an(V)\setminus\{\eta_{0,1}\}$ and the morphism~$f^\an$ induces an isomorphism between~$E$ and~$f^\an(E)$.
\end{enumerate}

We call \emph{algebraic brick} any triple $(O,V,f)$ of one of the three preceding sorts.

\medbreak

We will sometimes abusively say that a subset~$V$ of~$C^\an$ is an algebraic open disc (resp. algebraic open annulus, etc.) if there exist a Zariski-open subset~$O$ of~$C$ and a morphism $f \colon O \to \AA^1_{k}$ such that $(O,V,f)$ is an algebraic open disc (resp. algebraic open annulus, etc.).
\end{definition}

\begin{remark}\label{rem:restriction}
Let $(O,V,f)$ be an algebraic open disc or annulus. Let~$V'$ be a subset of~$V$ isomorphic to an open disc with radius in~$|k^\times|$ (resp. an open annulus in~$V$ with radii in~$|k|$). Then $(O,V',f)$ is an algebraic open disc (resp. algebraic open annulus). Similar results holds for tubes. For instance, if~$V'$ is a closed disc with radius in~$|k^\times|$ or a closed annulus with equal inner and outer radii belonging to~$|k^\times|$, then there exists an automorphism~$\alpha$ of~$\AA^1_{k}$ such that $(O,V',\alpha\circ f)$ is an algebraic tube. Note that the automorphism~$\alpha$ is needed to ensure that the image of~$V'$ lies in the closed unit disc centered at~0.

Let $(O,W,f)$ be an algebraic tube centered at~$x$. Let~$E$ be a connected component of $W\setminus\{x\}$. Then $(O,E,f)$ is an algebraic open disc and $(O,W\setminus E,f)$ is still an algebraic tube centered at~$x$. 

We will use freely the results of this remark in the rest of the text.
\end{remark}

In the setting of Proposition~\ref{prop:type2generic}, $V$ is a tube in the sense of Section~\ref{sec:residuecurve} and we have an associated reduction map $\rho_{V}\colon V \to \cC_{x}$ whose image~$\cU_{V}$ is an affine curve over~$\tilde k$. We fix a closed embedding $\iota_{V} \colon \cU_{V} \hookrightarrow \AA^r_{\tilde k}$.

\begin{lemma}\label{lem:residue_morph}
There exist a Zariski-open subset~$O$ of~$C$ and morphisms $f_{1},\dotsc,f_{r} \colon O \to \AA^1_{k}$ such that 
\begin{enumerate}[$i)$]
\item $x\in O^\an$;
\item for each $i\in \{1,\dotsc,r\}$, $f_{i}^\an(x) = \eta_{0,1}$ and $f_{i}^\an(O^\an \cap V) \subseteq \DD_{k}$;
\item the map 
\[z \in O^\an \cap V\setminus\{x\} \mapsto (\red(f_{1}^\an(z)),\dotsc,\red(f_{r}^\an(z))) \in \tilde{k}^r\] 
coincides with the restriction of $\iota_{V}\circ\rho_{V}$ to $O^\an \cap V\setminus\{x\}$.
\end{enumerate}
\end{lemma}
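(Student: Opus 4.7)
The plan is to realize each coordinate function $\tilde\alpha_i := \iota_V^*(T_i) \in \tilde k[\cU_V] \subseteq \tilde k(\cC_x) = \widetilde{\sH(x)}$ as the residue at~$x$ of a $k$-rational function on~$C$, exploiting the density of $\sM(C)$ in $\sH(x)$ provided by Lemma~\ref{lem:densityHx}. Note that for the condition $f_i^{\an}(x) = \eta_{0,1}$ to be attainable, each $\tilde\alpha_i$ must be transcendental over~$\tilde k$; this can be arranged from the outset, since $\cU_V$ is a positive-dimensional affine curve and $\iota_V$ a closed embedding, possibly after composing $\iota_V$ with a suitable linear automorphism of~$\AA^r_{\tilde k}$.

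I would lift each $\tilde\alpha_i$ to an element $\alpha_i \in \sH(x)^\circ$ of absolute value one, and then apply Lemma~\ref{lem:densityHx} to produce $\beta_i \in \sM(C)$ that is regular and non-constant at~$x$ and satisfies $|\beta_i(x) - \alpha_i| < 1$. This forces $|\beta_i(x)| = 1$ and $\widetilde{\beta_i(x)} = \tilde\alpha_i$. The induced morphism $f_i \colon O_i \to \AA^1_k$, defined on the Zariski-open complement $O_i$ of the polar divisor of~$\beta_i$, then satisfies $f_i^{\an}(x) = \eta_{0,1}$ by the standard characterization of the Gauss point via transcendence of the residue. Taking $O := \bigcap_i O_i$ settles (i) and the first half of (ii). Condition (iii) follows from the identity $\red(f_i^{\an}(z)) = \widetilde{\beta_i(z)}$ combined with the compatibility between reduction at the type~2 point~$x$ and reduction along the branches emanating from it, which yields $\widetilde{\beta_i(z)} = \tilde\alpha_i(\rho_V(z)) = (\iota_V \circ \rho_V)(z)_i$ for $z \in O^{\an} \cap V \setminus \{x\}$.

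The main obstacle is the containment $f_i^{\an}(O^{\an} \cap V) \subseteq \DD_k$, equivalently $|\beta_i| \le 1$ on $O^{\an} \cap V$. The natural tool is the Shilov boundary property of~$V$, which would yield $|\beta_i|_V \le |\beta_i(x)| = 1$, but this requires $\beta_i$ to be regular on the whole of~$V$. Since the polar divisor of $\beta_i$ may meet $V \setminus \{x\}$, I would shrink the tube~$V$ to a smaller tube centered at~$x$ that avoids the finitely many poles of the $\beta_i$'s, using the flexibility in the construction of algebraic tubes inherent in Proposition~\ref{prop:type2generic}, so that $V \subseteq O^{\an}$ and the Shilov bound applies.
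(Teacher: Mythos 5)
Your construction of the $f_i$ agrees in substance with the paper's: the paper produces each $f_i$ by applying Lemma~\ref{lem:liftresiduemorphism} to the finite morphism $\tilde f_i'\colon \cC_x\to\PP^1_{\tilde k}$ extending $\pi_i\circ\iota_V$, and Lemma~\ref{lem:liftresiduemorphism} is itself proved by exactly the density argument (Lemma~\ref{lem:densityHx}) you invoke; likewise the identification $f_i^\an(x)=\eta_{0,1}$ via transcendence of the reduction, and the derivation of (iii) from the compatibility of $\red$ with the branches at $x$, match the paper's proof. Your aside about non-constant coordinates points at a hypothesis the paper also needs implicitly (a constant $\pi_i\circ\iota_V$ does not extend to a finite morphism $\cC_x\to\PP^1_{\tilde k}$), but be aware that composing $\iota_V$ with a linear automorphism alters data that the statement fixes, so this should be phrased as an assumption on $\iota_V$ rather than something you "arrange".

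The genuine gap is your treatment of the containment $f_i^\an(O^\an\cap V)\subseteq\DD_k$. The tube $V$ (and $\iota_V$) is part of the given data: it is the tube of Proposition~\ref{prop:type2generic}, and Corollary~\ref{cor:definablereductionalg} applies the lemma to this very $V$. Replacing $V$ by a smaller tube that avoids the poles of the $\beta_i$ therefore proves a different, weaker statement: on the discarded components of $V\setminus\{x\}$ (minus the poles themselves, which still meet $O^\an\cap V$) conditions (ii) and (iii) genuinely fail for your lifts, since $|\beta_i|>1$ near a pole. To prove the lemma as stated along your route you would have to show that the lifts can be chosen with no poles on all of $V$ (equivalently, analytic on $V$, so that the Shilov bound applies to the given tube); the density lemma alone does not provide this, and your proposal does not address it. The paper's proof avoids the issue differently: it does not shrink $V$ and never invokes a sup-norm bound, but tracks each connected component of $V\setminus\{x\}$ through the induced morphism on residue curves, using that such a component corresponds to a closed point of $\cU_V$, that $\tilde f_i'(\cU_V)\subseteq\AA^1_{\tilde k}$, and that under the identification of the branches at $\eta_{0,1}$ with closed points of $\PP^1_{\tilde k}$ the branches over $\AA^1_{\tilde k}$ lie inside $\DD_k$. (Your weakened conclusion would in fact still suffice for Corollary~\ref{cor:definablereductionalg}, whose proof already treats finitely many exceptional components separately via Proposition~\ref{prop:separationbricks}, but it is not the lemma as stated.)
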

\begin{proof}
Let $i\in \{1,\dotsc,r\}$. Denote by $\tilde f_{i} \colon \cU_{V} \to \AA^1_{\tilde k}$ the composition of~$\iota_{V}$ with the $i^\text{th}$ projection map. It extends uniquely to a finite morphism $\tilde f_{i}' \colon \cC_{x} \to \PP^1_{\tilde k}$. Let us lift the latter to a morphism $f_{i} \colon O_{i} \to \AA^1_{k}$ as in Lemma~\ref{lem:liftresiduemorphism}. Up to shrinking the~$O_{i}$'s, we may assume that they are all equal to some common~$O$. We have $f_{i}^\an(x)=\eta_{0,1}$ as required. Moreover, since, by construction, each connected component of~$V\setminus\{x\}$ is sent into~$\cU_{V}$, we have
\[f_{i}^\an(O^\an \cap V\setminus\{x\}) \subseteq \beta_{\eta_{0,1}}^{-1}(\tilde f_{i}'(\cU_{V})) \subseteq \beta_{\eta_{0,1}}^{-1}(\AA^1_{\tilde k}) = \DD_{k},\]
where we have used the identification $\cB_{\eta_{0,1}} = \pi_{0}(\PP^{1,\an}_{k}\setminus\{\eta_{0,1}\})$. This proves property~i).

Property~ii) follows directly from the choice of the morphisms~$\tilde f_{i}$ and the explicit description of the map~$\beta_{\eta_{0,1}}$ in terms of~$\red$.
\end{proof}

\begin{proposition}\label{prop:type2branch}
Let $x\in C^\an$ be a point of type~2. Let~$U$ be a neighborhood of~$x$ in~$C^\an$ and let~$B$ be a connected component of~$U\setminus\{x\}$. There exist a Zariski-open subset~$O$ of~$C$, a morphism $f \colon O \to \AA^1_{k}$ and an open subset~$V$ of~$B$ whose closure contains~$x$ such that
\begin{enumerate}[$i)$]
\item $V \subseteq O^\an$;
\item $f^\an(V)$ is an open annulus with radii in~$|k^\times|$ in~$\AA^{1,\an}_{k}$;
\item $f^\an_{|V}$ induces an isomorphism onto its image.
\end{enumerate}
Moreover, given a finite set~$\cB'$ of connected components of~$U\setminus\{x\}$ not containing~$B$, we may ensure that $(f^\an)^{-1}(f^\an(V))$ does not meet any element of~$\cB'$. 
\end{proposition}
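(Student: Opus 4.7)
The plan is to deduce this from Proposition~\ref{prop:type2generic} by carving out an annular subregion of the disc component of the tube that lies in~$B$.

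First, I apply Proposition~\ref{prop:type2generic} to obtain an algebraic tube $(O,W,f)$ centered at~$x$ with $W\subseteq U$. Tracing through its proof, $W$ is built as a connected component of $(f^\an)^{-1}(V_{\cF})$, where $V_{\cF}$ is $\DD_{k}$ deprived of the open unit discs indexed by a finite subset~$\cF$ of~$\cC_{x}(\tilde k)$. By enlarging~$\cF$ (while avoiding the finitely many residues corresponding to the branches of $\cB'\cup\{B\}$), I arrange that $W\setminus\{x\}$ has a connected component~$E$ lying in~$B$ and, for each $B'_{j}\in\cB'$, a connected component $E'_{j}\subseteq B'_{j}$. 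The tube property then yields isomorphisms $f^\an|_{E}\colon E \simto D^-(a,1)$ and $f^\an|_{E'_{j}}\colon E'_{j} \simto D^-(a'_{j},1)$ onto pairwise distinct open unit subdiscs of~$\DD_{k}$ (distinct because they correspond to distinct branches, hence to distinct residues in~$\tilde k$).

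Second, for $r\in(0,1)\cap |k^\times|$, I set $A_{r} := D^-(a,1)\setminus \DD(a,r)$ and $V := (f^\an|_{E})^{-1}(A_{r}) \subseteq E \subseteq B$. Then $V\subseteq O^\an$, $f^\an(V)=A_{r}$ is an open annulus with radii in~$|k^\times|$, and $f^\an|_{V}$ is an isomorphism onto its image. Since $\eta_{0,1} = f^\an(x)$ lies in the closure of~$A_{r}$ in~$\AA^{1,\an}_{k}$, and since $f^\an|_{E}$ extends continuously to $E\cup\{x\}$ sending~$x$ to~$\eta_{0,1}$, the closure of~$V$ in~$C^\an$ contains~$x$. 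This gives items~$i)$--$iii)$.

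For the ``moreover'' clause, within~$W$ the preimage of~$A_{r}$ under~$f^\an$ is contained in~$E$: indeed, $A_{r}\subseteq D^-(a,1)$ while the images of the other components of~$W\setminus\{x\}$ (in particular each $D^-(a'_{j},1)$) are disjoint open unit discs in~$\DD_{k}$. The remaining concern is to exclude preimages of~$A_{r}$ lying in $O^\an\setminus W$ that could intersect some~$B'_{j}$. The plan for this step is to use a compactification argument in the spirit of Remark~\ref{rem:compactifiable}: extending~$f$ to a finite morphism of compact analytic curves, the fiber over~$\eta_{0,1}$ becomes finite, and for $r$ sufficiently close to~$1$ the preimage of~$A_{r}$ is forced to cluster within small tubes at these fiber points. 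By further shrinking~$O$ Zariski-openly so as to discard the sheets contributing away from~$W$, one secures the disjointness from each~$B'_{j}$. This last step is the main obstacle, as it requires reconciling the purely local tube structure at~$x$ with the global geometry of the algebraic morphism~$f$.
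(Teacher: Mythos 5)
Your construction of the annulus itself (items $i)$--$iii)$) is fine: restricting the tube of Proposition~\ref{prop:type2generic} to the component $E\subseteq B$ and pulling back an annulus $D^-(a,1)\setminus \DD(a,r)$ does the job. The gap is in the ``moreover'' clause, and it occurs in two places. First, your claim that the components of $W\setminus\{x\}$ map onto \emph{pairwise distinct} unit discs is false in general: distinct branches correspond to distinct points of $\cC_{x}(\tilde k)$, but the morphism $\tilde f$ underlying Proposition~\ref{prop:type2generic} is an arbitrary finite (generically \'etale) map, so it may identify several of these points; all branches lying over $\red(a)$ are then sent onto the \emph{same} disc $D^-(a,1)$, so already inside $W$ the preimage of $A_{r}$ need not be contained in $E$, and one of these sibling components may well sit inside some $B'_{j}$. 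Second, the global step cannot be repaired as you propose: shrinking $O$ Zariski-openly only deletes finitely many rigid ($k$-rational) points from $O^\an$ and cannot ``discard sheets''; if a preimage of $\eta_{0,1}$ under $f^\an$ (or a sibling component as above) lies inside some $B'_{j}$, then for \emph{every} $r$ the set $(f^\an)^{-1}(A_{r})$ meets $B'_{j}$ in an open region, and no choice of $r$ close to $1$ nor removal of a finite set of closed points fixes this. So the step you flag as ``the main obstacle'' is indeed where the proof breaks.

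The missing idea is to build the adaptation to $B$ and $\cB'$ into the \emph{residue morphism itself}, rather than starting from a generic tube. The paper takes $\tilde f\colon \cC_{x}\to\PP^1_{\tilde k}$ to be (induced by) a uniformizer at the point $a\in\cC_{x}(\tilde k)$ corresponding to the branch in~$B$, chosen moreover to be nonvanishing at the points corresponding to the elements of~$\cB'$; such a function exists by independence of valuations (the Bourbaki approximation theorem cited there). It is then lifted by Lemma~\ref{lem:liftresiduemorphism}. Unramifiedness at $a$ gives degree~$1$ on the branch, hence the isomorphism from a section of $B$ onto an annulus with outer radius $1$ inside the residue disc of $0$; and since $\tilde f$ does not vanish at the points attached to $\cB'$, the branches in the elements of $\cB'$ are sent, near $\eta_{0,1}$, into directions with nonzero reduction, hence miss the annulus. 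Note also that this residue-theoretic control is exactly the local (germ-level) statement that is actually used later, in the proof of Proposition~\ref{prop:separationbricks}; your compactification strategy in the spirit of Remark~\ref{rem:compactifiable} aims at a much stronger global exclusion that the argument neither needs nor, as written, delivers.
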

\begin{proof}
Let~$b$ be a branch emanating from~$x$ whose projection on $\pi_{0}(U\setminus\{x\})$ is~$B$. Let~$a$ be the $\tilde k$-rational point of~$\cC_{x}$ corresponding to~$b$. Pick a uniformizer of the local ring at~$a$. Seeing it as a meromorphic function on~$\cC_{x}$, it gives rise to a finite morphism $\tilde f \colon \cC_{x} \to \PP^1_{k}$ with a simple zero at~$a$. Let us lift it to a morphism $f \colon O \to \AA^1_{k}$ as in Lemma~\ref{lem:liftresiduemorphism}.

Since~$\tilde f$ is unramified at~$a$, the morphism~$f$ induces an isomorphism between a section of~$b$ and its image, \textit{i.e.} an open subset~$U'$ of~$B$ whose closure contains~$x$ and its image. The image~$f(U')$ is an open subset of~$\AA^{1,\an}_{k}$ whose closure contains $f^\an(x) = \eta_{0,1}$. We deduce that $f(U')$ contains an open annulus with radii in $|k^\times|$ whose closure contains~$\eta_{0,1}$. The result follows.

To prove the final part, it is enough to choose the morphism~$\tilde{f}$ in such a way that it does not vanish on any of the $\tilde k$-rational points of~$\cC_{x}$ corresponding to the elements of~$\cB'$. This may be done thanks to the independence of the associated valuations (see \cite[VI, \S 7, $n^\circ 2$, Th\'eor\`eme~1]{BourbakiAC56}).
\end{proof}

\begin{lemma}\label{lem:separationpoints}
Let~$x$ and~$y$ be distinct points of~$C^\an$. Then, there exist a Zariski-open subset~$O$ of~$C$, a morphism $g \colon O \to \AA^1_{k}$ and an open disc or open annulus~$A$ in~$\AA^{1,\an}_{k}$ with radii in~$|k^\times|$ such that
\begin{enumerate}[$i)$]
\item $(g^\an)^{-1}(A)$ contains~$x$;
\item $y$ does not belong to the closure of $(g^\an)^{-1}(A)$ in~$C^\an$.
\end{enumerate}
\end{lemma}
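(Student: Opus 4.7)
The plan is to separate $x$ from $y$ by a morphism to $\AA^1_k$ together with a small disc or annulus therein. I would construct a rational function $h$ on $C$ whose values distinguish $x$ from $y$, and then shrink the target to a small disc or annulus around $h^\an(x)$ whose closure avoids $h^\an(y)$.

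Step~1 (separating function): Produce $h\in k(C)^\times$ (viewed as an element of the product of the function fields of the irreducible components of~$C$, which defines a morphism from a Zariski-open subset of $C$ to $\AA^1_k$) such that $|h(x)|\neq |h(y)|$, with the convention that $|h(\cdot)|=+\infty$ at a pole. When $x$ and $y$ lie in different irreducible components of~$C$, I would take~$h$ supported on the component of~$x$. When they lie in a common component with function field~$K$: if both are non-type-1, they correspond to distinct multiplicative seminorms on~$K$ and any element of~$K$ on which they disagree works; if say~$x$ is a type-1 closed point, any nonzero $h\in K$ vanishing at~$x$ suffices, because the zero locus of a nonzero rational function on the analytification consists of finitely many type-1 points, so $|h(y)|>0=|h(x)|$. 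Replacing~$h$ by~$1/h$ if necessary, I arrange $|h(x)|<+\infty$, let $O\subseteq C$ be the Zariski-open locus where~$h$ is regular, and let $g:=h|_O\colon O\to\AA^1_k$ be the associated morphism.

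Step~2 (choice of $A$): Since $|T(g^\an(x))|=|h(x)|$ and $|k^\times|$ is dense in~$\R_{>0}$, I may select an open disc or open annulus $A\subseteq\AA^{1,\an}_k$ with radii in~$|k^\times|$ containing~$g^\an(x)$ and whose (compact) closure~$\overline{A}$ inside~$\AA^{1,\an}_k$ does not contain $g^\an(y)$ (interpreted as~$\infty$ when~$y$ is a pole of~$h$). Concretely, when $|h(x)|<|h(y)|$ (where $|h(y)|$ may be~$+\infty$) one takes $A:=\DD_k^-(0,r)$ for some $r\in|k^\times|$ with $|h(x)|<r<|h(y)|$; when $|h(y)|<|h(x)|<+\infty$ one takes an open annulus $A:=\{z\in\AA^{1,\an}_k : r<|T(z)|<R\}$ with $r,R\in|k^\times|$ satisfying $|h(y)|<r<|h(x)|<R$.

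Step~3 (verification): To check $y\notin\overline{(g^\an)^{-1}(A)}$ in~$C^\an$, if $y\in O^\an$ then $g^\an(y)\notin\overline{A}$, so continuity of~$g^\an$ yields a neighborhood of~$y$ mapped into $\AA^{1,\an}_k\setminus\overline{A}$, hence disjoint from~$(g^\an)^{-1}(A)$; if $y\notin O^\an$ then~$y$ is a type-1 pole of~$h$, so $|h|\to+\infty$ as one approaches~$y$, and a suitable neighborhood of~$y$ is mapped outside the bounded set~$\overline{A}$. The main obstacle is Step~1, the existence of a separating rational function, which rests on the identification of non-type-1 points of an algebraic curve with valuations on its function field, together with standard facts on zeros of rational functions on curves. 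The rest is a direct continuity argument combined with the behavior of~$|h|$ near its poles.
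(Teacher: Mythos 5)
Your overall strategy (produce a function whose absolute values separate $x$ from $y$, use density of $|k^\times|$ in $\R_{>0}$ to choose a disc or annulus around the value at $x$ avoiding the value at $y$, and finish by continuity) is the same shape as the paper's argument, but two steps of your Step~1/Step~3 fail as written in edge cases involving type-1 points. First, in the common-component case with $x$ of type~1 you assert that \emph{any} nonzero $h\in K$ vanishing at $x$ gives $|h(y)|>0$; this is false when $y$ is itself a type-1 point lying in the zero locus of $h$. You must choose $h$ with $h(x)=0$ and $h(y)\neq 0$ (possible, since the two maximal ideals are distinct); the finiteness of the zero locus alone does not give it. Second, Step~3 treats every type-1 point outside the regularity locus of $h$ as a pole at which $|h|\to+\infty$. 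At a smooth point this is correct, but on a singular curve a rational function can fail to be regular at a type-1 point without blowing up there (indeterminacy: e.g.\ on a nodal curve a rational function whose values along the two branches tend to two different finite numbers), and the same issue arises in your different-components construction when $y$ lies on the intersection of components, where the function ``supported on the component of $x$'' is neither regular at $y$ nor unbounded near it. In those configurations the convention $|h(y)|=+\infty$ and the final ``a neighborhood of $y$ is mapped outside the bounded set $\overline A$'' argument break down.

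Both defects disappear if you arrange $h$ to be regular at both $x$ and $y$, and this is exactly how the paper organizes the proof: choose an affine Zariski-open $O\subseteq C$ with $x,y\in O^\an$ (remove finitely many closed points; a curve with no proper irreducible component is affine), take a closed embedding $O\hookrightarrow\AA^N_k$, and note that, since $x\neq y$ are distinct multiplicative seminorms, some polynomial $P$ satisfies $|P(x)|\neq|P(y)|$; then $P$ restricts to the required morphism $g\colon O\to\AA^1_k$ and the disc/annulus is chosen exactly as in your Step~2. This sidesteps all the case analysis on types, components, poles and singular points. With that replacement for your Step~1 (so that poles and indeterminacy never occur at $x$ or $y$), your Step~2 and the $y\in O^\an$ branch of your Step~3 are correct and complete the proof.
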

\begin{proof}
Let~$O$ be an affine Zariski-open subset of~$C$ such that~$O^\an$ contains~$x$ and~$y$. This is easy to construct by removing at most finitely many closed points from~$C$, since every curve with no proper irreducible component is affine. 

Consider a closed embedding $i \colon O \hookrightarrow \AA^N_{k}$ of~$O$ into an affine space~$\AA^N_{k}$ and its analytification $i^\an\colon O^\an \hookrightarrow \AA^{N,\an}_{k}$. Since~$x$ and~$y$ have different images in~$\AA^{N,\an}_{k}$, there exists $P \in k[T_{1},\dotsc,T_{N}]$, where $T_{1},\dotsc,T_{N}$ denote coordinates on~$\AA^N_{k}$, such that $|P(x)| \ne |P(y)|$. If $P(x)=0$, then, by density of $|k^\times|$ in~$\RR_{>0}$, we can find $r \in |k^\times|$ such that $|P(x)| < r < |P(y)|$. If $P(x) \ne 0$, then, similarly, we can find $r<s \in |k^\times|$ such that $|P(x)| \in (r,s)$ and $|P(y)|\notin [r,s]$. The result follows by noting that the polynomial~$P$ induces a morphism $\AA^N_{k} \to \AA^1_{k}$, hence by restriction a morphism $O \to \AA^1_{k}$.
 \end{proof}
 
 It follows from the definition of an algebraic brick $(O,V,f)$ that~$V$ is a connected component of the preimage by~$f$ of a subset of the affine line of a rather simple kind. However, we will soon turn to definability questions, and the definability of the corresponding subset of the affine line will not be enough to provide a definable counterpart of the brick. Indeed, connected components of definable spaces are not definable as a rule, since such a purely topological notion need not be expressible in the language. The aim of the following technical result is to prove that, under some finiteness hypotheses at the boundary, such a result will hold nonetheless in our setting.
 
\begin{definition}
We say that two subsets~$A$ and~$B$ of a $k$-analytic curve~$X$ are \emph{equal up to a finite set of $k$-rational points} if there exist two finite subsets~$A_{0}$ and~$B_{0}$ of~$X(k)$ such that 
\[A \cup A_{0} = B \cup B_{0}.\]
\end{definition}

\begin{proposition}\label{prop:separationbricks}
Let $\varphi\colon C \to C'$ be a morphism of algebraic curves over~$k$, let~$U'$ be a connected analytic domain of~$(C')^\an$ whose boundary is a finite set of points of type~1 or~2 and let~$U$ be a connected component of $(\varphi^\an)^{-1}(U')$ such that~$\varphi^\an$ is finite at each point of~$U$. Then, there exist a Zariski-closed subset~$Z$ of~$C$, a finite set~$M$, for each $m\in M$, a finite set~$N_{m}$ and, for each~$n\in N_{m}$, a Zariski-open subset~$O_{n}$ of~$C$, a morphism $f_{n}\colon O_{n} \to \AA^1_{k}$ and an open subset $A_{n}$ of~$\AA^{1,\an}_{k}$ that is either an open disc or an open annulus with radii in~$|k^\times|$ such that
\[U = (\varphi^\an)^{-1}(U') \cap Z^\an \cap \bigcup_{m\in M} \bigcap_{n\in N_{m}}  (f_{n}^\an)^{-1}(A_{n})\]
up to a finite set of $k$-rational points.
\end{proposition}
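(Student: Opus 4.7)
The plan is to cover $U$ by a finite collection of algebraic bricks and, for each, to add finitely many brick-type separation conditions that carve out exactly one piece of $U$ inside $(\varphi^\an)^{-1}(U')$. First, using Remark~\ref{rem:compactifiable}, I would pass to a compactification; combined with the finiteness of $\partial U'$ and the finiteness of $\varphi^\an$ on $U$, this yields that the topological boundary $\partial U$ of $U$ in $C^\an$ is a finite set of type~1 and~2 points mapping to $\partial U'$, and that only finitely many other connected components of $(\varphi^\an)^{-1}(U')$ have closure meeting $\bar U$.

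Next, applying Theorem~\ref{thm:semistable} I would fix a finite triangulation $S$ of a compact neighborhood of $\bar U$ containing $\partial U$ and the finitely many type~2 points shared with other components. Modulo finitely many $k$-rational points (the type~1 vertices of $S$), this decomposes $U$ into a finite union of open discs, open annuli, and tubes, each of which embeds in an algebraic brick via Propositions~\ref{prop:type1}, \ref{prop:type3}, \ref{prop:type2generic}, and~\ref{prop:type2branch}. This supplies the index set $M$ (the pieces) together with, for each $m \in M$, a base brick whose analytic part covers the $m$-th piece. The Zariski-closed set $Z$ is taken to be the union of the irreducible components of $C$ meeting $U$, which removes any component of $(\varphi^\an)^{-1}(U')$ lying in a different irreducible component of $C$.

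For each piece~$m$, I would then augment the base brick with additional brick conditions excluding the finitely many nearby components of $(\varphi^\an)^{-1}(U')$. At a type~2 boundary point $x \in \partial U$ where $U$ meets another component, Proposition~\ref{prop:type2branch} yields a morphism $\tilde f_x \colon \cC_x \to \PP^1_{\tilde k}$ sending the branches of $U$ at~$x$ to $\tilde k$-rational points distinct from those of the other components, via independence of valuations; the associated open annulus preimage then isolates $U$ locally. Away from boundary points, Lemma~\ref{lem:separationpoints} provides brick data separating $U$ from each other bulk component. Packaging these auxiliary conditions into $N_m$ produces the required conjunction $\bigcap_{n \in N_m}(f_n^\an)^{-1}(A_n)$.

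The main obstacle is the bookkeeping at type~2 boundary points where several components of $(\varphi^\an)^{-1}(U')$ share the same point of $\partial U$ and must be separated branch by branch rather than point by point; this is what forces the disjunctive structure $\bigcup_{m\in M}\bigcap_{n \in N_m}$, with the conjunction handling local separation around a single covering piece and the disjunction reassembling the pieces into $U$. The unavoidable finite exceptional set of $k$-rational points absorbs the type~1 triangulation vertices together with the finitely many $k$-points lying on the boundaries of the annuli $A_n$ inside $\partial U$.
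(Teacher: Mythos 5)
Your reductions (compactify via Remark~\ref{rem:compactifiable}, cut down to the components where $\varphi$ is non-constant, absorb the type~1 boundary, note that only finitely many other components of $V:=(\varphi^\an)^{-1}(U')$ approach $\ov U$) and your treatment of the boundary branches (Proposition~\ref{prop:type2branch} plus independence of valuations) match the paper. The genuine gap is in the step that is supposed to produce the index set $M$: you claim that a finite triangulation containing $\partial U$ decomposes $U$, up to finitely many $k$-points, into finitely many open discs, open annuli and tubes, \emph{each contained in an algebraic brick}, citing Propositions~\ref{prop:type1}, \ref{prop:type3}, \ref{prop:type2generic} and~\ref{prop:type2branch}. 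Those propositions only produce a brick \emph{inside} a prescribed neighborhood of a given point; they do not produce a brick (or a single morphism $f\colon O\to \AA^1_k$ with $(f^\an)^{-1}(A)$ of the allowed shape) containing a prescribed large piece such as a full residue disc, a whole edge annulus $\tau^{-1}(I)$, or a triangulation tube. Indeed, Proposition~\ref{prop:type2generic} must discard finitely many residue discs precisely because a single lifted function cannot be controlled on all branches at once; asserting that every maximal open disc of $U$ sits inside an algebraic open disc is a nontrivial statement (it amounts to finding a rational function with exactly one simple zero and no pole in that disc) that the paper never needs and your citations do not supply. So the finite family of ``base bricks whose analytic part covers the $m$-th piece'' is not obtained.

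A second, related omission is the finiteness of the separating conditions: Lemma~\ref{lem:separationpoints} separates a point from a single point $y$, whereas you must avoid the entire closure of $V\setminus U$, an infinite compact set. The paper extracts finiteness from compactness of $C^\an$ (after passing to smooth projective compactifications), applied twice: once to the compact set $F_b$ of intruding points inside each boundary annulus $(f_b^\an)^{-1}(A_b)$, and once to the compact core $U^- = U\setminus\bigcup_b U_b$, each point of which gets an open set $U_x = V\cap\bigcap_e (g_e^\an)^{-1}(A_e)$ avoiding $\ov{V\setminus U}$; the union $\bigcup_b U_b \cup \bigcup_x U_x$ is then exactly $U$. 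If you patch your argument by covering each non-compact piece of your decomposition in this way, the triangulation is no longer doing any work and your proof collapses onto the paper's: the disjunction over $M$ should index this compactness cover (boundary branches plus finitely many core points), not triangulation pieces. As written, the finiteness claims underlying both $M$ and the sets $N_m$ are not established.
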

\begin{proof}
We want to express the connected component~$U$ in a ``definable way'' with respect to~$U'$. Here by ``definable way'' we simply mean to be a finite boolean combination as in the statement. Note however that at the level of $k$-points we do obtain a true definability transfer: if $U$ satisfies the identity of the statement and the $k$-points of $U'$ are $\cL$-definable (for $\cL$ either $\cL_\BB$ or $\cL_\BB^\an$), then the $k$-points of $U$ will also be $\cL$-definable. 

The strategy is as follows: we first reduce to the case where~$C$ and~$C'$ are smooth and proper, then to the case where the boundary of~$U'$ contains only points of type~2. The next step of the proof is to locally isolate~$U$ in a definable way above a given point point of~$U'$ or of its closure. We conclude by a compactness argument.

\medbreak

Let~$Z$ be the union of the irreducible components of~$C$ on which~$\varphi$ is not constant. It is a Zariski-closed subset of~$C$ whose analytification contains~$U$ by assumption. Up to replacing~$C$ by~$Z$, we may assume that~$\varphi$ has relative dimension~0.

Since the curves~$C$ and~$C'$ are generically smooth and we may work up to finitely many $k$-rational points, up to shrinking~$C$ and~$C'$, we may assume that they are smooth. The morphism~$\varphi$ extends to a morphism~$\bar\varphi$ between smooth compactifications~$\bar C$ and~$\bar C'$ of~$C$ and~$C'$. Since we may work up to finitely many $k$-rational points, up to shrinking~$U'$, we may assume that $(\bar\varphi^\an)^{-1}(U')$ is contained in~$C^\an$. We may now replace~$C$ and~$C'$ by $\bar C$ and $\bar C'$ respectively and~$\varphi$ by~$\bar\varphi$ without changing~$U$ and~$U'$, so we may assume that~$C$ and~$C'$ are smooth and projective. Note that~$\varphi$ is then finite and $C^\an$ and~$(C')^\an$ compact. 

\medbreak

Let~$x$ be a point of type~1 at the boundary of~$U$. There exists a neighborhood~$D$ of~$x$ in~$C^\an$ that is isomorphic to an open disc and such that $D \cap U = D\setminus \{x\}$. In particular, $U \cup \{x\}$ is still a connected analytic domain of~$C^\an$ and~$x$ does not belong to its boundary anymore.

Denote by~$U_{1}$ the set of type~1 points at the boundary of~$U$. By assumption, it is finite. Using the previous argument repeatedly, we show that $U \cup U_{1}$ is still a connected analytic domain of~$C^\an$ and that its boundary contains no points of type~1. 

Let us define the set~$U'_{1}$ similarly. Since each point in~$U'_{1}$ is smooth, it cannot belong to the closure of another connected component of~$(\varphi^\an)^{-1}(U)$. It follows that $U'\cup U'_{1}$ is a connected component of $\varphi^{-1}(U\cup U_{1})$. Up to replacing~$U$ and~$U'$ by $U \cup U_{1}$ and $U' \cup U'_{1}$, we may assume that~$U'$ has no points of type~1 in its boundary.

\medbreak

For each subset~$W$ of~$C^\an$, denote by~$\ov W$ the closure of~$W$ in~$C^\an$. Set $V := (\varphi^\an)^{-1}(U')$. It follows from the assumptions that $\ov{U}\setminus U$ and $\ov{V}\setminus V$ are finite sets of points of type~2. 

For each point $x\in C^\an$ of type~2 and each analytic domain~$W$ of~$C^\an$, denote by~$\cB_{x}(W)$ the set of branches emanating from the point~$x$ that belong to~$W$. Note that, for each~$x$ of type~2, we have $\cB_{x}(U) \subseteq \cB_{x}(V)$ and that, if $x\in \ov{U}\setminus U$ (resp. $x\in \ov{V}\setminus V$), then the set~$\cB_{x}(U)$ (resp. $\cB_{x}(V)$) is finite, because $U$ is connected (resp. $V$ has finitely many connected components). Set $\cB(U) := \bigcup_{x\in \ov{U}\setminus U} \cB_{x}(U)$ and $\cB(V) := \bigcup_{x\in \ov{V}\setminus V} \cB_{x}(V)$. Those are finite sets too.

Let $x\in \ov{U}\setminus U$ be a point of type~2 and let $b\in\cB_{x}(U)$. By Proposition~\ref{prop:type2branch}, there exist a Zariski-open subset~$O_{b}$ of~$C$, a morphism $f_{b} \colon O_{b} \to \AA^1_{k}$ and an open annulus~$A_{b}$ with radii in~$|k^\times|$ in~$\AA^{1,\an}_{k}$ such that $(f_{b}^\an)^{-1}(A_{b})$ contains the branch~$b$ but no other element of~$\cB_{x}(V)$. In particular, the closure~$F_{b}$ of $(f_{b}^\an)^{-1}(A_{b}) \cap (V\setminus U)$ in~$C^\an$ does not contain the point~$x$. By Lemma~\ref{lem:separationpoints}, for each $y \in F_{b}$, there exist a Zariski-open subset~$O_{y}$ of~$C$, a morphism $g_{y} \colon O_{y} \to \AA^1_{k}$ and an open disc or open annulus~$A_{y}$ with radii in~$|k^\times|$ in~$\AA^{1,\an}_{k}$ such that~$x$ belongs to $(g_{y}^\an)^{-1}(A_{y})$ and~$y$ does not belong to the closure of $(g_{y}^\an)^{-1}(A_{y})$ in~$C^\an$. Since~$C^\an$ is compact, $F_{b}$ is compact too, hence there exists a finite subset~$F^0_{b}$ of~$F_{b}$ such that
\[F_{b} \subseteq \bigcup_{y\in F^0_{b}} C^\an \setminus \overline{(g_{y}^\an)^{-1}(A_{y})}.\]
The set
\[U_{b} := V \cap (f_{b}^\an)^{-1}(A_{b}) \cap \bigcap_{y\in F^0_{b}} (g_{y}^\an)^{-1}(A_{y})\]
is an open subset of~$U$ containing~$b$ and the set 
\[U^- := U \setminus \bigcup_{b\in\cB(U)} U_{b}\] 
is compact.

Let~$x\in U^-$. The closure of~$V\setminus U$ in~$C^\an$ is a compact set that does not contain~$x$. Using Lemma~\ref{lem:separationpoints} as before, we deduce that there exist a finite set~$E_{x}$ and, for each $e \in E_{x}$, a Zariski-open subset~$O_{e}$ of~$C$, a morphism $g_{e} \colon O_{e} \to \PP^1_{k}$ and an open disc or open annulus~$A_{e}$ with radii in~$|k^\times|$ in~$\AA^{1,\an}_{k}$ such that
\[U_{x} := V \cap \bigcap_{e\in E_{x}} (g_{e}^\an)^{-1}(A_{e})\]
is an open subset of~$U$ containing~$x$.

Since~$U^-$ is compact, there exists a finite subset~$U^0$ of~$U^-$ such that 
$U^- \subseteq \bigcup_{x\in U^0} U_{x}$.
We conclude by writing 
\[U =  \bigcup_{b\in\cB(U)} U_{b} \cup \bigcup_{x\in U^0} U_{x}.\]
\end{proof}

\begin{corollary}\label{cor:definablealgbricks}
For each algebraic brick~$(O,V,f)$ of~$C^\an$, the set~$V(k)$ is a definable subset of~$C(k)$ and the map $V(k) \to k$ induced by~$f$ is definable.
\end{corollary}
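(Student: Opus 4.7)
The strategy is to apply Proposition~\ref{prop:separationbricks} directly to the morphism $\varphi = f \colon O \to \AA^1_{k}$, taking $U' := f^\an(V)$ and $U := V$. If this is legitimate, then the expression furnished by the proposition immediately yields the definability of $V(k)$, since each ingredient has a definable counterpart at the level of $k$-points.

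The first task is to verify the hypotheses of Proposition~\ref{prop:separationbricks}. In each of the three possible types of algebraic brick, the set $f^\an(V)$ is a connected analytic domain of $\AA^{1,\an}_k$ whose topological boundary consists of finitely many points of the form $\eta_{a,r}$ with $r \in |k^\times|$, hence of type~$2$: one such point when $V$ is an algebraic open disc, two when $V$ is an algebraic open annulus (one of which may degenerate to a $k$-rational point of type~$1$ if the inner radius is $0$), and finitely many type-2 points when $V$ is an algebraic tube. Moreover, $f^\an$ is finite at every point of $V$: for disc and annulus bricks, $f^\an|_V$ is an isomorphism onto its image, and for tube bricks, this is built into the description provided by Proposition~\ref{prop:type2generic}.

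The hypothesis that requires the most care is that $V$ is actually a connected component of $(f^\an)^{-1}(f^\an(V))$. For disc and annulus bricks, let $V^*$ be the connected component containing $V$. Since $f^\an|_V$ is an isomorphism onto~$f^\an(V)$, its inverse $g \colon f^\an(V) \to V$ is continuous, and any limit point $y \in V^*$ of $V$ satisfies $y = \lim y_n$ for some net $y_n \in V$; then $g(f^\an(y_n)) = y_n$ and $g(f^\an(y_n)) \to g(f^\an(y)) \in V$, so by Hausdorffness $y \in V$. Thus $V$ is closed in $V^*$, and since it is also open there, connectedness forces $V = V^*$. For tube bricks one argues as in the proof of Proposition~\ref{prop:type2generic}, using that the topological boundary of $V$ in $C^\an$ is the single point~$\{x\}$ and that the Shilov boundary of the affinoid preimage is controlled; this is the step I expect to be the main technical obstacle, but it follows the pattern of Ducros' analysis of tubes.

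Once these points are checked, the proposition provides a Zariski-closed set $Z \subseteq C$, finite sets $M$ and $N_m$, and for each $n \in N_m$ a Zariski-open $O_n \subseteq C$, a morphism $f_n \colon O_n \to \AA^1_k$ and an open disc or open annulus $A_n \subseteq \AA^{1,\an}_k$ with radii in $|k^\times|$, such that
\[
V = (f^\an)^{-1}(f^\an(V)) \cap Z^\an \cap \bigcup_{m\in M}\bigcap_{n\in N_m}(f_n^\an)^{-1}(A_n)
\]
up to a finite set of $k$-rational points. Each piece has a definable shadow on $k$-points: $Z(k)$ is $\cLr$-definable by Convention~\ref{convention.alg}; the $k$-points of $f^\an(V)$ and of each $A_n$ form a finite boolean combination of conditions of the shape $|T-a|<r$ or $|T-a|\le r$ with $r \in |k^\times|$, hence are $\cL_3$-definable; the preimages are definable because the algebraic morphisms $f$ and $f_n$ are $\cLr$-definable on $k$-points; and finite sets of $k$-rational points are trivially definable. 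Combining these gives definability of $V(k) \subseteq C(k)$. The second assertion is then immediate: the map $V(k) \to k$ induced by~$f$ is the restriction to the definable set $V(k)$ of the $\cLr$-definable morphism $O(k) \to k$ associated to $f$, and such a restriction is itself definable.
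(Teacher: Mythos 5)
Your proposal follows exactly the paper's proof: the paper's entire argument is the one-line application of Proposition~\ref{prop:separationbricks} with $\varphi=f$, $U'=f^\an(V)$ and $U=V$. Your verification of the hypotheses for disc and annulus bricks -- the boundary of the image being a finite set of type 1 or 2 points, finiteness of $f^\an$ at the points of $V$, and the open-and-closed net argument showing that $V$ is a connected component of $(f^\an)^{-1}(f^\an(V))$ -- is correct and fills in details that the paper leaves implicit, as is the final transfer to $k$-points.

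The step you defer in the tube case is, however, a genuine gap, and the route you sketch (``argue as in the proof of Proposition~\ref{prop:type2generic}'') does not close it, because for an arbitrary algebraic tube in the sense of the definition the needed hypothesis can simply fail: $V$ need not be a connected component of $(f^\an)^{-1}(f^\an(V))$. Concretely (residue characteristic $\neq 2$), take $C=\AA^1_k$, $f=T^2$, $x=\eta_{0,1}$ and $V=\DD_k\setminus(\DD^-(0,1)\cup\DD^-(1,1))$: each connected component of $V\setminus\{x\}$ is a residue disc $\DD^-(c,1)$ with $\tilde c\neq 0,1$, mapped isomorphically onto $\DD^-(c^2,1)$, and $f^\an(V)=\DD_k\setminus\DD^-(0,1)$, so $(\AA^1_k,V,f)$ is an algebraic tube; yet $(f^\an)^{-1}(f^\an(V))=\DD_k\setminus\DD^-(0,1)$ is connected and strictly contains $V$ (it also contains $\DD^-(1,1)$). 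The definition of a tube says nothing about the branches at $x$ that are \emph{not} in $V$, and such a branch may be sent by $f^\an$ into $f^\an(V)$, gluing extra material onto the component containing $V$. To complete the argument you must either (a) restrict to the tubes actually produced in Proposition~\ref{prop:type2generic}, which are by construction connected components of the preimage of an affinoid of the form $\DD_k$ deprived of finitely many open unit discs, so the hypothesis holds for them (this is what the decomposition results use); or (b) handle a general tube by first separating off the finitely many branches at $x$ missing from $V$ (they correspond to the finite complement of $\cU_V$ in $\cC_x$), using Proposition~\ref{prop:type2branch} or Lemma~\ref{lem:separationpoints} as in the proof of Proposition~\ref{prop:separationbricks}, before invoking that proposition; either way the resulting description is still a finite boolean combination with definable $k$-points, so the corollary is safe, but as written your tube case is not proved. (The paper's own one-line proof silently assumes the same connected-component property, so your instinct that this is the delicate point was right -- it just needs an actual argument.)
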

\begin{proof}
Apply Proposition~\ref{prop:separationbricks} with $\varphi=f$, $U'=f(V)$ and $U=V$.
\end{proof}

\begin{corollary}\label{cor:definablereductionalg}
For each point $x\in C^\an$ of type~2 and each algebraic tube $(O,V,f)$ centered at~$x$, the map $V(k) \to \cC_{x}(\tilde k)$ induced by~$\rho_{V}$ is definable.
\end{corollary}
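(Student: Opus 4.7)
The plan is to use the fact that $\rho_V$ is locally constant on $V\setminus\{x\}$, has only finitely many distinct values, and that the pieces on which it is constant are already known to be definable.

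First, I would decompose $V = \{x\} \sqcup \bigsqcup_{i=1}^m E_i$, where $E_1,\dotsc,E_m$ are the connected components of $V\setminus\{x\}$. Finiteness of $m$ is forced by the algebraic tube structure: $f^\an(V)$ is $\DD_k$ minus finitely many open unit discs, each $f^\an(E_i)$ is a connected component of $f^\an(V)\setminus\{\eta_{0,1}\}$ (and $f^\an$ restricts to an isomorphism on each $E_i$), and $f^\an$ is quasi-finite, so only finitely many $E_i$ can occur.

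Next, by the second part of Remark~\ref{rem:restriction}, each triple $(O,E_i,f)$ is itself an algebraic open disc. Corollary~\ref{cor:definablealgbricks} then gives that each $E_i(k)$ is a definable subset of $C(k)$. Because $x$ is of type~$2$ and therefore not a $k$-rational point, $V(k) = \bigsqcup_{i=1}^m E_i(k)$ is a finite partition into definable sets. Recalling from Section~\ref{sec:residuecurve} that $\rho_V$ is constant on each $E_i$, with value $c_i := \beta_x(b_{E_i}) \in \cC_x(\tilde k)$, the restriction of $\rho_V$ to $V(k)$ is the piecewise constant map $z \in E_i(k) \mapsto c_i$, which is definable using the finitely many parameters $c_1,\dotsc,c_m$ from the residue-field sort (which we view as a subset of a projective space over~$\tilde k$ as in Convention~\ref{convention.alg}).

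I do not expect a genuine obstacle: all of the analytic content is already absorbed into Corollary~\ref{cor:definablealgbricks} and the construction of $\rho_V$; what remains is the formal observation that a finite piecewise-constant function with definable pieces and values in a definable set is definable. The only point to verify carefully is that Remark~\ref{rem:restriction} really allows each component $E_i$ to inherit the \emph{same} morphism $f$, so that we may invoke Corollary~\ref{cor:definablealgbricks} directly; this is exactly what the remark provides. If a single explicit formula is preferred to a case distinction, one can alternatively invoke Lemma~\ref{lem:residue_morph} to obtain morphisms $f_1,\dotsc,f_r$ on a Zariski-open $O' \ni x$ for which $z \mapsto (\red(f_1^\an(z)),\dotsc,\red(f_r^\an(z)))$ agrees with $\iota_V\circ\rho_V$ on $(O'\cap V)(k)$; the values $c_i$ can then be recovered as this formula evaluated at any $k$-rational point of $E_i \cap O'^\an$, which is nonempty since $O'^\an$ is an open neighborhood of $x$ meeting every $E_i$.
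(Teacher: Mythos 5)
There is a genuine gap, and it sits at the very first step: your claim that $V\setminus\{x\}$ has only \emph{finitely} many connected components is false. Since $x$ is of type~2, there are infinitely many branches emanating from $x$, and an algebraic tube omits only finitely many of them: $f^\an(V)$ is $\DD_k$ deprived of finitely many open unit discs, so $f^\an(V)\setminus\{\eta_{0,1}\}$ still has infinitely many connected components (one open unit residue disc for each of the cofinitely many remaining residue classes), and hence so does $V\setminus\{x\}$. Quasi-finiteness of $f^\an$ bounds fibres, not the number of components. The model case in Section~\ref{sec:residuecurve} already shows the problem: for $V=\DD_k$ centered at $\eta_{0,1}$, the components of $V\setminus\{\eta_{0,1}\}$ are the discs $\DD^-(a,1)$ indexed by $\tilde k$, and $\rho_V$ is the reduction map, whose image is all of $\AA^1_{\tilde k}(\tilde k)$. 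So $\rho_V$ restricted to $V(k)$ takes infinitely many values $\beta_x(b_{E})$, and a ``finite piecewise-constant map with definable pieces'' argument cannot produce it: an infinite case distinction is not a first-order formula, even though each individual $E(k)$ is definable by Remark~\ref{rem:restriction} and Corollary~\ref{cor:definablealgbricks}.

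The route you relegate to a cosmetic alternative is in fact the essential one, and it is what the paper does: Lemma~\ref{lem:residue_morph} supplies a Zariski-open $O'\ni x$ and morphisms $f_1,\dotsc,f_r$ so that $z\mapsto(\red(f_1^\an(z)),\dotsc,\red(f_r^\an(z)))$ agrees with $\iota_V\circ\rho_V$ on $((O')^\an\cap V)\setminus\{x\}$; this single definable formula handles all but the exceptional components at once. What remains is finite for a different reason than the one you give: a component of $V\setminus\{x\}$ fails to be contained in $(O')^\an$ only if it meets the finite set $(C\setminus O')^\an(k)$, so only finitely many components $W$ are exceptional; on each of these $\rho_V$ is constant, and $W(k)$ is definable (by Proposition~\ref{prop:separationbricks}, or by your Remark~\ref{rem:restriction} plus Corollary~\ref{cor:definablealgbricks} observation), which closes the argument. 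As written, even your alternative still presupposes finitely many values $c_i$ and uses the formula of Lemma~\ref{lem:residue_morph} only to evaluate them, so it inherits the same flaw; you must instead use that formula as the definition of the map on $((O')^\an\cap V)(k)$ and reserve the constancy argument for the finitely many exceptional components.
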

\begin{proof}
By Corollary~\ref{cor:definablealgbricks}, $V(k)$ is a definable subset of~$C(k)$. By Lemma~\ref{lem:residue_morph}, there exists a Zariski-open subset~$O'$ of~$C$ such that $(O')^\an$ contains~$x$ and the map $((O')^\an\cap V)(k) \to \cC_{x}(\tilde k)$ induced by~$\rho_{V}$ is definable.

The set~$\cW$ of connected components of~$V\setminus\{x\}$ that are not contained in~$(O')^\an$ is finite. By Proposition~\ref{prop:separationbricks}, for each $W\in \cW$, the set $W(k)$ is definable. Since the map of the statement is constant on such a~$W$, the result follows.
\end{proof}

\subsubsection{Global decomposition}\label{sec:globaldecomposition}

Let~$C$ be an algebraic curve.

\begin{lemma}\label{lem:intersectionalgebraic bricks}
Let~$V$ and~$V'$ be algebraic bricks of~$C^\an$. Then $V\cap V'$ is a finite disjoint union of algebraic bricks of~$C^\an$.
\end{lemma}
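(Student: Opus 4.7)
The plan is to find a finite triangulation of a suitable compactification that is compatible with both bricks $V$ and $V'$, decompose $V \cap V'$ accordingly, and identify each piece as an algebraic brick using the morphism from $V$'s structure (possibly post-composed with an affine automorphism of $\AA^1_k$ to normalize tubes, as in Remark~\ref{rem:restriction}). First, observe that each of $V$ and $V'$ has topological boundary in $C^\an$ equal to a finite set of type-2 points, since their radii lie in $|k^\times|$: an open disc or tube contributes a single boundary point of type~2, while an open annulus contributes two. In particular, $V \cap V'$ is a semianalytic subset of $C^\an$ with finite type-2 boundary.

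I would pass to a compact strictly $k$-analytic curve $\overline C$ containing the closure of $V \cap V'$ and apply Theorem~\ref{thm:semistable} to produce a finite triangulation $S$ of $\overline C$ containing the boundary points of $V$ and $V'$, the centers of any tubes among them, and enough additional type-2 points so that both $V$ and $V'$ are unions of pieces of the natural decomposition of $\overline C$ (whose pieces are the vertices of $S$ and the connected components of $\overline C \setminus S$, the latter being open discs or open annuli). Then $V \cap V'$ itself is a union of such pieces.

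For each vertex $x \in S \cap V \cap V'$, I would absorb $x$ together with small end-portions of the adjacent pieces into a small tube $T_x \subseteq V \cap V'$. Outside these tubes, the remaining pieces of $V \cap V'$ are open annuli and open discs, giving a finite disjoint decomposition of $V \cap V'$ into pieces that are Berkovich open discs, open annuli, or tubes. Each such piece $W$ is contained in $V$; restricting the morphism $f \colon O \to \AA^1_k$ from $V$'s brick yields an isomorphism $W \to f^\an(W) \subseteq f^\an(V)$, whose image is of the same type (open disc, open annulus, or tube) with radii in $|k^\times|$, the latter because boundary points of $W$ are type-2 points inherited from the triangulation. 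Post-composing with an affine automorphism of $\AA^1_k$ if necessary (to send the central point of a tube to $\eta_{0,1}$), we conclude that $(O, W, f)$ is an algebraic brick.

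The main obstacle is arranging the triangulation so that $V$ and $V'$ are both unions of pieces of the decomposition; this is analogous to the simultaneous semistable reduction argument (cf.\ Corollary~\ref{cor:compact_compatible}). A secondary technical point is verifying that the image of each piece under $f$ has the correct form with radii in $|k^\times|$: this rests on $f|_V$ being an analytic isomorphism (preserving the types of sub-domains) together with the fact that type-2 points of the triangulation correspond to boundary radii in $|k^\times|$.
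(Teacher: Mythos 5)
Your route is genuinely different from the paper's: the paper never triangulates, but argues directly by cases. When at least one of $V,V'$ is an open disc or annulus, it writes $V\cap V'$ as a union of connected components of $V\setminus E$ for the finite set $E$ of boundary points of $V\cap V'$ in $V$, and gets finiteness from the observation that an open-disc component of $V\setminus E$ cannot be contained in $V\cap V'$ unless it equals it; the tube/tube case with distinct centers is reduced to the disc case by splitting off two disjoint sub-tubes. Your triangulation-based plan can be made to work (and the re-identification of pieces via the morphism from $V$'s datum, normalized by an automorphism as in Remark~\ref{rem:restriction}, is exactly the right mechanism), but it is heavier machinery, and two of your steps have genuine problems as written.

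First, the vertex-absorption step would fail if carried out literally. A tube centered at $x$ is an affinoid domain each of whose connected components off $x$ is a \emph{full} open disc with boundary $\{x\}$; ``small end-portions of the adjacent pieces'' do not produce such an object: an end-portion of an adjacent annulus piece glued to $x$ is again an annulus, not a disc, and a proper end-portion of a disc piece has the wrong shape too (and the resulting set is not affinoid). What you must take is $T_x=\{x\}$ together with the \emph{entire} open-disc components of $\overline C\setminus S$ adjacent to $x$ that lie in $V\cap V'$, and you must check that these account for all but finitely many branches at $x$; this does hold, because each brick containing $x$ (either with $x$ interior or as the center of a tube) contains a tube centered at $x$, so $V\cap V'$ contains the full disc along cofinitely many branches. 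Second, what you flag as ``the main obstacle'' is left open in your write-up, although it closes easily and needs no compatibility statement like Corollary~\ref{cor:compact_compatible}: once $S$ contains the finitely many topological boundary points of $V$ and of $V'$ in $\overline C$ (these are of type 1 or 2 --- not always type 2 as you assert, e.g.\ punctured-disc annuli, points of $\overline C\setminus C^\an$, or singular points contribute type-1 boundary points), each of $V\setminus S$ and $V'\setminus S$ is open and closed in $\overline C\setminus S$, hence a union of its connected components, so $V\cap V'$ is automatically a union of such components plus finitely many vertices; Theorem~\ref{thm:semistable} alone suffices. The type-1 ends only produce annulus pieces with inner radius $0$, which is still permitted (radii in $|k|$), so the radius bookkeeping in your last step survives this correction.
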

\begin{proof}
Assume that~$V$ or~$V'$ is an algebraic open disc or annulus. Without loss of generality, we may suppose that~$V'$ is. In this case, $V\cap V'$ is an open subset of~$V$ with finitely many boundary points in~$V$. Denote by~$E$ this set of boundary points. The set $V\cap V'$ is a disjoint union of connected components of $V \setminus E$. Note that all the connected components of $V\setminus E$ are finite disjoint union of algebraic bricks and that only finitely many of them are not open discs. But if an open disc is contained in~$V$ (resp.~$V'$), then either it is equal to the whole~$V$ (resp.~$V'$) or its closure is contained in~$V$ (resp.~$V'$). We deduce that no connected component of~$V\setminus E$ that is an open disc may be contained in $V\cap V'$, except when this connected component is equal to $V\cap V'$ itself. The result follows.

It remains to consider the case where~$V$ and~$V'$ are algebraic tubes. If~$V$ and~$V'$ have the same center, then their intersection is an algebraic tube (with the same center). If~$V$ and~$V'$ have different centers, say~$x$ and~$x'$ respectively, denote by~$D$ (resp.~$D'$) the unique connected component of~$V\setminus\{x\}$ (resp.~$V'\setminus\{x'\}$) containing the center of~$V'$ (resp.~$V$). Note that the algebraic tubes $T:=V\setminus D$ and $T':=V'\setminus D'$ are disjoint and that $T \subset D'$ and $T'\subset D$. Writing  $V \cap V' = T \sqcup T' \sqcup (D\cap D')$, we are reduced to the case of an intersection of two discs, which we already dealt with.
\end{proof}

\begin{lemma}\label{lem:complementalgebraic bricks}
Let~$V$ and~$V'$ be algebraic bricks of~$C^\an$. Then $V\setminus V'$  is a finite disjoint union of algebraic bricks of~$C^\an$.
\end{lemma}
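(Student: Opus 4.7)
The plan is to first reduce to the case of a single brick $V'$ contained in $V$, and then to perform a case analysis on the type of $V'$.

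For the reduction, apply Lemma~\ref{lem:intersectionalgebraic bricks} to decompose $V\cap V'$ as a finite disjoint union $V'_1\sqcup\cdots\sqcup V'_n$ of algebraic bricks contained in $V$. Then
\[
V\setminus V' \;=\; V\setminus (V\cap V')\;=\; \bigcap_{i=1}^n (V\setminus V'_i).
\]
Since Lemma~\ref{lem:intersectionalgebraic bricks} extends at once to finite disjoint unions of bricks (the indexed intersections remain pairwise disjoint), once the following \emph{key claim} is known---that $V\setminus V'$ is a finite disjoint union of algebraic bricks whenever $V'\subseteq V$ is a single brick---the general statement follows by iterated intersection.

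The key claim is established by a case analysis on the type of $V'$. If $V'$ is an algebraic open disc with Berkovich centre $p$, then $p$ is a point of type~2 (the radius lying in $|k^\times|$). If $V'\subsetneq V$, the tree-like topology of Berkovich curves forces $p\in V$, and $V'$ is one of the connected components of $V\setminus\{p\}$. Using Remark~\ref{rem:restriction} one chooses an algebraic tube $B_p$ at $p$ contained in $V$ and having $V'$ among its branches; then
\[
V\setminus V' = (B_p\setminus V')\sqcup (V\setminus B_p),
\]
where $B_p\setminus V'$ is still an algebraic tube at $p$ (with one more branch removed), and $V\setminus B_p$ decomposes as a finite disjoint union of algebraic bricks by a direct analysis of the branches of $p$ in~$V$: the branches not belonging to $B_p$ contribute open discs, while the ``outward'' directions from $p$ in $V$ contribute one or two open annuli according to whether $V$ is a disc, an annulus or a tube. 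An algebraic open annulus $V'$ is treated analogously at each of the one or two skeleton endpoints of $V'$ that lie in $V$. Finally, when $V'$ is an algebraic tube centred at a point $x$, then $V'$ is closed in~$V$ and $V\setminus V'$ is open; its connected components are the open discs corresponding to the branches of $x$ in $V$ that were removed to form $V'$, together with the pieces of $V$ lying ``outside'' the tube.

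The main technical obstacle is the careful treatment of the finite set of type-2 and type-3 points at the boundary of $V'$ inside $V$: each must be absorbed into an honest algebraic brick---a tube at each type-2 point, an open annulus enclosing each type-3 point---and the data $(O,V,f)$ witnessing the brick structure of each produced piece has to be exhibited concretely, possibly after shrinking the Zariski opens or composing $f$ with an automorphism of $\AA^1_{k}$ to normalise centres and radii as in Remark~\ref{rem:restriction}.
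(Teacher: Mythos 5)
Your reduction to a single brick $V'\subseteq V$ (via Lemma~\ref{lem:intersectionalgebraic bricks} and iterated intersections) is fine and agrees with the paper, and your tube-peeling idea does work when the boundary point $p$ of $V'$ lies on the skeleton of $V$, or when $V$ is a disc. The genuine gap is the claim describing $V\setminus B_p$: you assert that, besides finitely many open discs coming from branches of $p$ missed by $B_p$, the ``outward'' directions contribute only one or two open annuli. This fails whenever $V'$ (hence $p$) sits strictly inside a maximal open disc of an annulus or a tube $V$. Concretely, take $V=\{u<|T|<v\}$, let $D=\DD^-(c,|c|)$ be a maximal open disc of $V$ and $V'=\DD^-(c',\rho)\subsetneq D$ with $\rho<|c|$, so that $p=\eta_{c',\rho}\notin\Gamma_{V}$ and $B_p$ is (the preimage of) the closed disc $\DD(c',\rho)$. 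Then $V\setminus B_p$ contains the attaching point $\eta_{c,|c|}\in\Gamma_{V}$ and all the other maximal discs of $V$ at that point; its brick decomposition is a tube centered at $\eta_{c,|c|}$ together with three open annuli, not ``discs plus one or two annuli''. Worse, proving that this outer piece is a finite disjoint union of bricks is itself an instance of the statement being proved (a brick minus a closed disc or tube sitting off the skeleton), so as written the step is circular. The standard repair --- and what the paper does repeatedly --- is to first write $V\setminus V'=(V\setminus D)\sqcup(D\setminus V')$ whenever $V'$ is contained in a maximal disc $D$ of $V$, reducing to the disc case and treating $V\setminus D$ by the analysis at the skeleton point; your proposal never makes this reduction.

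The same issue is hidden in ``treated analogously'' for the annulus case and in ``the pieces of $V$ lying outside the tube'' for the tube case. Moreover, the annulus case has a sub-case your sketch does not touch at all: when $V$ and $V'$ are both open annuli and $V'\cap\Gamma_{V}\neq\emptyset$, one must actually prove $\Gamma_{V'}\subseteq\Gamma_{V}$ (the paper's argument that an open disc with boundary $\{x\}$ cannot sit inside an open annulus whose boundary contains $x$) before concluding that $V\setminus V'$ is one or two semi-open annuli, each splitting as a tube plus an open annulus. Finally, a small but telling slip: boundary points of algebraic bricks are of type~1 or~2 only (all radii lie in $|k|$ and $k$ is algebraically closed), so there are no type-3 points to ``absorb''; the real technical obstacle is the structure of the outer piece described above, not the type of the boundary points.
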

\begin{proof}
Since $V\setminus V' = V\setminus (V\cap V')$ and $V\cap V'$ is a finite disjoint union of algebraic bricks by Lemma~\ref{lem:intersectionalgebraic bricks}, we may assume that $V' \subseteq V$. We may also assume that $V'\ne V$. We distinguish several cases.

\begin{itemize}
\item  $V$ is an algebraic open disc and $V'$ is an algebraic open disc

Then $V\setminus V'$ is an algebraic open annulus.

\item $V$ is an algebraic open disc and $V'$ is an algebraic open annulus

Then $V\setminus V'$ is the disjoint union of a closed disc (hence an algebraic tube) and a semi-open annulus (hence the disjoint union of an algebraic open annulus and an algebraic tube).

\item $V$ is an algebraic open annulus and $V'$ is an algebraic open disc

If $V'$ is a maximal open disc in~$V$, then $V\setminus V'$ is the disjoint union of an algebraic tube (centered at the boundary point of~$V'$ in~$V$) and two algebraic open annuli.

In general, $V'$ is contained in a maximal disc~$D$ of~$V$. Writing $V\setminus V' = (V\setminus D) \sqcup (D \setminus V')$, we are reduced to the previous cases.

\item $V$ is an algebraic open annulus and $V'$ is an algebraic open annulus

If $V'$ is contained in a disc~$D$ of~$V$, then, writing $V\setminus V' = (V\setminus D) \sqcup (D \setminus V')$, we are reduced to the previous cases.

Assume that~$V'$ is not contained in a disc of~$V$, \emph{i.e.} $V'\cap \Gamma_{V} \ne\emptyset$. Let $x \in V' \cap \Gamma_{V}$. Then, the connected components of~$V\setminus\{x\}$ are open discs, except for exactly two of them that are open annuli. If $x\notin \Gamma_{V'}$, then the connected components of~$V'\setminus\{x\}$ are open discs with boundary~$\{x\}$, except for exactly one of them. As a consequence, one of these open discs with boundary~$\{x\}$ is contained in an open annulus whose boundary contains~$\{x\}$, which is impossible. We deduce that $V' \cap \Gamma_{V} \subseteq \Gamma_{V'}$. 

The argument above also shows that, for a point $x\in \Gamma_{V'} \cap \Gamma_{V}$, the two connected components of $V'\setminus\{x\}$ that are not discs lie inside the two connected components of $V\setminus\{x\}$ that are not discs. In other words, the two branches emanating from~$x$ corresponding to~$\Gamma_{V'}$ coincide that corresponding to~$\Gamma_{V}$. It follows that $\Gamma_{V'} \subseteq \Gamma_{V}$, and we deduce that $V\setminus V'$ is a semi-open annulus or a disjoint union of two semi-open annuli.

\item $V$ is an algebraic open disc and $V'$ is an algebraic tube

Then $V\setminus V'$ is the disjoint union of an algebraic open annulus and finitely many algebraic open discs.

\item $V$ is an algebraic tube and $V'$ is an algebraic open disc

Since $V' \subseteq V$ and the center~$x$ of~$V$ is a boundary point of~$V$, it cannot belong to~$V'$, which has no boundary. It follows that~$V'$ is contained in a connected component~$D$ of~$V \setminus\{x\}$. Writing $V\setminus V' = D\setminus V'$, we are reduced to a previous case.

\item $V$ is an algebraic open annulus and $V'$ is an algebraic tube

If~$V'$ is contained in a disc~$D$ of~$V$, then, writing $V\setminus V' = (V\setminus D) \sqcup (D\setminus V')$, we are reduced to the previous cases.

Assume that~$V'$ is not contained in a disc of~$V$, \emph{i.e.} $V' \cap \Gamma_{V} \ne \emptyset$. Arguing as in the case where $V$ and $V'$ are open algebraic annuli, we prove no point other than the center~$x$ of~$V'$ may belong to~$\Gamma_{V}$. It follows that $V' \cap \Gamma_{V} =\{x\}$ and we deduce that $V\setminus V'$ is a disjoint union of two algebraic open annuli and finitely many algebraic open discs (with boundary~$\{x\}$).

\item $V$ is an algebraic tube and $V'$ is an algebraic open annulus

By the same argument as in the case where $V$~is an algebraic tube and $V'$~is an algebraic open disc, we prove that $V'$~is contained in a connected component~$D$ of~$V \setminus\{x\}$, where $x$ is the center of~$V$. Noting that $T := V\setminus D$ is an algebraic tube and writing $V\setminus V' = T \sqcup (D\setminus V')$, we are reduced to a previous case.

\item $V$ is an algebraic tube and $V'$ is an algebraic tube

Let~$x$ be the center of~$V$. If~$V'$ is contained in a connected component~$D$ of~$V \setminus\{x\}$, then, noting that $T := V\setminus D$ is an algebraic tube and writing $V\setminus V' = T \sqcup (D\setminus V')$, we are reduced to a previous case.

Otherwise, a boundary argument as above shows that $V$ and~$V'$ have the same center. It follows that $V \setminus V'$ is a disjoint union of finitely many connected components of~$V\setminus\{x\}$, hence a finite union of algebraic open discs.
\end{itemize}
\end{proof}

\begin{corollary}\label{cor:disjoint}
Let $\cV$ be a finite set of algebraic bricks of~$C^\an$. Then, there exists a finite set~$\cW$ of disjoint algebraic bricks of~$C^\an$ such that 
\[\bigcup_{V \in \cV} V =  \bigsqcup_{W \in \cW} W.\]
\qed
\end{corollary}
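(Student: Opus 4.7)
The plan is to reduce the corollary directly to Lemma~\ref{lem:complementalgebraic bricks} via the telescoping trick. Write $\cV = \{V_{1}, \dots, V_{n}\}$ and decompose
\[
\bigcup_{i=1}^{n} V_{i} \;=\; \bigsqcup_{i=1}^{n} \Big( V_{i} \setminus \bigcup_{j<i} V_{j} \Big).
\]
The summands are pairwise disjoint by construction, so it suffices to express each $V_{i} \setminus \bigcup_{j<i} V_{j}$ as a finite disjoint union of algebraic bricks.

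To do this, I would prove the auxiliary statement: for any algebraic bricks $V, W_{1}, \dots, W_{m}$ of~$C^{\an}$, the set $V \setminus (W_{1} \cup \cdots \cup W_{m})$ is a finite disjoint union of algebraic bricks. The proof is by induction on $m$. The case $m=0$ is trivial and the case $m=1$ is exactly Lemma~\ref{lem:complementalgebraic bricks}. For the inductive step, I would write
\[
V \setminus (W_{1} \cup \cdots \cup W_{m}) \;=\; (V \setminus W_{m}) \setminus (W_{1} \cup \cdots \cup W_{m-1}),
\]
apply Lemma~\ref{lem:complementalgebraic bricks} to decompose $V \setminus W_{m}$ as a finite disjoint union $\bigsqcup_{\ell} V'_{\ell}$ of algebraic bricks, and then distribute the remaining difference:
\[
(V \setminus W_{m}) \setminus (W_{1} \cup \cdots \cup W_{m-1}) \;=\; \bigsqcup_{\ell} \Big( V'_{\ell} \setminus (W_{1} \cup \cdots \cup W_{m-1}) \Big).
\]
Each piece is a finite disjoint union of algebraic bricks by the inductive hypothesis, and disjointness across~$\ell$ is preserved since the $V'_{\ell}$ are themselves disjoint.

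There is essentially no serious obstacle here: the geometric content is fully carried by Lemmas~\ref{lem:intersectionalgebraic bricks} and~\ref{lem:complementalgebraic bricks}, and the remaining work is the elementary bookkeeping above. The only point worth checking carefully is that the disjointness of the outer telescoping decomposition and the disjointness produced by Lemma~\ref{lem:complementalgebraic bricks} combine without overlap, which is immediate from the set-theoretic identities displayed.
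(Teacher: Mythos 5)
Your argument is correct: the telescoping decomposition plus the induction on the number of subtracted bricks (each step powered by Lemma~\ref{lem:complementalgebraic bricks}, with disjointness preserved when distributing) is exactly the routine bookkeeping the paper leaves implicit, since the corollary is stated there without proof as an immediate consequence of Lemmas~\ref{lem:intersectionalgebraic bricks} and~\ref{lem:complementalgebraic bricks}. Nothing further is needed.
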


\begin{theorem}\label{thm:alg-brick}
Assume that~$C$ is proper and smooth. Then, there exists a finite partition~$\cV$ of~$C^\an$ into algebraic bricks. 
\end{theorem}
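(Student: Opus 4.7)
The plan is to first build a finite cover of $C^{\an}$ by algebraic bricks, and then apply Corollary~\ref{cor:disjoint} to refine it into a partition into disjoint algebraic bricks. Since $C$ is proper and smooth, $C^{\an}$ is compact and every point is smooth, so each local structure result of Section~\ref{sec:localalgebraic} is available at every point.

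I would begin by fixing, via Theorem~\ref{thm:semistable}, a finite triangulation $S$ of $C^{\an}$. For each vertex $x \in S^{(2)}$ (a finite set), Proposition~\ref{prop:type2generic} produces an algebraic tube $V_{x}$ centered at $x$, and by shrinking I arrange the $V_{x}$'s to be pairwise disjoint. For each vertex $x \in S^{(1)}$, Proposition~\ref{prop:type1} yields an open algebraic disc neighborhood. These already form finitely many algebraic bricks covering a neighborhood of the vertex set $S$.

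Next, I would cover the rest of $C^{\an}$ by finitely many open algebraic bricks. For type-$1$, type-$3$, and type-$4$ points, Propositions~\ref{prop:type1}, \ref{prop:type3}, \ref{prop:type4} directly supply open algebraic disc or annulus neighborhoods. For a non-vertex type-$2$ point $y$, lying in the interior of an edge-annulus of $\Gamma_{S}$ or a disc-component of $C^{\an} \setminus \Gamma_{S}$, I would cover $y$ either by an open algebraic annulus obtained from Proposition~\ref{prop:type3} applied at a nearby type-$3$ point (type-$3$ points being dense in the skeleton), or by an open algebraic annulus from Proposition~\ref{prop:type2branch} applied at an adjacent vertex of $S^{(2)}$ along the relevant branch. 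Compactness of appropriate closed subsets of $C^{\an}$, obtained as complements of open thickenings of the tubes together with the type-$1$ disc neighborhoods, then yields a finite subcover; together with the tubes $V_{x}$ this gives a finite cover of $C^{\an}$ by algebraic bricks.

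Finally, Corollary~\ref{cor:disjoint} applied to this finite cover produces the desired finite partition of $C^{\an}$ into disjoint algebraic bricks. The main difficulty is the previous step: Proposition~\ref{prop:type2generic} only provides tubes at type-$2$ points, and in a tube $V_{x}$ the center $x$ lies on the Shilov (hence topological) boundary, so tubes are not open neighborhoods and one cannot apply a direct open-cover compactness argument uniformly on $C^{\an}$. The triangulation $S$ isolates this difficulty to the finite vertex set $S^{(2)}$, where tubes are placed by hand; the delicate technical point is verifying that every remaining type-$2$ point, lying in the generic part of an edge-annulus or disc-component, is captured by an open algebraic annulus or disc provided by the adjacent propositions, so that compactness can finish the argument on the remaining closed portions of $C^{\an}$.
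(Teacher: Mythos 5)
Your overall frame (produce a finite cover of $C^\an$ by algebraic bricks, then convert it into a partition via Corollary~\ref{cor:disjoint}) is the same as the paper's, but the way you build the cover has a genuine gap at exactly the point you flag as ``delicate''. You place tubes only at the finitely many vertices of a triangulation, and you must then cover every remaining type-$2$ point by an \emph{open} algebraic disc or annulus. Nothing in Section~\ref{sec:localalgebraic} produces such a brick containing a prescribed type-$2$ point: Proposition~\ref{prop:type3} applies only at type-$3$ points, and your fallback ``type-$3$ points are dense in the skeleton'' is false in general --- if $|k^\times|=\R_{>0}$ (e.g.\ $k$ maximally complete with value group $\R$) there are no type-$3$ points at all. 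Even when type-$3$ points are dense, the annuli furnished by Proposition~\ref{prop:type3}, and likewise the branch annuli of Proposition~\ref{prop:type2branch}, are merely \emph{some} neighborhood of the type-$3$ point, resp.\ \emph{some} section of the branch near the vertex, with no lower bound on their size; there is no reason such an annulus reaches a given type-$2$ point $y$ sitting in the interior of an edge, or deep inside a disc component missed by the vertex tubes. So the step ``every remaining type-$2$ point is captured by an adjacent open brick'' is an unproved (and, as stated, partly false) claim, not a routine compactness verification; repairing it would require a new local statement (an algebraic open annulus whose skeleton passes through a prescribed type-$2$ point), which the paper never establishes.

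The paper's proof avoids this entirely, and needs no triangulation: one does not need each point to lie in a single open brick, only to have a \emph{neighborhood that is a finite union of bricks}. For every type-$2$ point $x$ (not just vertices), Propositions~\ref{prop:type2generic} and~\ref{prop:type2branch} together give such a neighborhood, namely the union of an algebraic tube centered at $x$ and finitely many algebraic open annuli covering the branches the tube misses; points of type $1$, $3$, $4$ are handled by Propositions~\ref{prop:type1}, \ref{prop:type3}, \ref{prop:type4}. Since $C^\an$ is compact, taking interiors of these neighborhoods and extracting a finite subcover yields a finite family of bricks covering $C^\an$, and Corollary~\ref{cor:disjoint} then gives the partition. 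I recommend you restructure your argument along these lines: drop the triangulation and the attempt to isolate tubes at finitely many points, and instead use the tube-plus-branch-annuli neighborhood at every type-$2$ point.
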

\begin{proof}
Since~$C$ is proper and smooth, $C^\an$ is proper and smooth too. In particular, it is compact.

By Proposition~\ref{prop:type3}, each point of type~3 in~$C^\an$ has a neighborhood that is an algebraic open annulus. By Propositions~\ref{prop:type1} and~\ref{prop:type4}, each point of type~1 or~4 in~$C^\an$ has a neighborhood that is an algebraic open disc. By Propositions~\ref{prop:type2generic} and~\ref{prop:type2branch}, each point of type~2 in~$C^\an$ has a neighborhood that is the union of an algebraic tube centered at that point and finitely many algebraic annuli (corresponding to the branches missing in the algebraic tube). By compactness of~$C^\an$, it follows that there exists a finite cover~$\cT$ of~$C^\an$ made of algebraic algebraic bricks. The result now follows from Corollary~\ref{cor:disjoint}.
\end{proof}

We now extend the result to arbitrary algebraic curves. By using the same kind of arguments as in the proofs of Lemmas~\ref{lem:intersectionalgebraic bricks} and~\ref{lem:complementalgebraic bricks}, it is not difficult to prove the following result.

\begin{lemma}\label{lem:algebraicbricksminuspoints}
Let~$V$ be an algebraic brick of~$C^\an$ and let~$F$ be a finite subset of~$C(k)$. Then $V\setminus F$ admits a finite partition into algebraic bricks.
\qed
\end{lemma}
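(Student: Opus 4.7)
The plan is to induct on the cardinality of $F\cap V$. The base case $F\cap V=\emptyset$ is trivial. For the inductive step, it suffices, for a single $a\in F\cap V$, to partition $V\setminus\{a\}$ into finitely many algebraic bricks; one then applies the inductive hypothesis to each of these pieces with the smaller set $F\setminus\{a\}$.

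To decompose $V\setminus\{a\}$, I would first produce an algebraic open disc $D_{a}$ with $a\in D_{a}\subseteq V$, and then write
\[V\setminus\{a\} \;=\; (V\setminus D_{a}) \,\sqcup\, (D_{a}\setminus\{a\}).\]
Lemma~\ref{lem:complementalgebraic bricks} applied to the pair $(V,D_{a})$ partitions $V\setminus D_{a}$ into finitely many algebraic bricks, so the only remaining task is to recognize $D_{a}\setminus\{a\}$ itself as an algebraic brick. Constructing $D_{a}$ is routine in each of the three possible shapes for~$V$: if $(O,V,f)$ is an algebraic open disc, any sufficiently small open subdisc around $f^\an(a)$ with radius in $|k^\times|$ works; if $(O,V,f)$ is an algebraic open annulus whose image is $\{u<|T-c|<v\}$, the pullback of the open disc $\DD^{-}(f^\an(a),|f^\an(a)-c|)$ does the job; and if $V$ is an algebraic tube centered at a type-2 point~$x$, then $a\ne x$ and the connected component of $V\setminus\{x\}$ containing~$a$ is itself an algebraic open disc by Remark~\ref{rem:restriction}, reducing to the first case.

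The main point to verify is then that $D_{a}\setminus\{a\}$ is an algebraic open annulus via the same data $(O,f)$ realizing~$D_{a}$. Writing $f^\an(D_{a})=\DD^{-}(b_{0},r)$ with $r\in|k^\times|$ and setting $b:=f^\an(a)\in k$, the inequality $|b-b_{0}|<r$ gives $\DD^{-}(b_{0},r)=\DD^{-}(b,r)$, and hence
\[\DD^{-}(b,r)\setminus\{b\} \;=\; \{x\in\AA^{1,\an}_{k} : 0<|(T-b)(x)|<r\},\]
which is an open annulus with radii $0,r\in|k|$. Since the definition of an algebraic open annulus allows the radii to lie in the whole of~$|k|$ (not just $|k^\times|$), this identification makes $(O,D_{a}\setminus\{a\},f)$ an algebraic open annulus. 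The only mildly delicate point is thus this last step, which relies on recognizing an open Berkovich disc minus a $k$-rational point as an open annulus with inner radius $0\in|k|$; the remainder is a direct combination of previously established results.
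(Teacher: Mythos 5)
Your proposal is correct. The paper in fact omits the proof of this lemma, merely remarking that it follows ``by the same kind of arguments as in the proofs of Lemmas~\ref{lem:intersectionalgebraic bricks} and~\ref{lem:complementalgebraic bricks}'', i.e.\ by a direct case analysis; your argument is a clean instantiation of exactly that, packaged as an induction on $\#(F\cap V)$ that invokes Lemma~\ref{lem:complementalgebraic bricks} as a black box for $V\setminus D_a$ and only adds one new observation. That observation --- that a punctured open Berkovich disc $\DD^-(b,r)\setminus\{\eta_{b,0}\}$, with the restricted map $f$, is an algebraic open annulus of radii $0$ and $r$ --- is indeed sanctioned by the definitions: algebraic open annuli only require radii in $|k|$ (not $|k^\times|$), and the paper's conventions elsewhere (triangulations allowing ``possibly punctured open discs'', $\BB$-brick annuli with $s_1\in|k|$) confirm that inner radius $0$ is intended. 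Your construction of $D_a$ in each of the three shapes is also fine; in the annulus case the radius $|f^\an(a)-c|$ lies in $|k^\times|$ because both points are $k$-rational and distinct, and Remark~\ref{rem:restriction} then upgrades the subdisc to an algebraic open disc. So the only thing you use beyond the stated lemmas is the inner-radius-$0$ convention, and that is legitimate.
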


\begin{corollary}\label{cor:algbricks}
There exists a finite subset~$E$ of~$C(k)$ such that $C^\an \setminus E$ admits a finite partition into algebraic bricks. 

\end{corollary}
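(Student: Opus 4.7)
The plan is to reduce the general statement to the proper smooth case already established in Theorem~\ref{thm:alg-brick}. Since $C$ is a reduced purely one-dimensional scheme of finite type over the algebraically closed field $k$, the singular locus $s(C)$ is a finite set of $k$-rational points, and $U := C \setminus s(C)$ is a smooth open subscheme of $C$. My candidate for the exceptional set will be $E := s(C)$, and I will produce the partition of $C^\an \setminus E = U^\an$ by partitioning the analytification of a suitable compactification of $U$ and then pulling back.

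Concretely, I will choose a smooth projective compactification $\widetilde{U}$ of $U$, obtained by taking the unique smooth projective model of each connected component of $U$ and forming the disjoint union. Then $F := \widetilde{U} \setminus U$ is a finite set of $k$-rational points. Applying Theorem~\ref{thm:alg-brick} to $\widetilde{U}$ yields a finite partition of $\widetilde{U}^\an$ into algebraic bricks $(O_i, V_i, f_i)$ of $\widetilde{U}$. Applying Lemma~\ref{lem:algebraicbricksminuspoints} to each $V_i$ with the finite set $F \cap V_i$, and collecting the pieces, produces a finite partition $\{(O'_j, W_j, g_j)\}_j$ of $\widetilde{U}^\an \setminus F = U^\an$ into algebraic bricks of $\widetilde{U}$.

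The remaining step is to reinterpret each such brick as an algebraic brick of $C^\an$. Since $U$ is Zariski-open in both $\widetilde{U}$ and $C$, the intersection $O'_j \cap U$ is Zariski-open in $C$, and the restriction $g_j|_{O'_j \cap U} \colon O'_j \cap U \to \AA^1_k$ is a morphism of $k$-schemes. Because $W_j \subseteq U^\an \subseteq (O'_j \cap U)^\an$ and the defining conditions for being an algebraic open disc, open annulus, or tube only involve the behaviour of $f^\an$ on $W_j$, all of these conditions are preserved under restriction. Hence $(O'_j \cap U, W_j, g_j|_{O'_j \cap U})$ is an algebraic brick of $C^\an$, and the family $\{W_j\}_j$ partitions $C^\an \setminus s(C)$ as required.

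There is no serious obstacle here: once the reduction from $C$ to $\widetilde{U}$ is in place, Theorem~\ref{thm:alg-brick} combined with Lemma~\ref{lem:algebraicbricksminuspoints} does the bulk of the work. The only point requiring some care, and the natural place to slip, is checking that the brick data on $\widetilde{U}$ restricts to genuine brick data on $C$. This reduces to the observation that each $O'_j \cap U$ is simultaneously Zariski-open in $\widetilde{U}$ and in $C$ and that $W_j$ already lies in $U^\an$, so neither the morphism $g_j$ nor the defining geometric conditions see the difference between the two ambient curves.
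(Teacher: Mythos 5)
Your proof is correct and takes essentially the same route as the paper: both reduce to the smooth projective case of Theorem~\ref{thm:alg-brick} combined with Lemma~\ref{lem:algebraicbricksminuspoints}, the paper by compactifying $C$ and then normalizing, you by deleting the singular locus and passing to the smooth projective model of the smooth locus --- which yields the same curve, since the normalization of the projective closure is the disjoint union of the smooth projective models of the irreducible components. Your explicit verification that each brick $(O'_j, W_j, g_j)$ of $\widetilde{U}$ with $W_j \subseteq U^\an$ restricts to a brick of $C^\an$ is precisely the transfer step the paper leaves implicit, and it is carried out correctly.
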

\begin{proof}
We may identify~$C$ to an open subset of a projective curve~$\bar C$ over~$k$ such that $E_{1} := \bar{C} \setminus C$ is a finite subset of~$\bar C(k)$.

Denote by~$E_{2}$ the singular locus of~$\bar C$. Since~$k$ is algebraically closed and~$\bar C$ is reduced, $\bar C$ is generically smooth, hence~$E_{2}$ is also a finite subset of~$\bar C$. 
Let~$\tilde C$ be the normalisation of~$\bar C$ and denote by $n \colon \tilde{C} \to \bar C$ the corresponding morphism. Set $F := E_{1} \cup E_{2}$ and $\tilde F := n^{-1}(F)$. Then~$n$ induces an isomorphism $\tilde{C} \setminus \tilde F \to C\setminus T$, hence, to conclude, it is enough to find a finite subset~$\tilde E$ of~$\tilde C(k)$ containing~$\tilde F$ such that $\tilde C \setminus \tilde E$ admits a finite partition into algebraic bricks. Since~$\tilde C$ is smooth and projective, the result now follows from Theorem~\ref{thm:alg-brick} and Lemma~\ref{lem:algebraicbricksminuspoints}.
\end{proof}

\subsection{Analytic curves}

In this section, we give analogues of the results we obtained in the analytic setting. We will first handle the smooth case and then allow singularities.

\subsubsection{Quasi-smooth curves}

We first adapt the definition of bricks. We fix a quasi-smooth connected strictly $k$-affinoid curve $X = \cM(\cA)$.

\begin{definition}
An \emph{analytic open disc} is the data of a strict affinoid domain~$W$ of~$X$, an open subset~$V$ of~$W$ and a morphism $f \colon W \to \DD_{k}$ such that
\begin{enumerate}[$i)$]
\item $f(V)$ is an open disc with radius in~$|k^\times|$ in~$\DD_{k}$;
\item $f_{|V}$ induces an isomorphism onto its image.
\end{enumerate}

An \emph{analytic open annulus} is the data of a strict affinoid domain~$W$ of~$X$, an open subset~$V$ of~$W$ and a morphism $f \colon W \to \DD_{k}$ such that
\begin{enumerate}[$i)$]
\item $f(V)$ is an open annulus with radii in~$|k|$ in~$\DD_{k}$;
\item $f_{|V}$ induces an isomorphism onto its image.
\end{enumerate}

An \emph{analytic tube centered at a point~$x\in X$} is the data of a strict affinoid domain~$W$ of~$X$, an affinoid domain~$V$ of~$W$ with Shilov boundary~$\{x\}$ and a morphism $f \colon W \to \DD_{k}$ such that
\begin{enumerate}[$i)$]
\item $f^\an(x) = \eta_{0,1}$ and $f^\an(V)$ is equal to~$\DD_{k}$ possibly deprived of finitely many open unit discs;
\item for each connected component~$E$ of~$V\setminus\{x\}$, $f(E)$ is a connected component of $f(V)\setminus\{\eta_{0,1}\}$ and the morphism~$f$ induces an isomorphism between~$E$ and~$f(E)$.
\end{enumerate}

We call \emph{analytic brick} any triple $(W,V,f)$ of one of the three preceding sorts. 

When we speak about the topological properties of an analytic brick, we will mean the topological properties of~$V$.
\end{definition}

We have analogues of the results of Section~\ref{sec:localalgebraic} in the analytic setting. 

\begin{proposition}\label{prop:type1an}
Let $x\in X$ be a point of type~1. For each neighborhood~$U$ of~$x$ in~$X$, there exists an analytic open disc of~$X$ that contains~$x$ and is contained in~$U$. 
\end{proposition}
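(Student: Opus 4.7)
The plan is to mimic the argument of Proposition~\ref{prop:type1} in the analytic setting, replacing the use of étale morphisms of algebraic varieties by their analytic counterparts. Since $x$ is a $k$-rational point at which $X$ is quasi-smooth, the sheaf of differentials $\Omega^1_{X/k}$ is free of rank~$1$ in a neighborhood of $x$, and the completed local ring $\widehat{\mathcal{O}_{X,x}}$ is isomorphic to $k[\![T]\!]$.

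First, I would pick a strict affinoid neighborhood $W_{0}$ of $x$ contained in $U$ and choose a function $g\in\mathcal{O}(W_{0})$ whose image in the maximal ideal $\mathfrak{m}_{x}\subseteq\mathcal{O}_{X,x}$ is a uniformizer (equivalently, such that $dg$ generates $\Omega^{1}_{X/k}$ at~$x$ and $g(x)=0$). After multiplying $g$ by a suitable element of $k^{\times}$, which exists because $k$ is non-trivially valued, I may assume $\|g\|_{W_{0}}\le 1$, so that $g$ defines a morphism $f\colon W_{0}\to\mathbb{D}_{k}$ with $f(x)=\eta_{0,0}$. By construction $f$ is étale at $x$.

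Next, I would apply the analytic inverse function theorem (equivalently, the local description of étale morphisms of Berkovich spaces, see for instance the arguments used in \cite[Th\'eor\`eme~4.5.4]{Duc-book} that were invoked for Propositions~\ref{prop:type3} and~\ref{prop:type4}) to produce an open neighborhood $V_{0}$ of $x$ in $W_{0}$ on which $f$ restricts to an isomorphism onto an open neighborhood of $\eta_{0,0}$ in $\mathbb{D}_{k}$. Since $|k^{\times}|$ is dense in $\mathbb{R}_{>0}$, I can shrink $V_{0}$ to an open subset $V$ such that $f(V)$ is an open disc $\mathbb{D}^{-}_{k}(0,r)$ with $r\in|k^{\times}|\cap(0,1]$; concretely, I pull back such a disc along $f$ and take the connected component containing $x$.

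Finally, I would choose a strict affinoid domain $W$ of $X$ with $V\subseteq W\subseteq W_{0}$ (for example the preimage under $f$ of a closed disc $\mathbb{D}_{k}(0,r')$ with $r<r'<1$, $r'\in|k^{\times}|$, intersected with a small enough affinoid), so that $(W,V,f_{|W})$ satisfies the three conditions in the definition of an analytic open disc, with $V\subseteq U$ by construction. The main subtlety is the second step: verifying that a uniformizer at a quasi-smooth type~$1$ point actually yields an analytic local isomorphism onto a Berkovich disc. This is exactly the content of the analytic inverse function theorem in the Berkovich setting, so the argument reduces to invoking the appropriate reference.
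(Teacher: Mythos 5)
Your proposal is correct and follows essentially the same route as the paper: quasi-smoothness at the type~1 point yields a morphism that is \'etale at~$x$ (the paper simply asserts its existence, you construct it from a uniformizer and rescale to land in~$\DD_{k}$), and the Berkovich inverse function theorem then gives the local isomorphism onto an open disc, shrunk to have radius in~$|k^\times|$. The extra care you take in producing the strict affinoid domain~$W$ and the morphism to~$\DD_{k}$ is exactly the bookkeeping the paper's two-line proof leaves implicit, so there is no gap.
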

\begin{proof}
Since~$X$ is smooth at~$x$, there exists an open neighborhood~$U_{0}$ of~$x$ and an \'etale morphism $f_{0} \colon U_{0} \to \AA^{1,\an}_{k}$. The result now easily follows from the inverse function theorem.
\end{proof}

For the other types of points, as before, the key point is a density statement.

\begin{lemma}\label{lem:densityHxan}
Let $x \in X$ and $\eps>0$. For each $\alpha_{0} \in \sH(x)$, there exists an affinoid neighborhood~$V$ of~$x$ and a morphism 
\[f \colon V = \cM(\cA_{V}) \to \DD_{k} = \cM(k\{T\})\]
such that, if we denote by~$\alpha$ the image of the coordinate~$T$ on~$\DD_{k}$ by the map
\[k\{T\} \to \cA_{V} \to \sH(x),\] 
we have $|\alpha - \alpha_{0}| < \eps$.
\end{lemma}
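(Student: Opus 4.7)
The plan is to adapt the proof of the algebraic counterpart (Lemma~\ref{lem:densityHx}), using the affinoid algebra~$\cA$ as the analogue of the coordinate ring of an affine open. Recall that, by definition of the seminorm associated to~$x$, the kernel~$\mathfrak{p}_{x}$ of the natural map $\cA\to\sH(x)$ is a prime ideal, and~$\sH(x)$ is (by construction) the completion of the fraction field of the integral domain~$\cA/\mathfrak{p}_{x}$ with respect to the absolute value induced by~$x$. In particular, the image of the set $\{g/h : g,h\in\cA,\ h(x)\ne 0\}$ is dense in~$\sH(x)$, so I can choose $g,h\in\cA$ with $h(x)\ne 0$ and
\[\left|\frac{g(x)}{h(x)}-\alpha_{0}\right|<\eps.\]

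Next, I would pass to an affinoid neighborhood on which~$h$ is invertible. Using the density of~$|k^\times|$ in~$\R_{>0}$, pick $r\in|k^\times|$ with $0<r<|h(x)|$ and set
\[V_{0} := \{y\in X : |h(y)|\ge r\}.\]
This is a strict affinoid (Laurent) domain of~$X$ containing~$x$, on which $h$ is invertible. Consequently $\alpha := g/h$ is a well-defined element of~$\cA_{V_{0}}$ with $\alpha(x) = g(x)/h(x)$, so $|\alpha(x)-\alpha_{0}|<\eps$.

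Finally, I would produce the morphism $f\colon V \to \DD_{k}$. A bounded morphism $k\{T\}\to \cA_{V}$ sending~$T$ to~$\alpha$ exists precisely when $\|\alpha\|_{\sup,V}\le 1$. If this bound already holds on~$V_{0}$ I set $V := V_{0}$; otherwise I shrink by a further rational inequality $|\alpha(y)|\le s$ for a suitable $s\in|k^\times|$ with $|\alpha(x)|\le s\le 1$ (which is available since $|\alpha(x)| \le \max(|\alpha_{0}|,\eps)$ and the relevant range is $|\alpha_{0}|\le 1$ and $\eps\le 1$, so $s$ can be chosen using density of~$|k^\times|$ in~$\R_{>0}$). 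The resulting~$V$ is a strict affinoid neighborhood of~$x$, and the morphism $f\colon V\to \DD_{k}$ defined by sending~$T$ to~$\alpha$ has the required property.

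The main obstacle is simply locating the right density statement; once one recognises~$\sH(x)$ as the completion of $\mathrm{Frac}(\cA/\mathfrak{p}_{x})$, the remaining steps are elementary manipulations with rational domains and use only the non-triviality of the valuation on~$k$ to keep every neighborhood strict. A minor subtlety is the passage from $|\alpha(x)|\le 1$ to $\|\alpha\|_{\sup,V}\le 1$, which is where one actually uses continuity of~$|\alpha|$ on~$X$ and shrinks by a rational cut.
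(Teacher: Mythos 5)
Your first two steps coincide with the paper's proof: density of the fraction field of~$\cA$ in~$\sH(x)$ produces $g,h\in\cA$ with $h(x)\ne 0$ and $|g(x)/h(x)-\alpha_{0}|<\eps$, and one passes to an affinoid neighborhood on which $h$ is invertible so that $\alpha:=g/h\in\cA_{V_{0}}$. The gap is in your final step, where you try to arrange $\|\alpha\|_{\sup,V}\le 1$ by cutting with $\{y : |\alpha(y)|\le s\}$ for $s\in|k^\times|$, $|\alpha(x)|\le s\le 1$. This works when $|\alpha(x)|<1$ (take $s$ strictly between $|\alpha(x)|$ and $1$), but when $|\alpha(x)|=1$ --- which is forced as soon as $|\alpha_{0}|=1$ and $\eps<1$ --- the only admissible value is $s=1$, and the non-strict locus $\{y\in V_{0} : |\alpha(y)|\le 1\}$ need not be a neighborhood of~$x$. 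Worse, no shrinking can help: take $X=\DD_{k}(0,R)$ with $R\in|k^\times|$, $R>1$, $x=\eta_{0,1}$ and $\alpha_{0}$ the image of the coordinate~$S$ in~$\sH(x)$. Every neighborhood $V$ of~$\eta_{0,1}$ in~$X$ contains an annulus $\{1<|S|<\rho\}$ with $\rho>1$; if $\beta\in\cA_{V}$ satisfies $|\beta(x)-\alpha_{0}|<1$, then $\beta(x)$ reduces to~$\tilde t$ in $\widetilde{\sH(x)}=\tilde k(\tilde t)$, which has a pole at the point of~$\cC_{x}=\PP^1_{\tilde k}$ corresponding to the outward branch, so $|\beta(\eta_{0,\rho'})|=\rho'>1$ for $\rho'$ slightly larger than~$1$; hence $\|\beta\|_{\sup,V}>1$ for \emph{every} affinoid neighborhood~$V$. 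So ``shrink until the sup norm is at most $1$ while keeping $|\alpha(x)-\alpha_{0}|<\eps$'' is not achievable in general.

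The paper's proof sidesteps exactly this point by not insisting that the coordinate pull back to~$\alpha$ itself: it picks $c\in k^\times$ with $|c|\ge\|\alpha\|_{V}$, obtains a bounded morphism $k\{|c|^{-1}T\}\to\cA_{V}$ sending $T\mapsto\alpha$, and composes with the isomorphism $k\{T\}\simeq k\{|c|^{-1}T\}$ given by rescaling; the coordinate of~$\DD_{k}$ then pulls back to $c^{-1}\alpha$. (As your difficulty in fact makes visible, the $\eps$-estimate of the statement is thereby only obtained up to the scalar~$c$; this is harmless in the applications, e.g.\ Proposition~\ref{prop:type4an}, where only the density of $k(\alpha)=k(c^{-1}\alpha)$ in~$\sH(x)$ is used.) To repair your write-up, replace the final ``cut by $|\alpha|\le s$'' step by this rescaling of the target disc, or equivalently prove the conclusion with $\alpha$ replaced by $c^{-1}\alpha$ for a suitable $c\in k^\times$.
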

\begin{proof}
By definition, the fraction field of~$\cA$ is dense in~$\sH(x)$. It follows that there exists $a,b \in \cA$ with $b(x)\ne 0$ such that $\big|\frac{a(x)}{b(x)} - \alpha_{0}\big| < \eps$. Let $V = \cM(\cA_{V})$ be an affinoid neighborhood of~$x$ in~$X$ such that~$b$ does not vanish on~$V$. Then the image of~$b$ in~$\cA_{V}$ is invertible, hence~$a/b$ defines an element~$\alpha$ of~$\cA_{V}$.

Let $c \in k^\times$ such that $|c|�\ge \|\alpha\|_{V}$. We have a bounded morphism $k\{|c|^{-1} T\} \to \cA_{V}$ sending~$T$ to~$\alpha$. Since multiplication by~$c$ induces an isomorphism between $k\{T\}$ and $k\{|c|^{-1} T\}$, the result follows.
\end{proof}

The next results are then proven exactly as in the algebraic case by using Lemma~\ref{lem:densityHxan} instead of Lemma~\ref{lem:densityHx}.

\begin{proposition}\label{prop:type4an}
Let $x\in X$ be a point of type~4. For each neighborhood~$U$ of~$x$ in~$X$, there exists an analytic open disc of~$X$ that contains~$x$ and is contained in~$U$. 
\qed
\end{proposition}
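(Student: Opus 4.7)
The plan is to transpose the proof of Proposition~\ref{prop:type4} line by line, using Lemma~\ref{lem:densityHxan} in place of Lemma~\ref{lem:densityHx} and replacing the Zariski-open subset~$O$ of~$C$ by a strict affinoid neighborhood of~$x$ in~$X$.

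First I would invoke \cite[Théorème~4.5.4]{Duc-book} to produce an open neighborhood~$U'$ of~$x$ in~$U$ together with an isomorphism $f_{0}\colon U'\simto D$ onto an open disc~$D$. Pulling back a coordinate on~$D$ yields an element $\alpha_{0}\in\sH(x)$ such that $k(\alpha_{0})$ is dense in~$\sH(x)$. Next, I would apply Lemma~\ref{lem:densityHxan} with a sufficiently small $\varepsilon>0$ to obtain an affinoid neighborhood~$W$ of~$x$ (which we may take contained in~$U'$) and a morphism $f\colon W\to\DD_{k}$ such that the image~$\alpha$ of the coordinate~$T$ in~$\sH(x)$ satisfies $|\alpha-\alpha_{0}|<\varepsilon$. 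By \cite[Lemma~3.1.6]{temkin_transcendence}, for~$\varepsilon$ small enough the subfield $k(\alpha)$ is still dense in~$\sH(x)$.

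Now the induced map on completed residue fields $\sH(f(x))\to\sH(x)$ is surjective, since its image is dense (it contains~$\alpha$, hence $k(\alpha)$) and closed (by finiteness arguments, or by observing that~$f$ is quasi-finite at~$x$ so the extension is of finite degree). Since both fields are type-4 completed residue fields of the same transcendence structure over~$k$, the map is in fact an isomorphism $\sH(f(x))\simto\sH(x)$. By \cite[Theorem~3.4.1]{bleu}, the morphism~$f$ is therefore an isomorphism onto its image on some open neighborhood~$V$ of~$x$ in~$W$.

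To finish, I would shrink~$V$ further (using Proposition~\ref{prop:type4}-style arguments: the image $f(V)$ is an open neighborhood of the type-4 point $f(x)$ in $\DD_{k}$, so by~\cite[Théorème~4.5.4]{Duc-book} it contains an open disc with radius in~$|k^\times|$ containing $f(x)$) until $f(V)$ is an open disc with radius in~$|k^\times|$ contained in $\DD_{k}$, while keeping $V\subseteq U$. The triple $(W,V,f)$ is then an analytic open disc containing~$x$ and contained in~$U$, as required. The main subtlety, compared with the algebraic case, is that one must be careful to bound $\alpha$ so that $f$ factors through~$\DD_{k}$ rather than $\AA^{1,\an}_{k}$; this is precisely what Lemma~\ref{lem:densityHxan} was tailored to do, by rescaling the coordinate, so there is no real obstacle beyond keeping track of the affinoid structure on~$W$.
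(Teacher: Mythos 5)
Your proof is correct and takes essentially the same route as the paper: Proposition~\ref{prop:type4an} is proved there exactly by transposing the proof of Proposition~\ref{prop:type4}, with Lemma~\ref{lem:densityHxan} replacing Lemma~\ref{lem:densityHx}, which is precisely what you do. The details you supply (density of $k(\alpha)$ via \cite[Lemma~3.1.6]{temkin_transcendence}, the dense-and-closed argument giving $\sH(f(x))\simto\sH(x)$, then \cite[Theorem~3.4.1]{bleu} and shrinking so that $f(V)$ is an open disc with radius in $|k^\times|$ inside $\DD_k$) are the expected ones.
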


\begin{proposition}\label{prop:type3an}
Let $x\in X$ be a point of type~3. For each neighborhood~$U$ of~$x$ in~$X$, there exists an analytic open annulus of~$X$ that contains~$x$ and is contained in~$U$. 
\qed
\end{proposition}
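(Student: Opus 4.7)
The plan is to mimic the algebraic argument of Proposition~\ref{prop:type3}, swapping Lemma~\ref{lem:densityHx} for its analytic counterpart Lemma~\ref{lem:densityHxan}, and then carefully produce the data of an analytic brick in the sense needed.

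First, since $x$ is of type~3, by \cite[Th\'eor\`eme~4.5.4]{Duc-book} there is a neighborhood $U_{0}\subseteq U$ of~$x$ and an isomorphism $f_{0}\colon U_{0}\simto A$ to some open annulus~$A$. Pulling back a coordinate on~$A$ produces an element $\alpha_{0}\in\sH(x)$ with $k(\alpha_{0})$ dense in~$\sH(x)$. Next, invoking \cite[Lemma~3.1.6]{temkin_transcendence} together with Lemma~\ref{lem:densityHxan}, I would approximate $\alpha_{0}$ well enough by an element~$\alpha$ coming from a strict affinoid neighborhood $W=\cM(\cA_{W})$ of~$x$ contained in~$U_{0}$, so that $k(\alpha(x))$ is still dense in~$\sH(x)$. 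After multiplying by a suitable constant of~$k^{\times}$ (which amounts to replacing $T$ by $cT$ in the target) one may assume $\|\alpha\|_{W}\le 1$, so that $\alpha$ defines a morphism $f\colon W\to \DD_{k}$ with $f^{*}(T)=\alpha$.

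The construction ensures that $f$ induces an isomorphism $\sH(f(x))\simto\sH(x)$, so $f$ is \'etale at~$x$. By \cite[Theorem~3.4.1]{bleu} (the analytic inverse function theorem), $f$ is then an isomorphism from some open neighborhood of~$x$ in~$W$ onto an open neighborhood of $f(x)$ in~$\DD_{k}$. Because $x$ is of type~3, so is $f(x)=\eta_{a,r}$ with $r\in\R_{>0}\setminus|k^{\times}|$, and the open annuli centered at~$a$ with radii in~$|k^{\times}|$ form a neighborhood basis of $f(x)$ in~$\DD_{k}$. Picking such an annulus~$A'$ small enough that it is contained in $f(W)$ and such that the connected component~$V$ of $f^{-1}(A')$ containing~$x$ lies inside~$U$ and is mapped isomorphically onto~$A'$ by~$f$, the triple $(W,V,f)$ is by construction an analytic open annulus of~$X$ containing~$x$ and contained in~$U$.

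The only delicate point is the passage from the local \'etale isomorphism produced by \cite[Theorem~3.4.1]{bleu} to a morphism globally defined on a strict affinoid neighborhood~$W$; this is what Lemma~\ref{lem:densityHxan} is designed to handle, by letting us realise~$\alpha$ as an honest element of some affinoid algebra $\cA_{W}$ (after the rescaling that ensures $\|\alpha\|_{W}\le 1$). Everything else is a direct transcription of the proof of Proposition~\ref{prop:type3}, as suggested by the authors' remark preceding the statement.
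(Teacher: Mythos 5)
Your argument is correct and is essentially the paper's own proof: the paper proves Proposition~\ref{prop:type3an} exactly by transposing the algebraic case (Propositions~\ref{prop:type4} and~\ref{prop:type3}), replacing Lemma~\ref{lem:densityHx} by Lemma~\ref{lem:densityHxan}, which is precisely what you do. The remaining details (rescaling to land in $\DD_{k}$, the isomorphism $\sH(f(x))\simto\sH(x)$, \cite[Theorem~3.4.1]{bleu}, and shrinking to an annulus with radii in $|k^\times|$ inside $U$) are handled as in the paper.
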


\begin{proposition}\label{prop:type2generican}
Let $x\in X$ be a point of type~2. For each neighborhood~$U$ of~$x$ in~$X$, there exists an analytic tube $(W,V,f)$ of~$X$ centered at~$x$ that is contained in~$U$. 
\qed
\end{proposition}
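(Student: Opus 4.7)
The plan is to mirror the proof of Proposition~\ref{prop:type2generic} in the analytic setting, substituting the analytic density lemma (Lemma~\ref{lem:densityHxan}) for the algebraic density lemma that underlies the lifting statement Lemma~\ref{lem:liftresiduemorphism}. The broad steps are: choose a suitable finite morphism at the level of residue curves, lift it to an analytic function in a neighborhood of~$x$, and then cut out the tube as a connected component of the preimage of $\DD_{k}$ deprived of a finite collection of open unit discs.

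More precisely, I would first choose a finite generically \'etale morphism $\tilde f\colon \cC_{x}\to \PP^{1}_{\tilde k}$, which exists because $\cC_{x}$ is a smooth projective curve over the algebraically closed field~$\tilde k$. Let~$t$ be the standard coordinate on~$\PP^{1}_{\tilde k}$ and set $\tilde\alpha := \tilde f^{*}(t)$; pick any lift $\alpha_{0}\in \sH(x)$ of~$\tilde\alpha$ with $|\alpha_{0}|=1$. Applying Lemma~\ref{lem:densityHxan} with~$\alpha_{0}$ and some~$\eps<1$ yields an affinoid neighborhood~$W$ of~$x$ and a morphism $f\colon W\to \DD_{k}$ whose pullback of the coordinate~$T$ is an element~$\alpha\in \cA_{W}$ with $|\alpha(x)-\alpha_{0}|<\eps$. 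Since $|\alpha_{0}|=1$, this forces $|\alpha(x)|=1$ and $\widetilde{\alpha(x)} = \tilde\alpha$; equivalently, $f(x)=\eta_{0,1}$ and the induced morphism of residue curves $\tilde f_{x}\colon \cC_{x}\to \cC_{\eta_{0,1}} = \PP^{1}_{\tilde k}$ coincides with~$\tilde f$.

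Next, shrinking~$W$ inside~$U$, I would invoke \cite[Theorem~4.5.4]{Duc-book} to ensure that~$f$ is finite around~$x$ and that every connected component of $W\setminus\{x\}$ carries exactly one branch emanating from~$x$; the degree of~$f$ on each such component then equals the ramification index of~$\tilde f$ at the associated $\tilde k$-point of~$\cC_{x}$. Denote by~$\cE$ the set of connected components of $\DD_{k}\setminus\{\eta_{0,1}\}$; for any finite $\cF\subseteq \cE$, the set $V_{\cF}:=\{\eta_{0,1}\}\cup \bigcup_{E\in \cE\setminus \cF} E$ is an affinoid neighborhood of~$\eta_{0,1}$ in~$\DD_{k}$ (namely~$\DD_{k}$ with finitely many open unit discs removed). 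Choosing~$\cF$ large enough so that the connected component~$V$ of $f^{-1}(V_{\cF})$ containing~$x$ lies in~$W\subseteq U$ and so that~$\tilde f$ is \'etale above the open complement $\cU_{\cF}=\PP^{1}_{\tilde k}\setminus \mathrm{image}(\cF)$, the triple $(W,V,f)$ is then an analytic tube centered at~$x$: the Shilov boundary of~$V$ is~$\{x\}$ because $f^{-1}(\eta_{0,1})\cap V=\{x\}$, and each connected component of $V\setminus\{x\}$ is mapped isomorphically onto a connected component of $V_{\cF}\setminus\{\eta_{0,1}\}$ since the corresponding ramification index is~$1$.

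The main obstacle is the last point, namely verifying that~$f$ restricts to an isomorphism on each connected component of $V\setminus\{x\}$ once~$\cF$ is chosen so as to isolate the unramified locus of~$\tilde f$. This requires combining the identification of branches at~$x$ with rational points of~$\cC_{x}$ (\cite[4.2.11.1]{Duc-book}) and the equality of the local degree of~$f$ with the ramification index of~$\tilde f$ (\cite[Th\'eor\`eme~4.3.13]{Duc-book})—after which a morphism of open discs of degree~$1$ is automatically an isomorphism. Everything else (affinoid domain structure, finiteness, containment in~$U$) is a routine consequence of the density lemma and standard analytic neighborhood arguments.
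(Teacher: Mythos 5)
Your proposal is correct and is essentially the paper's own argument: the proposition is stated without a written proof precisely because, as announced before Proposition~\ref{prop:type4an}, it is proven exactly as in the algebraic case (Lemma~\ref{lem:liftresiduemorphism} together with Proposition~\ref{prop:type2generic}) with Lemma~\ref{lem:densityHxan} replacing Lemma~\ref{lem:densityHx}, which is exactly what you carry out. Your step of lifting $\tilde f^{*}(t)$ to $\alpha_{0}\in\sH(x)$ with $|\alpha_{0}|=1$ and approximating within $\eps<1$ is the right analytic substitute for the exact lifting in Lemma~\ref{lem:liftresiduemorphism}, and the remainder (finiteness near~$x$, one branch per component, ramification indices, and the choice of~$\cF$) follows the algebraic proof verbatim.
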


As in the discussion following Proposition~\ref{prop:type2generic}, in the setting of Proposition~\ref{prop:type2generican}, $V$ is a tube and we have an associated reduction map $\rho_{V}\colon V \to \cC_{x}$ whose image~$\cU_{V}$ is an affine curve over~$\tilde k$. We fix a closed embedding $\iota_{V} \colon \cU_{V} \hookrightarrow \AA^r_{\tilde k}$.

\begin{lemma}\label{lem:residue_morphan}
There exist an affinoid domain~$Y$ of~$X$ containing~$x$ and morphisms $f_{1},\dotsc,f_{r} \colon Y \to \DD_{k}$ such that 
\begin{enumerate}[$i)$]
\item for each $i\in \{1,\dotsc,r\}$, $f_{i}(x) = \eta_{0,1}$;
\item the map 
\[z \in O^\an \cap V\setminus\{x\} \mapsto (\red(f_{1}^\an(z)),\dotsc,\red(f_{r}^\an(z))) \in \tilde{k}^r\] 
coincides with the restriction of $\iota_{V}\circ\rho_{V}$ to $O^\an \cap V\setminus\{x\}$.
\end{enumerate}
\qed
\end{lemma}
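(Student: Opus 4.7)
The plan is to imitate the proof of Lemma~\ref{lem:residue_morph}, replacing Lemma~\ref{lem:densityHx} by its analytic counterpart Lemma~\ref{lem:densityHxan}. The first step is to build an analytic analogue of Lemma~\ref{lem:liftresiduemorphism}: given any finite morphism $\tilde g \colon \cC_x \to \PP^1_{\tilde k}$, produce an affinoid neighborhood $Y$ of $x$ in $X$ and a morphism $g \colon Y \to \DD_k$ such that $g(x) = \eta_{0,1}$ and the induced morphism on residue curves at $x$ equals $\tilde g$. To prove this, let $t$ denote the image of $T$ in $\widetilde{\sH(\eta_{0,1})} \cong \tilde k(t)$ and set $\tilde \alpha := \tilde g^\ast(t)$, which is transcendental over $\tilde k$ because $t$ is and $\tilde g$ is finite. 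Pick any representative $\beta \in \sH(x)$ with $|\beta| = 1$ and $\tilde \beta = \tilde\alpha$. Applying Lemma~\ref{lem:densityHxan} to $\alpha_{0} := \beta$ with $\epsilon < 1$ produces an affinoid neighborhood $Y$ of $x$ together with a morphism $g \colon Y \to \DD_k$ such that the image $\alpha \in \cA_{Y}$ of the coordinate $T$ on $\DD_{k}$ satisfies $|\alpha(x) - \beta| < 1$. Then $|\alpha(x)| = 1$ and $\widetilde{\alpha(x)} = \tilde\alpha$; transcendence of $\tilde\alpha$ over $\tilde k$ yields $g(x) = \eta_{0,1}$, and by construction the induced morphism on residue curves is $\tilde g$.

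The second step is to apply this lifting construction to each of the morphisms needed in the statement. For $i \in \{1,\dotsc,r\}$, let $\tilde f_{i} \colon \cU_{V} \to \AA^1_{\tilde k}$ be the composition of $\iota_{V}$ with the $i$-th projection. Since $\cU_{V}$ is a nonempty Zariski-open subset of the smooth projective curve $\cC_{x}$, each $\tilde f_{i}$ extends uniquely to a finite morphism $\tilde f_{i}' \colon \cC_{x} \to \PP^1_{\tilde k}$. Applying the previous step to each $\tilde f_{i}'$ produces affinoid neighborhoods $Y_{i}$ of $x$ and morphisms $f_{i} \colon Y_{i} \to \DD_{k}$ with $f_{i}(x) = \eta_{0,1}$ and induced residue morphism $\tilde{f_{i}}_{,x} = \tilde f_{i}'$. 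Setting $Y := \bigcap_{i=1}^{r} Y_{i}$, one obtains an affinoid neighborhood of $x$ on which all the $f_{i}$ are simultaneously defined, establishing property~(i).

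For property~(ii), each connected component $E$ of $Y \cap V \setminus \{x\}$ is an open disc whose associated branch $b_{E}$ emanating from $x$ is mapped by $\beta_{x}$ to a $\tilde k$-rational point of $\cU_{V}$. By the compatibility between $f_{i}$ and the reduction map at $\eta_{0,1}$ explained at the end of Section~\ref{sec:residuecurve} (the same argument as in Lemma~\ref{lem:residue_morph}), the map $z \mapsto \red(f_{i}(z))$ is constant on $E$ with value $\tilde f_{i}'(\beta_{x}(b_{E}))$, which by construction of $\tilde f_{i}$ is exactly the $i$-th coordinate of $\iota_{V}(\rho_{V}(z))$.

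The main obstacle is the analytic lifting of a finite residue morphism, i.e.\ the analytic analogue of Lemma~\ref{lem:liftresiduemorphism}; once this is in place the rest of the argument is formally identical to the algebraic case. The lifting itself rests entirely on the density statement of Lemma~\ref{lem:densityHxan}, which in turn follows from the density of the fraction field of $\cA$ in $\sH(x)$.
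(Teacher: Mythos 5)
Your proposal matches the paper's (implicit) proof: the paper establishes Lemma~\ref{lem:residue_morphan} by repeating the argument of Lemma~\ref{lem:residue_morph} verbatim, with Lemma~\ref{lem:densityHxan} replacing Lemma~\ref{lem:densityHx} to produce the analytic lift of each finite residue morphism $\tilde f_i'$. Your intermediate analytic analogue of Lemma~\ref{lem:liftresiduemorphism} (lifting $\tilde g$ via a residue representative and transcendence of $\tilde\alpha$ to get $g(x)=\eta_{0,1}$) and the branch/reduction argument for property~(ii) are exactly the intended adaptation, so the proposal is correct at the same level of detail as the paper.
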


\begin{proposition}\label{prop:type2branchan}
Let $x\in X$ be a point of type~2. For each neighborhood~$U$ of~$x$ in~$X$ and each connected component~$B$ of~$U\setminus\{x\}$, there exists an open analytic annulus $(W,V,f)$ of~$X$ contained in~$B$ and whose closure contains~$x$. Moreover, given a finite set~$\cB'$ of connected components of~$U\setminus\{x\}$ not containing~$B$, we may ensure that $f^{-1}(f(V))$ does not meet any element of~$\cB'$. 
\qed
\end{proposition}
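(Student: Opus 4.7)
The plan is to mirror the proof of Proposition~\ref{prop:type2branch} in the analytic setting. The only ingredient of that proof not already available here is Lemma~\ref{lem:liftresiduemorphism}, which lifts a finite morphism $\tilde f \colon \cC_x \to \PP^1_{\tilde k}$ to an algebraic morphism with the prescribed residue at~$x$. Its analytic counterpart---an affinoid neighborhood $Y$ of~$x$ together with a morphism $f \colon Y \to \DD_k$ satisfying $f(x) = \eta_{0,1}$, finiteness at~$x$, and $\tilde f_x = \tilde f$---is proved by the same argument as Lemma~\ref{lem:liftresiduemorphism}, with Lemma~\ref{lem:densityHxan} used in place of Lemma~\ref{lem:densityHx} to lift a chosen element of $\widetilde{\sH(x)}$ to an analytic function $\alpha$ on an affinoid neighborhood of~$x$ with $|\alpha(x)|=1$ and prescribed residue $\widetilde{\alpha(x)}$.

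Granting this analog, the construction goes as follows. Let $b \in \cB_x$ be a branch whose projection to $\pi_0(U \setminus \{x\})$ is $B$ and let $a \in \cC_x(\tilde k)$ be the corresponding point of the residue curve. Using independence of the valuations associated to $a$ and to the points of $\cC_x(\tilde k)$ coming from branches in elements of $\cB'$, we pick a rational function on $\cC_x$ with a simple zero at $a$ and nonvanishing at each such point, and view it as a finite morphism $\tilde f \colon \cC_x \to \PP^1_{\tilde k}$. Lift $\tilde f$ to $f \colon Y \to \DD_k$ via the analog above. Since $\tilde f$ is unramified at $a$, $f$ restricts to an isomorphism on a section of $b$ (an open subset of $B \cap Y$ whose closure in $X$ contains $x$) onto an open subset of $\DD_k$ whose closure contains $\eta_{0,1}$; this image contains an open annulus $A$ with radii in $|k^\times|$ adjacent to $\eta_{0,1}$. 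Pulling $A$ back inside the section defines $V$, and choosing a strict affinoid domain $W \subseteq Y$ containing $V$ produces the required analytic open annulus $(W, V, f) \subseteq B$ whose closure contains $x$.

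For the final clause, the non-vanishing of $\tilde f$ at the residue points coming from branches in $\cB'$ ensures that every such branch is sent via $f$ into an open unit disc of $\DD_k \setminus \{\eta_{0,1}\}$ centered at a nonzero element of $\tilde k$, while $A$ is contained in the open unit disc centered at $0$. After shrinking $W$ to a sufficiently small strict affinoid neighborhood of $V$ inside $Y$, the preimage $f^{-1}(f(V)) = f^{-1}(A)$ therefore lies inside components of $W \setminus \{x\}$ missing every element of $\cB'$. The main obstacle is setting up the analytic analog of Lemma~\ref{lem:liftresiduemorphism}; once that is available, the remainder of the argument transcribes the algebraic proof almost verbatim.
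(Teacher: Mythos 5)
Your proposal is correct and follows essentially the same route as the paper: the paper disposes of Proposition~\ref{prop:type2branchan} (and its companions) by declaring that they are ``proven exactly as in the algebraic case by using Lemma~\ref{lem:densityHxan} instead of Lemma~\ref{lem:densityHx}'', which is precisely your plan of first establishing the analytic analogue of Lemma~\ref{lem:liftresiduemorphism} via Lemma~\ref{lem:densityHxan} and then transcribing the proof of Proposition~\ref{prop:type2branch}, including the use of independence of valuations for the final clause about~$\cB'$. Your write-up simply makes explicit the details the paper leaves implicit.
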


We can now adapt the arguments given in Section~\ref{sec:globaldecomposition} to obtain the following result.

\begin{theorem}\label{thm:an-brick}
There exists a finite subset~$E$ of~$X(k)$ such that $X\setminus E$ admits a finite partition into analytic bricks. \qed
\end{theorem}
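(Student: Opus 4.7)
The plan is to mimic the algebraic arguments of Theorem~\ref{thm:alg-brick} and Corollary~\ref{cor:algbricks}, now using the analytic local structure results Propositions~\ref{prop:type1an}--\ref{prop:type2branchan} in place of their algebraic counterparts. First I would use these to cover $X$ by analytic bricks: each point has a neighborhood that is either a single brick (types~1, 3, 4, via Propositions~\ref{prop:type1an}, \ref{prop:type4an}, \ref{prop:type3an}) or a union of an analytic tube with finitely many open annuli approaching the center (type~2, where the annuli provided by Proposition~\ref{prop:type2branchan} cover the branches missed by the tube from Proposition~\ref{prop:type2generican}). Since $X = \cM(\cA)$ is compact, I extract a finite subcover $\cT$ of $X$ by analytic bricks.

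Second, I would establish the analytic analogues of Lemmas~\ref{lem:intersectionalgebraic bricks}, \ref{lem:complementalgebraic bricks} and~\ref{lem:algebraicbricksminuspoints}, and hence of Corollary~\ref{cor:disjoint}: finite intersections, finite complements, and removal of finitely many $k$-rational points from analytic bricks all yield finite disjoint unions of analytic bricks, so that any finite union of analytic bricks can be rewritten as a finite disjoint union of analytic bricks. The algebraic proofs are purely topological case analyses based on the shapes of discs, annuli and tubes, together with the analytic version of Remark~\ref{rem:restriction} (a subset of the appropriate shape inside an analytic brick inherits a brick presentation from the ambient one, possibly after composing with an automorphism of $\AA^1_{k}$). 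These transfer directly. Applying the resulting disjointification to~$\cT$, and absorbing into a finite set $E \subseteq X(k)$ the finitely many type~1 boundary points that get ``pinched off'' when two overlapping bricks are replaced by their symmetric difference, yields the desired finite partition of $X \setminus E$ into analytic bricks.

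The main obstacle, and the point where I expect the most care to be needed, is the verification of the analytic analogues of Remark~\ref{rem:restriction} and of the complement lemma. In the algebraic setting, the map~$f$ of a brick lands in the unrestricted affine line $\AA^{1,\an}_{k}$, so any automorphism of $\AA^1_{k}$ can be composed freely with~$f$; in the analytic setting the map of a brick must take values in the closed unit disc~$\DD_{k}$, so when restricting $(W,V,f)$ to a subset $V' \subseteq V$ one must check, after the necessary recentering/rescaling by an automorphism of $\AA^1_{k}$, that the reparametrized map still factors through $\DD_{k}$, possibly after shrinking the ambient affinoid $W$ to a strict affinoid subdomain on which the new composition is bounded by~$1$. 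This is routine in each of the nine cases of the complement lemma but requires careful bookkeeping, and this is essentially the source of the finite exceptional set $E$ that appears in the statement.
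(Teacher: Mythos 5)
Your proposal is correct and is essentially the paper's own argument: the paper offers no written proof of Theorem~\ref{thm:an-brick} beyond the instruction to adapt the arguments of Section~\ref{sec:globaldecomposition}, and your outline — a finite cover by analytic bricks via Propositions~\ref{prop:type1an}--\ref{prop:type2branchan} together with compactness of the strictly $k$-affinoid curve~$X$, followed by analytic analogues of Lemmas~\ref{lem:intersectionalgebraic bricks}, \ref{lem:complementalgebraic bricks}, \ref{lem:algebraicbricksminuspoints} and Corollary~\ref{cor:disjoint}, with degenerate $k$-rational pieces absorbed into the finite set~$E$ — is exactly that adaptation. Your observation that the only real point of care is the analytic version of Remark~\ref{rem:restriction} (after recentering by an automorphism of $\AA^1_{k}$ one replaces the ambient strict affinoid~$W$ by the Weierstrass domain where the new map is bounded by~$1$, which is strict since the radii lie in $|k^\times|$) is also the right one.
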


The separation results are easier in the analytic setting since two distinct points of a curve may be put into disjoint affinoid domains. It follows that the analogue of Lemma~\ref{lem:separationpoints} holds. Using this remark, we may derive an analogue of Proposition~\ref{prop:separationbricks}.

\begin{proposition}\label{prop:separationbricksanalytic}
Let $\varphi\colon X \to X'$ be a morphism of smooth strictly $k$-affinoid curves, let~$U'$ be a connected analytic domain of~$X'$ whose boundary is a finite set of points of type~1 or~2 and let~$U$ be a connected component of $\varphi^{-1}(U')$ such that~$\varphi$ is finite at each point of~$U$. Then, there exist a Zariski-closed subset~$Z$ of~$X$, a finite set~$M$, for each $m\in M$, a finite set~$N_{m}$ and, for each~$n\in N_{m}$, an affinoid domain $W_{n}$ of~$X$, a morphism $f_{n}\colon W_{n} \to \DD^1_{k}$ and an open subset $A_{n}$ of~$\DD^{1,\an}_{k}$ that is either an open disc or an open annuli with radii in~$|k^\times|$ such that
\[U = \varphi^{-1}(U') \cap Z \cap \bigcup_{m\in M} \bigcap_{n\in N_{m}} f_{n}^{-1}(A_{n})\]
up to a finite set of $k$-rational points.
\qed
\end{proposition}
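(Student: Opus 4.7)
The plan is to adapt the proof of Proposition~\ref{prop:separationbricks} to the analytic setting, replacing each appeal to the algebraic local structure by the corresponding analytic result (Propositions~\ref{prop:type1an}--\ref{prop:type2branchan}). A pleasant simplification is that $X$ and $X'$ are already compact, being strictly $k$-affinoid, so the compactification step disappears entirely. The Zariski-closed subset $Z$ will be taken to be the union of the irreducible components of $X$ on which $\varphi$ is non-constant, reducing us to the case where $\varphi$ has relative dimension~$0$. Then I would deal with the type~$1$ boundary points by absorbing them: each admits an analytic open disc neighborhood by Proposition~\ref{prop:type1an}, so exactly as in the algebraic proof we may assume, up to a finite set of $k$-rational points, that both $\partial U'$ and $\partial U$ contain only points of type~$2$.

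Set $V := \varphi^{-1}(U')$, so that $\ov U\setminus U$ and $\ov V\setminus V$ are finite sets of type~$2$ points. For each $x\in \ov U\setminus U$ the set $\cB_x(U)\subseteq \cB_x(V)$ of branches is finite, and $\cB(U):=\bigcup_{x\in \ov U\setminus U}\cB_x(U)$ is finite. For each such pair $(x,b)$, Proposition~\ref{prop:type2branchan} furnishes an analytic open annulus $f_b^{-1}(A_b)$ containing $b$ but none of the other branches of $\cB_x(V)$. The set $F_b := \ov{f_b^{-1}(A_b)\cap(V\setminus U)}$ is then a compact subset of $X$ not containing $x$.

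To cut $F_b$ out I need an analytic analogue of Lemma~\ref{lem:separationpoints}: given distinct points $x,y\in X$, there exist a strictly affinoid domain $W$ of $X$, a morphism $g\colon W\to\DD_k$ and an open disc or annulus $A\subset \DD^{1,\an}_k$ with radii in $|k^\times|$ such that $x\in g^{-1}(A)$ and $y\notin \ov{g^{-1}(A)}$. This is the main new input and, as the authors indicate, it is easier than its algebraic counterpart: one picks a strictly affinoid neighborhood $W$ of $x$ avoiding $y$ (or containing both if necessary), applies Lemma~\ref{lem:densityHxan} to obtain a bounded $\alpha\in\cA_W$ with $|\alpha(x)|\neq |\alpha(y)|$, and uses the density of $|k^\times|$ in $\R_{>0}$ to choose $A$ as in the proof of Lemma~\ref{lem:separationpoints}. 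Using compactness of $F_b$ one then obtains finitely many $g_y^{-1}(A_y)$ whose complements cover $F_b$, and the open set $U_b := V\cap f_b^{-1}(A_b)\cap \bigcap_y g_y^{-1}(A_y)$ is contained in $U$ and contains the branch~$b$.

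The residual set $U^- := U\setminus \bigcup_{b\in \cB(U)} U_b$ is closed in the compact space $X$, hence compact. For each $x\in U^-$, the closure of $V\setminus U$ is a compact set missing~$x$, so the analytic separation lemma yields an open $U_x = V\cap \bigcap_{e\in E_x} g_e^{-1}(A_e)$ contained in $U$ and containing $x$. A finite subcover $\{U_x\}_{x\in U^0}$ of $U^-$ together with the sets $U_b$ produces the desired identity
\[
U = \varphi^{-1}(U')\cap Z \cap \Big(\bigcup_{b\in \cB(U)} U_b \ \cup\ \bigcup_{x\in U^0} U_x\Big)
\]
up to a finite set of $k$-rational points. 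The main obstacle is the analytic separation lemma; once that is established, the compactness-and-local-structure argument transcribes essentially verbatim from the algebraic case.
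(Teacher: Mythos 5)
Your proposal is correct and takes essentially the same route as the paper, which offers no written argument beyond noting that the analytic analogue of Lemma~\ref{lem:separationpoints} holds and that the proof of Proposition~\ref{prop:separationbricks} then transcribes — exactly your plan, with the compactification and smoothness reductions rightly dropped. One cosmetic point: the separating function need not come from Lemma~\ref{lem:densityHxan}; since distinct points of $\cM(\cA_{W})$ are distinct seminorms, some bounded $\alpha$ with $|\alpha(x)|\neq|\alpha(y)|$ exists immediately (or, as the paper suggests, one may simply place $x$ and $y$ in disjoint affinoid neighborhoods).
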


\begin{corollary}\label{cor:definableanbricks}
For each analytic brick~$(W,V,f)$ of~$X$, the set~$V(k)$ is a definable subset of~$X(k)$ and the map $V(k) \to k^\circ$ induced by~$f$ is definable.
\qed
\end{corollary}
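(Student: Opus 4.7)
The plan is to mimic the proof of Corollary~\ref{cor:definablealgbricks} in the analytic setting, applying Proposition~\ref{prop:separationbricksanalytic} to the morphism $\varphi := f \colon W \to \DD_{k}$ with $U' := f(V)$ and $U := V$. The hypotheses are satisfied: in each of the three brick types, $f(V)$ is a connected analytic domain of $\DD_{k}$ whose boundary consists of finitely many points of type~$1$ or~$2$ (a single type~$2$ point for a disc, at most two such points for an annulus, and finitely many type~$2$ points for a tube), and $f|_{V}$ is finite since it is an isomorphism onto its image in the disc and annulus cases, and a finite morphism whose restriction to each component of $V\setminus\{x\}$ is an isomorphism in the tube case. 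In the disc and annulus cases, $V$ is a connected component of $f^{-1}(f(V))$: it is connected and open in $W$, and its boundary in $W$ is sent to the boundary of $f(V)$, which lies outside $f(V)$, making $V$ clopen in $f^{-1}(f(V))$. In the tube case the same conclusion holds up to a finite set of $k$-rational points absorbed by the ``up to'' flexibility of the proposition.

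Proposition~\ref{prop:separationbricksanalytic} then furnishes a Zariski-closed subset $Z$ of $W$, a finite set $M$, and, for each $m\in M$, a finite set $N_{m}$ together with, for every $n\in N_{m}$, an affinoid domain $W_{n}$ of $W$, a morphism $f_{n} \colon W_{n} \to \DD_{k}$ and an open subset $A_{n} \subseteq \DD_{k}$ that is either an open disc or an open annulus with radii in $|k^\times|$, such that
\[ V \;=\; f^{-1}(f(V)) \cap Z \cap \bigcup_{m\in M} \bigcap_{n\in N_{m}} f_{n}^{-1}(A_{n}) \]
up to a finite set of $k$-rational points. Passing to $k$-points, each ingredient is definable: the sets $W(k)$, $W_{n}(k)$ and $Z(k)$ are definable subsets of $X(k)$ (the first two via Convention~\ref{convention.an}, the third because $Z$ is Zariski-closed); the morphisms $f$ and $f_{n}$ induce $\cL^\an$-definable maps on $k$-points by the corollary to Lemma~\ref{lem:mapdistinguished}; the subsets $A_{n}\cap k$ and $f(V)\cap k$ of $k$ are $\cL_{\BB}$-definable (as open or closed discs, or $\DD_{k}$ minus finitely many open discs, with radii in $|k|$); and finite sets of $k$-rational points are trivially definable. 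Forming the corresponding Boolean combination of preimages yields the definability of $V(k)$ inside $X(k)$.

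The second assertion is then immediate: the morphism $f\colon W \to \DD_{k}$ induces an $\cL^\an$-definable map $W(k) \to k^\circ$ by Convention~\ref{convention.an}, whose restriction to the definable subset $V(k)$ is again definable.

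The main subtlety I anticipate concerns the verification that $V$ is, up to a finite set of $k$-rational points, a connected component of $f^{-1}(f(V))$, particularly in the tube case, where the Shilov point $x$ maps to $\eta_{0,1}\in f(V)$ and other preimages of $f(V)$ in $W$ may be topologically adjacent to $V$ through $x$. Absorbing the finitely many $k$-rational points of this potential adjacency into the ``up to'' clause of the proposition, together with a careful analysis of the Shilov behavior, should suffice to close the argument.
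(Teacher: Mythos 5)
Your proof is correct and follows essentially the same route as the paper: the corollary is left without a written proof there precisely because, as in Corollary~\ref{cor:definablealgbricks}, one applies Proposition~\ref{prop:separationbricksanalytic} with $\varphi=f$, $U'=f(V)$ and $U=V$, and then passes to $k$-points via Convention~\ref{convention.an} and the definability of the maps induced by the affinoid morphisms. Your verification of the hypotheses (the boundary of~$f(V)$, finiteness of~$f$ on~$V$, and $V$ being, up to finitely many $k$-rational points, a connected component of $f^{-1}(f(V))$) is exactly the implicit content the paper leaves to the reader.
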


\begin{corollary}\label{cor:definablereductionan}
For each point $x\in X$ of type~2 and each analytic tube~$(W,V,f)$ centered at~$x$, the map $V(k) \to \cC_{x}(\tilde k)$ induced by~$\rho_{V}$ is definable.
\qed
\end{corollary}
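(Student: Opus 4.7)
My plan is to mirror the proof of Corollary~\ref{cor:definablereductionalg}, replacing each algebraic ingredient by the analytic analogue just established. By Corollary~\ref{cor:definableanbricks}, the set~$V(k)$ is a definable subset of $X(k)$, so it suffices to exhibit the map $V(k)\to \cC_x(\tilde k)$ induced by~$\rho_V$ as definable.

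The main step is to apply Lemma~\ref{lem:residue_morphan}: after fixing a closed embedding $\iota_V\colon \cU_V \hookrightarrow \AA^r_{\tilde k}$, it furnishes an affinoid domain~$Y$ of~$X$ containing~$x$ and morphisms $f_1,\dotsc,f_r\colon Y\to\DD_k$ such that $(\iota_V\circ \rho_V)(z) = (\red(f_1(z)),\dotsc,\red(f_r(z)))$ for every $z \in (Y\cap V)(k)\setminus\{x\}$. By Convention~\ref{convention.an} (based on Lemma~\ref{lem:mapdistinguished} and its corollary), each~$f_i$ induces an $\cL^\an$-definable map $Y(k)\to k^\circ$. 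Since the residue map $\red\colon k^\circ \to \tilde k$ is part of~$\cL_3$ and~$\iota_V$ is a $\cLr$-definable closed embedding (by Convention~\ref{convention.alg}), composing shows that the restriction of~$\rho_V$ to $(Y\cap V)(k)$ is definable as a map into~$\cC_x(\tilde k)$.

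It remains to handle the connected components of $V\setminus\{x\}$ that are not contained in~$Y$. Because~$V$ and~$Y$ are both affinoid neighborhoods of~$x$ in the same curve, only finitely many branches emanating from~$x$ are missed by $V\cap Y$ (the missing branches correspond, via~$\beta_x$, to a Zariski-closed proper subset of~$\cC_x$, necessarily finite), so only finitely many such components~$E$ exist. Each such~$E$ arises, through the tube morphism $f\colon W\to\DD_k$ attached to~$V$, as an analytic open disc brick $(W,E,f)$, so $E(k)$ is a definable subset of $X(k)$ by Corollary~\ref{cor:definableanbricks}. Moreover, $\rho_V$ is constant on~$E$ with value the single point $\beta_x(b_E)\in \cC_x(\tilde k)$, so its restriction to~$E(k)$ is trivially definable. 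Gluing the definable pieces on $(Y\cap V)(k)$ and on each of the finitely many leftover $E(k)$ yields the claim.

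The substantive content is entirely packaged in Lemma~\ref{lem:residue_morphan}, which provides analytic coordinates whose residues compute~$\rho_V$; once that lemma is granted, the only obstacle is a bookkeeping check that analytic morphisms between $k$-affinoid spaces translate into $\cL^\an$-definable maps on $k$-points and that the reduction map intertwines these with the $\cLr$-definable structure on~$\cC_x(\tilde k)$, both of which are already in hand.
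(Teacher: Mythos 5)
Your argument is correct and is essentially the paper's own (omitted) proof: exactly as for Corollary~\ref{cor:definablereductionalg}, one combines Corollary~\ref{cor:definableanbricks} with Lemma~\ref{lem:residue_morphan} on $(Y\cap V)(k)$ and then treats the finitely many connected components of $V\setminus\{x\}$ not contained in~$Y$, which are definable and on which $\rho_V$ is constant. Only a small point: the finiteness of these leftover components is better justified by noting that $Y$ is an affinoid neighbourhood of~$x$, so any component of $V\setminus\{x\}$ not contained in~$Y$ must meet the finite boundary of $V\cap Y$ in~$V$, rather than by counting branches missed by $V\cap Y$ (a neighbourhood of~$x$ misses no branch; the issue is whole components, not branches).
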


\subsubsection{Arbitrary curves}

We now adapt our definitions to be able to handle non-smooth curves as well. Let $X = \cM(\cA)$ be a reduced irreducible strictly $k$-affinoid curve.

\begin{definition}
Let~$X$ be a strictly $k$-affinoid space and~$Y$ be $k$-analytic space. A \emph{compactifiable rational map} $f \colon X \dasharrow Y$ is the data of 
\begin{enumerate}[$i)$]
\item a nowhere dense Zariski-closed subset~$S$ of~$X$;
\item a strictly $k$-affinoid space~$X'$;
\item a morphism $n \colon X' \to X$ such that the induced morphism $X' \setminus n^{-1}(S) \to X\setminus S$ is an isomorphism;
\item a morphism $f' \colon X' \to Y$.
\end{enumerate} 

We will call $X\setminus S$ the regularity locus of~$f$ and say that~$f$ is regular on an analytic domain~$U$ of~$X$ if $U \subseteq X\setminus S$. 

We will commonly use~$f$ as a shortcut for $f' \circ n^{-1}_{|X\setminus S}$ and use the following representation:
\[
\begin{tikzcd}
X' \ar[d,"n"'] \ar{r}{f'} & Y. \\
X \ar[ru, dashed,"f"']\\
\end{tikzcd}
\]
\end{definition}

\begin{remark}
In the previous setting, if~$Y$ is strictly $k$-affinoid, then the restriction of~$f$ to $X(k)\setminus S(k)$ is $\cL^\an$-definable.
\end{remark}

\begin{definition}
Let $X$ be a $k$-analytic curve. 

A \emph{rational analytic open disc} is the data of a strict affinoid domain~$W$ of~$X$, an open subset~$V$ of~$W$ and a compactifiable rational map $f \colon W \dasharrow \DD_{k}$ such that
\begin{enumerate}[$i)$]
\item $f$ is regular on~$V$; 
\item $f(V)$ is an open disc with radius in~$|k^\times|$ contained in~$\DD_{k}$;
\item $f_{|V}$ induces an isomorphism onto its image.
\end{enumerate}

A \emph{rational analytic open annulus} is the data of a strict affinoid domain~$W$ of~$X$, an open subset~$V$ of~$W$ and a compactifiable rational map $f \colon W \dasharrow \DD_{k}$ such that
\begin{enumerate}[$i)$]
\item $f$ is regular on~$V$;
\item $f(V)$ is an open annulus with radii in~$|k|$ contained in~$\DD_{k}$;
\item $f_{|V}$ induces an isomorphism onto its image.
\end{enumerate}

A \emph{rational analytic tube centered at a point~$x\in X$} is the data of a strict affinoid domain~$W$ of~$X$, an affinoid domain~$V$ of~$W$ with Shilov boundary~$\{x\}$ and a compactifiable rational map $f \colon W \dasharrow \DD_{k}$ such that
\begin{enumerate}[$i)$]
\item $f$ is regular on~$V$;
\item $f^\an(x) = \eta_{0,1}$ and $f^\an(V)$ is equal to~$\DD_{k}$ possibly deprived of finitely many open unit discs;
\item for each connected component~$E$ of~$V\setminus\{x\}$, $f(E)$ is a connected component of $f(V)\setminus\{\eta_{0,1}\}$ and the morphism~$f$ induces an isomorphism between~$E$ and~$f(E)$.
\end{enumerate}

We call \emph{rational analytic brick} any triple $(W,V,f)$ of one of the three preceding sorts.
\end{definition}

As in the discussions following Propositions~\ref{prop:type2generic} and~\ref{prop:type2generican}, in the setting of Proposition~\ref{prop:type2generican}, $V$ is a tube and we have an associated reduction map $\rho_{V}\colon V \to \cC_{x}$. 

The following results are easily derived from Corollaries~\ref{cor:definableanbricks} and~\ref{cor:definablereductionan}.

\begin{corollary}\label{cor:definableratanbricks}
For each rational analytic brick~$(W,V,f)$ of~$X$, the set~$V(k)$ is a definable subset of~$X(k)$ and the map $V(k) \to k^\circ$ induced by~$f$ is definable.
\qed
\end{corollary}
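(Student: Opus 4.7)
The plan is to reduce the claim to the quasi-smooth case already handled by Corollary~\ref{cor:definableanbricks}. Let $(W, V, f)$ be a rational analytic brick, and unpack the compactifiable rational map $f$ as data $(S, W', n, f')$: here $S$ is a nowhere dense Zariski-closed subset of $W$, the morphism $n \colon W' \to W$ restricts to an isomorphism $W' \setminus n^{-1}(S) \simto W \setminus S$, and $f' \colon W' \to \DD_{k}$ is a genuine morphism with $f = f' \circ n^{-1}$ on $W \setminus S$.

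Since $f$ is regular on $V$, we have $V \subseteq W \setminus S$, so $V' := n^{-1}(V)$ is mapped isomorphically onto $V$ by $n$, and $f'|_{V'}$ corresponds to $f|_{V}$ under this identification. By the definition of a rational analytic brick, $V$ is isomorphic via $f$ to either an open disc, an open annulus, or (minus its central point) a disjoint union of open discs in $\DD_{k}$, so $V$, and therefore $V'$, is quasi-smooth. Shrinking $W'$ to a strict affinoid neighborhood of $V'$ contained in the (open) quasi-smooth locus of $W'$, we obtain that $(W', V', f')$ is an ordinary (non-rational) analytic brick of the same type in a quasi-smooth strictly $k$-affinoid curve.

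Applying Corollary~\ref{cor:definableanbricks} to $(W', V', f')$ shows that $V'(k)$ is $\cL^{\an}$-definable in $W'(k)$ and that the map $V'(k) \to k^\circ$ induced by $f'$ is definable. Now $n \colon W' \to W$ is a morphism of strictly $k$-affinoid spaces, hence by Convention~\ref{convention.an} it induces a definable map on $k$-points whose restriction $n|_{V'(k)} \colon V'(k) \to V(k)$ is a definable bijection. It follows that $V(k) = n(V'(k))$ is definable in $W(k)$, and therefore in $X(k)$ since $W$ is itself a strict affinoid domain of $X$; moreover, $f|_{V(k)}$ equals $f'|_{V'(k)} \circ (n|_{V'(k)})^{-1}$ and is thus definable as a composition of definable maps.

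The main delicate point I anticipate is the shrinking step: one must justify that $W'$ can indeed be replaced by a strict affinoid subdomain lying in its quasi-smooth locus that still contains $V'$ and preserves the brick structure. If this turns out to be awkward, a clean alternative is to pass to the normalization $\widetilde{W'} \to W'$, which is quasi-smooth and an isomorphism above $V'$, and to apply Corollary~\ref{cor:definableanbricks} there before transferring definability back along two definable bijections. Everything else is a routine transfer of definability through a bijective morphism of strictly $k$-affinoid spaces.
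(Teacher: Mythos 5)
Your argument is correct and is essentially the derivation the paper intends: the paper gives no proof beyond declaring the corollary ``easily derived'' from Corollary~\ref{cor:definableanbricks}, i.e.\ one unpacks the data $(S,W',n,f')$ of the compactifiable rational map and transfers definability through the $\cL^{\an}$-definable maps induced on $k$-points, exactly as you do. Your fallback of passing to the normalization to guarantee a quasi-smooth ambient curve is the right way to handle the one delicate point (the definition of a compactifiable rational map does not require $W'$ to be quasi-smooth), and it is consistent with the paper's own appeal to normalization just below for Corollary~\ref{thm:an-brick-singular}; note that $V'$ indeed avoids the finitely many singular ($k$-rational) points since it is isomorphic to a disc, annulus or tube and its only boundary point is of type~2, hence smooth.
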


\begin{corollary}\label{cor:definablerareductionan}
For each point $x\in X$ of type~2 and each rational analytic tube~$(W,V,f)$ centered at~$x$, the map $V(k) \to \cC_{x}(\tilde k)$ induced by~$\rho_{V}$ is definable.
\qed
\end{corollary}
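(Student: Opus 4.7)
The plan is to pull the rational analytic tube data back along the structural morphism of the underlying compactifiable rational map, and thereby reduce to the smooth case handled by Corollary~\ref{cor:definablereductionan}. Unwinding the definition of~$f$ gives us a nowhere dense Zariski-closed subset $S\subseteq W$, a strictly $k$-affinoid space~$W'$, a morphism $n\colon W'\to W$ inducing an isomorphism $W'\setminus n^{-1}(S)\simto W\setminus S$, and a morphism $f'\colon W'\to \DD_{k}$. The regularity condition in the definition of a rational analytic tube forces $V\subseteq W\setminus S$, so upon setting $V' := n^{-1}(V)$ and $x' := n^{-1}(x)$, the map~$n$ restricts to an isomorphism $V'\simto V$ carrying~$x'$ to~$x$, and this restriction is $\cL^{\an}$-definable on $k$-points by Convention~\ref{convention.an}.

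Next I would argue that~$V$ consists only of smooth points of~$W$: each connected component of~$V\setminus\{x\}$ is isomorphic via~$f$ to an open disc and is therefore smooth, while~$x$ itself is of type~2 and hence smooth too, because on a reduced curve over an algebraically closed field the non-quasi-smooth locus is a finite set of type-1 points. Consequently~$V'$ sits inside the quasi-smooth locus of~$W'$, and we may choose a quasi-smooth connected strict affinoid neighborhood~$W''$ of~$V'$ in~$W'$. Viewing~$W''$ as a quasi-smooth connected strictly $k$-affinoid curve and a strict affinoid domain of itself, the triple $(W'',V',f'|_{W''})$ is then an analytic tube centered at~$x'$ in the sense of the previous subsection; the conditions on~$f(V)$ and on the connected components of $V\setminus\{x\}$ are inherited from~$(W,V,f)$ through the isomorphism~$n|_{V'}$.

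Applying Corollary~\ref{cor:definablereductionan} to this analytic tube, the map $V'(k)\to \cC_{x'}(\tilde k)$ induced by~$\rho_{V'}$ is definable. To transport this back to~$V$, I would observe that~$n$ induces an isomorphism on an open neighborhood of~$x'$, hence an isomorphism of completed residue fields $\sH(x')\simeq \sH(x)$ and a canonical identification $\cC_{x'}\simeq \cC_{x}$. This identification is compatible with the bijection between branches and closed points of the residue curve, so $\rho_{V} \circ n|_{V'} = \rho_{V'}$. Combining this equality with the definable bijection $V(k)\simto V'(k)$ obtained in the first paragraph yields the definability of the map $V(k)\to \cC_{x}(\tilde k)$ induced by~$\rho_{V}$.

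The only piece of genuine geometric input is the observation that~$V$ avoids the (at worst type-1) singular points of~$W$, and this is where one uses in an essential way that each connected component of $V\setminus\{x\}$ is forced by~$f$ to be an open disc; once that is granted, the rest of the argument is bookkeeping around the definition of a compactifiable rational map and the naturality of the reduction construction~$\rho$ under isomorphism of analytic neighborhoods.
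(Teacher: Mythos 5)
This is correct and is essentially the paper's own (omitted) argument: the corollary is presented as easily derived from Corollaries~\ref{cor:definableanbricks} and~\ref{cor:definablereductionan}, and your reduction through the morphism $n$ underlying the compactifiable rational map, together with the definability of $n$ on $k$-points and the canonical identification $\cC_{x'}\simeq\cC_{x}$ giving $\rho_{V}\circ n|_{V'}=\rho_{V'}$, is exactly that derivation. One remark: the affinoid \emph{neighborhood} $W''$ of $V'$ is not needed (and its existence is not entirely free); since the definition of an analytic tube never uses that $W''$ is a neighborhood, you may simply take $W''=V'$, which is a quasi-smooth connected strictly $k$-affinoid curve because all its points are quasi-smooth and its Shilov boundary is the type-2 point $x'$.
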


It is not difficult to check that if a strictly $k$-affinoid curve~$X$ admits a finite partition into analytic bricks, then, for each finite subset~$S$ of~$X(k)$, $X\setminus S$ admits a finite partition into rational analytic bricks. Using this kind of arguments, together with the fact that the normalisation of a strictly $k$-affinoid space is a strictly $k$-affinoid space isomorphic to the first one outside a finite number of $k$-rational points, we obtain the following analogue of Theorem~\ref{thm:an-brick}.

\begin{corollary}\label{thm:an-brick-singular}
There exists a finite subset~$E$ of~$X(k)$ such that $X\setminus E$ admits a finite partition into rational analytic bricks. 
\qed
\end{corollary}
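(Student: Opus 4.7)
The plan is to reduce to the quasi-smooth case via normalisation, apply Theorem~\ref{thm:an-brick} there, and transport the resulting partition to~$X$ using the compactifiable rational map formalism. Let $n\colon \wt X\to X$ be the normalisation. As recalled just before the statement, $\wt X$ is a strictly $k$-affinoid space and $n$ induces an isomorphism outside a finite set $F\subseteq X(k)$ consisting of the (necessarily $k$-rational) singular points of~$X$. Moreover, $\wt X$ is quasi-smooth, so each of its finitely many connected components is a quasi-smooth connected strictly $k$-affinoid curve, and Theorem~\ref{thm:an-brick} applies componentwise. This yields a finite subset $\wt E_{0}\subseteq \wt X(k)$ together with a finite partition of $\wt X\setminus \wt E_{0}$ into analytic bricks.

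Setting $\wt E:=\wt E_{0}\cup n^{-1}(F)$, I would then invoke the analytic analogue of Lemma~\ref{lem:algebraicbricksminuspoints}, proven by the same case-by-case local analysis (for instance, removing a $k$-point from an open disc produces an open annulus with inner radius $0\in |k|$), to refine the partition into a finite partition of $\wt X\setminus \wt E$ into analytic bricks $(\wt W_{i},\wt V_{i},\wt f_{i})_{i\in I}$. Set $E:=n(\wt E)\subseteq X(k)$. Since $\wt E\supseteq n^{-1}(F)$, the restriction of $n$ induces an isomorphism $\wt X\setminus \wt E\simto X\setminus E$ of $k$-analytic spaces, and the sets $V_{i}:=n(\wt V_{i})$ partition $X\setminus E$. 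For each $i\in I$, I would pick a strict affinoid neighbourhood $W_{i}$ of $V_{i}$ in~$X$ of the type expected from the brick, and associate to it the triple $(W_{i},V_{i},f_{i})$, where $f_{i}\colon W_{i}\dasharrow \DD_{k}$ is the compactifiable rational map specified by the data $(S_{i}=W_{i}\cap F,\ n^{-1}(W_{i}),\ n|_{n^{-1}(W_{i})}\colon n^{-1}(W_{i})\to W_{i},\ f_{i}')$, where $n^{-1}(W_{i})$ is a strict affinoid of $\wt X$ by finiteness of~$n$ and $f_{i}'$ is any morphism to $\DD_{k}$ extending $\wt f_{i}$ on $n^{-1}(W_{i})\cap \wt W_{i}$. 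Unravelling the definitions, the resulting triple is a rational analytic brick on~$X$ of the same topological type as the original one, and the family covers $X\setminus E$.

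The main obstacle is to cope with the possibility that the original affinoid $\wt W_{i}$ crosses $n^{-1}(F)$ and that its image $n(\wt W_{i})$ fails to be a strict affinoid domain of~$X$. This is precisely what the compactifiable rational map formalism is designed to accommodate: one is allowed to replace the intractable image $n(\wt W_{i})$ by a strict affinoid $W_{i}\supseteq V_{i}$ chosen within~$X$, and to pull the rational map back through the strict affinoid $n^{-1}(W_{i})$ of~$\wt X$. The only substantive check is that such a $W_{i}$ can be chosen compatibly with the expected topological type of each brick, which reduces, around each point of $W_{i}\cap F$, to a local argument using finiteness of~$n$ and the explicit description of $\wt W_{i}$ as a disc, annulus, or tube. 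Granting this, the bricks $(W_{i},V_{i},f_{i})_{i\in I}$ deliver the claimed finite partition of $X\setminus E$.
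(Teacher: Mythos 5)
Your overall strategy is the one the paper has in mind: normalise, apply Theorem~\ref{thm:an-brick} to $\wt X$, enlarge the exceptional set by $n^{-1}(F)$, and transport the partition back to $X$ through the compactifiable-rational-map formalism; the reductions you make (the identification $\wt X\setminus\wt E \simto X\setminus E$, the analytic analogue of Lemma~\ref{lem:algebraicbricksminuspoints}) are fine. The gap is in the construction of the rational map itself. You take $X':=n^{-1}(W_i)$ and declare $f_i'$ to be ``any morphism to $\DD_k$ extending $\wt f_i$ on $n^{-1}(W_i)\cap \wt W_i$''. Such an extension need not exist: a morphism defined on a strict affinoid subdomain does not in general extend to the ambient affinoid (already a coordinate on a smaller disc does not extend to a larger one), and the definition of a compactifiable rational map requires $f'$ on all of $X'$. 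Nor can this be dodged by shrinking $W_i$: if the closure of $V_i$ contains a singular point $p$, then $W_i$, being compact, contains $p$, and a strict affinoid domain of a curve is a neighbourhood of each of its $k$-rational points (its boundary consists only of points of type~2 or~3); hence $n^{-1}(W_i)$ automatically contains neighbourhoods of \emph{all} the preimages of $p$, including branches on which $\wt f_i$ is not defined at all. So the ``only substantive check'' you flag (the topological type of $W_i$) is not where the difficulty lies; the constraints of the definition bear on $f(V)$, not on $W$, and the real issue is the global definition of $f_i'$ on $n^{-1}(W_i)$.

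The missing idea is to arrange the data so that no extension is required. For instance, refine the partition of $\wt X\setminus\wt E$ near $n^{-1}(F)$ and treat each singular point $p$ by a local argument: choose pairwise disjoint strict affinoid neighbourhoods $Z_1,\dotsc,Z_m$ of the preimages $q_1,\dotsc,q_m$ of $p$ in $\wt X$, each carrying a morphism to $\DD_k$ that is an isomorphism near $q_j$ (Proposition~\ref{prop:type1an}); since $n$ is finite, hence proper, one can then pick a strict affinoid neighbourhood $W$ of $p$ in $X$ with $n^{-1}(W)\subseteq Z_1\sqcup\dotsb\sqcup Z_m$, so that $n^{-1}(W)$ is a disjoint union of affinoid domains on which $f'$ may be defined piecewise (the chosen local coordinate on the relevant component, a constant on the others). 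The pieces of the partition adjacent to $p$ in each branch then become rational analytic bricks with this data, while the remaining pieces can be kept away from $n^{-1}(F)$ and transported as honest analytic bricks of $X$ through the isomorphism off $F$. Some device of this kind --- equivalently, choosing $W_i$ so that the components of $n^{-1}(W_i)$ meeting $n^{-1}(V_i)$ lie inside $\wt W_i$, and setting $f'$ equal to $\wt f_i$ there and constant elsewhere --- is needed to make your transport step correct; as written, that step fails.
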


\section{Definable analytic curves and morphisms}\label{sec:functorial}

Through this section we let $\cL$ denote either $\cL_\BB$ or $\cL_\BB^\an$.

\subsection{Facades}

Given $X$ a $k$-analytic curve, we use the notation $X(k)$ to denote the set of $k$-rational points of $X$. We will often identify $X^{(1)}$ and $X(k)$. 

Let $\mathbf{Def}_k(\cL)$ denote the category of $\cL$-definable sets with  $\cL$-definable maps as morphisms. 

\begin{definition}
An \emph{$\cL$-definable category of $k$-analytic curves} consists in the data of
\begin{enumerate}[$i)$]
\item a subcategory~$\mathcal{C}$ of the category of $k$-analytic curves;
\item a functor $j\colon \mathcal{C}\to \mathbf{Def}_k(\cL)$;
\item  for every object $X$ of $\mathcal{C}$, a bijection $j_X\colon X(k)\to j(X)$
\end{enumerate} 
such that, for any morphism $f\colon X\to Y$ of $\mathcal{C}$, the following diagram commutes:  
\[
\begin{tikzcd}
X(k) \ar{d}{f} \ar{r}{j_X} & j(X)\ar{d}{j(f)} \\
Y(k) \ar{r}{j_Y} & j(Y)\\
\end{tikzcd}
\texto{.}{-1.5pc}
\]
\end{definition}

We will often refer to an $\cL$-definable category of $k$-analytic curves abusively as~$\mathcal{C}$. Given an object~$X$ of~$\mathcal{C}$, we identify the set $X(k)$ with $j(X)$ and freely speak of $X(k)$ as an $\cL$-definable set. For example, a subset $A\subseteq X(k)$ is said to be $\cL$-definable if $j_X(A)$ is $\cL$-definable. 

Two main examples of definable categories will be considered in this article: 
\begin{enumerate}
\item the category of analytifications of algebraic curves with morphisms the analytifications of algebraic morphisms. Given $f^\an\colon X^\an\to Y^\an$, the corresponding $\cL_\BB$-definable map $j(f^\an) \colon j(X^\an) \to j(Y^\an)$ corresponds to the $\cL_{\mathrm{ring}}$-definable map induced by $f\colon X(k)\to Y(k)$ as explained in Convention~\ref{convention.alg}. Since in this situation $X^\an(k)=X(k)$, this provides the desired bijections.   
\item The category of strictly $k$-affinoid curves with morphisms the morphisms induced by bounded morphisms of the corresponding affinoid algebras. If $X=\mathcal{M}(\mathcal{A})$ is a strictly $k$-affinoid curve, to any Lipshitz presentation $f$ of $\mathcal{A}$ there is an associated $\cL_\BB^\an$-definable set $X_f(k)$ which is in bijection with $X(k)$ as explained in Convention \ref{convention.an}. The functoriality of such an association follows from Lemma \ref{lem:mapdistinguished}. 
\end{enumerate}



\begin{definition}\label{def:facade} Let $X$ be a $k$-analytic curve in an $\cL$-definable category of $k$-analytic curves~$\mathcal{C}$. An \emph{$\cL$-facade}~$\cS$ of~$X$ consists of the following data: 
\begin{enumerate}[$i)$]
\item a finite triangulation $S$ with an associated skeleton $\Gamma=(S,E)$ of $X$ and an associated retraction map~{$\tau\colon X\to\Gamma$}; 
\item for each edge~$I$ of~$E$, a pair $(V_I, f_I)$ such that 
\begin{enumerate}[(a)]
\item $V_I =\tau^{-1}(I)$,
\item $f_I\colon V_I\to \AA_k^{1,\an}$ is a morphism, $f_{I}(V_{I})$ is an open annulus and $f_I$ induces an isomorphism between~$V_{I}$ and its image,
\item $V_I(k)$ is an $\cL$-definable subset of $X(k)$,
\item the restriction $f_I\colon V_I(k)\to k$ is $\cL$-definable; 
\end{enumerate}
\item for each vertex~$x\in S^{(2)}$, an integer $m(x)\geqslant 0$, tuples $(W_x, f_x)$ and $(V_{x,i}, f_{x,i})$ with $1\leqslant i\leqslant m(x)$ such that 
\begin{enumerate}[(a)]
\item $\tau^{-1}(x)=W_x\sqcup \bigsqcup_{i=1}^{m(x)} V_{x,i}$,
\item $W_{x}$ is a tube centered at~$x$ (see Section~\ref{sec:residuecurve}),
\item $f_x\colon W_x\to \DD_k$ is a morphism such that $f_x(x) = \eta_{0,1}$, $f_x(W_x)$ is equal to~$\DD_{k}$ deprived of finitely many open unit discs, for each connected component~$C$ of~$W_x\setminus\{x\}$, $f_x(C)$ is an open unit disc and $f_x$ induces an isomorphism between~$C$ and its image, 
\item $f_{x,i}\colon V_{x,i}\to \AA^{1,\an}_k$ is a morphism, $f_{x,i}(V_{x,i})$ is an open unit disc and $f_{x,i}$ induces an isomorphism between $V_{x,i}$ and its image, 
\item $W_x(k)$ and $V_{x,i}(k)$ are $\cL$-definable subsets of $X(k)$,
\item the restrictions $f_x\colon W_x(k)\to k^\circ$, $f_{x,i}\colon V_{x,i}(k)\to k$ and $\rho_{W_x}\colon W_x(k)\to \cC_x(\tilde{k})$ are $\cL$-definable (see Section \ref{sec:residuecurve}).
\end{enumerate}
\end{enumerate}
\end{definition} 

We show the existence of $\cL$-facades in the following cases: 

\begin{theorem}\label{thm:facade_existence} Let $X$ be a $k$-analytic curve. 
\begin{enumerate}[$i)$]
\item If $X=Y^\an$ for an algebraic curve $Y$, then there is an $\cL_\BB$-facade of $X$.
\item If $X$ is a strictly $k$-affinoid curve, then there is an $\cL_\BB^\an$-facade of $X$.
\end{enumerate}
\end{theorem}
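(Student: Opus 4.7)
The strategy is to produce the facade from a finite triangulation of~$X$ (supplied by the semistable reduction theorem) together with algebraic (resp.\ rational analytic) brick data on each piece of the triangulation, extracted from the local structure results of Section~\ref{sec:bricks} and whose definability is guaranteed by Corollaries~\ref{cor:definablealgbricks}--\ref{cor:definablereductionalg} (resp.\ their analytic analogues).

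For part~(i), apply Theorem~\ref{thm:semistable} to $X=Y^\an$ to obtain a finite triangulation~$S$, with skeleton~$\Gamma_S$ and associated retraction $\tau\colon X\to\Gamma_S$. For each edge~$I$ of~$\Gamma_S$, Proposition~\ref{prop:type3} gives an algebraic open annulus structure $(O,V,f)$ on a neighborhood of any interior type-3 point of~$I$; because an open annulus is determined up to unique isomorphism by its skeleton and because $Y\setminus O$ consists of finitely many $k$-rational points that can be chosen outside $V_I=\tau^{-1}(I)$, this local structure extends to an algebraic open annulus $(O_I,V_I,f_I)$, and Corollary~\ref{cor:definablealgbricks} provides the $\cL_\BB$-definability of $V_I(k)$ and of~$f_I$.

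The vertex construction is the core of the argument. Fix $x\in S^{(2)}$ and identify the branches at~$x$ with $\cC_x(\tilde k)$; the edge branches form a finite subset~$B^{\text{edge}}_x$. Choose a non-constant finite morphism $\tilde f\colon\cC_x\to\PP^1_{\tilde k}$ with $B^{\text{edge}}_x\subseteq\tilde f^{-1}(\infty)$ (which exists by elementary algebraic geometry of~$\cC_x$), lift it via Lemma~\ref{lem:liftresiduemorphism} to an algebraic $f_x\colon O\to\AA^1_k$ with $f_x^\an(x)=\eta_{0,1}$, and apply Proposition~\ref{prop:type2generic}, together with Remark~\ref{rem:restriction} to discard the components of the resulting tube that extend into edge branches, so as to obtain an algebraic tube $(O,W_x,f_x)$ with $W_x\subseteq\tau^{-1}(x)$. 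Because the construction in Proposition~\ref{prop:type2generic} produces~$W_x$ as the connected component of $f_x^{-1}$ of an explicit subdomain of~$\DD_k$ containing~$\eta_{0,1}$, each disc branch~$D_j$ of~$\tau^{-1}(x)$ satisfies either $D_j\subseteq W_x$ or $D_j\cap W_x=\emptyset$; the finitely many disc branches of the latter kind are the required open discs $V_{x,1},\ldots,V_{x,m(x)}$, each of which carries an algebraic open disc structure via Proposition~\ref{prop:type1} or~\ref{prop:type4} (extended globally as in the annular case). The remaining definability statements for $W_x(k), V_{x,i}(k), f_x, f_{x,i}$ and~$\rho_{W_x}$ are then supplied by Corollaries~\ref{cor:definablealgbricks} and~\ref{cor:definablereductionalg}.

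Part~(ii) runs in parallel: the triangulation is obtained from the semistable reduction theorem together with Lemma~\ref{lem:Swtov} and passage to the normalization when~$X$ has singularities; Propositions~\ref{prop:type3an},~\ref{prop:type2generican},~\ref{prop:type1an},~\ref{prop:type4an} replace their algebraic counterparts; and Corollaries~\ref{cor:definableanbricks},~\ref{cor:definablereductionan},~\ref{cor:definableratanbricks},~\ref{cor:definablerareductionan} furnish the $\cL_\BB^\an$-definability. The main technical subtlety throughout is the extension of the local ``$(O,V,f)$'' data from the structure theorems to the entire edge or tube under consideration; in each case the key is that the analytic object in question (open annulus, open disc, or tube centered at a given point) is rigid enough---determined by its skeleton or its center---that the local data can be glued into a global brick of the required kind.
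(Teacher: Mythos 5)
There is a genuine gap, and it sits exactly where you wave your hands: the passage from the \emph{local} structure results of Section~\ref{sec:bricks} to brick structures on the \emph{prescribed} pieces of a triangulation chosen in advance. Proposition~\ref{prop:type3} (and likewise Propositions~\ref{prop:type1}, \ref{prop:type4}, \ref{prop:type2branch}) only produces a morphism $f\colon O\to\AA^1_k$ that is an isomorphism on some unspecified small neighborhood of a single point; nothing in those statements lets you conclude that the \emph{same} (or any) globally defined algebraic morphism restricts to an isomorphism on the whole edge piece $V_I=\tau^{-1}(I)$, and the justification you offer --- ``an open annulus is determined up to unique isomorphism by its skeleton'' --- is beside the point: the issue is not abstract uniqueness of annuli but the existence of an algebraic coordinate injective on all of $V_I$ (away from the small neighborhood, the local $f$ may ramify, acquire poles, or fold other preimages into $V_I$). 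The same gap recurs for the leftover discs at a vertex: the $V_{x,i}$ are entire maximal open discs attached at $x$, and Propositions~\ref{prop:type1}/\ref{prop:type4} give only a small disc around one interior point, so ``extended globally as in the annular case'' inherits the unproved step. Moreover, Corollaries~\ref{cor:definablealgbricks} and~\ref{cor:definablereductionalg} can only be invoked \emph{after} one has a genuine brick $(O,V,f)$ on the full piece, so the definability part of your argument also rests on the missing extension. (Your vertex step is the closest to sound: choosing $\tilde f$ with the edge branches in $\tilde f^{-1}(\infty)$ and using Lemma~\ref{lem:liftresiduemorphism} and Proposition~\ref{prop:type2generic} does produce a tube missing the edge branches, and since its Shilov boundary is $\{x\}$ its disc components are full components of $X\setminus\{x\}$; but it is the finitely many branches it \emph{fails} to cover that then carry no structure.)

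The paper fills exactly this hole by reversing the order of construction: it first proves a \emph{global} decomposition, Theorem~\ref{thm:alg-brick} and Corollary~\ref{cor:algbricks} (resp.\ Theorem~\ref{thm:an-brick} and Corollary~\ref{thm:an-brick-singular} in the analytic case), obtained by covering the compactified, normalized curve with the local bricks and then refining the cover into a partition using the stability of bricks under intersections, differences and removal of finitely many rational points (Lemmas~\ref{lem:intersectionalgebraic bricks}--\ref{lem:algebraicbricksminuspoints}, Corollary~\ref{cor:disjoint}, together with Remark~\ref{rem:restriction}); the triangulation and the facade data are then \emph{read off} from this partition, rather than fixed beforehand via Theorem~\ref{thm:semistable}. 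In particular, it is not claimed (and you have not shown) that an arbitrary finite triangulation underlies a facade --- one must in general take the triangulation adapted to, and possibly refined by, the brick partition. To repair your argument you would either have to prove the missing global extension statements (that every edge annulus and every maximal residue disc of the analytification carries an algebraic, resp.\ rational analytic, brick structure), or simply reroute through Corollary~\ref{cor:algbricks} / Corollary~\ref{thm:an-brick-singular} as the paper does.
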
 

\begin{proof} For part $i)$, by Corollary \ref{cor:algbricks} there is a finite triangulation $S$ of $X$ which induces a partition of $X$ into algebraic bricks. The data of this partition provides the ingredients of the required $\cL_\BB$-facade. That the restriction to $k$ of all this data is $\cL_\BB$-definable follows both from the definition of algebraic brick and Corollaries \ref{cor:definablealgbricks} and \ref{cor:definablereductionalg}. The proof of part $ii)$ is analogous: we obtain a partition of $X$ into rational analytic bricks by Corollary \ref{thm:an-brick-singular}, and the definability assumption follows from the definition of rational analytic brick together with Corollaries \ref{cor:definableratanbricks} and \ref{cor:definablerareductionan}.  
\end{proof}

\begin{remark}
Let~$X$ be a $k$-analytic curve that admits an $\cL$-facade associated to some triangulation~$S$ with associated skeleton~$\Gamma$ and retraction~$\tau$. For each subgraph~$\Gamma'$ of~$\Gamma$, the analytic curve $\tau^{-1}(\Gamma')$ admits an $\cL$-facade induced by that of~$X$.
\end{remark}

\begin{question} Are there other $\cL$-definable categories of $k$-analytic curves admitting $\cL$-facades? 
\end{question}

\begin{definition}\label{def:facade-ref} Let $\cS_1, \cS_2$ be facades of $X$, for $X$ a $k$-analytic curve. For $i=1,2$, we let $\Gamma_i=(S_i,E_i)$ denote the associated skeleton of $\cS_i$ and $\tau_i$ the corresponding retraction map. We say that $\cS_2$ is a \emph{refinement} of $\cS_1$ if 
\begin{enumerate}[$i)$]
\item the triangulation $S_2$ refines $S_1$;
\item if $I\in E_{2}$, one of the following holds 
\[
\begin{cases}
(f_{I})_{|V_I}=(f_{I'})_{|V_I} & \text{ if $I\subseteq \tau_1^{-1}(I')$ for some $I'\in E_{1}$}, \\
(f_{I})_{|V_I}=(f_x)_{|V_I} & \text{ if $I\subseteq W_x$ for some $x\in S_{1}^{(2)}$,} \\
(f_{I})_{|V_I}=(f_{x,i})_{|V_I} & \text{ if $I\subseteq V_{x,i}$ for some $x\in S_{1}^{(2)}$;} 
\end{cases} 
\]
\item if $x\in S_2^{(2)}\setminus S_1$ then $m(x)=0$; 
\item if $x\in S_2^{(2)}$, there exists an automorphism~$\sigma$ of~$\AA^1_{k}$ such that
\[
\begin{cases}
(f_{x})_{|\tau_2^{-1}(x)}=(\sigma^\an\circ f_I)_{|\tau_2^{-1}(x)} & \text{ if $x\in \tau_1^{-1}(I)$ for some $I\in E_{1}$,} \\
(f_{x})_{|\tau_2^{-1}(x)}=(\sigma^\an\circ f_y)_{|\tau_2^{-1}(x)} & \text{ if $x\in W_y$ for some $y\in S_{1}$,} \\
(f_{x})_{|\tau_2^{-1}(x)}=(\sigma^\an\circ f_{y,i})_{|\tau_2^{-1}(x)} & \text{ if $x\in V_{y,i}$ for some $y\in S_{1}$.} 
\end{cases} 
\]
\end{enumerate}
\end{definition} 

\begin{remark} \label{rem:facade-ref} Let $X$ be a $k$-analytic curve and $\cS_1$ be an $\cL$-facade of $X$. Let $S_2$ be a finite triangulation refining $S_1$. Then there is an $\cL$-facade $\cS_2$ with underlying triangulation $S_2$ which refines $\cS_1$. Indeed, one just defines the functions $f_I$ for $I\in E_2$ and $f_x$ for $x\in S_2^{(2)}$ as imposed by Definition \ref{def:facade-ref} with a suitable choice of an algebraic automorphism of $\AA_k^1$. 
\end{remark}

\begin{lemma}\label{lem:no_open_disc} Let $X$ be a $k$-analytic curve and $\cS_1$ be an $\cL$-facade of $X$. There is an $\cL$-facade $\cS_2$ refining $\cS_1$ such that for all $x\in S_2^{(2)}$, $m(x)=0$.  
\end{lemma}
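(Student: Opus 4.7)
The plan is to construct $\cS_2$ explicitly by inserting a single new type~$2$ point inside each open disc $V_{x,i}$ that witnesses some $m(x) > 0$. For each $(x,i)$ with $x \in S_1^{(2)}$ and $1 \le i \le m(x)$, the map $f_{x,i}$ identifies $V_{x,i}$ with an open unit disc $\DD_k^-(b_{x,i},1)$ of $\AA^{1,\an}_k$; pick $r_{x,i} \in |k^\times| \cap (0,1)$ and let $y_{x,i} \in V_{x,i}$ be the type~$2$ point with $f_{x,i}(y_{x,i}) = \eta_{b_{x,i},r_{x,i}}$. Set $S_2 := S_1 \cup \{y_{x,i}\}_{x,i}$. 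Since $V_{x,i}\setminus\{y_{x,i}\}$ decomposes, via $f_{x,i}$, as a single open annulus $\{\,r_{x,i}<|T-b_{x,i}|<1\,\}$ (the branch pointing toward $x$) together with a family of open discs (one per residue class $\tilde b \in \tilde k$), $S_2$ is still a finite triangulation refining $S_1$, and $\Gamma_2$ differs from $\Gamma_1$ only by one new edge $I_{x,i}$ joining $x$ to $y_{x,i}$ per pair $(x,i)$.

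To assemble the facade $\cS_2$, I keep all old edge and vertex data unchanged; the key observation is that $\tau_2^{-1}(x) = W_x$ for every $x \in S_1^{(2)}$, because each annular piece of the $V_{x,i}$ now retracts onto the new edge $I_{x,i}$ rather than onto $x$, so $m(x)$ becomes $0$ in $\cS_2$. For each new edge $I_{x,i}$ I set $f_{I_{x,i}} := f_{x,i}|_{V_{I_{x,i}}}$, whose image is the open annulus $\{\,r_{x,i}<|T-b_{x,i}|<1\,\}$. For each new vertex $y = y_{x,i}$ I take $W_y := \tau_2^{-1}(y)$ and $m(y) := 0$; under $f_{x,i}$ the set $W_y$ is identified with the closed disc $\DD_k(b_{x,i},r_{x,i})\subseteq \DD_k^-(b_{x,i},1)$, so composing with the affine automorphism $\sigma\colon T\mapsto (T-b_{x,i})/c$ of $\AA^1_k$, where $c\in k^\times$ satisfies $|c|=r_{x,i}$, yields a morphism $f_y := \sigma^\an \circ f_{x,i}$ that identifies $W_y$ with $\DD_k$ and sends $y$ to $\eta_{0,1}$. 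Thus $W_y$ is an affinoid tube centered at $y$ with $f_y(W_y) = \DD_k$ (deprived of zero open discs), as the facade axioms permit.

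All definability conditions transfer smoothly. The sets $W_y(k)$ and $V_{I_{x,i}}(k)$ are cut out inside $V_{x,i}(k)$ by the $\cL$-definable conditions $|f_{x,i}(z)-b_{x,i}| \le r_{x,i}$ and $r_{x,i} < |f_{x,i}(z)-b_{x,i}| < 1$ respectively; $f_y$ and $f_{I_{x,i}}$ are $\cL$-definable because $f_{x,i}$ is and $\sigma^\an$ acts $\cL_\BB$-definably on $\AA^{1,\Def}_k$ by Remark~\ref{rem:auto}; finally $\rho_{W_y}$ factors as $\red \circ f_y$ under the identification $\cC_y \cong \PP^1_{\tilde k}$, hence is definable. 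Conditions (i)--(iv) of Definition~\ref{def:facade-ref} are then immediate from the construction, with $\sigma = \mathrm{id}$ witnessing (iv) for old vertices and the $\sigma$ above for the new ones. The only genuinely subtle point, which I would be careful to spell out, is the identification $W_y = f_{x,i}^{-1}(\DD_k(b_{x,i},r_{x,i}))$: although $V_{x,i}\setminus\{y\}$ has infinitely many connected components in $V_{x,i}$, the ones corresponding to the residue classes $\tilde b\in\tilde k$ reassemble together with $\{y\}$ into the closed disc $\DD_k(b_{x,i},r_{x,i})$ inside $\DD_k^-(b_{x,i},1)$, producing a genuine affinoid tube.
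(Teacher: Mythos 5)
Your construction is correct, but it takes a different (and more laborious) route than the paper. The paper's proof is a short induction on $\sum_x m_{\cS_1}(x)$: it picks a \emph{type~1} point $y\in V_{x,1}^{(1)}$ and sets $S_2:=S_1\cup\{y\}$, so that the whole punctured disc $V_{x,1}\setminus\{y\}$ — an open annulus with inner radius $0$ under the paper's conventions — becomes the preimage of the single new edge, with $f_I:=f_{x,1}$ reused verbatim; since the new vertex is of type~1, no new tube, no automorphism of $\AA^1_k$, and no new reduction map have to be produced or checked for definability. You instead insert a \emph{type~2} point $\eta_{b_{x,i},r_{x,i}}$ into each $V_{x,i}$, which splits the disc into a new edge (the outer annulus) plus a new tube $W_{y_{x,i}}$ at a new type~2 vertex; this forces the extra work you indeed carry out — identifying $W_{y_{x,i}}$ with $\DD_k(b_{x,i},r_{x,i})$ and rescaling by $\sigma^\an$ so that $f_y(W_y)=\DD_k$ (allowed, since ``deprived of finitely many open unit discs'' includes zero), checking $\rho_{W_y}=\tilde f_y^{-1}\circ\red\circ f_y$ is definable, and verifying conditions (ii)--(iv) of Definition~\ref{def:facade-ref}, where (iii) is exactly why your new vertices must carry $m=0$. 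What your version buys is that it treats all the stray discs $V_{x,i}$ simultaneously, with no induction; what the paper's version buys is that the verification collapses to almost nothing because a type~1 refinement point creates no new facade data beyond one edge. Both arguments are valid proofs of the lemma.
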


\begin{proof} Let $x\in S_1^{(2)}$ with $m_{\cS_1}(x)>0$. By induction, it suffices to build an $\cL$-facade $\cS_2$ refining $\cS_1$ in which $m_{\cS_2}(x)<m_{\cS_1}(x)$ and $m_{\cS_2}(x')\leqslant m_{\cS_1}(x')$ for all $x'\in S_1^{(2)}\setminus\{x\}$ (note that by the definition of refinement, $m_{\cS_2}(y)=0$ for all $y\in S_2^{(2)}\setminus S_1^{(2)}$). Take any $y\in V_{x,1}^{(1)}$ and let $S_2$ be the triangulation $S_2:=S_1\cup \{y\}$. The associated skeleton has one new edge corresponding to the path from $x$ to $y$. Setting $f_I:= f_{x,1}$, gives an $\cL$-facade $\cS_2$ refining $\cS_1$ for which $m_{\cS_2}(x) = m_{\cS_1}(x)-1$. 
\end{proof}

\subsection{Definable set associated to a facade}

We will now associate an $\cL$-definable set to a given $\cL$-facade $\cS$ of a $k$-analytic curve $X$. Let $\Gamma=(S,E)$ be the skeleton associated to $\cS$. Note that the curve $X$ can be written as the following disjoint union
\[
X =  S^{(1)} \sqcup \bigsqcup_{I\in E} V_I\sqcup \bigsqcup_{x\in S^{(2)}} \left[W_x \sqcup\bigsqcup_{i=1}^{m(x)} V_{x,i} \right]. 
\]
We will associate a definable set to each part of the previous disjoint union. To that end, we need to introduce some notation concerning residue curves. Let $x\in S^{(2)}$, $W_x$ be its associated tube and $f_{x}$ be the corresponding morphism. As explained in Section~\ref{sec:residuecurve}, we have a map $\rho_{W_x}\colon W_x\to \cC_x$ with image a Zariski open subset $\cU_{W_x}$ of $\cC_x$. For simplicity, we will denote from now on $\cU_{W_x}$ by $\cU_x$. Let
\begin{equation}\label{eq:epsilon}\stepcounter{eqn}\tag{E\arabic{eqn}} 
\epsilon_x\colon W_x\setminus\{x\}\to \cU_x(\tilde{k})\times \DD_k \hspace{0.5cm} z\mapsto(\rho_{W_x}(z),f_x(z)).
\end{equation} 
The map $\epsilon_x$ is injective and its image is equal to the set
\begin{equation*}
Z_x:=\{(\alpha,y)\in \cU_x(\tilde{k})\times \DD_k :  \tilde{f}_x(\alpha)=\red(y)\}. 
\end{equation*}
We define the set $Z_x^\cS$ as the $\cL$-definable set given by:  
\begin{equation}\label{eq:Z}\stepcounter{eqn}\tag{E\arabic{eqn}} 
Z_x^\cS:=\{(\alpha,\eta_{a,r})\in \cU_x(\tilde{k})\times \DD_k^\Def :  \tilde{f}_x(\alpha)=\red(\eta_{a,r})\}.
\end{equation} 

\begin{definition}\label{def:defset} 
Let $X$ be a $k$-analytic curve and $\cS$ be an $\cL$-facade of $X$. Let $\Gamma=(S,E)$ be the skeleton associated to $\cS$. We define the $\cL$-definable set $X^{\cS}$ as 
\begin{equation}\label{eq:defset}\stepcounter{eqn}\tag{E\arabic{eqn}} 
X^{\cS}:=\bigsqcup_{x\in S^{(1)}} \eta_{1,0} \sqcup \bigsqcup_{I\in E} f_I(V_I)^{\Def}\sqcup \bigsqcup_{x\in S^{(2)}} \left[\eta_{0,1} \sqcup Z_x^\cS \sqcup \bigsqcup_{i=1}^{m(x)} f_{x,i}(V_{x,i})^{\Def} \right] 
\end{equation}
\end{definition}

\begin{definition} For $X$ as in the previous definition, we let $\varphi_\cS\colon X^{\D}\to X^\cS$ be the bijection given by: 
\begin{itemize}
\item for $x\in S^{(1)}$, $\varphi_\cS(x)$ is the corresponding copy of $\eta_{1,0}$, 
\item $(\varphi_\cS)_{| V_I^{\D}} = (f_I)_{|V_I^{\D}}$, 
\item for $x\in S^{(2)}$, $\varphi_\cS(x)$ is the corresponding copy of $\eta_{0,1}$, 
\item for $x\in S^{(2)}$, $(\varphi_\cS)_{|V_{x,i}^{\D}}= (f_{x,i})_{|V_{x,i}^{\D}}$, 
\item for $x\in S^{(2)}$, $(\varphi_\cS)_{|(W_x\setminus\{x\})^{\D}}= (\epsilon_x)_{|(W_x\setminus\{x\})^{\D}}$. 
\end{itemize}
\end{definition}

\begin{definition}\label{def:map} For $i=1,2$, let $X_i$ be a $k$-analytic curve and $\cS_i$ be an $\cL$-facade of $X_i$. Let $h\colon X_1\to X_2$ be a $k$-analytic morphism.  
We define the map $h_{\cS_1\cS_2}\colon X_1^{\cS_1}\to X_2^{\cS_2}$ as the unique map that makes the following diagram commute

\[
\begin{tikzcd}
X_1^{\D} \ar{d}{\varphi_{\cS_1}} \ar{r}{h} & X_2^\D \ar{d}{\varphi_{\cS_2}} \\
X_{1}^{\cS_1} \ar{r}{h_{\cS_1,\cS_2}} & X_{2}^{\cS_2}\\
\end{tikzcd}
\texto{.}{-1.5pc}
\]

We say that the pair $(\cS_1,\cS_2)$ is $h$-compatible if the underlying pair of triangulations $(S_1,S_2)$ is $h$-compatible. 
\end{definition}

The main theorem of this section is the following:

\begin{theorem}\label{thm:main} Let~$X_{1}$ and~$X_{2}$ be $k$-analytic curves and let $h\colon X_1\to X_2$ be a $k$-analytic morphism. Suppose that 
\begin{enumerate}[$i)$]
\item for every pair of $\cL$-facades $(\cS_1,\cS_2)$ of $X_1$ and $X_2$ there exists an $h$-compatible pair $(\cS_1',\cS_2')$ where $\cS'_1$ and $\cS'_{2}$ respectively refine $\cS_1$ and~$\cS_{2}$.
\item the restriction $h\colon X_1(k)\to X_2(k)$ is $\cL$-definable. 
\end{enumerate}
Then, for any $\cL$-facades $\cS_i$ of $X_i$, the map $h_{\cS_1\cS_2}\colon X_1^{\cS_1}\to X_2^{\cS_2}$ is $\cL$-definable. 
\end{theorem}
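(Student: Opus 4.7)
The plan is to reduce, via hypothesis (i), to the case of an $h$-compatible pair of facades, and then to work piece by piece on the decomposition of $X_1$ afforded by $\cS_1$, using Lemmas \ref{lem:kpoints} and \ref{lem:kpointsannuli} to promote the $\cL$-definability of $h$ on $k$-points coming from hypothesis (ii) to points of type~2 and~3. The first step I would carry out is a \emph{change of facade} lemma: whenever $\cS'$ refines $\cS$ on a curve $X$, the bijection $\varphi_{\cS'} \circ \varphi_\cS^{-1} \colon X^\cS \to X^{\cS'}$ is $\cL$-definable. This follows from Definition \ref{def:facade-ref}: the charts of $\cS'$ are obtained from those of $\cS$ either by restriction or by postcomposition with an algebraic automorphism of $\AA^1_k$ (which is $\cL_\BB$-definable on $\BB$ by Remark \ref{rem:auto}), and the maps $\epsilon_x$ in (\ref{eq:epsilon}) use only the facade data $f_x, \rho_{W_x}$ that is already $\cL$-definable. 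Given arbitrary facades $\cS_1, \cS_2$ and an $h$-compatible refinement $(\cS_1', \cS_2')$ provided by hypothesis (i), this lemma yields a commutative square
\[
\begin{tikzcd}
X_1^{\cS_1} \ar{d} \ar{r}{h_{\cS_1 \cS_2}} & X_2^{\cS_2} \ar{d} \\
X_1^{\cS_1'} \ar{r}{h_{\cS_1' \cS_2'}} & X_2^{\cS_2'}
\end{tikzcd}
\]
with $\cL$-definable vertical arrows, so it suffices to prove the theorem under the additional assumption that $(\cS_1, \cS_2)$ is itself $h$-compatible.

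Assume now the pair is $h$-compatible. The conditions $h^{-1}(S_2)=S_1$ and $h^{-1}(E_{S_2})=E_{S_1}$ ensure that $h$ maps each piece of the $\cS_1$-decomposition of $X_1$ (a type~1 vertex, an edge $V_I$, a tube $W_x$, or a disc $V_{x,i}$) into a uniquely determined piece of the $\cS_2$-decomposition, so I would check $\cL$-definability of $h_{\cS_1\cS_2}$ on each piece. Type~1 vertices are immediate. On an edge piece $V_I$ with $h(V_I)\subseteq V_{I'}$, the composition $g := f_{I'} \circ h \circ f_I^{-1}$ is a morphism of open annuli that is $\cL$-definable on $k$-points (by hypothesis (ii) together with the facade axioms on $f_I, f_{I'}$). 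To extend $g$ to type~2 and~3 points of $f_I(V_I)^\Def$, I would decompose $f_I(V_I)^\Def$ into its skeleton and the discs of its complement, and apply Lemma \ref{lem:kpointsannuli} along the skeleton and Lemma \ref{lem:kpoints} inside each disc: the image of a point $\eta_{a,r}$ is characterised by the images of $k$-points of the boundary of $D(a,r)$ (or of $D(a,r)\setminus D^-(a,r)$), yielding an $\cL$-definable formula. The same reasoning, using only Lemma \ref{lem:kpoints}, treats the disc pieces $V_{x,i}$.

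Finally I would address the tube pieces. For $x \in S_1^{(2)}$, the center $\eta_{0,1}$ maps to the center of the target tube. For $(\alpha, \eta_{a,r}) \in Z_x^{\cS_1}$, the first coordinate of its image is $\tilde h_x(\alpha)$, where $\tilde h_x \colon \cC_x \to \cC_{h(x)}$ is the induced finite morphism of residue curves, which is $\cL_\mathrm{ring}$-definable via Convention \ref{convention.alg}. The second coordinate is handled by restricting to the connected component $E_\alpha$ of $W_x \setminus \{x\}$ attached to $\alpha$, on which $f_x$ restricts to an isomorphism onto an open unit disc: the morphism $f_{h(x)} \circ h \circ (f_x|_{E_\alpha})^{-1}$ is then a morphism of discs to which the preceding paragraph's argument applies. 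The step that I expect to be the main obstacle is this last one, namely showing that these component-by-component disc descriptions assemble into a single $\cL$-definable formula on $Z_x^{\cS_1}$ depending on $\alpha$, and that the resulting image indeed lies in $Z_{h(x)}^{\cS_2}$. Both assertions should follow by unpacking the defining relation $\tilde f_{h(x)}(\tilde h_x(\alpha)) = \red(f_{h(x)}(h(z)))$ built into (\ref{eq:Z}), but the bookkeeping between the residue curve data on both sides is the most delicate bit of the argument.
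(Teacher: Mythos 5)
Your overall strategy is the paper's: reduce to an $h$-compatible pair via a definable change-of-facade map (the paper's Lemma \ref{lem:facade-id}), then determine the image of non-rational points piece by piece through the $k$-point characterizations of Lemmas \ref{lem:kpoints} and \ref{lem:kpointsannuli}. The genuine gap is in your treatment of the tube pieces, and it is precisely the assertion you flag at the end and then dismiss: without a further reduction, the image of $Z_x^{\cS_1}$ need \emph{not} lie in $\{\eta_{0,1}\}\sqcup Z_{h(x)}^{\cS_2}$, and unpacking the relation in (\ref{eq:Z}) cannot make it do so. Compatibility of the triangulations only gives $h(\tau_1^{-1}(x))\subseteq \tau_2^{-1}(h(x))$, and by Definition \ref{def:facade} the target fibre decomposes as $W_{h(x)}\sqcup\bigsqcup_{j} V_{h(x),j}$; if $m(h(x))>0$, a connected component of $W_x\setminus\{x\}$ may be sent into a disc piece $V_{h(x),j}$ rather than into the tube $W_{h(x)}$. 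For the corresponding branches $\alpha$, the chart $\varphi_{\cS_2}$ on the image is $f_{h(x),j}$, so $h_{\cS_1\cS_2}$ sends the fibre of $Z_x^{\cS_1}$ over $\alpha$ into the subset $f_{h(x),j}(V_{h(x),j})^{\D}$ of $\BB$, and your description ``first coordinate $\tilde h_x(\alpha)$, second coordinate a morphism of discs into the chart of $W_{h(x)}$'' is not even meaningful there. (A symmetric wrinkle occurs for your source pieces $V_{x,i}$: their images can land inside the target tube, where the chart is $\epsilon_{h(x)}$ and a residue-curve coordinate must also be produced, so ``the same reasoning as for edges'' is not quite enough.)

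The missing ingredient is an extra refinement step: before invoking hypothesis $(i)$, refine both facades so that $m(x)=0$ at every type-2 vertex (the paper's Lemma \ref{lem:no_open_disc}; by Definition \ref{def:facade-ref}, new vertices of any further refinement again have $m=0$, which is what makes the paper's Claim \ref{claim:radial} work). Once all the $m$'s vanish, each fibre $\tau_2^{-1}(h(x))$ is exactly the tube $W_{h(x)}$, tubes map into tubes, and your tube argument goes through; your change-of-facade lemma already accommodates these additional refinements, so the repair fits your framework with no new ideas. By contrast, your other worry---assembling the per-branch disc descriptions into a single formula uniform in $\alpha$---is not an obstacle: since the restriction of $h_{\cS_1\cS_2}$ to $Z_x^{\cS_1}(k)$ is one $\cL$-definable map, the criterion that $\beta$ be the common value of the first coordinate on $\{\alpha\}\times D(a,r)$ and that $D(b,s)$ be the image of $D(a,r)$ under the second coordinate (the paper's (\ref{eq:equivalencecase2})) is automatically uniform in $\alpha$.
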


Before going into the proof of this theorem, let us show some instances where its hypotheses are satisfied. 

\begin{lemma}\label{lem:morphism_definable} Let $h\colon X_1\to X_2$ be a compactifiable morphism of nice curves of relative dimension~0. If the restriction $h\colon X_1(k)\to X_2(k)$ is $\cL$-definable then all hypotheses of Theorem \ref{thm:main} are satisfied. In particular: 
\begin{enumerate}[$i)$]
\item if $X_{1}$ and $X_{2}$ are analytifications of algebraic curves and $h$ is the analytification of an algebraic morphism of relative dimension~0 between the latter, or
\item if $X_1, X_2$ are strictly $k$-affinoid curves and $h$ is any morphism of relative dimension~0, 
\end{enumerate}
then all hypotheses of Theorem \ref{thm:main} are satisfied.
\end{lemma}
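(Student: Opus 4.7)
The plan is to derive hypothesis (i) of Theorem~\ref{thm:main} from Corollary~\ref{cor:compact_compatible} combined with Remark~\ref{rem:facade-ref}. Given $\cL$-facades $\cS_1, \cS_2$ with underlying triangulations $S_1, S_2$, I would apply Corollary~\ref{cor:compact_compatible} with $S_1 \subseteq X_1^{(1,2)}$ and $S_2 \subseteq X_2^{(1,2)}$ as the prescribed finite subsets to obtain an $h$-compatible pair of finite triangulations $(S_1', S_2')$ with $S_i \subseteq S_i'$. Then Remark~\ref{rem:facade-ref} promotes each $S_i'$ to an $\cL$-facade $\cS_i'$ refining $\cS_i$. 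Since the underlying triangulations are $h$-compatible, the pair $(\cS_1', \cS_2')$ is $h$-compatible in the sense of Definition~\ref{def:map}. Hypothesis (ii) is part of the standing assumption of the lemma.

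For the two concrete cases, what remains is to verify that $h$ is compactifiable and that its restriction to $k$-points is $\cL$-definable. In case $i)$, Remark~\ref{rem:compactifiable} shows that the analytification of any algebraic morphism between algebraic curves is compactifiable, while Convention~\ref{convention.alg} ensures that the induced map on $k$-points is $\cLr$-definable, hence a fortiori $\cL_\BB$-definable. In case $ii)$, any strictly $k$-affinoid curve is compact (and so nice, with empty set of points to remove) and therefore its own compactification, so every morphism between them is trivially compactifiable with $\overline{f} = f$; the $\cL_\BB^\an$-definability of the restriction to $k$-points follows from Lemma~\ref{lem:mapdistinguished} and its subsequent corollary, together with Convention~\ref{convention.an}.

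I do not expect any substantive obstacle: the statement is essentially a bookkeeping assembly of results already carefully prepared earlier in the text. The only point requiring mild care is the matching of the definition of $h$-compatibility for triangulations with the definition of $h$-compatibility for facades in Definition~\ref{def:map}, but this is immediate since the latter is defined directly in terms of the former.
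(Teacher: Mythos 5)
Your proposal is correct and follows essentially the same route as the paper: the general claim is obtained by combining Corollary~\ref{cor:compact_compatible} with Remark~\ref{rem:facade-ref}, case~$i)$ rests on Remark~\ref{rem:compactifiable} (plus Convention~\ref{convention.alg} for definability), and case~$ii)$ is the observation that affinoid curves are their own compactifications, with definability handled by Lemma~\ref{lem:mapdistinguished} and Convention~\ref{convention.an}. Nothing further is needed.
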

\begin{proof} This follows from Corollary \ref{cor:compact_compatible} and Remark \ref{rem:facade-ref}. Point~i) follows from Remark~\ref{rem:compactifiable} and point~ii) is obvious.
\end{proof}

\begin{question} Are there other $\cL$-definable categories of $k$-analytic curves whose objects and morphisms satisfy the hypotheses of Theorem \ref{thm:main}?
\end{question}

Let us now prove Theorem \ref{thm:main}. We need to show first the following lemma: 

\begin{lemma}\label{lem:facade-id} Let~$X$ be $k$-analytic curve. Let~$\cS_1,\cS_2$ be $\cL$-facades of~$X$ such that $\cS_2$ is a refinement of $\cS_1$. Then
the function~$\id_{\cS_1\cS_2}$ is $\cL_\BB$-definable. 
\end{lemma}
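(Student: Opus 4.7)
The plan is to decompose $X$ via the common refinement of the partitions induced by $\cS_1$ and $\cS_2$ into a finite collection of pieces, and verify that the restriction of $\id_{\cS_1\cS_2}$ to each corresponding $\cL$-definable subset of $X^{\cS_1}$ is $\cL_\BB$-definable. Since $S_1\subseteq S_2$, each piece of the $\cS_2$-partition (a vertex in $S_2^{(1,2)}$, an edge $V_I$ for $I\in E_2$, a punctured tube $W_x^{\cS_2}\setminus\{x\}$, or an attached disc $V_{x,i}^{\cS_2}$) is contained in exactly one piece of the $\cS_1$-partition. It then suffices to compute $\id_{\cS_1\cS_2}=\varphi_{\cS_2}\circ\varphi_{\cS_1}^{-1}$ piecewise using the refinement conditions of Definition \ref{def:facade-ref}.

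I would split into cases according to the type of $\cS_2$-piece. Vertices contribute singletons mapped to singletons, which is trivially $\cL_\BB$-definable. For a $\cS_2$-edge $V_{I_2}$ with $I_2\in E_2$, refinement condition~(ii) identifies $f_{I_2}$ with the restriction of some $\cS_1$-morphism $g\in\{f_{I_1},f_x,f_{x,i}\}$, so $\id_{\cS_1\cS_2}$ is either an inclusion $g(V_{I_2})^\Def\hookrightarrow f_{I_2}(V_{I_2})^\Def$ (when $g=f_{I_1}$ or $g=f_{x,i}$) or the projection to the $\DD_k$-factor (when $g=f_x$, discarding the $\rho_{W_x}$-coordinate of $\epsilon_x$). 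For a type-2 vertex $x'\in S_2^{(2)}$, refinement condition~(iv) provides an algebraic automorphism $\sigma$ of $\AA^1_k$ and a $\cS_1$-morphism $g$ such that the relevant $\cS_2$-morphism equals $\sigma^\an\circ g$ on $\tau_2^{-1}(x')$. By Remark~\ref{rem:auto} the restriction of $\sigma^\an$ to $\AA^{1,\Def}_k$ is $\cL_\BB$-definable, and the reduction $\red$ is $\cL_\BB$-definable, so every resulting expression lies in the $\cL_\BB$-fragment regardless of whether $\cL=\cL_\BB$ or $\cL=\cL_\BB^\an$.

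The main subtle point is expressing the reduction $\rho_{W_{x'}^{\cS_2}}$ in terms of $\cS_1$-data. When $x'\in S_2^{(2)}\setminus S_1^{(2)}$, Definition~\ref{def:facade-ref}(iii) gives $m(x')=0$, and $x'$ lies inside a $\cS_1$-piece whose local structure at $x'$ is that of a type-2 point on an annulus or disc in $\AA^{1,\an}_k$; this forces $\cC_{x'}\cong\PP^1_{\tilde k}$ and allows $\rho_{W_{x'}^{\cS_2}}(y)$ to be recovered as $\red(\sigma(g(y)))$, whose composition with $g$ on the $\cS_1$-image gives an $\cL_\BB$-definable formula. When $x'\in S_1^{(2)}\cap S_2^{(2)}$, the reduction maps $\rho_{W_{x'}^{\cS_i}}$ are intrinsic to $x'$ and agree on their common domain, so the residue-curve component of the identity becomes the identity map between two (algebraic) descriptions of $\cC_{x'}(\tilde k)$, which is $\cL_\BB$-definable via Convention~\ref{convention.alg}; the disc coordinate is then controlled by $\sigma$ as above. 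Assembling the finitely many cases produces the $\cL_\BB$-definability of $\id_{\cS_1\cS_2}$.
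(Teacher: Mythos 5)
Your proposal is correct in substance, but it is organized differently from the paper's argument. The paper proceeds by induction on $|S_2|-|S_1|$, using Lemma~\ref{lem:ref-arity} to strip off one vertex of arity $\le 2$ at a time, so that each step only has to handle a single new point, and then splits into three cases according to whether that point lies on an edge $I\in E_1$, in a tube $W_y$, or in a disc $V_{y,i}$. You instead do a one-shot piecewise computation over the partition of $X$ induced by $\cS_2$, using that (by the very formulation of Definition~\ref{def:facade-ref} together with compatibility of the retractions) each $\cS_2$-piece sits inside a single $\cS_1$-piece. The per-piece formulas you obtain are exactly the paper's: identity/inclusion on edge and disc pieces, projection of $Z_y^{\cS_1}$ onto its $\DD_k$-coordinate when an $\cS_2$-edge sits inside an old tube, and on a new tube the map $(\alpha_0,\eta)\mapsto(\widetilde{f_{x'}}^{-1}(\red(\sigma^\an(\eta))),\sigma^\an(\eta))$, all of which are $\cL_\BB$-definable via Remark~\ref{rem:auto} and the definability of $\red$ and of algebraic maps of residue curves. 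What the paper's induction buys is local bookkeeping (only one vertex changes, so away from it the map is literally the identity); what your direct approach buys is avoiding Lemma~\ref{lem:ref-arity} altogether, at the cost of having to check globally that the $\varphi_{\cS_1}$-images of all $\cS_2$-pieces are $\cL_\BB$-definable subsets of $X^{\cS_1}$ (they are, since the facade axioms make them standard discs, annuli and tubes in $\BB$, resp.\ fibers $\{\alpha_0\}\times(\cdot)$ in $Z_y^{\cS_1}$). Two small points of looseness, neither fatal: the residue coordinate at a new type-2 vertex is $\widetilde{f_{x'}}^{-1}(\red(\sigma^\an(g(y))))$ rather than $\red(\sigma^\an(g(y)))$ itself (harmless, since the defining constraint of $Z_{x'}^{\cS_2}$ determines the first coordinate from the second when $\widetilde{f_{x'}}$ is an isomorphism, exactly as in the paper's Case 2), and you do not explicitly treat the disc pieces $V_{x',i}^{\cS_2}$ at old vertices $x'\in S_1^{(2)}$ — though the paper's own proof elides this in the same way, tacitly taking the data of $\cS_2$ away from the new vertex to agree with that of $\cS_1$ up to the automorphisms allowed by Definition~\ref{def:facade-ref}.
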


\begin{proof} We proceed by induction on the number of points in $S_2$ which are not in $S_1$, $n:=|S_2|-|S_1|$. If $n=0$, the map $\id_{\cS_1\cS_2}$ corresponds to the identity map and there is nothing to show. Suppose the result holds for $n$ and assume that $|S_2|-|S_1| = n+1$. By Lemma \ref{lem:ref-arity}, there is $x\in S_2\setminus S_1$ of arity $\leqslant 2$ and $S_1':=S_2\setminus\{x\}$ is a triangulation refining $S_1$. The facade $\cS_2$ induces by restriction a facade $\cS_1'$ refining $\cS_1$ with associated triangulation $S_1'$. By induction $\id_{\cS_1,\cS_1'}$ is definable and $\id_{\cS_1,\cS_2}=\id_{\cS_1',\cS_2}\circ \id_{\cS_1,\cS_1'}$. Hence, it suffices to show the result for $\cS_1=\cS_1'$ and $n=1$. Let $\{x\}=S_2\setminus S_1$. Note that it suffices to show definability up to a finite set, since functions with a finite domain are always definable. We split in cases:

\

\textbf{Case 1:} $x\in I$ for $I\in E_1$. Then there are $I_1,I_2\in E_2$ such that $I_1\cup\{x\}\cup I_2=I$. The map $\id_{\cS_1,\cS_2}$ is the identity map on $X^{\cS_1}\setminus f_{I}(V_{I})$. Let $\eta_{a,r}:=f_{I}(x)$. The set $f_I(V_I)$ partitions as
\[
f_{I_1}(V_{I_{1_1}})\sqcup f_{I_2}(V_{I_2}) \sqcup Y,
\] 
where $Y$ is the remaining part $f_I(V_I)\setminus (f_{I_1}(V_{I_{1_1}})\sqcup f_{I_2}(V_{I_2}))$. It suffices to show that $\id_{\cS_1,\cS_2}$ is definable in each piece. For $i=1,2$, the map $\id_{\cS_1,\cS_2}$ on $f_{I_i}(V_{I_{1_i}})^\D$ is the identity. For $Y$, let $\sigma$ be an automorphism of~$\AA^1_{k}$ such that $(f_{x})_{|W_x}=(\sigma^\an\circ f_I)_{|W_x}$. We have $(\id_{\cS_1,\cS_2})_{|Y\setminus\{\eta_{a,r}\}}={\sigma^\an}_{|Y\setminus\{\eta_{a,r}\}}$, which is an $\cL_\BB$-definable function (see Remark \ref{rem:auto}). 

\

\textbf{Case 2:} $x\in W_y$ for $y\in S_1^{(2)}$. Let $I\in E_2$ be the edge between $y$ and $x$. Among our data we have $(V_I,f_I)$, $(W_x,f_x)$ and $(W_y',f_y')$ such that 
\begin{enumerate}
\item $W_y$ is partitioned as $W_y'\sqcup V_I\sqcup W_x$;
\item $(f_y')_{|W_y'}=(f_y)_{|W_y'}$, $(f_I)_{|V_I}=(f_y)_{|V_I}$ and $(f_x)_{|W_x}=(\sigma^\an \circ f_y)_{|W_x}$, where~$\sigma$ is an automorphism of~$\AA^1_{k}$.  
\end{enumerate}
The map $\id_{\cS_1,\cS_2}$ is the identity map on $X^{\cS_1}\setminus Z_y^{\cS_1}$. Let $\eta_{a,r}:=f_y(x)$ and $\alpha_0\in \cU_{y}(\tilde{k})$ be such that $\tilde{f}_y(\alpha_0)=\red(\eta_{a,r})$. Since $m(x)=0$, the set $\eta_{0,1}\sqcup Z_y^{\cS_1}$ partitions as $(\eta_{0,1}\sqcup Z_y^{\cS_2})\sqcup Y_1\sqcup Y_2$, where 
\[
Y_1:=\{(\alpha_0,\eta_{b,s})\in Z_y^{\cS_1} :  D(b,s)\not\subseteq D(a,r)\} \text{ and } Y_2:=\{(\alpha_0,\eta_{b,s})\in Z_y^{\cS_1} :  D(b,s)\subseteq D(a,r)\}.
\]
Since $Y_1$ and $Y_2$ are definable, it suffices to show that the restriction of $\id_{\cS_1,\cS_2}$ to each piece is definable. The reader can check that 
\begin{itemize}
\item $(\id_{\cS_1,\cS_2})_{|(\eta_{0,1}\sqcup Z_y^{\cS_2})}$ is the identity;
\item $(\id_{\cS_1,\cS_2})_{|Y_1}$ is the projection to the second coordinate;
\item $(\id_{\cS_1,\cS_2})_{|Y_2}$ is the definable function
\[
(\alpha_0,\eta_{b,s})\mapsto (\widetilde{f_x}^{-1}(\res(\sigma^\an(\eta_{b,s}))), \sigma^\an(\eta_{b,s})), 
\]
\end{itemize} 
where $\widetilde{f_x}\colon \cC_x\to \PP_{\tilde{k}}^1$ is the morphism induced by $f_x=\sigma^\an\circ f_y$ at $x$, which in this case is an isomorphism. 

\

\textbf{Case 3:} $x\in V_{y,i}$ for $y\in S_1$ and $1\leqslant i\leqslant m(y)$. This case is analogous to Case 2. 
\end{proof}

\begin{proof}[Proof of Theorem \ref{thm:main}:] Let~$\cS_{1}$ and~$\cS_{2}$ be $\cL$-facades of~$X_{1}$ and~$X_{2}$ respectively.

\begin{claim}\label{claim:radial} We may suppose that
\begin{enumerate}
\item for every $x\in S_i^{(2)}$, $m_{\cS_i}(x)=0$;
\item the pair $(\cS_1,\cS_2)$ is $h$-compatible.  
\end{enumerate}
\end{claim}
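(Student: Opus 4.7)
The plan is to reduce the proof of Theorem~\ref{thm:main} to the case where both conditions of the claim hold, by replacing $(\cS_1,\cS_2)$ with a well-chosen pair of refinements and transferring $\cL$-definability back along those refinements via Lemma~\ref{lem:facade-id}.

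Concretely, for any refinements $\cS_i'$ of $\cS_i$, Lemma~\ref{lem:facade-id} makes each map $\id_{\cS_i\cS_i'}\colon X_i^{\cS_i}\to X_i^{\cS_i'}$ $\cL$-definable; its inverse is definable as well, since the inverse of a definable bijection between definable sets is always definable. Chasing the commutative squares that define $h_{\cS_1\cS_2}$ and $h_{\cS_1'\cS_2'}$ in Definition~\ref{def:map} gives the factorisation
\[h_{\cS_1\cS_2}=(\id_{\cS_2\cS_2'})^{-1}\circ h_{\cS_1'\cS_2'}\circ \id_{\cS_1\cS_1'},\]
so that $\cL$-definability of $h_{\cS_1'\cS_2'}$ implies $\cL$-definability of $h_{\cS_1\cS_2}$. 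It therefore suffices to exhibit refinements $(\cS_1',\cS_2')$ that are $h$-compatible and satisfy $m(x)=0$ at every vertex.

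To build such refinements I would first apply hypothesis~(i) of the theorem to $(\cS_1,\cS_2)$ to obtain an $h$-compatible pair $(\tilde\cS_1,\tilde\cS_2)$ refining it, and then drive down the numbers $m(x)$ to zero by a careful use of Lemma~\ref{lem:no_open_disc}. The subtlety is that the construction of that lemma — adding a $k$-rational point $y$ in some open disc component $V_{x,1}^{(1)}$ — is performed on a single curve and might a priori destroy $h$-compatibility. I would therefore carry it out $h$-coherently: whenever $y$ is added to the triangulation of one of the $X_i$, simultaneously add to the other side the point $h(y)$ (or, going the other way, all preimages $h^{-1}(y)$), which constitute a finite set because $h$ has relative dimension~$0$. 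An induction on the total number of vertices with $m>0$, which strictly decreases at each step, terminates the process in an $h$-compatible pair of facades with $m\equiv 0$.

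The main obstacle will be verifying rigorously that this symmetric refinement preserves the two equalities $h^{-1}(S_2')=S_1'$ and $h^{-1}(E_{S_2'})=E_{S_1'}$ entering the definition of $h$-compatibility, and that the facade data on both sides can be chosen coherently at each new vertex. A possibly smoother alternative is to iterate: after Lemma~\ref{lem:no_open_disc} breaks $h$-compatibility, reapply hypothesis~(i); Definition~\ref{def:facade-ref}(iii) automatically forces $m=0$ on all newly created vertices, so one only needs to check that at the remaining old vertices (already with $m=0$) the refined facade data can still be arranged with $m=0$, which amounts to absorbing the finitely many residual open disc components into the tube~$W_x$ using the local structure results of Section~\ref{sec:localalgebraic} (or their analytic counterparts).
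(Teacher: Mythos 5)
Your reduction mechanism is exactly the paper's: both arguments transfer definability along refinements via the factorisation $h_{\cS_1\cS_2}=\id_{\cS_2\cS_2^*}^{-1}\circ h_{\cS_1^*\cS_2^*}\circ\id_{\cS_1\cS_1^*}$, using Lemma~\ref{lem:facade-id} together with the fact that the inverse of a definable bijection is definable. Where you diverge is in how the good pair $(\cS_1^*,\cS_2^*)$ is produced, and here the paper follows what you call the ``smoother alternative'', not your primary route: it first applies Lemma~\ref{lem:no_open_disc} to each $\cS_i$ separately to obtain refinements with $m\equiv 0$, and only then invokes hypothesis~$(i)$ of Theorem~\ref{thm:main} --- which is quantified over \emph{every} pair of facades, hence applies to the corrected pair --- to get an $h$-compatible pair refining them. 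This order makes compatibility come for free and avoids entirely the ``$h$-coherent'' puncturing of your primary route, which would require adding all of $h^{-1}(h(y))$ to $S_1$ (a finiteness you justify by relative dimension~$0$, an assumption not made in Theorem~\ref{thm:main}) and a genuinely nontrivial check that $h^{-1}(E_{S_2'})=E_{S_1'}$ survives; you flag this obstacle but do not close it. Your residual worry about old vertices is also resolved more simply than by ``absorbing'' discs: since $h$-compatibility depends only on the underlying triangulations, the facade data at an old vertex $x$ of the refined pair can be taken to be the restriction of the old tube datum $(W_x,f_x)$ to the finer $\tau^{-1}(x)$ (this is how the paper reads Definition~\ref{def:facade-ref}), so $m(x)=0$ persists with nothing to absorb, while new vertices have $m=0$ by Definition~\ref{def:facade-ref}$(iii)$. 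In short, your alternative, properly ordered, \emph{is} the paper's proof; your primary route is a harder detour whose compatibility-preservation step remains unproved.
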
 

Suppose that the result holds for every pair of $\cL$-facades satisfying the claim. By Lemma \ref{lem:no_open_disc}, for $i=1,2$, there is a refinement $\cS_i'$ of $\cS_i$ such that for every $x\in (S_i')^{(2)}$, $m_{\cS_i'}(x)=0$. By assumption~$(i)$ of the theorem, there are $\cL$-facades $\cS_1^*$ and $\cS_{2}^*$ of $X_1$ and $X_{2}$ respectively, refining $\cS_1'$ and $\cS_{2}'$ respectively, such that the pair $(\cS_1^*,\cS_2^*)$ is $h$-compatible. Note that by definition of refinement, for $i=1,2$ and every $x\in (S_i^*)^{(2)}$, we have $m_{\cS_i^*}(x)=0$, thus $(\cS_1^*,\cS_2^*)$ satisfies the conditions of the claim. We have then the following commutative diagram: 
\[
\begin{tikzcd}
X_1^{\D} \ar[d,"\varphi_{\cS_1}"'] \arrow[dd, bend right=80, "\varphi_{\cS_1^*}"'] \ar{r}{h} & X_2^{\D}\ar{d}{\varphi_{\cS_2}} \arrow[dd, bend left=80, "\varphi_{\cS_2^*}"]\\
X_{1}^{\cS_1} \ar[d,"\id_{\cS_1\cS_1^*}"'] \ar{r}{h_{\cS_1,\cS_2}} &X_{2}^{\cS_2}\ar{d}{\id_{\cS_2\cS_2^*}}\\
X_{1}^{\cS_1^*} \ar{r}{h_{\cS_1^*,\cS_2^*}} &X_{2}^{\cS_2^*} 
\end{tikzcd}
\texto{.}{-1.5pc}
\]

By Lemma \ref{lem:facade-id}, $\id_{\cS_i\cS_i^*}$ is a definable bijection for $i=1,2$. Therefore, since $h_{\cS_1^*,\cS_2^*}$ is definable by assumption, $h_{\cS_1,\cS_2}$ is defined by 
\[
h_{\cS_1,\cS_2}(x)=  (\id_{\cS_2\cS_2^*}^{-1} h_{\cS_1^*,\cS_2^*} \id_{\cS_1\cS_1^*})(x), 
\]
which shows the claim. 

\

We suppose from now on that the pair $(\cS_1,\cS_2)$ satisfies conditions (1) and (2) of Claim \ref{claim:radial}. It suffices to show that the restriction of $h_{\cS_1,\cS_2}$ to 
\begin{enumerate}
\item $\varphi_{\cS_1}(\tau_1^{-1}(I)\cap X_1^{\D})$ for each $I\in E_1$ and 
\item $\varphi_{\cS_1}(\tau_1^{-1}(x) \cap X_1^{\D})$ for each $x\in S_1^{(2)}$  
\end{enumerate}
are $\cL$-definable. 
We split the argument in these two cases. 

\

\textbf{Case 1:} Let $I\in E_1$. By the definition of $X^{\cS_1}$, $\varphi_{\cS_1}(V_I^\D)$ is equal to $f_I(V_I^\D) \subseteq  \BB$. Let $A_1$ denote the set $f_I(V_I^\D)$. By $h$-compatibility, there is an interval $J\in E_2$ such that $h(V_I)=V_J$. Let $A_2$ denote the set $f_J(V_J^\D)\subseteq \BB$. We have the following commutative diagram 
\[
\begin{tikzcd}
V_I^\D \ar{d}{h} \ar{r}{f_I} & A_1 \ar{d}{h_{\cS_1\cS_2}} \\
V_J^\D \ar{r}{f_J} & A_2
\end{tikzcd}
\texto{.}{-1.5pc}
\]

Let $I':=f_I(I^{(2)})$, $J':=f_J(J^{(2)})$. Abusing notation, we identify $A_1(k)$ with the set $\{a\in k :  \eta_{a,0}\in A_1\}$. Note that the restriction of $h_{\cS_1\cS_2}$ to $A_1(k)$ is $\cL$-definable. Indeed, $(f_I)_{|V_I(k)}$ is $\cL$-definable and $\cL$-definably invertible, $h_{|X_1(k)}$ is $\cL$-definable, $(f_J)_{|V_J(k)}$ is $\cL$-definable and we have
\[
\forall a\in A_{1}(k),\ h_{\cS_1\cS_2}(a)=f_J(h(f_I^{-1}(a))). 
\]
Let us first show that $h_{\cS_1\cS_2}$ restricted to $I'$ is $\cL$-definable. For $i=1,2$, let $a_i\in k$ and $u_i, v_i\in |k|$ be such that 
\[
I'=\{\eta_{a_1,r} :  u_1<r<v_1\} \text{ and } J'=\{\eta_{a_2,r} :  u_2<s<v_2\}.
\] 
Note that both $I'$ and $J'$ are definable sets. By Lemma~\ref{lem:kpointsannuli}, for $r\in (u_{1},v_{1}) \cap |k^\times|$ and $s\in (u_{2},v_{2}) \cap |k^\times|$, we have
\begin{equation}\label{eq:equivcase1}\stepcounter{eqn}\tag{E\arabic{eqn}} 
h_{\cS_1\cS_2}(\eta_{a_1,r})=\eta_{a_2,s} \Leftrightarrow D(a_2,s)\setminus D^-(a_2,s)=\{h_{\cS_1\cS_2}(x)\in k  :  x\in D(a_1,r)\setminus D^-(a_1,r)\}.
\end{equation}
and, for $\eta_{b_{1},r}\in A_1\setminus I'$ and $\eta_{b_{2},s}\in A_2\setminus J'$, we have
\begin{equation}\label{eq:equivcase1.5}\stepcounter{eqn}\tag{E\arabic{eqn}} 
h_{\cS_1\cS_2}(\eta_{b_{1},r})=\eta_{b_{2},s} \Leftrightarrow D(b_{2},s)=\{h_{\cS_1\cS_2}(x)\in k  :  x\in D(b_{1},r)\}.
\end{equation}

\

\textbf{Case 2:} Let~$x\in S_1^{(2)}$ and~$y=h(x)$. Set~$B_1:=W_x^\D \setminus\{x\}$ and~$B_2:=W_y^\D \setminus\{y\}$. In this case, by part (1) of Claim \ref{claim:radial}, $(\varphi_{\cS_1})_{|B_1}$ (resp.~$(\varphi_{\cS_2})_{| B_2}$) is equal to~$\epsilon_x$ (resp~ $\epsilon_y$) as defined in \eqref{eq:epsilon} and we have the following commutative diagram
\[
\begin{tikzcd}
B_1\ar{d}{h} \ar{r}{\epsilon_x} & Z_x^{\cS_1} \ar{d}{h_{\cS_1\cS_2}} \\
B_2\ar{r}{\epsilon_y} & Z_y^{\cS_2}
\end{tikzcd}
\texto{.}{-1.5pc}
\]

Abusing notation, consider the set 
\begin{equation}\label{eq:Z(k)}\stepcounter{eqn}\tag{E\arabic{eqn}} 
Z_x^{\cS_1}(k):=\{(\alpha,a)\in \cU_x(\tilde{k})\times k  :  (\alpha,\eta_{a,0})\in Z_x^{\cS_1}\}.
\end{equation}
Let us first show that the restriction of~$h_{\cS_1\cS_2}$ to~$Z_x^{\cS_1}(k)$ is $\cL$-definable. By definition of facade, the restriction of $\epsilon_x$ to $W_x(k)$ is $\cL$-definable since both $f_x$ and $\rho_{W_x}$ restricted to $W_x(k)$ are $\cL$-definable. Recall that~$\eps_{x}$ induces a bijection from~$W_{x}(k)$ to~$Z_{x}^{\cS_{1}}(k)$ and that its inverse is automatically $\cL$-definable. For all $(\alpha,a)\in Z_x^{\cS_1}(k)$, we have
\[
h_{\cS_1\cS_2}(\alpha,a)= \epsilon_y(h(\epsilon_x^{-1}(\alpha,a))=(\rho_{W_y}(h(\epsilon_x^{-1}(\alpha,a))), f_y(h(\epsilon_x^{-1}(\alpha,a)))). 
\]
Then, as in Case 1, for any $(\alpha,\eta_{a,r})\in Z_x^{\cS_1}$, we have
\begin{equation}\label{eq:equivalencecase2}\stepcounter{eqn}\tag{E\arabic{eqn}} 
h_{\cS_1\cS_2}(\alpha,\eta_{a,r}) = (\beta,\eta_{b,s}) \Leftrightarrow 
\begin{cases}
\beta= \rho_{W_y}(h(\epsilon_x^{-1}(\alpha,y))) \text{ for all $y\in D(a,r)$ and }&\\

D(b,s) = \{h_{\cS_1\cS_2}(\alpha,y)\in \cU_x(\tilde k)\times k  :  y\in D(a,r)\}. &   
\end{cases}
\end{equation}
The equivalence \eqref{eq:equivalencecase2} holds both by Lemma \ref{lem:kpoints}, since for every $\alpha\in \cU_x(\tilde{k})$ the morphism 
\[
f_y\circ h\circ \epsilon_x^{-1}\colon (Z_x)^{\cS_1}_\alpha \to (Z_y)^{\cS_2}_{\beta},
\]
where $(Z_x)^{\cS_1}_\alpha$ and $(Z_y)^{\cS_2}_\beta$ denote the fiber of $Z_x^{\cS_1}$ at $\alpha$ and the fiber of $Z_y^{\cS_2}$ at $\beta$ respectively, is a morphism of open discs in $\AA_{k}^{1,\an}$, and by the fact that $\epsilon_x^{-1}(\alpha,a)=\epsilon_x^{-1}(\alpha,y)$ for all $y\in D(a,r)$. This completes Case 2 and the proof. 
\end{proof}

We finish this section with the following observation about the deformation retraction introduced in Section \ref{sec:prelim}. 

\begin{remark}\label{rem:deformation} Let $[0,1]_k$ denote $[0,1]\cap |k|$. Let $X$ be a $k$-analytic curve and $\cS$ be an $\cL$-facade of~$X$. The map $\nu_\cS$ making the following diagram commute 
\[
\begin{tikzcd}
{[0,1]_k}\times X^{\D} \ar{d}{\id\times \varphi_{\cS}} \ar{r}{\tau_S} & X{^\D} \ar{d}{\varphi_{\cS}} \\
{[0,1]_k}\times X^{\cS} \ar{r}{\nu_\cS} & X^{\cS} \\
\end{tikzcd}
\]
is $\cL$-definable. This follows essentially by the form of of $\tau_S$ given in Lemmas \ref{lem:taudisc} and \ref{lem:tauannulus}. We leave the details to the reader. 
\end{remark}

\section{Definable subsets of~\texorpdfstring{$\BB$}{B}}\label{sec:definable}

\subsection{Basic $\BB$-radial sets} Let $|k|^\infty$ denote the set $|k|\cup\{+\infty\}$ where $+\infty$ is a new formal element bigger than every element in $|k|$. In particular, this allows us to treat $k$ (resp. $\AA^{1,\an}_{k}$) as an open disc (resp. a Berkovich open disc) of radius $+\infty$ centered at some (any) point in $k$. 

Since $\BB$ is interpretable in the three-sorted language $\cL_3$, every $\cL_\BB^\an$-definable subset of $X\subseteq \BB$ is of the form $\bb(Y)$ for some $\cL_3^\an$-definable set $Y\subseteq k\times |k|$ (recall that~$\bb$ denotes the quotient map). Instead of writing $\bb(Y)$, we will directly use the $\eta$ notation and write $X=\{\eta_{x,r}\in \BB  :  \varphi(x,r)\}$  where $\varphi(x,r)$ is an $\cL_3^\an$-formula defining $Y$. We set $X\cap k := \{a \in k :  \eta_{a,0} \in X\}$.
 
The main objective of this section is to show that $\cL_\BB^\an$-definable subsets of $\BB$ are finite disjoint unions of the following $\cL_\BB$-definable basic blocks, which we call \emph{basic $\BB$-radial sets}. 

\begin{definition}\label{def:radialset} A subset $X\subseteq \BB$ is a \emph{basic $\BB$-radial set} if it is empty or equal to one of the following definable sets:

\noindent $\bullet$ \emph{Points:} for $a\in k$ and $s\in |k|$,  
\begin{equation}\label{eq:point}\tag{$R_0$}
X=\{\eta_{a,s}\}.
\end{equation}
$\bullet$ \emph{Branch segments:} for $a\in k$ and $s_1,s_2\in |k|^\infty$ such that $s_1<s_2$,  
\begin{equation}\label{eq:branch}\tag{$R_1$}
X=\{\eta_{x,r} :  x=a \wedge s_1<r<s_2\}.
\end{equation}
$\bullet$ \emph{Annulus cylinders:} for $a\in k$, $s_2\in |k|^\infty$, $s_1,\rho_1,\rho_2\in |k|$ and $g_1,g_2\in \QQ$,
\begin{equation}\label{eq:acylind1}\tag{$R_2$}
X=\left\{\eta_{x,r} \in \BB:\begin{array}{l} D(x,r)\subseteq D^-(a,s_2)\setminus D(a,s_1) \ \wedge \\
\rho_1|x-a|^{g_1} = \ r < |x-a|\end{array}\right\}. 
\end{equation}
or 
\begin{equation}\label{eq:acylind2}\tag{$R_3$}
X=\left\{\eta_{x,r}\in \BB :\begin{array}{l} D(x,r)\subseteq D^-(a,s_2)\setminus  D(a,s_1) \ \wedge \\
\rho_1|x-a|^{g_1} <  r  <  \rho_2|x-a|^{g_2} \leqslant |x-a| \end{array}\right\}.
\end{equation} 
$\bullet$ \emph{Closed disc cylinder:} for $a\in k$, $s,s_1,s_2\in |k|$ such that $s_1<s_2\leqslant s$, $n\in \NN$ and $b_1,\ldots, b_n\in k$ such that $|a-b_i|=s$, 
\begin{equation}\label{eq:cdcylin1}\tag{$R_4$}
X=\{\eta_{x,r} \in \BB :  D(x,r)\subseteq D(a,s)\setminus  (\bigcup_{i=1}^n D^-(b_i,s)) \ \wedge r = s_1  \}
\end{equation}
or 
\begin{equation}\label{eq:cdcylin2}\tag{$R_5$}
X=\{\eta_{x,r} \in \BB :  D(x,r)\subseteq D(a,s)\setminus  (\bigcup_{i=1}^n D^-(b_i,s)) \ \wedge \ s_1 < r < s_2 \}.
\end{equation}
$\bullet$ \emph{Open disc cylinders:} for $a\in k$, $s,s_{1},s_{2}\in |k|^\infty$ such that $s_1<s_2\leqslant s$ 
\begin{equation}\label{eq:odcylin1}\tag{$R_6$}
X=\{\eta_{x,r} \in \BB :  D(x,r)\subseteq D^-(a,s) \ \wedge s_1 = \ r \}
\end{equation}
or 
\begin{equation}\label{eq:odcylin2}\tag{$R_7$}
X=\{\eta_{x,r} \in \BB :  D(x,r)\subseteq D^-(a,s) \ \wedge \ s_1 < r < s_2 \}.
\end{equation}

A subset $X\subseteq \BB$ is a \emph{$\BB$-radial set} if it is a finite disjoint union of basic $\BB$-radial sets.
\end{definition}

\begin{remark} \hspace{1cm}
\begin{enumerate}[$i)$]
\item The only non-empty basic $\BB$-radial sets which are definably connected (\emph{i.e.}, not the union of two disjoint open non-empty definable subsets) are points, branch segments and open disc cylinders of type~$(R_{7})$ with $s=s_{2}$. Every non-empty basic $\BB$-radial set is infinite except for points.
\item Let $X$ be a non-empty annulus cylinder as in $(R_2)$ or $(R_3)$. By possibly changing $s_1$, one may always suppose that for every $s_2<r<s_1$, the set 
\[
X\cap \{\eta_{x,r}\in \BB  :  D(x,r)\subseteq D(a,r)\setminus D^-(a,r)\}
\]
is non-empty. 
\end{enumerate}
\end{remark}

\begin{lemma}\label{lem:booleancombination} Finite intersections and complements of basic $\BB$-radial sets are finite disjoint unions of basic $\BB$-radial sets. More generally, $\BB$-radial sets are stable under finite boolean combinations.
\end{lemma}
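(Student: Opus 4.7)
The proof splits naturally: we handle intersections and complements of basic $\BB$-radial sets separately, and then the Boolean closure statement for $\BB$-radial sets (finite disjoint unions of basic ones) follows from these together with the trivial closure under finite unions and de Morgan's laws.

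For the intersection, I would observe that each of the forms $(R_0)$--$(R_7)$ is specified by two pieces of data: a \emph{position condition} stating that the closed disc $D(x,r)$ is contained in some Berkovich Swiss cheese region cut out by the centers and radii appearing in $X$, and a \emph{radius condition} on $r$ (either $r$ fixed, $r$ in an open interval, or a power-law relation with $|x-a|$). The intersection of two basic $\BB$-radial sets is the conjunction of the two position conditions and the two radius conditions. Thanks to the ultrametric trichotomy (any two closed discs in $k$ are either disjoint or comparable), the conjunction of the two position conditions can be rewritten as a finite disjoint union of position conditions of the same form, obtained by refining the Swiss cheese along the centers and radii of both inputs. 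On each cell of this refinement the quantities $|x-a|$ for the various centers $a$ involved become either constant or mutually identified up to a fixed affine relation in $\R$, so the two radius conditions combine to one of the admissible forms. A finite case analysis over the types $(R_0)$--$(R_7)$ then confirms that every intersection lies in the class.

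For the complement of a single basic $\BB$-radial set $X$ with parameters $\{a_i\}\cup\{b_j\}\subseteq k$ and $\{s_\ell\}\subseteq |k|$, the plan is to construct a finite partition of $\BB$ into basic $\BB$-radial sets $Y_1,\dots,Y_N$ subordinate to these parameters, such that $X$ is the disjoint union of some of the $Y_m$; the complement is then the union of the remaining ones. The partition is built by stratifying $\AA^{1,\an}_k$ along the Berkovich discs $\DD(a_i,s_\ell)$ and $\DD^-(a_i,s_\ell)$, which by the ultrametric trichotomy produces a finite family of Swiss-cheese cells, and then stratifying $\BB$ over each cell by the relevant radius values $s_\ell$ and, where needed, by the power-law curves $r=\rho|x-a|^g$ occurring in $X$. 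One checks that each piece of the resulting partition is either a point, a branch segment, or a cylinder of type $(R_2)$--$(R_7)$ in the list of Definition~\ref{def:radialset}, and that the defining conditions of $X$ single out a sub-union of these pieces.

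The main obstacle is organizing the case analysis cleanly without missing configurations, especially for the annulus cylinders $(R_2)$ and $(R_3)$ whose radius conditions involve power laws in $|x-a|$: when intersecting two such cylinders with distinct centers, or when complementing them, one must use the ultrametric trichotomy to move to a common center and verify that the induced power-law conditions remain within the admissible forms. Once these cases are settled, the Boolean closure statement is immediate: intersections of finite disjoint unions reduce term-by-term to the intersection case, and complements of finite disjoint unions reduce via de Morgan to finitely many intersections of complements.
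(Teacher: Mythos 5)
Your plan is correct and is essentially the paper's approach: the paper also treats this as a direct, case-by-case decomposition using the ultrametric structure (explicitly calling it a tedious verification analogous to the lemmas on intersections and complements of algebraic bricks) and only exhibits one representative case, namely the complement of an annulus cylinder of type~$(R_2)$. Your organization via position/radius conditions and a partition of $\BB$ subordinate to the parameters is just a systematic way of carrying out the same case analysis.
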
 

\begin{proof}
The proof is a (tedious) case verification similar to the proof of Lemmas \ref{lem:intersectionalgebraic bricks} and \ref{lem:complementalgebraic bricks}. We do one case to let the reader have an idea of the kind of decomposition one obtains. So suppose $X$~is an annulus cylinder as defined in (R2) 
\begin{equation*}
X=\left\{\eta_{x,r} \in \BB:\begin{array}{l} D(x,r)\subseteq D^-(a,s_2)\setminus D(a,s_1) \ \wedge \\
\rho_1|x-a|^{g_1} = \ r < |x-a|\end{array}\right\}, 
\end{equation*}
with $s_1<r<s_2$. In Figure \ref{fig1} we depict $X$ as the bold black rays and the corresponding collection of $\BB$-radial sets whose union equals the complement $\BB\setminus X$.  
\begin{figure}[!htb]
        \center{\includegraphics[scale=0.8]
        {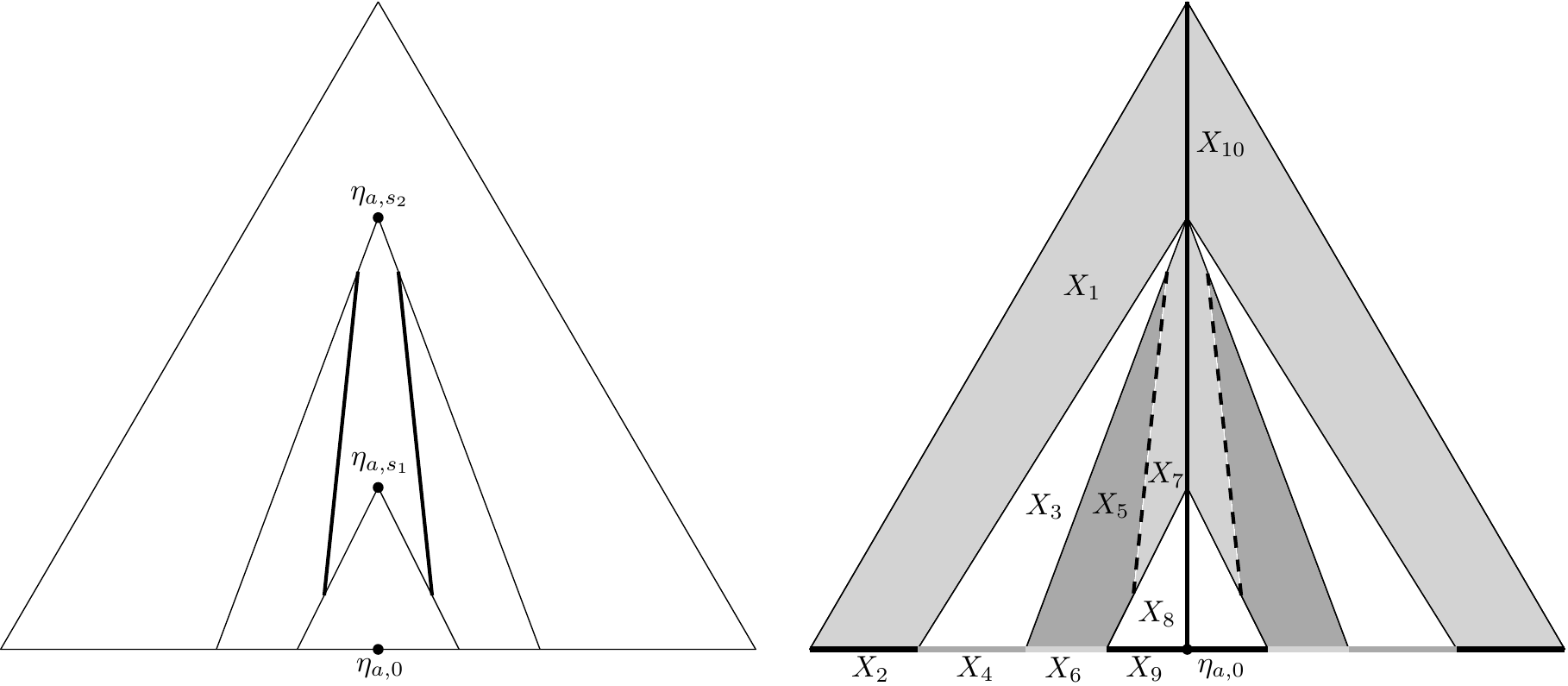}}
        \caption{\label{fig1} $\BB$-radial decomposition of $\BB\setminus X$}
      \end{figure}
Formally, the corresponding collection is given by the following $\BB$-radial sets which we enumerate which we enumerate in decreasing distance to $\eta_{a,0}$:
\begin{itemize}
\item two annulus cylinders
\[
X_1=\{\eta_{x,r} \in \BB :  D(x,r)\subseteq D^-(a,\infty)\setminus D(a,s_2) \ \wedge \ 0<r < |x-a| \},
\]
\[
X_2=\{\eta_{x,r} \in \BB :  D(x,r)\subseteq D^-(a,\infty)\setminus D(a,s_2) \ \wedge r=0 \},
\]
\item two closed disc cylinders
\[
X_3=\{\eta_{x,r} \in \BB :  D(x,r)\subseteq D(a,s_2)\setminus  D^-(a,s_2) \ \wedge 0<r<s_2 \},
\]
\[
X_4=\{\eta_{x,r} \in \BB :  D(x,r)\subseteq D(a,s_2)\setminus  D^-(a,s_2) \ \wedge r = 0  \} \text{ and }
\]
\item three annulus cylinders
\[
X_5=\left\{\eta_{x,r}\in \BB :\begin{array}{l} D(x,r)\subseteq D^-(a,s_2)\setminus  D(a,s_1) \ \wedge \\
0 <  r  <  \rho_1|x-a|^{g_1} \leqslant |x-a| \end{array}\right\} \text{ and }
\]
\[
X_6=\left\{\eta_{x,r}\in \BB :\begin{array}{l} D(x,r)\subseteq D^-(a,s_2)\setminus  D(a,s_1) \ \wedge \\
0 =  r  <  |x-a| \end{array}\right\},
\]
\[
X_7=\left\{\eta_{x,r}\in \BB :\begin{array}{l} D(x,r)\subseteq D^-(a,s_2)\setminus  D(a,s_1) \ \wedge \\
\rho_1|x-a|^{g_1} <  r  <  |x-a| \end{array}\right\}, 
\]
\item two closed disc cylinders 
\begin{equation*}
X_8=\{\eta_{x,r} \in \BB :  D(x,r)\subseteq D(a,s_1) \ \wedge r = 0  \} \text{ and }
\end{equation*}
\begin{equation*}
X_{9}=\{\eta_{x,r} \in \BB :  D(x,r)\subseteq D(a,s_1)\ \wedge \ 0 < r < s_1 \}.
\end{equation*}
\item one branch segment 
\[
X_{10}=\{\eta_{x,r} :  x=a \wedge 0<r<\infty\}.
\]
\item one final point $\eta_{a,0}$. 
\end{itemize}
\end{proof}

It will be useful to consider other kinds of subsets definable subsets of~$\BB$.

\begin{definition}\label{def:Bbrick} A subset $X\subseteq \BB$ is a \emph{$\BB$-brick} if it is empty or equal to one of the following definable sets:

\noindent $\bullet$ \emph{$k$-rational points:} for $a\in k$,  
\begin{equation}\label{eq:krationalpoint}\tag{$B_0$}
X=\{a\}.
\end{equation}
$\bullet$ \emph{Open discs:} for $a\in k$, $s\in |k|^\infty \setminus\{0\}$,
\begin{equation}\label{eq:disc}\tag{$B_1$}
X = \DD^-(a,s)^\D.
\end{equation}
$\bullet$ \emph{Open annuli:} for $a\in k$, $s_1\in |k|$ and $s_2\in |k|^\infty$,
\begin{equation}\label{eq:annulus}\tag{$B_2$}
X = \DD^-(a,s_{2})^\D \setminus \DD(a,s_{1})^\D.
\end{equation}
$\bullet$ \emph{Tubes:} for $a\in k$, $s\in |k^\times|$, $n\in \NN$ and $b_{1},\dotsc,b_{n} \in k$ such that $|a-b_{i}|=s$,
\begin{equation}\label{eq:tube}\tag{$B_3$}
X = \DD(a,s)^\D \setminus \bigcup_{i=1}^n \DD^-(b_{i},s)^\D.
\end{equation}
If a $\BB$-brick $X$ is a $k$-rational point or an open disc, the skeleton of~$X$ is $\Gamma_{X} := \emptyset$. If $X$ is an open annulus as above, the skeleton of~$X$ is 
$\Gamma_{X} := \{\eta_{a,r} \in \BB :  s_{1}<r< s_{2}\}$. If $X$ is a tube as above, the skeleton of~$X$ is 
$\Gamma_{X} := \{\eta_{a,s}\}$.
\end{definition}

\begin{remark}\label{rem:skeleton}
Let~$X$ be a brick. Then $X$ satisfies the following properties:
\begin{enumerate}[$i)$]
\item The sets~$X$ and $X\setminus \Gamma_{X}$ are $\BB$-radial.
\item If $\eta_{a,r} \in X \setminus \Gamma_{X}$, then $D(a,r) \subseteq X \cap k$.
\end{enumerate}
\end{remark}

\begin{lemma}\label{lem:distance}
Let~$X \subseteq \BB$ be a $\BB$-brick. Let $b\in (\BB \setminus X) \cap k$.
\begin{enumerate}[$i)$]
\item If~$X$ is a $k$-rational point, an open disc or a tube, then
\[\forall x,x' \in X\cap k,\ |x-b| = |x'-b|.\]
\item If~$X$ is an open annulus, there exist disjoint subsets $U_{X}$ and $V_{X}$ of~$\BB$ with $U_{X} \cup V_{X} = \BB \setminus X$ and $b_{X} \in U_{X}\cap k$ such that, if $b\in U_{X}\cap k$, then
\[\forall x\in X\cap k,\ |x-b| = |x-b_{X}|\]
and, if $b\in V_{X}\cap k$, then
\[\forall x\in X\cap k,\ |x-b| = |b-b_{X}|.\]
\end{enumerate}
\end{lemma}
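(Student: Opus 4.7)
The plan is to proceed by case analysis on the four types of $\BB$-brick, using the ultrametric strong triangle inequality throughout.

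For type $(B_{0})$ the set $X\cap k = \{a\}$ is a singleton, and the conclusion is trivial. For type $(B_{1})$, since $X\cap k = D^{-}(a,s)$, any point $b\in (\BB\setminus X)\cap k$ satisfies $|b-a|\geq s$, while every $x\in X\cap k$ satisfies $|x-a|<s\leq |b-a|$. The strict inequality $|x-a|<|b-a|$ combined with the ultrametric gives $|x-b|=|a-b|$, which is independent of~$x$.

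The tube case $(B_{3})$ is the one that requires more care. Here $X\cap k = D(a,s)\setminus \bigcup_{i=1}^{n}D^{-}(b_{i},s)$, and the key preliminary observation is that for every $x\in X\cap k$ and every~$i$ one has the exact equality $|x-b_{i}|=s$. Indeed, $|x-b_{i}|\geq s$ by definition of~$X$, while $|x-b_{i}|\leq \max(|x-a|,|a-b_{i}|)\leq s$ by the ultrametric together with the defining condition $|a-b_{i}|=s$ of a tube. Now a point $b\in (\BB\setminus X)\cap k$ either satisfies $|b-a|>s$, in which case the argument from $(B_{1})$ yields $|x-b|=|a-b|$, or $|b-b_{i}|<s$ for some~$i$, in which case $|b-b_{i}|<s=|x-b_{i}|$ and the ultrametric gives $|x-b|=|x-b_{i}|=s$. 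In both subcases the value is independent of~$x$.

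For the annulus case $(B_{2})$, I would set $U_{X}:=\DD(a,s_{1})^{\D}$ and $V_{X}:=\BB\setminus \DD^{-}(a,s_{2})^{\D}$ and take $b_{X}:=a\in U_{X}\cap k$. The inclusion $\DD(a,s_{1})^{\D}\subseteq \DD^{-}(a,s_{2})^{\D}$, which holds because $s_{1}<s_{2}$, guarantees that $U_{X}$ and $V_{X}$ are disjoint with $U_{X}\cup V_{X}=\BB\setminus X$. For $x\in X\cap k$ one has $s_{1}<|x-a|<s_{2}$. If $b\in U_{X}\cap k$, then $|b-a|\leq s_{1}<|x-a|$ gives $|x-b|=|x-a|=|x-b_{X}|$; if $b\in V_{X}\cap k$, then $|x-a|<s_{2}\leq |b-a|$ gives $|x-b|=|b-a|=|b-b_{X}|$. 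The only real obstacle is the tube case, where one must extract the precise equality $|x-b_{i}|=s$ from the defining conditions of~$X$; without it the ultrametric alone would not prevent $|x-b|$ from depending on~$x$ when $b$ lies in a small disc $D^{-}(b_{i},s)$. All remaining cases are routine applications of the strong triangle inequality.
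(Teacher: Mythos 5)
Your proof is correct and follows essentially the same route as the paper: part (i) is a direct ultrametric check (which the paper dismisses as clear, your tube argument via $|x-b_i|=s$ being the only point needing a line), and for part (ii) you choose exactly the same $U_X=\DD(a,s_1)^\D$, $V_X=\BB\setminus\DD^-(a,s_2)^\D$, $b_X=a$ and the same strong-triangle-inequality comparison as the paper's proof.
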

\begin{proof} Part $(i)$ is clear. For part $(ii)$, suppose $X = \DD^-(a,s_{2})^\D \setminus \DD(a,s_{1})^\D$ and set~$U_{X}:= D(a,s_{1})$, $V_{X}:= \BB \setminus D^-(a,s_{2})$ and $b_{X}:= a$. If~$b\in U_X$, then~$|b-b_X|\leqslant s_1$. Therefore, $|x-b|=|x-b_X|$ for all~$x\in X\cap k$ since $|x-b_X|>s_1$. If $b\in V_X$, then $|x-b_X|<s_2\leqslant |x-b|$ for all~$x\in X\cap k$, which shows that $|x-b|=|b-b_X|$. 
\end{proof}

\begin{definition}\label{def:BSwisscheese} Let $X\subseteq k$ be a Swiss cheese:
\[X = D^\eps(a,r) \setminus \bigcup_{i=1}^n D^{\eps_{i}}(a_{i},r_{i}),\]
with $a\in k$, $r\in |k|^\infty$, $\eps \in\{-,\emptyset\}$ with $(r,\eps) \ne (+\infty,\emptyset)$, $n\in \NN$, $a_{1},\dotsc,a_{n} \in k$, $r_{1},\dotsc,r_{n}\in |k|$, $\eps_{1},\dotsc,\eps_{n} \in \{-,\emptyset\}$.

We set 
\[X^\BB := \DD^\eps(a,r)^\D \setminus \bigcup_{i=1}^n \DD^{\eps_{i}}(a_{i},r_{i})^\D.\]
A subset of~$\BB$ of the latter form will be called a \emph{$\BB$-Swiss cheese}.
\end{definition}

Note that $X^\BB \cap k = X$ and that the association $X\mapsto X^\BB$ is well-defined since the discs appearing in the definition of~$X$ are unique.

\begin{lemma}\label{lem:Swisscheese1}
Let~$\cA$ be a finite partition of~$k$ into Swiss cheeses. Then $\cA^\BB := \{A^\BB  :  A\in \cA\}$ is a finite partition of~$\BB$ into $\BB$-Swiss cheeses.
\end{lemma}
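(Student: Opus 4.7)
The plan is to show that each $\eta_{a,r} \in \BB$ lies in exactly one $A^\BB$ for $A \in \cA$; since $\cA$ is finite and each $A^\BB$ is a $\BB$-Swiss cheese by definition, this will conclude the proof. The basic translation is that if $A = D^\eps(a_0,r_0) \setminus \bigcup_{i=1}^n D^{\eps_i}(a_i,r_i)$, then $\eta_{a,r} \in A^\BB$ if and only if $D(a,r) \subseteq D^\eps(a_0,r_0)$ and $D(a,r) \not\subseteq D^{\eps_i}(a_i,r_i)$ for every $i$; this follows from $|(T-b)(\eta_{a,r})| = \max(|a-b|,r)$, which yields $\eta_{a,r} \in \DD^\eps(b,s)^\D$ if and only if $\max(|a-b|,r) \le s$ (with strict inequality in the open case).

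First I would let $\cD$ denote the finite set of all discs appearing in the Swiss cheese decompositions of the elements of $\cA$; by the ultrametric property, any two elements of $\cD$ are either disjoint or nested. Fix $\eta_{a,r} \in \BB$ and let $\cD' \subseteq \cD$ consist of those discs that are properly contained in $D(a,r)$. Then $D(a,r) \setminus \bigcup_{D \in \cD'} D$ is a Swiss cheese (with outer disc $D(a,r)$ and finitely many proper holes), hence non-empty; pick any $b^* \in D(a,r) \setminus \bigcup \cD'$.

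The crux of the argument is then the equivalence: for every $A \in \cA$, $\eta_{a,r} \in A^\BB$ if and only if $b^* \in A$. Write $A = D^\eps \setminus \bigcup_i B_i$. Suppose first $b^* \in A$. Then $D^\eps \cap D(a,r) \ni b^*$; since $D^\eps \in \cD$ and $b^* \notin \bigcup \cD'$ forces $D^\eps \notin \cD'$, we obtain $D(a,r) \subseteq D^\eps$. For each $i$, if $D(a,r) \subseteq B_i$ then $b^* \in B_i$, contradicting $b^* \in A$; thus $D(a,r) \not\subseteq B_i$, and $\eta_{a,r} \in A^\BB$. Conversely, if $\eta_{a,r} \in A^\BB$, then $b^* \in D(a,r) \subseteq D^\eps$; and if $b^* \in B_i$ for some $i$, then $B_i \cap D(a,r) \ne \emptyset$ while $D(a,r) \not\subseteq B_i$ forces $B_i \subsetneq D(a,r)$, hence $B_i \in \cD'$, contradicting $b^* \notin \bigcup \cD'$. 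Hence $b^* \in A$. Since $\cA$ partitions $k$, exactly one $A$ contains $b^*$, so exactly one $A^\BB$ contains $\eta_{a,r}$, establishing both disjointness and covering.

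The main obstacle, which I would invoke rather than reprove, is the non-emptiness of $D(a,r) \setminus \bigcup \cD'$, a standard fact for non-archimedean Swiss cheeses: it rests on the infinitude of $\tilde k$ (algebraically closed and non-trivially valued), since the finitely many proper sub-discs in $\cD'$ occupy only finitely many of the infinitely many residue classes of $D(a,r)/D^-(a,r)$. Everything else is formal manipulation of the nested/disjoint dichotomy for ultrametric discs.
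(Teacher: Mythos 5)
Your proof is correct, but it follows a genuinely different route from the paper's. The paper treats disjointness as an easy check and proves covering model-theoretically: it lifts $\eta_{a,r}$ to an actual field point $x$ in an elementary extension $\cKK$, notes that $\cA(\cKK)$ covers $L$, and transfers membership back through the identity $|P(x)|=|P(\eta_{a,r})|$ for all $P\in k[T]$, so the point $x$ plays the role of a true generic of $D(a,r)$. You instead stay entirely inside $k$: your test point $b^*\in D(a,r)\setminus\bigcup\cD'$ is only generic relative to the finitely many discs occurring in $\cA$, and the transfer is replaced by the combinatorial biconditional $\eta_{a,r}\in A^\BB\Leftrightarrow b^*\in A$, proved via the nested-or-disjoint dichotomy for ultrametric discs together with the translation $\eta_{a,r}\in\DD^\eps(b,s)^\D\Leftrightarrow D(a,r)\subseteq D^\eps(b,s)$ (valid here since all radii lie in $|k|$). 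This single equivalence delivers disjointness and covering simultaneously, whereas the paper argues them separately. What the paper's route buys is brevity and no counting argument, since genericity of $x$ is automatic once one passes to $\cKK$; what yours buys is a self-contained, extension-free argument, at the cost of the auxiliary non-emptiness of $D(a,r)\setminus\bigcup\cD'$, whose residue-class justification is correct but hinges on the observation (worth stating explicitly) that every disc properly contained in $D(a,r)$ sits inside a single residue class $D^-(b,r)$, so finitely many of them cannot exhaust the infinitely many classes indexed by $\tilde k$; the degenerate case $r=0$ is trivial since then $\cD'$ is empty.
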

\begin{proof}
It is easy to check that, if $A_{1}$ and~$A_{2}$ are disjoint Swiss cheeses, then $A_{1}^\BB$ and~$A_{2}^\BB$ are disjoint, which shows that the elements of~$\cA^\BB$ are disjoint.

To show that~$\cA^\BB$ covers~$\BB$, let $\eta_{a,r} \in \BB$. Let $\cKK$ be an elementary extension of~$\mathbf{k}$ and $x \in L$ whose image by the projection $\AA^{1,\an}_{L} \to \AA^{1,\an}_{k}$ is~$\eta_{a,r}$. The collection $\cA(\cKK) := \{A(\cKK)  :  A\in \cA\}$ covers $L$, hence there exists $A\in \cA$ such that $x\in A(\cKK)$. Since, for every $P\in k[T]$, we have $|P(x)| = |P(\eta_{a,r})|$, it follows that $\eta_{a,r} \in A^\BB$.
\end{proof}

\begin{lemma}\label{lem:Swisscheese2}
Any $\BB$-Swiss cheese admits a finite partition into $\BB$-bricks. \qed
\end{lemma}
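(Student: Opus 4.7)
I would proceed by induction on the number $n$ of discs removed in the $\BB$-Swiss cheese
\[
Y \;=\; \DD^\eps(a,r)^\D \setminus \bigcup_{i=1}^{n} \DD^{\eps_i}(a_i,r_i)^\D.
\]
For the base case $n=0$, the set $\DD^\eps(a,r)^\D$ is already a single $\BB$-brick: an open disc (type $(B_1)$) when $\eps=-$, and a closed disc $\DD(a,r)^\D$, viewed as a tube of type $(B_3)$ with no removed sub-discs, when $\eps=\emptyset$.

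For the inductive step, write $Y' := Y \cup \DD^{\eps_n}(a_n,r_n)^\D$, which is a $\BB$-Swiss cheese with $n-1$ removed discs; by the inductive hypothesis it admits a finite partition into $\BB$-bricks. Because the removed discs $\DD^{\eps_i}(a_i,r_i)^\D$ are pairwise disjoint and all contained in the outer disc, $D := \DD^{\eps_n}(a_n,r_n)^\D$ is contained in exactly one of the bricks of that partition, say~$B$, and is disjoint from all the other bricks. Hence $Y = (Y' \setminus B) \sqcup (B \setminus D)$, and the inductive step reduces to the following key statement: \emph{if $B$ is a $\BB$-brick and $D \subsetneq B$ is either a $\BB$-open disc or a $\BB$-closed disc, then $B \setminus D$ admits a finite partition into $\BB$-bricks.}

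I would prove this key statement by a case analysis on the type of~$B$, closely paralleling the argument of Lemma~\ref{lem:complementalgebraic bricks}. Specifically:
\emph{(1)} if $B$ is an open disc $\DD^-(a,s)^\D$, one writes $B\setminus D = (B\setminus \DD(a',s')^\D) \sqcup (\DD(a',s')^\D\setminus D)$ when $D$ is open, and observes that the first piece is an open annulus (of type $(B_2)$, possibly after using the identity $\DD(a',s')^\D = \DD(a,s')^\D$ when $|a'-a|\le s'$) while the second piece is empty or a tube of type $(B_3)$;
\emph{(2)} if $B$ is an open annulus, one distinguishes whether $D$ lies inside one of the maximal open discs of $B\setminus \Gamma_B$ (reducing to case~(1)) or meets the skeleton $\Gamma_B$ (in which case $D$ must be centered on $\Gamma_B$, and $B\setminus D$ decomposes as two open annuli, possibly joined by a tube);
\emph{(3)} if $B$ is a tube with central point $\eta_{a,s}$, then $D$ either lies inside one of the maximal open discs of $B\setminus\{\eta_{a,s}\}$ (again reducing to case~(1)) or $D$ itself is of the form $\DD^-(c,s)^\D$ with $|c-a|=s$, in which case $B\setminus D$ is a tube with one extra removed sub-disc. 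The key combinatorial input throughout is the standard ultrametric dichotomy for closed discs: any two are either nested or disjoint; this is exactly what controls how $D$ can sit inside the skeleton of each type of brick.

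The only real obstacle is bookkeeping through the subcases above; there is no new idea beyond those already exploited in Lemma~\ref{lem:complementalgebraic bricks}, and the situation is in fact simpler because we work with four explicit families of bricks in the affine line rather than with algebraic bricks on a general curve.
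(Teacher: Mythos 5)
Your overall strategy is sound and is exactly the kind of routine verification the paper leaves to the reader (the lemma is stated with a \qed, in the spirit of the case checks of Lemmas \ref{lem:intersectionalgebraic bricks} and \ref{lem:complementalgebraic bricks}); in particular your key statement, that a $\BB$-brick minus a single $\BB$-open or $\BB$-closed disc properly contained in it is a finite disjoint union of $\BB$-bricks, is correct, and your case analysis (1)--(3) for it is essentially right (note only that in cases (2) and (3) the piece $B\setminus E$, where $E$ is the maximal open disc containing $D$, also has to be decomposed -- into two annuli and a tube in the annulus case -- and that in case (2) the subcase where $D$ meets $\Gamma_B$ cannot occur, since then the centre of the annulus would lie in $D$ and $\eta$ of radius $0$ at that centre would be a point of $D$ outside $B$).

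The genuine gap is in the inductive bookkeeping: you assert that $D:=\DD^{\eps_n}(a_n,r_n)^\D$ is contained in exactly one brick of the partition of $Y'$ produced by the induction ``because the removed discs are pairwise disjoint and all contained in the outer disc''. That reason does not prove the claim, and the claim is false for an arbitrary partition into $\BB$-bricks: for $s_1<s_2$ in $|k^\times|\cap(0,1)$, partition $\DD^-(b,1)^\D$ into the open disc $\DD^-(b,s_1)^\D$, the tube $\DD(b,s_1)^\D\setminus\DD^-(b,s_1)^\D$ and the annulus $\DD^-(b,1)^\D\setminus\DD(b,s_1)^\D$; the sub-disc $\DD(b,s_2)^\D$ meets all three pieces (it contains $\eta_{b,0}$, $\eta_{b,s_1}$ and $\eta_{b,s_2}$). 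So the single-brick property must be proved for the specific partitions your recursion builds. It does hold there, but for a reason you need to record and carry along as part of the induction hypothesis: your cases (1)--(3) never create new open-disc bricks, and every annulus or tube they create is cut out by discs $D^{\pm}(p,s)$ whose centre $p$ lies in one of the previously removed discs (or by the outer disc itself, which contains all of $D$); since $p\notin D_n$, for every $\eta_{x,\rho}\in D$ one has $|x-p|=|a_n-p|>\rho$, so whether $\eta_{x,\rho}$ belongs to $\DD^{\pm}(p,s)^\D$ depends only on $|a_n-p|$ and is constant on $D$, whence $D$ lies in a single brick. Alternatively, you can sidestep the issue by organizing the recursion on the configuration of removed discs directly: for a closed outer disc of radius $r$, split off the tube obtained by removing the maximal open sub-discs of radius $r$ that meet some $D_i$ and recurse inside each of them; for an open outer disc, first split off the annulus around the smallest closed disc containing all the $D_i$ and recurse inside that closed disc.
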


\begin{corollary}\label{cor:Bbrick}
Let~$\cA$ be a finite partition of~$k$ into $\cL$-definable subsets. Then, there exists a finite partition~$\cB$ of~$\BB$ into $\BB$-bricks such that, for each $B\in\cB$, there exists $A\in \cA$ with $B\cap k\subseteq A$. \qed
\end{corollary}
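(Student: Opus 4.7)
The plan is to combine $C$-minimality with the two Swiss-cheese lemmas just stated. First, I would apply Theorem~\ref{thm:cmin} ($C$-minimality of $(k,\cL_{3}^\an)$): since each $A\in\cA$ is definable in~$\cL$, its restriction to~$k$ is $\cL_{3}^\an$-definable, hence a finite disjoint union of Swiss cheeses in the sense of Section~\ref{sec:prelim}. Collecting all the Swiss-cheese pieces, I obtain a refinement~$\cA'$ of~$\cA$ that is a finite partition of~$k$ into Swiss cheeses with the property that every $A'\in\cA'$ is contained in some $A\in\cA$.

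Next I apply Lemma~\ref{lem:Swisscheese1} to~$\cA'$: the family $(\cA')^\BB:=\{(A')^\BB : A'\in\cA'\}$ is a finite partition of~$\BB$ into $\BB$-Swiss cheeses, and by construction $(A')^\BB \cap k = A'$ for each $A'\in\cA'$. Then I apply Lemma~\ref{lem:Swisscheese2} to each member of $(\cA')^\BB$, obtaining a finite partition of each $(A')^\BB$ into $\BB$-bricks. The union of all these partitions yields a finite partition~$\cB$ of~$\BB$ into $\BB$-bricks.

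Finally, I verify the compatibility condition. Let $B\in\cB$; by construction $B$ is contained in a single $(A')^\BB$ for some $A'\in\cA'$, so $B\cap k \subseteq (A')^\BB \cap k = A'$. Since the refinement~$\cA'$ satisfies $A'\subseteq A$ for some $A\in\cA$, we conclude $B\cap k \subseteq A$, as required. There is no real obstacle here beyond correctly invoking the two lemmas; the only subtlety is the initial step of ensuring that $\cL$-definability of the~$A$'s guarantees $\cL_{3}^\an$-definability of their intersections with~$k$, which is immediate since both $\cL_\BB$ and $\cL_\BB^\an$ expand $\cL_{3}^\an$ and the sort $\BB$ is interpretable in $\cL_{3}$.
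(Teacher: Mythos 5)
Your proof is correct and is exactly the argument the paper leaves implicit for this corollary: use $C$-minimality (Theorem~\ref{thm:cmin}) to refine~$\cA$ into a finite partition of~$k$ into Swiss cheeses, then apply Lemma~\ref{lem:Swisscheese1} followed by Lemma~\ref{lem:Swisscheese2}, and conclude via $(A')^\BB\cap k=A'$. The passage from $\cL$-definability to $\cL_3^\an$-definability of the sets $A\subseteq k$ is handled as you say, so there is nothing to add.
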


\subsection{Characterization of definable subsets of $\BB$}

We will need the following result about valuations of $\cL^\an$-terms: 

\begin{proposition}\label{prop:terms} Let $x$ be a $\mathbf{VF}$-variable, $t_1(x),\ldots, t_n(x)$ be $\cL^\an$-terms and $g_1,\ldots,g_n$ be rational numbers. Then there is a finite partition $\mathcal{B}$ of $\BB$ into $\BB$-bricks such that, for every $B\in \mathcal{B}$, there are $\rho_B\in |k|$, $h_B\in \QQ$ and $a_B\in k$ satisfying the following property: for all $x\in B\cap k$,
\[
\prod_{i=1}^n |t_i(x)|^{g_i}= \rho_{B}|x-a_B|^{h_B}. 
\] 
\end{proposition}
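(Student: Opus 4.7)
The plan is to first reduce $\prod_i|t_i(x)|^{g_i}$ to a product of absolute values of linear factors on a Swiss cheese partition of~$k$, and then to lift this to a partition of~$\BB$ into bricks where the product collapses to a single monomial of the form $\rho_B|x-a_B|^{h_B}$. For the first step, I will apply term preparation for analytic terms \emph{\`a la} Cluckers--Lipshitz \cite{CL} to each $\cL^\an$-term $t_i(x)$ in one variable: one obtains a finite partition of~$k$ into Swiss cheeses such that on each cheese $A$, we have $t_i(x) = \lambda_i\, u_i(x)\prod_j (x-\alpha_{ij})^{n_{ij}}$ with $\lambda_i\in k$, $u_i$ a unit whose absolute value is constant on $A$, $\alpha_{ij}\in k$, and $n_{ij}\in\ZZ$. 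Taking a common refinement over $i=1,\dotsc,n$ and using that $|k^\times|$ is divisible (so $|\cdot|^{g_i}$ is well-defined for $g_i\in\QQ$), one obtains a Swiss cheese partition of~$k$ on each piece of which
\[
\prod_{i=1}^n |t_i(x)|^{g_i} \;=\; c\,\prod_\ell |x-\alpha_\ell|^{e_\ell},\qquad c\in|k|,\ \alpha_\ell\in k,\ e_\ell\in\QQ.
\]

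For the second step, I enlarge the Swiss cheese partition of~$k$ by separating out the finite set of centers $\{\alpha_\ell\}$ together with suitably small discs around them, and then invoke Corollary~\ref{cor:Bbrick} to lift it to a partition $\mathcal{B}$ of~$\BB$ into $\BB$-bricks. On each brick $B\in\mathcal{B}$, Lemma~\ref{lem:distance} together with the brick classification (Definition~\ref{def:Bbrick}) controls each factor $|x-\alpha_\ell|$ on $B\cap k$: when $\alpha_\ell\notin B$ and $B$ is a $k$-rational point, an open disc, or a tube, then $|x-\alpha_\ell|$ is constant on $B\cap k$; when $B=\DD^-(b,s_2)^{\Def}\setminus\DD(b,s_1)^{\Def}$ is an open annulus, the $\alpha_\ell$ lying in the outer region $\{|\alpha_\ell-b|\geqslant s_2\}$ contribute constants, whereas those in the core $\{|\alpha_\ell-b|\leqslant s_1\}$ all contribute the same function $|x-b|$. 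By a further finite refinement I arrange that each disc or tube brick contains at most one $\alpha_\ell$ in its interior and that no $\alpha_\ell$ lies strictly inside the open part of an annulus brick (achieved by carving out small open subdiscs around such $\alpha_\ell$ and reapplying Lemma~\ref{lem:Swisscheese2} to repartition the complement into bricks).

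After these refinements, the product $c\prod_\ell|x-\alpha_\ell|^{e_\ell}$ collapses on each brick $B$ to the required form $\rho_B|x-a_B|^{h_B}$ (with $\rho_B=0$ and arbitrary $a_B,h_B$ in the degenerate case where the product vanishes identically on $B$). The main obstacle is establishing the analytic term preparation step in the exact product form claimed, which requires carefully unpacking the Weierstrass machinery of the Cluckers--Lipshitz analytic structure; once this is in hand, the subsequent brick refinement is combinatorial and follows from the brick classification together with Lemma~\ref{lem:distance}.
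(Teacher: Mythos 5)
Your proposal is correct and follows essentially the same route as the paper: the preparation step is exactly the cited Cluckers--Lipshitz result ([CL, Theorem 5.5.3], which gives $t_i = P_{iA}\,u_{iA}$ with $|u_{iA}|$ constant; factoring $P_{iA}$ over the algebraically closed $k$ yields your linear-factor form), followed by lifting the refined partition of $k$ to $\BB$-bricks via Corollary~\ref{cor:Bbrick} and collapsing the product brick-by-brick with Lemma~\ref{lem:distance}. The only (harmless) difference is that the paper simply isolates the finite set of zeros and poles as singleton pieces of the partition of $k$ before lifting, so that no non-trivial brick contains a center, whereas you allow one center inside disc or tube bricks and carve around the others; both variants yield the monomial form $\rho_B|x-a_B|^{h_B}$.
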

\begin{proof} By \cite[Theorem 5.5.3]{CL}, there is a definable partition $\mathcal{A}$ of $k$ such that for each $A\in\mathcal{A}$, 
\[
(\forall x\in A) (t_i(x)=P_{i,A}(x)\,u_{iA}(x)),
\]
where $P_{iA}\in k(X)$ is a rational function without poles in $A$ and $u_{iA}(x)$ is an $\cL^\an$-term such that $|u_{iA}(x)|=r_A$ for all $x\in A$ with $r_A\in |k^\times|$. Without loss of generality we may suppose $r_A=1$. Let~$Z$ be the finite set containing all zeros and poles of the non-zero $P_{iA}$'s. 

Let~$\cA'$ be a partition of~$k$ into definable sets that refines both~$\cA$ and the partition $\{\{b\} :  b\in Z\}\cup \{k\setminus Z\}$. By Corollary~\ref{cor:Bbrick}, there exists a finite partition~$\cB$ of~$\BB$ into $\BB$-bricks such that, for each $B\in\cB$, there exists $A\in \cA'$ with $B\cap k\subseteq A$. We claim that $\mathcal{B}$ has all required properties. 

Let $B\in \mathcal{B}$. By construction, there exists a unique $A\in \cA$ such that $B\cap k \subseteq A$. If~$B$ is a $k$-rational point, the result of the statement is obvious, so we assume otherwise. In this case, $B \cap Z = \emptyset$. Let $i \in \{1,\dotsc,n\}$. For each $x\in B \cap k$, we have
\[
|t_i(x)|=|P_{iA}(x)||u_{iA}(x)|=|P_{iA}(x)|=\rho_{iA}\prod_{b\in Z}|x-b|^{n_b}, 
\] 
where $\rho_{iA}\in |k|$ and $n_b\in \ZZ$. If~$B$ is an open disc or a tube, then, by part $(i)$ of Lemma~\ref{lem:distance}, $|t_{i}|$ is constant on~$B(k)$, and the result holds. If~$B$ is an open annulus, then, by part $(ii)$ of Lemma~\ref{lem:distance}, there are subsets $U_B, V_B\subseteq \BB$ and $b_B\in U_B\cap k$ such that for all $x\in B\cap k$
\begin{align*}
|t_i(x)| 	& = \rho_{iA}\prod_{b\in Z \cap V_{B}}|x-b|^{n_b} \prod_{b\in Z\cap U_{B}} |x-b|^{n_b}\\
							 	& = \rho_{iA}\prod_{b\in Z \cap V_{B}}|b-b_{B}|^{n_b} \prod_{b\in Z\cap U_{B}} |x-b_B|^{n_{b}}.
\end{align*} 
\end{proof}

\begin{theorem}\label{thm:defsets} Every $\cL_\BB^\an$-definable set $X\subseteq \BB$ is a finite disjoint union of basic $\BB$-radial sets. 
\end{theorem}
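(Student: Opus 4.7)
The plan is to use quantifier elimination to reduce to atomic formulas, then apply Proposition \ref{prop:terms} to simplify absolute values of $\cL^\an$-terms on $\BB$-bricks, and finally observe that the resulting conditions cut out basic $\BB$-radial sets. First, realize $X$ as the image under $\mathbf{b}$ of an $\cL_3^\an$-definable $\sim$-invariant set $Y \subseteq k \times |k|$. By quantifier elimination (Theorem \ref{thm:QE}), $Y$ is defined by a quantifier-free boolean combination of atomic formulas. After clearing denominators and separating by sort, such atomic formulas are of three kinds: conditions on $x$ alone, conditions on $r$ alone (comparisons $r \leqslant c$ or $r=c$ for $c \in |k|$), and mixed value-group comparisons of the shape $r^a \cdot \prod_i |t_i(x)|^{e_i} \leqslant \rho$ (or strict, or the reverse, or equality) with $a,e_i \in \mathbb{Z}$, $\rho \in |k|$, and $\cL^\an$-terms $t_i$. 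Since $\BB$-radial sets are closed under finite boolean combinations (Lemma \ref{lem:booleancombination}), it is enough to produce, for each atomic type, a partition of $\BB$ into $\BB$-bricks on which that atomic formula becomes a finite disjoint union of basic $\BB$-radial sets, and then take a common refinement.

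For conditions on $x$ alone, $C$-minimality (Theorem \ref{thm:cmin}) gives a finite Swiss cheese partition of $k$ on which the condition is constant, which Lemma \ref{lem:Swisscheese1} and Corollary \ref{cor:Bbrick} promote to a $\BB$-brick partition of $\BB$. For conditions on $r$ alone, partitioning $|k|$ into intervals and points does the job. For mixed conditions, I apply Proposition \ref{prop:terms} to obtain a $\BB$-brick partition such that on each brick $B$ there is a distinguished center $a_B \in k$ (the center of $B$ when $B$ is an annulus; arbitrary otherwise, since by Lemma \ref{lem:distance} each $|t_i(x)|$ is then constant on $B \cap k$) for which $|t_i(x)| = \rho_{i,B}\,|x - a_B|^{h_{i,B}}$ on $B \cap k$. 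The atomic inequality then reduces on~$B$ to a comparison of the form
\[
r^a \leqslant \rho_B' \cdot |x - a_B|^{h_B'}
\]
(or strict, reversed, or an equality) with $a \in \mathbb{Z}$, $\rho_B' \in |k|$ and $h_B' \in \mathbb{Q}$, which is precisely the shape defining the basic $\BB$-radial sets.

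Taking a common refinement of these partitions, each brick $B$ thus decomposes into a finite disjoint union of basic $\BB$-radial sets. When $B$ is a $k$-point, an open disc, or a tube, Lemma \ref{lem:distance} collapses the mixed inequalities into conditions on $r$ alone, producing sets of types $(R_0)$, $(R_4)$, $(R_5)$, $(R_6)$ or $(R_7)$. When $B$ is an annulus, outside the skeleton $\Gamma_B$ the inequality $r \leqslant \rho|x-a_B|^h$ is $\sim$-invariant and cuts out an annulus cylinder of type $(R_2)$ or $(R_3)$; its intersection with $\Gamma_B$ is a branch segment of type $(R_1)$ possibly together with an isolated point of type $(R_0)$. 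The main obstacle is bookkeeping: ensuring that Proposition \ref{prop:terms} can be applied coherently with a single center $a_B$ per brick for all the $\cL^\an$-terms appearing in $\varphi$, and that the a priori non-$\sim$-invariant expression $|x - a_B|$ translates to the correct subset of $\BB$. Both points reduce to the ultrametric observation that $|x - a_B|$ depends only on $\eta_{x,r}$ when $|x - a_B| > r$, \emph{i.e.}\ outside the branch $\{\eta_{a_B,r}\}$, which is exactly the split between $B \setminus \Gamma_B$ and $\Gamma_B$ exploited above.
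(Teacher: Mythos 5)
Your proposal is correct and uses the same ingredients and the same overall decomposition as the paper: quantifier elimination (Theorem~\ref{thm:QE}), normalization of atomic formulas, Proposition~\ref{prop:terms} to put the terms in monomial form on a partition into $\BB$-bricks (refined via $C$-minimality and Corollary~\ref{cor:Bbrick} for conditions on $x$ alone), the split of each brick into $B\setminus\Gamma_{B}$ and $\Gamma_{B}$ via Remark~\ref{rem:skeleton}, and Lemma~\ref{lem:booleancombination} to close up. The one place where you genuinely diverge is the treatment of the boolean combination: the paper explicitly keeps each conjunction of atomics together, precisely because the quotient map $\bb$ commutes with unions but not with intersections or complements, and then reduces the conjunction by hand (eliminating exponent-$0$ conditions, allowing at most one equality, and taking the max of the lower bounds and the min of the upper bounds) so that off the skeleton the set is exhibited as a single sandwich condition. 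You instead process the atomics one at a time and intersect in~$\BB$; this would be a gap as stated in your opening reduction, but your closing observation rescues it: on $B\setminus\Gamma_{B}$ one has $|x-a_{B}|>r$ (and $|t_{i}(x)|$ constant on discs and tubes by Lemma~\ref{lem:distance}), so each atomic condition is independent of the chosen representative of $\eta_{x,r}$ there, and hence $\bb$ does commute with the boolean combination on that locus. The only remaining imprecision is on the skeleton: since the atomics are \emph{not} representative-independent on $\Gamma_{B}$, you should not describe $X\cap\Gamma_{B}$ as the trace of the individual inequalities; instead argue, as the paper does, that $\Gamma_{B}$ is definably isomorphic to an interval of $|k|$, so by o-minimality of the value group (Lemma~\ref{lem:min}) the definable set $X\cap\Gamma_{B}$ is a finite union of points and branch segments of types $(R_0)$ and $(R_1)$ (a finite union in general, not necessarily one segment plus one point). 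With that adjustment your argument is a slightly leaner reorganization of the paper's proof rather than a different one.
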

\begin{proof} By quantifier elimination, we may assume that $X$ is of the form
\[
X:=\{\eta_{x,r}\in \BB  :  \bigvee_{i\in I}\bigwedge_{j\in J} \varphi_{ij}(x,r)\}
\]
with $I, J$ finite sets and each $\varphi_{ij}(x,r)$ an atomic or negated atomic $\cL_3^\an$-formula. By commuting over unions (i.e. $\bb$ commutes over unions) we have that 
\[
X=\bigcup_{i\in I} \{\eta_{x,r}\in \BB  :  \bigwedge_{j\in J} \varphi_{ij}(x,r)\}
\]
and therefore (by Lemma~\ref{lem:booleancombination}) it suffices to show the result for $\varphi(x,r)$ a formula of the form $\bigwedge_{j\in J} \varphi_{j}(x,r)$ with $\varphi_{j}(x,r)$ an atomic or negated atomic $\cL_3^\an$-formula (note that $\bb$ does not necessarily commute over intersections and complements). 

By standard manipulations, we may suppose that every atomic formula $\varphi_j(x,r)$ is of the form 
\[
\prod_{i\in I_j} |t_{ji}(x)|^{g_{ji}}r^{h_j} \bowtie_j s_j,
\]
where $I_j$ is a finite set, $t_{ji}$ is an $\cL^\an$-term, $g_{ji}, h_j\in \QQ$, $s_j\in |k|$ and $\bowtie_j$ is an element of $\{=,<,>\}$. 

By a repeated use of Proposition \ref{prop:terms} and taking common refinements of finite partitions, there is a finite partition $\mathcal{B}$ of $\BB$ into $\BB$-bricks such that, for each $B\in \mathcal{B}$, we have, using Remark~\ref{rem:skeleton},
\[
\eta_{x,r}\in B \setminus \Gamma_{B}\Rightarrow \bigwedge_{j\in J}\varphi_j(x,r) \leftrightarrow \bigwedge_{j\in J} \rho_{jB}|x-a_B|^{g_{jB}}\bowtie_{jB} r^{h_{jB}}
\]
where $a_B\in k$, $\rho_{jB}\in |k|$, $g_{jB}\in \QQ$ and $h_{jB}\in\{0,1\}$. It suffices to show, by Lemma~\ref{lem:booleancombination}, that both $(B\setminus \Gamma_B)\cap X$ and $\Gamma_B\cap X$ are $\BB$-radial for each $B\in \mathcal{B}$. We proceed by cases.  

If there is $j'\in J$ such that $h_{j'B}=0$, then the condition $\rho_{j'B}|x-a_B|^{g_{j'B}}\bowtie_{j'B} 1$ only imposes a restriction on the $x$ variable. Hence, by applying Corollary~\ref{cor:Bbrick} and possibly refining~$\mathcal{B}$, we may suppose that, for each $B\in \mathcal{B}$, we have 
 
\[
\eta_{x,r}\in B\setminus \Gamma_{B} \Rightarrow \bigwedge_{j\in J}\varphi_j(x,r) \leftrightarrow \bigwedge_{j\in J\setminus \{j'\}} \rho_{jB}|x-a_B|^{g_{jB}}\bowtie_{jB} r^{h_{jB}}. 
\]
Thus, by induction on the size of $J$, we may assume that $h_{jB}=1$ for all $j\in J$. 

Similarly, if there are $j,j'\in J$ such that both $\bowtie_{jB}$ and $\bowtie_{j'B}$ are ``$=$'', this imposes a restriction on the variable $x$ by the condition 
\[
\rho_{jB}|x-a_B|^{g_{jB}}=\rho_{j'B}|x-a_B|^{g_{j'B}}.
\] 
Thus, by further refining $\mathcal{B}$ we may assume $\bowtie_j$ is ``$=$'' for at most one element $j\in J$. 

Suppose first that there is $j\in J$ for which $\bowtie_{jB}$ is ``$=$''. By possibly further refining $\mathcal{B}$, either $(B\setminus \Gamma_{B})\cap X$ is empty or 
\[
(B\setminus \Gamma_{B})\cap X= (B\setminus \Gamma_{B})\cap \{\eta_{x,r} :  \rho_{jB}|x-a_B|^{g_{jB}}=r\},
\] 
which is an intersection of $\BB$-radial sets (see part $(i)$ of Remark \ref{rem:skeleton}). 

We may thus suppose that $\bowtie_{jB}\in\{<,>\}$ for all $j\in J$. For $\square\in\{<,>\}$, let 
\[
J_\square=\{j\in J :  \ \bowtie_{jB} \text{ is ``$\square$'' }\},
\] 
so that $J=J_<\sqcup J_>$. By possibly refining $\mathcal{B}$, we may suppose that for all distinct $j,j'\in J_\square$
\[
\eta_{x,r}\in B\setminus \Gamma_{B}\Rightarrow \rho_{jB}|x-a_B|^{g_{jB}} \ \square' \ \rho_{j'B}|x-a_B|^{g_{j'B}}, 
\]
for $\square'\in\{<,>\}$. Hence, there are $j_1,j_2\in J$ such that 
\begin{align*}
\eta_{x,r}\in B\setminus \Gamma_{B}\Rightarrow \rho_{j_1B}|x-a_B|^{g_{j_1B}} =\max_{j\in J_<}(\rho_{jB}|x-a_B|^{g_{jB}}) & \text{ and } \\
\eta_{x,r}\in B\setminus \Gamma_{B}\Rightarrow \rho_{j_2B}|x-a_B|^{g_{j_2B}} =\min_{j\in J_>}(\rho_{jB}|x-a_B|^{g_{jB}}),
\end{align*}
which shows that 
\[
(B\setminus \Gamma_{B})\cap X= (B\setminus \Gamma_{B})\cap \{\eta_{x,r}\in \BB  :  \rho_{j_1B}|x-a_B|^{g_{j_1B}}<r<\rho_{j_2B}|x-a_B|^{g_{j_2B}}\},
\] 
which is an intersection of $\BB$-radial sets. 

The previous argument shows the result for all $B\setminus \Gamma_{B}$ with $B\in\mathcal{B}$, so it remains to show the result for $\Gamma_{B}\cap X$ with $B\in\mathcal{B}$. If $B$ is a $k$-rational point, an open disc or a tube, there is nothing to show. Suppose that $B$ is an open annulus. Then~$\Gamma_{B}$ is definably isomorphic to a definable interval of~$|k|$. Since the induced structure on $|k|$ is o-minimal, any definable subset of $\Gamma_{B}$ is a finite disjoint union of points and branch segments as in \eqref{eq:branch}. This shows that $\Gamma_{B}\cap X$ is a $\BB$-radial. 
\end{proof}

\section{Definable subsets of curves}\label{sec:def_of_curves}

Let $X$ be a $k$-analytic curve over $k$ and $\cS$ be a facade of $X$. Our goal in this section is to describe the definable subsets of $X^\cS$. We will provide such a description by reducing to a description of the definable subsets of $\BB$. We will first need some preliminary lemmas.

\subsection{Consequences of $\BB$-radiality}

We gather in this section some consequences of Theorem \ref{thm:defsets} and some auxiliary lemmas that will be later used. In model-theoretic terms, we will show orthogonality between residue curves and the sort $\BB$. This will essentially follow from the orthogonality between the value group and the residue field which we somehow recall for the reader's convenience.  

\begin{lemma}\label{lem:ort} Let~$C$ be an algebraic curve over~$\tilde k$. Let $X\subseteq |k|\times C(\tilde{k})$ be an $\cL_\mathbb{B}$-definable set such that, for each $s\in |k|$, the fiber $X_{s} := \{x \in C(\tilde k)  :  (s,x) \in X\}$ is finite. Then $\pi_2(X)$ is finite, where $\pi_2\colon X\to C(\tilde{k})$ denotes the projection to the second coordinate.  
\end{lemma}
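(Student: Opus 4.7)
The plan is to exploit the orthogonality between the value group sort $|k|$ and the residue field sort $\tilde k$ in ACVF. By quantifier elimination (Theorem~\ref{thm:QE}), every $\cL_\mathbb{B}$-definable subset of $|k| \times C(\tilde k)$ is already $\cL_3$-definable. A standard feature of ACVF---which can be read off from the form of atomic $\cL_3$-formulas together with the stable embeddedness of both sorts noted after Lemma~\ref{lem:min}---is that any such definable set decomposes as a finite disjoint union of rectangles
\[X = \bigsqcup_{j \in J} A_j \times B_j,\]
with $A_j \subseteq |k|$ definable in the language of ordered abelian groups and $B_j \subseteq C(\tilde k)$ $\cLr$-definable (the latter via Convention~\ref{convention.alg}). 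I would discard any rectangle with $A_j = \emptyset$.

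Next, I would use the finite-fiber hypothesis: for any remaining~$j$, pick $s_j \in A_j$; then $X_{s_j} \supseteq B_j$, and finiteness of $X_{s_j}$ forces $B_j$ to be finite. Therefore
\[\pi_2(X) = \bigcup_{j \in J} B_j\]
is a finite union of finite sets, hence finite. Note that strong minimality of $C(\tilde k)$ (Lemma~\ref{lem:min}) is not strictly needed, though it gives an alternative finish: each $B_j$ is finite or cofinite, and cofinite $B_j$ would make some fiber infinite.

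The main obstacle is justifying the rectangle decomposition, which the paper does not record explicitly. It is a classical fact about ACVF, and one could either cite a textbook reference or carry out directly the short case analysis showing that no atomic $\cL_3$-formula genuinely couples a $\mathbf{VG}$-variable with an $\mathbf{RF}$-variable. As a fallback, one can argue by saturation: in a sufficiently saturated elementary extension~$\mathbf{k}'$, if $\pi_2(X)(\mathbf{k}')$ were infinite, the family of definable fibers $\{X^c : c \in \pi_2(X)(\mathbf{k}')\} \subseteq |k'|$ (with $X^c := \{s : (s,c) \in X\}$) would be infinite and, by stable embeddedness of~$|k|$, reduce to a parameter-definable family in the value group alone---contradicting the assumption by compactness applied to fibers~$X_s$.
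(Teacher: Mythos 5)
Your argument is correct, but it is genuinely different from the one in the paper. You prove (via quantifier elimination in $\cL_3$, to which $\cL_\BB$-definability reduces) that a definable subset of $|k|\times C(\tilde k)$ splits as a finite union of rectangles $A_j\times B_j$ — the orthogonality of the value group and residue field — and then the finite-fiber hypothesis, applied at one point $s_j\in A_j$, forces each $B_j$ to be finite. That rectangle decomposition is indeed classical and your justification is sound: since the free variables lie in the $\mathbf{VG}$ and $\mathbf{RF}$ sorts, all $\mathbf{VF}$-terms occurring in a quantifier-free formula are constants, so every atomic formula constrains only the value-group variable or only the residue-field variables, and a disjunctive normal form yields the rectangles; picking $s_j\in A_j$ is legitimate because $A_j$ is a nonempty subset of the actual $|k|$. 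The paper instead avoids any decomposition: it passes to an elementary extension $\mathbf{k}'$ with $\card(|k'|)<\card(\tilde{k'})$, notes (using strong minimality, which gives uniform finiteness of the fibers so that the hypothesis transfers) that $\pi_2(X)(\mathbf{k}')$ has cardinality at most $\card(|k'|)$, hence cannot be cofinite in $C(\tilde k')$, and concludes by strong minimality that it is finite. Your route buys an explicit structural description of $X$ and needs neither the cardinality trick nor the transfer of the finite-fiber hypothesis to an extension, at the cost of invoking (and sketching) the orthogonality fact, which the paper never records; the paper's route is shorter given Lemma~\ref{lem:min}. Your closing ``fallback'' saturation argument is vaguer and not needed — the main argument stands on its own.
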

\begin{proof} 
Set $A := \pi_{2}(X)$.
There exists an elementary extension~$\mathbf{k}'$ of~$\mathbf{k}$ such that $\card(|k'|)<\card(\tilde{k'})$. By Lemma \ref{lem:min}, for each $s\in |k'|$, the fiber~$X_{s}$ is finite, hence 
\[\card(A(\mathbf{k}')) \le \card(|k'|) < \card(\tilde{k'}) = \card(C(\tilde k')).\]
It follows that $A(\mathbf{k}')$ is a definable subset of~$C(\tilde k')$ that is not cofinite, hence it is finite.
\end{proof}

\begin{lemma}\label{lem:EQinfty}(Elimination of $\exists^{\infty}$ for $\BB$) 
Let $S$ be a definable set and let $X\subseteq S\times \mathbb{B}$ be a definable set such that, for each $s\in S$, the fiber $X_s:=\{x\in \mathbb{B} :  (s,x)\in X\}$ is finite. Then there is $N$ such that for all $s\in S$, the fiber $X_s$ has less than $N$ elements.  
\end{lemma}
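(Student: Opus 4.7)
The plan is to combine Theorem~\ref{thm:defsets} with a compactness argument that transfers a hypothetical failure back to the base model. My crucial preliminary observation will be that every non-empty basic $\BB$-radial set of type~$R_1, \ldots, R_7$ (in the sense of Definition~\ref{def:radialset}) is infinite in any model of the theory. For types $R_1, R_2, R_3, R_5, R_7$ this will follow from the density of $|k^\times|$ in $\mathbb{R}_{>0}$, which holds since $k$ is algebraically closed and non-trivially valued; for types $R_4, R_6$, which involve a fixed radius but varying center, it will follow from the fact that a non-trivial closed disc contains infinitely many disjoint proper subdiscs of any given smaller radius (this uses both that $\tilde k$ is infinite and the non-triviality of the valuation). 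Consequently, the only finite non-empty basic $\BB$-radial sets are the singletons of type~$R_0$.

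For each $i \in \{1, \ldots, 7\}$, I will then write down, directly from Definition~\ref{def:radialset}, an $\cL_\BB^\an$-formula $\chi_i(s)$ expressing that the fiber~$X_s$ contains a non-empty basic $\BB$-radial set of type~$R_i$. For example, for $i=1$ one takes
\[
\chi_1(s) \ := \ \exists a \in k \ \exists s_1, s_2 \in |k|^\infty \ \bigl( s_1 < s_2 \ \wedge\ \forall r\, (s_1 < r < s_2 \to (s, \eta_{a, r}) \in X) \bigr),
\]
and analogous first-order formulas can be written from each of the remaining conditions $(R_2)$--$(R_7)$. To conclude, suppose for contradiction that no uniform bound $N$ exists. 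Then for each $N \geq 1$ the $\cL_\BB^\an$-formula expressing $|X_s| \geq N$ is satisfiable in $\mathbf{k}$, so by compactness there exist an elementary extension $\mathbf{k}^*$ of $\mathbf{k}$ and $s^* \in S(\mathbf{k}^*)$ such that $X_{s^*}$ is infinite in~$\mathbf{k}^*$. Applying Theorem~\ref{thm:defsets} inside $\mathbf{k}^*$, the set $X_{s^*}$ decomposes there as a finite disjoint union of basic $\BB$-radial sets, at least one of which must be of type $R_i$ for some $i \geq 1$; hence $\chi_i(s^*)$ holds in $\mathbf{k}^*$. By elementary equivalence, some $s \in S(\mathbf{k})$ satisfies $\chi_i(s)$, so $X_s$ contains a non-empty basic $\BB$-radial set of type $R_i$, which is infinite, contradicting the hypothesis of finite fibers.

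The main obstacle will be essentially notational: one must write out each of the seven formulas $\chi_i$ explicitly from the conditions $(R_1)$--$(R_7)$ and verify case by case that a non-empty basic $\BB$-radial set of each of those types is infinite in any model of the theory. Both steps are routine once one unpacks the explicit descriptions in Definition~\ref{def:radialset}.
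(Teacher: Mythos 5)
Your proposal is correct and follows essentially the same route as the paper: assume no bound, use compactness to pass to an elementary extension with an infinite fiber, apply Theorem~\ref{thm:defsets} there to extract a non-point basic $\BB$-radial set inside the fiber, transfer its existence back to $\mathbf{k}$ by elementarity, and conclude via the observation that every non-empty basic $\BB$-radial set other than a point is infinite. The only cosmetic difference is that the paper existentially quantifies the parameters of the single formula $\varphi(x,s,a)$ defining the extracted set (together with a clause asserting two distinct elements), rather than writing out type-by-type formulas $\chi_i$ as you do.
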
 
\begin{proof}
Suppose not. Then, by compactness, there is an elementary extension $\mathbf{k}\prec \mathbf{k}'$ and $s\in S(\mathbf{k}')$ such that $X_s$ is infinite. By Proposition \ref{thm:defsets}, $X_s$ must contain a non-empty basic $\BB$-radial set $D$ which is not a point. Let $D$ be defined by a formula $\varphi(x,s,a)$ with $a$ some parameters in $k'$. We have that 
\[
\mathbf{k}'\models (\exists s)(\exists a)\big((\exists x_1 \exists x_2\in \mathbb{B})(\varphi(x_1,s,a)\wedge \varphi(x_2,s,a)\wedge x_1\neq x_2) \wedge (\forall y\in \BB)(\varphi(y,s,a)\to y\in X_s)\big). 
\]
This implies that there are $s'\in S(\mathbf{k})$ and $a'$ in $k$ such that $X_{s'}$ contains a non-empty basic $\BB$-radial set $\{x\in \BB  :  \varphi(x,s',a')\}$ which is not a point. Since every non-empty basic $\BB$-radial set which is not a point is infinite, this contradicts the assumption.  
\end{proof}

\begin{lemma}\label{lem:finiteimage} Let $X\subseteq (\tilde{k})^n\times \mathbb{B}$ be an $\cL_\mathbb{B}$-definable set such that, for each $a\in (\tilde{k})^n$, the fiber $X_a:=\{x\in \mathbb{B}  :  (a,x)\in X\}$ is finite. Then $\pi_2(X)$ is finite, where $\pi_2\colon X\to \mathbb{B}$ denotes the projection to the second coordinate. 
\end{lemma}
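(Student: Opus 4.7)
The plan is to mimic the proof strategy of Lemma~\ref{lem:ort}, exchanging the roles of the value group and the residue field and replacing the strong minimality of $\tilde k$ by the classification of definable subsets of~$\BB$ provided by Theorem~\ref{thm:defsets}.

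First, I apply Lemma~\ref{lem:EQinfty} with $S = \tilde k^n$ to produce a uniform bound $N\in \NN$ on the cardinality of every fiber~$X_a$; since the statement ``$\forall a,\ \card(X_a)\leqslant N$'' is first-order, the same bound persists in any elementary extension $\mathbf{k}'\succ \mathbf{k}$, so that $\card(X(\mathbf{k}')) \leqslant N\cdot \card(\tilde{k'})^n$. Next, assuming for contradiction that $\pi_2(X)$ is infinite, Theorem~\ref{thm:defsets} expresses $\pi_2(X)$ as a finite disjoint union of basic $\BB$-radial sets, at least one of which, call it~$R$, must be infinite. An inspection of the seven types listed in Definition~\ref{def:radialset} then shows that $\card(R(\mathbf{k}')) \geqslant \card(|k'^\times|)$ in any elementary extension $\mathbf{k}'\succ \mathbf{k}$: each infinite type either directly contains a nontrivial branch segment parametrized by an open interval of the value group, or is a family of closed discs varying along a ray $\{\eta_{a+tu,s_{1}} : |t|\in J\cap |k'^\times|\}$ for some nontrivial open interval $J$ and a unit~$u$. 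In both cases the cardinality is bounded below by $\card(|k'^\times|)$, since the divisible group $|k'^\times|$ has no isolated nontrivial subintervals.

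Finally, I select $\mathbf{k}'\succ \mathbf{k}$ with $\tilde{k'}=\tilde k$ and $\card(|k'^\times|) > N \cdot \card(\tilde k)^n$. Such an extension exists by the orthogonality and stable embeddedness of the residue-field and value-group sorts in $\mathrm{ACVF}$ (with or without analytic structure), which allow one to enlarge the value group arbitrarily without altering the residue field. The chain of inequalities
\[
\card(|k'^\times|) \leqslant \card(R(\mathbf{k}')) \leqslant \card(\pi_2(X)(\mathbf{k}')) \leqslant \card(X(\mathbf{k}')) \leqslant N \cdot \card(\tilde k)^n < \card(|k'^\times|)
\]
furnishes the desired contradiction.

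The main point to justify carefully is the existence of the asymmetric elementary extension in the last step; this is the exact dual of the cardinality trick used in the proof of Lemma~\ref{lem:ort} and is a standard consequence of the orthogonality between the residue field and value group in $\mathrm{ACVF}$. The remaining ingredients are straightforward applications of tools already established earlier in this section.
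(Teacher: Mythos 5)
Your overall strategy is exactly the paper's: get a uniform bound on the fibers from Lemma~\ref{lem:EQinfty}, use Theorem~\ref{thm:defsets} to find an infinite basic $\BB$-radial piece inside $\pi_2(X)$, pass to an elementary extension with the same residue field but a large value group, and conclude by counting. However, one step as written does not hold: the inequality $\card(R(\mathbf{k}'))\geqslant \card(|k'^\times|)$, justified by the remark that ``the divisible group $|k'^\times|$ has no isolated nontrivial subintervals''. In a divisible ordered abelian group a nontrivial interval need not have the cardinality of the whole group: take $H\oplus\QQ$ ordered lexicographically with $H$ huge, divisible and most significant; the interval between two elements of the $\QQ$-part is countable no matter how large $H$ is. Value groups of elementary extensions of $\mathbf{k}$ can be of this shape, and since the infinite basic radial set $R$ is defined with value-group parameters in $|k|\cup\{0,+\infty\}$, its size in $\mathbf{k}'$ is controlled by the cardinality of intervals of $|k'|$ with endpoints in $|k|\cup\{0,+\infty\}$, not by $\card(|k'^\times|)$. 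So with your stated requirement on $\mathbf{k}'$ (only that the total cardinality of the value group exceed $N\cdot\card(\tilde k)^n$) the key lower bound can genuinely fail.

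The repair is exactly the choice made in the paper: require of $\mathbf{k}'$ that $\tilde{k'}=\tilde k$ and that \emph{every} nonempty open interval of $|k'|$ have cardinality strictly greater than $\card(\tilde{k'})$ (equivalently, greater than $N\cdot\card(\tilde k)^n$). Such an extension exists by the same orthogonality/compactness argument you invoke — one realizes many distinct cuts inside a fixed interval of the value group without adding residue-field elements — and is asserted at the same level of detail in the paper's proofs of Lemmas~\ref{lem:ort} and~\ref{lem:finiteimage}. With that strengthened choice, your observation that every infinite basic $\BB$-radial set contains at least as many points as some nonempty open interval with endpoints in $|k|\cup\{0,+\infty\}$ closes the chain of inequalities, and your argument then coincides with the paper's.
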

\begin{proof} Suppose for a contradiction that $Z:=\pi_2(X)$ is infinite. By Proposition \ref{thm:defsets}, $Z$ must contain a non-empty basic $\BB$-radial set~$D$ which is not a point. The cardinality of $D$ is greater than or equal to the cardinality of some non-empty open interval with endpoints in $|k|$. There exists an elementary extension~$\mathbf{k}'$ of~$\mathbf{k}$ such that $\tilde{k}=\tilde{k'}$ and every open interval in $|k'|$ is of cardinality strictly bigger than $\card(\tilde{k'})$. By Lemma \ref{lem:EQinfty}, there is $N$ such that for all $a\in (\tilde{k})^n$, the fiber $X_a$ has less than $N$ elements, and the same property holds over~$\tilde k'$. We therefore have 
\[\card(Z(\mathbf{k}'))\leqslant \card(\tilde{k'})=\card(\tilde{k}) < \card(D(\mathbf{k}'))\leqslant \card(Z(\mathbf{k}')),  
\]
a contradiction. 
\end{proof}

We finish this subsection with a slightly technical result. We will use the following notation. For $s\in |k|$ such that $s<1$ and $c\in \tilde{k}$, we set $D[s]:=\{\eta_{x,r}\in \DD_k^{(1,2)} :  r=s\}$ and $D_c[s] := D[s]\cap\red^{-1}(c)$. Let $\cL$ denote either $\cL_\BB$ or $\cL_\BB^\an$. 

\begin{lemma}\label{lem:opendisc} Let $C$ be an algebraic curve over~$\tilde{k}$ and $h\colon C\to \AA^1_{\tilde{k}}$ be a finite morphism. Let $s\in |k|$ be such that $s<1$. Let $H\subseteq C(\tilde k)\times D[s]$ be an $\cL$-definable set such that the projection of $H$ onto the first coordinate is a cofinite subset $U\subseteq C(\tilde k)$ and for all $\alpha\in U$, $H_\alpha\subseteq D_{h(\alpha)}[s]$. Then there is a finite set $F\subseteq U$ such that either $H_\alpha = \emptyset$ for all $\alpha\in U\setminus F$, or $H_\alpha=D_{h(\alpha)}[s]$ for all $\alpha\in U\setminus F$. 
\end{lemma}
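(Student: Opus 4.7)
The plan is to combine the strong minimality of $C(\tilde k)$ (Lemma~\ref{lem:min}(ii)) with the orthogonality lemmas~\ref{lem:ort}, \ref{lem:EQinfty}, \ref{lem:finiteimage} and the classification of $\cL$-definable subsets of $\BB$ from Theorem~\ref{thm:defsets}. I will first consider the $\cL$-definable set $U_{\mathrm{full}} := \{\alpha \in U : H_\alpha = D_{h(\alpha)}[s]\}$, which sits inside the strongly minimal set $C(\tilde k)$ and is therefore finite or cofinite. In the cofinite case we take $F := U \setminus U_{\mathrm{full}}$ and the second alternative of the lemma holds. Otherwise, $V := U \setminus U_{\mathrm{full}}$ is cofinite in $C(\tilde k)$ and, since $\alpha \in U = \pi_1(H)$, for every $\alpha \in V$ the set $H_\alpha$ is a proper non-empty subset of $D_{h(\alpha)}[s]$; the goal is then to derive a contradiction.

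Next, I would consider $V_{\mathrm{fin}} := \{\alpha \in V : H_\alpha \text{ is finite}\}$, which is $\cL$-definable thanks to Lemma~\ref{lem:EQinfty} and is hence again finite or cofinite. If $V_{\mathrm{fin}}$ is cofinite, Lemma~\ref{lem:EQinfty} provides a uniform bound on $|H_\alpha|$, so the restriction of $H$ to $V_{\mathrm{fin}} \times \BB$ has finite fibers over $C(\tilde k) \subseteq \tilde k^n$; Lemma~\ref{lem:finiteimage} then implies that its projection $\{y_1, \dots, y_m\} \subseteq \BB$ is finite. Non-emptiness of $H_\alpha$ forces some $y_i \in D_{h(\alpha)}[s]$, whence $h(\alpha) \in \{\red(y_1), \dots, \red(y_m)\}$; since $h$ is finite, $V_{\mathrm{fin}}$ is then finite, a contradiction. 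The same argument applied to the complementary family $H_\alpha^c := D_{h(\alpha)}[s] \setminus H_\alpha$ handles the case where $\{\alpha \in V : H_\alpha^c \text{ is finite}\}$ is cofinite.

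The remaining (and hardest) case is when both $H_\alpha$ and $H_\alpha^c$ are infinite on a cofinite subset $V' \subseteq V$. Here my plan is to show that the family $(H_\alpha)_{\alpha \in V'}$ must be generically constant, which will once more contradict the finiteness of $h$. By quantifier elimination (Theorem~\ref{thm:QE}) and the computations in the proof of Theorem~\ref{thm:defsets} (in particular Proposition~\ref{prop:terms}), the formula defining $H$ can be rewritten as a uniform boolean combination of atomic conditions of the form $\rho(\alpha)\,|x - a(\alpha)|^{g}\,r^{h} \bowtie \sigma(\alpha)$, where $a : V' \to k$ and $\rho, \sigma : V' \to |k|$ are $\cL$-definable and $g, h \in \QQ$ are fixed. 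The parameter $a$ will have finite image by Lemma~\ref{lem:finiteimage} applied to its graph via the definable embedding $k \hookrightarrow \BB$, $a \mapsto \eta_{a, 0}$, while $\rho$ and $\sigma$ will have finite image by an application of Lemma~\ref{lem:ort}: indeed, if every fiber $\rho^{-1}(t)$ were finite, Lemma~\ref{lem:ort} applied to the swapped graph $\{(\rho(\alpha), \alpha) : \alpha \in V'\} \subseteq |k| \times C(\tilde k)$ would force $C(\tilde k)$ to be finite, which is absurd. Strong minimality then makes each parameter constant on a cofinite subset, and a common refinement produces a cofinite $V''' \subseteq V'$ on which $H_\alpha$ equals a fixed $H_0 \subseteq \BB$; any $y \in H_0$ then satisfies $h(\alpha) = \red(y)$ on $V'''$, the desired contradiction.

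The hard part will be the uniform decomposition step in the third paragraph: because the expression of a definable subset of $\BB$ as a union of basic $\BB$-radial sets is not canonical, one must extract the $\cL$-definable parameter functions directly from the atomic formulas defining $H$ (as in the proof of Proposition~\ref{prop:terms}) rather than from any a posteriori radial decomposition.
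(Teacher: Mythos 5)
Your first two reductions are fine in spirit (modulo a small overreach: Lemma~\ref{lem:EQinfty} as stated presupposes that \emph{all} fibers are finite, so it does not literally give definability of $\{\alpha : H_\alpha \text{ finite}\}$; for that you would need elimination of $\exists^\infty$ for the sort $\BB$, which requires an extra uniformity/compactness argument). The decisive step, however --- your third paragraph --- has a genuine gap. After quantifier elimination in $\cL_3$ or $\cL_3^\an$ there is no normal form of the kind you describe: $\alpha$ is a residue-field variable, and the language has no function symbols from $\mathbf{RF}$ into $\mathbf{VF}$ or $\mathbf{VG}$, so no atomic formula can take the shape $\rho(\alpha)\,|x-a(\alpha)|^{g}\,r^{h}\bowtie\sigma(\alpha)$ with $a(\alpha)\in k$ and $\rho(\alpha),\sigma(\alpha)\in|k|$. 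In the QE normal form every valuative inequality has parameters from $k$ and $|k|$ that are \emph{independent} of $\alpha$ (this is also all that Proposition~\ref{prop:terms} provides), and the dependence on $\alpha$ enters exclusively through residue-field atomic formulas of the form $\tilde P\bigl(\alpha,\res(t_1(x),t_2(x)),\dots\bigr)=0$ mixing $\alpha$ with residues of terms in the center variable. Understanding how such mixed residue conditions cut the family $D_{h(\alpha)}[s]$ as $\alpha$ varies is precisely the content of the lemma, so extracting ``definable parameter functions of $\alpha$'' from the atomic formulas assumes away the hard case; the orthogonality lemmas you then invoke (\ref{lem:ort}, \ref{lem:finiteimage}) are applied to functions whose existence has not been established.

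For comparison, the paper's proof needs no uniform normal form. Assuming the conclusion fails on a cofinite set $U'$, it passes to the unions $W_\alpha=\bigcup_{\eta_{x,s}\in H_\alpha}D(x,s)\subseteq\res^{-1}(h(\alpha))$, applies $C$-minimality (Theorem~\ref{thm:cmin}) fiberwise to see that either $W_\alpha$ or its complement in $\res^{-1}(h(\alpha))$ is contained in a proper closed subdisc, uses strong minimality to fix the same alternative on a cofinite set, and then takes the \emph{smallest} closed disc containing $W_\alpha$ (or its complement). This produces an $\cL$-definable map $U''\to\BB$ whose value at $\alpha$ lies in $\red^{-1}(h(\alpha))$, and whose image is finite by Lemma~\ref{lem:finiteimage}, contradicting the finiteness of $h$. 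Your paragraph 2 is essentially a special instance of this mechanism; to complete your argument you should replace the normal-form step by such a canonical, fiberwise-defined invariant of $H_\alpha$ (e.g.\ this smallest disc) to which Lemma~\ref{lem:finiteimage} can be applied.
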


\begin{proof} Suppose for a contradiction such a finite set $F$ does not exist. Then there is a cofinite subset $U'\subseteq U$ such that for all $\alpha\in U'$, the fiber $H_\alpha$ is a non-empty proper $\cL$-definable subset of $D_{h(\alpha)}[s]$. Consider for each $\alpha\in U'$ the definable subset of $k$ given by 
\[
W_\alpha:=\bigcup_{\eta_{x,s}\in H_\alpha} D(x,s),  
\]
which is a subset of $\res^{-1}(h(\alpha))$. Let $W_\alpha^c:=\res^{-1}(h(\alpha))\setminus W_\alpha$. By assumption, both $W_\alpha$ and $W_\alpha^c$ are non-empty. By $C$-minimality (Theorem \ref{thm:cmin}), $W_\alpha$ and $W_\alpha^c$ are disjoint unions of Swiss cheeses. Since they cover $\res^{-1}(h(\alpha))$, either $W_\alpha$ or $W_\alpha^c$ contains a set of the form $\res^{-1}(h(\alpha))\setminus D(a,r)$ for some $a\in \res^{-1}(h(\alpha))$ and $r<1$. By strong minimality, there is a cofinite set $U''\subseteq U'$ such that either for all $\alpha\in U''$, $W_\alpha$ contains such a set or for all $\alpha\in U''$, $W_\alpha^c$ contains such a set. Suppose without loss of generality the latter happens. Then, for each $\alpha\in U''$, $W_{\alpha}$ is contained in a closed subdisc of the open disc $\res^{-1}(h(\alpha))$. Denote by~$D_\alpha$ the smallest one. Let $g\colon U'\to \BB$ be the definable function sending $\alpha\in U''$ to $\eta_{x,r} \in \BB$ with $D(x,r)=D_\alpha$. By Lemma \ref{lem:finiteimage}, $g$ has finite image, which contradicts the fact that for each $\alpha$, $g(\alpha)$ lies in $\red^{-1}(h(\alpha))$ (note that $h(U'')$ is infinite as $h$ is finite). 
\end{proof}

\begin{corollary}\label{cor:opendisc} Let $C$ be an algebraic curve over~$\tilde{k}$ and $h\colon C\to \AA^1_{\tilde{k}}$ be a finite morphism. Let $H\subseteq C(\tilde k)\times D(0,1)$ be an $\cL$-definable set such that the projection of $H$ onto the first coordinate is a cofinite subset $U\subseteq C(\tilde k)$ and for all $\alpha\in U$, $H_\alpha\subseteq \res^{-1}(h(\alpha))$. Then there is a finite set $F\subseteq U$ such that either $H_\alpha = \emptyset$ for all $\alpha\in U\setminus F$ or $H_\alpha=\res^{-1}(h(\alpha))$ for all $\alpha\in U\setminus F$. 
\end{corollary}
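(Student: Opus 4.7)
The plan is to deduce Corollary~\ref{cor:opendisc} directly from Lemma~\ref{lem:opendisc} by specialising that lemma to radius $s = 0$ and invoking the standard identification of type~1 points of $\BB$ with elements of $k$. The key observation is that the map $x \mapsto \eta_{x,0}$ is an $\cL$-definable bijection between $D(0,1)\subseteq k$ and $D[0] := \{\eta_{x,0} :  x\in D(0,1)\}\subseteq \DD_k^{(1,2)}$, and that under this bijection $\res^{-1}(h(\alpha))\subseteq k$ is carried onto $D_{h(\alpha)}[0] = D[0]\cap \red^{-1}(h(\alpha))$.

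Given $H$ as in the statement, I would form the $\cL$-definable set
\[
\widetilde H := \{(\alpha,\eta_{x,0})\in C(\tilde k)\times D[0] :  (\alpha,x)\in H\},
\]
whose projection onto $C(\tilde k)$ is still the cofinite $U$ and whose fibres satisfy $\widetilde H_\alpha \subseteq D_{h(\alpha)}[0]$. These are exactly the hypotheses of Lemma~\ref{lem:opendisc} at $s = 0$, so that lemma furnishes a finite $F\subseteq U$ such that, for every $\alpha\in U\setminus F$, $\widetilde H_\alpha$ is either empty or equal to $D_{h(\alpha)}[0]$. Reversing the identification gives the desired dichotomy: $H_\alpha=\emptyset$ for all $\alpha\in U\setminus F$, or $H_\alpha=\res^{-1}(h(\alpha))$ for all $\alpha\in U\setminus F$.

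The single point that deserves a moment's care is that Lemma~\ref{lem:opendisc} genuinely applies at $s = 0$: its hypothesis only demands $s\in|k|$ with $s<1$, and its proof degenerates cleanly in this boundary case, since $D(x,0) = \{x\}$ makes the auxiliary set $W_\alpha = \bigcup_{\eta_{x,s}\in H_\alpha} D(x,s)$ coincide with $H_\alpha$ itself. The ensuing Swiss-cheese decomposition of $W_\alpha$ and $W_\alpha^c$ (via $C$-minimality, Theorem~\ref{thm:cmin}) and the orthogonality contradiction obtained through Lemma~\ref{lem:finiteimage} then proceed verbatim. I do not anticipate any essential obstacle beyond this routine bookkeeping.
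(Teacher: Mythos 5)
Your proposal is correct and is essentially the paper's own proof: the paper likewise applies Lemma~\ref{lem:opendisc} with $s=0$ to the set $H':=\{(\alpha,\eta_{x,0}) :  (\alpha,x)\in H\}$, which is in definable bijection with $H$. Your extra check that the lemma's argument degenerates harmlessly at $s=0$ is fine but not needed, since the hypothesis $s\in|k|$, $s<1$ of the lemma already covers $s=0$.
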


\begin{proof} Apply Lemma \ref{lem:opendisc} with $s=0$ to the definable set $H':=\{(\alpha,\eta_{x,0}) :  (\alpha,x)\in H\}$, which is in definable bijection with $H$. 
\end{proof}

\subsection{Reduction to $\BB$}

Let $X$ be a $k$-analytic curve and $\cS$ be an $\cL$-facade of $X$. Let $\Gamma=(S,E)$ be the skeleton associated to $\cS$. Recall that the $\cL$-definable set $X^{\cS}$ corresponds to 
\[
X^{\cS}:=\bigsqcup_{x\in S^{(1)}} \eta_{1,0} \sqcup \bigsqcup_{I\in E} f_I(V_I)^\D\sqcup \bigsqcup_{x\in S^{(2)}} \left[\eta_{0,1} \sqcup Z_x^\cS \sqcup \bigsqcup_{i=1}^{m(x)} f_{x,i}(V_{x,i})^\D \right] 
\]
where the set $Z_x^\cS$ was defined by 
\[
Z_x^\cS:=\{(\alpha,\eta_{a,r})\in \cU_x(\tilde{k})\times \DD_k^{(1,2)} :  \tilde{f}_x(\alpha)=\red(\eta_{a,r}) \}.
\]
To provide a description of the definable subsets of $X^\cS$, it is enough to describe the definable subsets of each piece in the previous disjoint union. The pieces $f_I(V_I)^\D$ and $f_{x,i}(V_{x,i})^\D$ are subsets of~$\BB$, whose definable subsets have been classified as $\BB$-radial sets. It remains to describe the definable subsets of the pieces of the form $Z_x^\cS$. 

The following proposition explains why, by passing to a larger finite triangulation, we can reduce the description of definable subsets of $Z_x^\cS$ to those of $\BB$. 

\begin{proposition}\label{prop:defsetsZx} Let $H$ be a definable subset of~$Z_x^\cS$ and let $\pi_1$ and $\pi_2$ denote the projections from $H$ onto $\cU_x(\tilde{k})$ and $\DD_k^{(1,2)}$ respectively. Then there is a finite set $F\subseteq \cU_x(\tilde{k})$, $n\in \NN$ and closed disc cylinders $D_1,\ldots,D_n$ of the form 
\[
D_i:=\{\eta_{w,r}\in \BB :  D(w,r)\subseteq D(0,1) \ \wedge \ r=s_i\}
\]
or 
\[
D_i:=\{\eta_{w,r}\in \BB :  D(w,r)\subseteq D(0,1)  \ \wedge \ s_{i,0}<r<s_{i,1}\}
\]
such that 
\[
H\cap \pi_1^{-1}(\cU_{x}(\tilde k)\setminus F)= \pi_2^{-1}(\bigcup_{i=1}^n D_i) \cap \pi_1^{-1}(\cU_{x}(\tilde k)\setminus F). 
\] 
\end{proposition}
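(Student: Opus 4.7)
The plan is to stratify $H$ by level in the $\DD_{k}$-direction and apply the slice-wise dichotomy of Lemma~\ref{lem:opendisc}. Indeed, for each $\alpha\in\cU_{x}(\tilde k)$, the fiber $(Z_{x}^{\cS})_{\alpha}$ coincides with $\DD_{k}^{-}(b_{\alpha},1)^{\Def}$ for any lift $b_{\alpha}\in k^{\circ}$ of $\tilde f_{x}(\alpha)$ (and is empty in the finitely many cases where $\tilde f_{x}(\alpha)=\infty$, where the required identity holds trivially). This open disc partitions as $\bigsqcup_{s\in|k|\cap[0,1)} D_{\tilde f_{x}(\alpha)}[s]$, so that $H_{\alpha}$ is determined by the family $\{H_{\alpha}\cap D[s]\}_{s}$. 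For each fixed $s\in|k|\cap[0,1)$, strong minimality of $\cU_{x}(\tilde k)$ (Lemma~\ref{lem:min}) implies that $\pi_{1}(H\cap(\cU_{x}(\tilde k)\times D[s]))$ is finite or cofinite. In the cofinite case, Lemma~\ref{lem:opendisc} applies and produces the following dichotomy: for all $\alpha$ outside some finite subset of $\cU_{x}(\tilde k)$, either $H_{\alpha}\cap D[s]=\emptyset$ or $H_{\alpha}\cap D[s]=D_{\tilde f_{x}(\alpha)}[s]$.

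Let $A\subseteq|k|\cap[0,1)$ be the set of those $s$ for which the ``full'' case occurs. Elimination of $\exists^{\infty}$ in strongly minimal sets makes $A$ a definable subset of $|k|$, and the o-minimality of $|k|$ (Lemma~\ref{lem:min}) implies that $A$ decomposes as a finite disjoint union of points $\{s_{i}\}$ and open intervals $(s_{i,0},s_{i,1})$. These components give exactly the closed disc cylinders $D_{i}$ of the statement, obtained by specializing items $(R_{4})$ and $(R_{5})$ of Definition~\ref{def:radialset} to $a=0$, $s=1$ and no excluded open subdiscs; one verifies directly that $\bigcup_{s\in A}D_{\tilde f_{x}(\alpha)}[s]=\bigl(\bigcup_{i=1}^{n}D_{i}\bigr)\cap(Z_{x}^{\cS})_{\alpha}$ for every $\alpha$. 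It remains to choose the finite exceptional set $F$ uniformly in $s$; to this end we introduce the definable ``exceptional set''
\[
E:=\bigl\{(\alpha,s)\in\cU_{x}(\tilde k)\times(|k|\cap[0,1)) :   H_{\alpha}\cap D[s]\text{ disagrees with the pattern dictated by }A\bigr\},
\]
each of whose vertical fibers $E_{s}$ is finite by the slice-wise dichotomy.

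The main obstacle is to prove that $\pi_{1}(E)$ is finite in $\cU_{x}(\tilde k)$. Suppose for contradiction that it is cofinite; then the horizontal fibers $E_{\alpha}\subseteq|k|$ are generically non-empty. By o-minimal cell decomposition in $|k|$---whose complexity is uniformly bounded thanks to elimination of $\exists^{\infty}$---the family $\{E_{\alpha}\}_{\alpha}$ admits a description via finitely many definable endpoint functions $p_{1},\dots,p_{N}\colon\cU_{x}(\tilde k)\to|k|^{\infty}$. Lemma~\ref{lem:ort} together with the strong minimality of $\cU_{x}$ forces any such definable function to be generically constant, so $E_{\alpha}$ agrees generically with a fixed non-empty definable subset $E_{0}\subseteq|k|$. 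Picking $s^{*}\in E_{0}\cap|k|$, which is possible since $|k^{\times}|$ is dense in $\R_{>0}$, we would obtain $\alpha\in E_{s^{*}}$ for a cofinite set of $\alpha\in\cU_{x}(\tilde k)$, contradicting the finiteness of $E_{s^{*}}$. Setting $F:=\pi_{1}(E)$, finite by the preceding, and combining with the level decomposition above yields the desired equality.
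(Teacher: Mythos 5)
Your proof is correct (it even establishes the stronger, two-sided fibrewise equality $H_\alpha=\bigl(\bigcup_i D_i\bigr)\cap (Z_x^{\cS})_\alpha$ off $F$), and it rests on exactly the same pillars as the paper's argument: the level-wise dichotomy coming from Lemma~\ref{lem:opendisc}, strong minimality of $\cU_x(\tilde k)$ combined with the orthogonality Lemma~\ref{lem:ort} (forcing definable maps from the residue curve to $|k|$ to be generically constant), and o-minimality of the value group. The difference is one of organization: the paper fibers over $\alpha$ first, proving directly that the set of $\alpha$ admitting a ``partial'' level $\emptyset\subsetneq H_\alpha\cap D_{\tilde f_x(\alpha)}[s]\subsetneq D_{\tilde f_x(\alpha)}[s]$ is finite (a Skolem function into $|k|$, finite image by Lemma~\ref{lem:ort}, generic constancy, contradiction with Lemma~\ref{lem:opendisc}), and then reads off the radii $s_i$ from the fibrewise sets of full radii via generically constant endpoint functions; you fiber over $s$ first, using elimination of $\exists^\infty$ in the strongly minimal sort to make the set $A\subseteq|k|$ of generically full radii definable, so that o-minimality produces the cylinders at once, and the remaining work is the uniform finiteness of the exceptional set $E$, obtained by playing the generically constant description of the horizontal fibers $E_\alpha$ against the finiteness of the vertical fibers $E_s$. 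Both routes have the same depth; yours isolates $A$ explicitly (conceptually clean) at the cost of the extra uniformization step for $E$. In a final write-up make two things explicit: the hypothesis to refute is ``$\pi_1(E)$ infinite'', with strong minimality upgrading this to ``cofinite'' before your argument starts; and the step from generically constant endpoint functions to ``$E_\alpha$ agrees generically with a fixed set $E_0$'' also needs the finitely many possible cell types of $E_\alpha$ to stabilize, which again follows from strong minimality together with the uniform bound on the number of cells.
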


\begin{proof} 
Consider the definable set $F\subseteq \cU_x(\tilde{k})$ defined by 
\[
F:=\{\alpha\in \cU_x(\tilde{k}) :  (\exists s)(s<1 \wedge \emptyset \subsetneq H_{\alpha} \cap D_{\tilde{f}_{x}(\alpha)}[s] \subsetneq D_{\tilde{f}_{x}(\alpha)}[s])\}.
\]
Let us show that $F$ is finite. Suppose towards a contradiction that $F$ is infinite. Consider the definable set $Y\subseteq F\times |k|$ defined by 
\[
Y:=\{(\alpha,s) :  s<1 \wedge \emptyset \subsetneq H_{\alpha} \cap D_{\tilde{f}_{x}(\alpha)}[s] \subsetneq D_{\tilde{f}_{x}(\alpha)}[s]\}.
\]
By the existence of Skolem functions in the value group, there is a definable function $g\colon Y\to |k|$ such that $(\alpha,g(\alpha))\in Y$ for all $\alpha\in F$. By Lemma \ref{lem:ort}, $g$ has finite image. By strong minimality of $\cU_x(\tilde{k})$, there is a cofinite subset~$F'$ of~$F$ such that $g(F')=\{s\}$ for some $s\in |k|$. But then, $H':=\{(\alpha,\eta_{a,r})\in H :  \alpha\in F', r=s\}$ contradicts Lemma \ref{lem:opendisc}. Therefore $F$ is finite. 

Set $C:= \cU_x(\tilde{k})\setminus F$. Consider the definable set 
\[G:=\{(\alpha,s)\in C\times |k|  :  H_{\alpha} = D_{\tilde{f}_{x}(\alpha)}[s]\}.\] 
By o-minimality, for each $\alpha\in C$, $G_{\alpha}$ is a finite union of points and intervals, therefore, $H_\alpha$ is a finite union of open disc cylinders as in \eqref{eq:odcylin1} or \eqref{eq:odcylin2}. By strong minimality of $C$, there is an integer~$N$ and a finite set $E\subseteq C$ such that for all $\alpha\in C\setminus E$, $H_\alpha$ is the union of $N$ open disc cylinders. By enlarging~$F$ adding~$E$, we may assume that~$E$ is empty.

By possibly further enlarging $F$, we may suppose there are definable functions $f_1,\ldots,f_{M}\colon C\to |k|$ and finite sets of indices $I_1$ and $I_2$ such that for all $\alpha\in C$
\[
G_\alpha= \bigcup_{i\in I_1} (f_{i}(\alpha),f_{i+1}(\alpha)) \cup \bigcup_{i\in I_2} \{f_i(\alpha)\}. 
\]
By Lemma \ref{lem:ort} and strong minimality, there is a finite set $E'\subseteq C$ such that all functions $f_i$ are constant on $C\setminus E'$. Let $s_i:=f_i(\alpha)$ for some (any) $\alpha \in C\setminus E'$. As before, up to enlarging~$F$, we may assume that~$E'$ is empty. Finally, consider, for $i\in I_1$, the disc cylinder 
\[
D_i:=\{\eta_{w,r}\in \BB  :  D(w,r)\subseteq D(0,1) \ \wedge \ s_i<r<s_{i+1} \},
\]
and, for $i\in I_2$, the disc cylinder
\[
D_{i}:=\{\eta_{w,r}\in \BB  :  D(w,r)\subseteq D(0,1) \ \wedge \ r=s_{i} \} .
\]
By construction, setting $F' := \tilde{f}_{x}^{-1}(\tilde{f}_{x}(F))$, we have
\[
H\cap \pi_1^{-1}(\cU_{x}(\tilde k)\setminus F')= \pi_2^{-1}(\bigcup_{i\in I_{1}\cup I_{2}} D_i) \cap \pi_1^{-1}(\cU_{x}(\tilde k)\setminus F'). 
\] 
\end{proof}

The previous theorem suggests the following definition:

\begin{definition}\label{def:basicZ_x} A subset $H\subseteq Z_x^\cS$ is called \emph{basic $\BB$-radial} if $H=\pi_2^{-1}(D) \cap Z_{x}^\cS$ for a closed cylinder $D$ of the form 
\[
D:=\{\eta_{w,r}\in \DD_k^\D :  \ r=s\} \text{ or }  D:=\{\eta_{w,r}\in \DD_k^\D :  \ s_{0}<r<s_{1}\}.
\]
\end{definition} 

\begin{definition}\label{def:BBradial} Let $\Gamma=(S,E)$ be the skeleton associated to $S$. A subset $Y\subseteq X^\cS$ is called \emph{$\BB$-radial} if 
\begin{enumerate}[$i)$]
\item for each $I\in E$, $Y\cap f_I(V_I)^\D$ is $\BB$-radial, 
\item for each $x\in S^{(2)}$, $Y\cap Z_x^\cS$ is a disjoint union of basic $\BB$-radial sets, 
\item for each $I\in E$, and $i\in\{1,\ldots, m(x)\}$, $Y\cap f_{x,i}(V_{x,i})^\D$ is $\BB$-radial.  
\end{enumerate}
\end{definition} 

Note that all $\BB$-radial sets are $\cL_\BB$-definable. 

\begin{theorem}\label{thm:definablecurves} Let $X$ be a $k$-analytic curve and $\cS$ be an $\cL$-facade of $X$. Let $Y\subseteq X^\cS$. Then $Y$ is $\cL$-definable if, and only if, there is a refinement $\cS'$ of $\cS$ and a $\BB$-radial set $Y'$ such that $Y=\id_{\cS\cS'}^{-1}(Y')$.  
\end{theorem}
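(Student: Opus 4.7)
The ``if'' direction is immediate: by Lemma~\ref{lem:facade-id}, $\id_{\cS\cS'}$ is an $\cL$-definable bijection, and any $\BB$-radial set in~$X^{\cS'}$ is visibly $\cL$-definable (by Theorem~\ref{thm:defsets} on the $\BB$-pieces and by Definition~\ref{def:basicZ_x} on the $Z_x^{\cS'}$-pieces), so its preimage $Y=\id_{\cS\cS'}^{-1}(Y')$ is $\cL$-definable.

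For the converse, I would decompose~$Y$ along the disjoint pieces of~$X^\cS$ given by Definition~\ref{def:defset}. The vertex points $\eta_{1,0}$ and $\eta_{0,1}$ require no treatment, being isolated. For each edge $I\in E$ the set $Y\cap f_I(V_I)^\D$ is an $\cL$-definable subset of~$\BB$, hence $\BB$-radial by Theorem~\ref{thm:defsets}; the same holds for each $Y\cap f_{x,i}(V_{x,i})^\D$. The only obstruction comes from the pieces~$Z_x^\cS$ for $x\in S^{(2)}$: Proposition~\ref{prop:defsetsZx} applied to~$Y\cap Z_x^\cS$ yields a finite set $F_x\subseteq \cU_x(\tilde k)$ outside of which $Y\cap Z_x^\cS$ agrees with a disjoint union of basic $\BB$-radial sets in the sense of Definition~\ref{def:basicZ_x}. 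All residual obstruction to $\BB$-radiality therefore lies in the finitely many fibers over the points of~$F_x$.

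The core of the argument is to dissolve these bad fibers by a carefully chosen refinement. For each $x\in S^{(2)}$ and each $\alpha\in F_x$, I pick a $k$-rational point $y_{x,\alpha}$ inside the connected component $E_\alpha$ of $W_x\setminus\{x\}$ corresponding to~$\alpha$ (such a point exists because $E_\alpha$ is isomorphic to an open unit disc and $k$ is algebraically closed). Set $S':= S\cup\{y_{x,\alpha}\,:\,x\in S^{(2)},\ \alpha\in F_x\}$ and endow it with the induced facade structure~$\cS'$ refining~$\cS$ as in Remark~\ref{rem:facade-ref}. For each such~$\alpha$, the subspace $E_\alpha\setminus\{y_{x,\alpha}\}$ is a punctured open disc---viewed as an open annulus---and becomes $V_{I_\alpha}$ for a new edge~$I_\alpha$ between~$x$ and~$y_{x,\alpha}$, with $f_{I_\alpha}=f_x|_{V_{I_\alpha}}$ by the refinement rule. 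Tracing $\id_{\cS\cS'}$: the part of $Y\cap Z_x^\cS$ sitting over $\cU_x(\tilde k)\setminus F_x$ is transported identically into $Z_x^{\cS'}$, and hence remains a disjoint union of basic $\BB$-radial subsets of~$Z_x^{\cS'}$; whereas each fiber $Y\cap\pi_1^{-1}(\{\alpha\})\cap Z_x^\cS$ with $\alpha\in F_x$ is transported onto an $\cL$-definable subset of $f_{I_\alpha}(V_{I_\alpha})^\D\subseteq\BB$ (possibly together with the image of the new vertex~$y_{x,\alpha}$), and is therefore $\BB$-radial by Theorem~\ref{thm:defsets}. The pieces $f_I(V_I)^\D$ and $f_{x,i}(V_{x,i})^\D$ coming from~$\cS$ are untouched. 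Assembling everything, $Y':=\id_{\cS\cS'}(Y)$ is $\BB$-radial in~$X^{\cS'}$ and $Y=\id_{\cS\cS'}^{-1}(Y')$.

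The expected main obstacle is the careful bookkeeping of precisely how each bad fiber redistributes under the refinement: verifying that its points land inside the new annulus piece $f_{I_\alpha}(V_{I_\alpha})^\D$ plus at most the new vertex~$y_{x,\alpha}$, and that the restricted facade data $(W_x^{\cS'}, f_x^{\cS'})$ still satisfies Definition~\ref{def:facade}(iii) after the branches $E_\alpha$ for $\alpha\in F_x$ have been excised. Once this compatibility of $\eps_x$, $f_x$ and the refinement maps~$f_{I_\alpha}$ on each $E_\alpha$ is established, the $\BB$-radial classifications of Theorem~\ref{thm:defsets} and Proposition~\ref{prop:defsetsZx} apply cleanly to conclude.
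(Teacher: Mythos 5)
Your proposal is correct and takes essentially the same route as the paper's own proof: the forward direction is handled via Proposition~\ref{prop:defsetsZx}, and the finitely many bad fibers over each $Z_x^{\cS}$ are eliminated by refining the facade with a type-1 point in each corresponding branch (Remark~\ref{rem:facade-ref}), after which Theorem~\ref{thm:defsets} on the $\BB$-pieces and Lemma~\ref{lem:facade-id} finish the argument. There is no substantive difference in decomposition or key lemmas.
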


\begin{proof} Assume that~$Y$ is $\cL$-definable. Fix $x\in S^{(2)}$ and let $H_x:=Y\cap Z_x^\cS$. By Proposition \ref{prop:defsetsZx}, there is a finite set $F_x\subseteq \cU_x(\tilde{k})$, $n_{x}\in \NN$ and closed disc cylinders $D_{1,x},\ldots,D_{n_x,x}$ such that 
\[
H_{x}\cap \pi_1^{-1}(\cU_{x}(\tilde k)\setminus F_{x})= \pi_2^{-1}(\bigcup_{i=1}^{n_{x}} D_{i,x}) \cap \pi_1^{-1}(\cU_{x}(\tilde k)\setminus F_{x}). 
\] 
where $\pi_1$ and $\pi_2$ denote the projections to $\cU_x(\tilde{k})$ and $\DD_k^\D$ respectively. Let $S'$ be refinement of~$S$ in which, for each $x\in S^{(2)}$, we add a point of type 1 in each connected component in $\bigcup_{\alpha\in F_x}\varphi_\cS^{-1}(\pi_1^{-1}(\alpha))$. Let $\cS'$ be an $\cL$-facade refining $\cS$ with underlying triangulation~$S'$ (see Remark~\ref{rem:facade-ref}). Note that $S^{(2)}=S'^{(2)}$. In particular, for every $x\in S'^{(2)}$, we have 
\[
\id_{\cS\cS'}(H_x) = \pi_2^{-1}(\bigcup_{i=1}^{n_{x}} D_{i,x}) \cap Z_{x}^{\cS'},  
\] 
which is a disjoint union of basic $\BB$-radial sets. For each $I\in E_{S'}$, the set $\id_{\cS\cS'}(Y\cap f_I(V_I)^\D)$ is $\BB$-radial by Theorem \ref{thm:defsets}. This same argument applies for each $\id_{\cS\cS'}(Y\cap f_{x,i}(V_{x,i})^\D)$ for $x\in S'^{(2)}$ and $i\in\{1,\ldots, m(x)\}$. Setting $Y':=\id_{\cS\cS'}(Y)$ shows the result. 

The converse follows from the $\cL_\BB$-definability of $\BB$-radial sets and Lemma~\ref{lem:facade-id}.
\end{proof}

\section{Radiality and definability}\label{sec:radiality}

In this section, we introduce the notion of radial set of a $k$-analytic curve. It is inspired by Temkin's article~\cite{temkin_2017_metric}, where a notion of radial morphism of $k$-analytic curves is provided. We will relate this notion to that of definable set and use this link to recover a result of Temkin showing that the topological ramification locus of a morphism of $k$-analytic curves admits a tame behavior.

In order to state the definition of radial set in greater generality, in this section (and only here), we will allow triangulations to be infinite. Recall that, by~\cite[Th\'eor\`eme~5.1.14]{Duc-book}, every quasi-smooth strictly $k$-analytic curve admits a triangulation (made of points of type~2 only). By a normalisation argument as in Lemma~\ref{lem:Swtov}, it follows that every strictly $k$-analytic curve admits a triangulation. 

Let~$X$ be a strictly $k$-analytic curve.

\subsection{Radiality}

Let~$S$ be a triangulation of~$X$. Let $x\in X$. If $x\notin \Gamma_{S}$, then the connected component~$C_{x}$ of~$X\setminus \Gamma_{S}$ containing~$x$ is isomorphic to the open unit disc~$\DD_{k}^-$. We denote by~$\rho_{S}(x)$ the radius of the image of the point~$x$ in~$\DD_{k}^-$. It does not depend on the choice of the isomorphism. If $x\in \Gamma_{S}$, we set $\rho_{S}(x) := 1$.

\begin{definition}\label{def:radialset_berk} We say that a subset~$Y$ of~$X$ is \emph{basic radial with respect to~$S$ over $k$} if it is of one of the following forms:
\begin{enumerate}[$i)$]
\item for some $x\in S^{(2)}$, $r_{1},r_{2} \in [0,1]\cap |k|$ and $\bowtie_{1}, \bowtie_{2} \in \{<,\leqslant\}$,
\[Y = \{y\in \tau_{S}^{-1}(x)  :  r_{1} \bowtie_{1} \rho_{S}(y) \bowtie_2 r_{2}\};\]
\item for some $I\in E_{S}$, $|k^\times|$-monomial functions $f_{1},f_{2} \colon I \to [0,1]$ and $\bowtie_{1}, \bowtie_{2} \in \{<,\leqslant\}$,
\[Y = \{y\in \tau_{S}^{-1}(I)  :  f_{1}(\tau_{S}(y)) \bowtie_{1} \rho_{S}(y) \bowtie f_{2}(\tau_{S}(y))\}.\]
\end{enumerate}
We say that a subset~$Y$ of~$X$ is \emph{radial with respect to~$S$ over $k$} if, for each $x\in S^{(2)}$, $Y\cap \tau_{S}^{-1}(x)$ is a finite union of basic radial sets with respect to~$S$ over~$k$ and, for each $I\in E_{S}$, $Y\cap \tau_{S}^{-1}(I)$ is a finite union of basic radial sets with respect to~$S$ over $k$. We say that a subset~$Y$ of~$X$ is \emph{radial over $k$} if it is radial over $k$ with respect to some triangulation. We let $\Rad_k(X)$ denote the set of radial subsets of $X$ over $k$. 
\end{definition}

The following lemma is an easy verification (yet again similar to Lemmas \ref{lem:intersectionalgebraic bricks}, \ref{lem:complementalgebraic bricks} and \ref{lem:booleancombination} for the first point) which is left to the reader. 

\begin{lemma}\phantomsection\label{lem:radialboolean}
\begin{enumerate}[$i)$]
\item The set $\Rad_k(X)$ is closed under finite boolean combinations. 
\item Let $Z\in \Rad_k(X)$ be a non-empty radial set. Then $Z^\D$ is non-empty.
\item For each point~$x$ of~$X$, there exists a basis of radial open (resp. radial strictly $k$-affinoid) neighborhoods of~$x$.
\qed
\end{enumerate}
\end{lemma}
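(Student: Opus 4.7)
The strategy is to reduce each assertion to a slice-wise computation on the pieces of a single triangulation, by first observing that any two triangulations of~$X$ admit a common refinement. The crucial auxiliary fact, which I would prove as a preliminary lemma, is that radiality is preserved under refinement: if~$Y$ is a basic radial set with respect to a triangulation~$S_{1}$ and~$S_{2}$ refines~$S_{1}$, then on each piece $\tau_{S_{2}}^{-1}(x)$ (for $x\in S_{2}^{(2)}$) or $\tau_{S_{2}}^{-1}(I)$ (for $I\in E_{S_{2}}$), the functions~$\rho_{S_{1}}$ and~$\rho_{S_{2}}$ are related by an explicit $|k^{\times}|$-monomial change of variables, so that~$Y$ decomposes as a finite disjoint union of basic radial sets with respect to~$S_{2}$.

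For~(i), given two radial sets $Y_{1},Y_{2}$, I would move to a common refined triangulation~$S$ over which both are radial. Boolean closure is then verified slice by slice: within each $\tau_{S}^{-1}(x)$ or $\tau_{S}^{-1}(I)$, radial subsets are finite unions of ``cylinders'' defined by inequalities on~$\rho_{S}$ with $|k^{\times}|$-monomial bounds, and intersections or complements of such cylinders remain in the same class after at worst subdividing the edge~$I$ into the finitely many subintervals on which the relevant monomial functions are linearly ordered. This is the same flavour of argument as in Lemmas~\ref{lem:intersectionalgebraic bricks}, \ref{lem:complementalgebraic bricks} and~\ref{lem:booleancombination}.

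For~(ii), I would decompose~$Z$ into its basic radial components and choose a non-empty one~$Y$. If~$Y$ is of form~(i) in Definition~\ref{def:radialset_berk}, centred at some $x\in S^{(2)}$, the set of admissible~$\rho_{S}$-values is a non-empty subinterval of~$[0,1]$ whose endpoints lie in~$|k|$, and so contains some $r\in|k|$. Picking any open disc~$C$ attached to~$x$ inside $\tau_{S}^{-1}(x)$ and taking the point of~$C$ corresponding to $\eta_{0,r}\in \DD_{k}^{-}$ under the isomorphism $C\simeq \DD_{k}^{-}$ yields either a $k$-rational point (if $r=0$) or a type-$2$ point (if $r\in |k^{\times}|$) of~$Y$. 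The case in which~$Y$ is of form~(ii) is handled analogously, by first selecting a type-$2$ point~$t$ of the edge~$I$ using the density of~$|k^{\times}|$ in the parameter interval of~$I$, and then picking an admissible~$\rho_{S}$-value at~$t$ in~$|k|$.

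For~(iii), I would construct radial neighbourhoods from a carefully chosen triangulation. If~$x$ is of type~$1$, $2$, or~$3$, standard existence results for triangulations yield one~$S$ in which~$x$ is either a vertex or lies on an edge (in the latter case, add~$x$ to~$S$); for~$x$ of type~$4$, I would use a nested family of open disc neighbourhoods with type-$2$ Berkovich boundary points, \textit{cf.}~\cite[Th\'eor\`eme~4.5.4]{Duc-book}, and take triangulations containing those boundary points. In each case, the neighbourhoods are obtained from a refinement~$S'$ of~$S$ in which an auxiliary vertex is added at small distance~$\varepsilon$ from~$x$ along each edge~$I$ adjacent to~$x$, producing an initial edge~$I_{1}$, and they take the form
\[
U_{\delta,\varepsilon}\;=\;\{y\in \tau_{S'}^{-1}(x):\rho_{S'}(y)>1-\delta\}\;\cup\;\bigcup_{I} \tau_{S'}^{-1}(I_{1}),
\]
which is radial with respect to~$S'$ and open in~$X$ by a direct check at each type of point (using the continuity of~$\tau_{S'}$). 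The strictly $k$-affinoid variants are obtained by replacing the strict inequality by~$\ge$ and the open edges~$I_{1}$ by their closures. The main technical obstacle is that $|k^{\times}|$-monomial functions on an edge are monotone, so that an arbitrary ``initial segment'' of an existing edge cannot be cut out as a basic radial set with respect to~$S$ itself; inserting the auxiliary vertices in~$S'$ is precisely what turns each desired segment into a full edge, on which the trivial basic radial condition $0\le \rho_{S'}\le 1$ is vacuous.
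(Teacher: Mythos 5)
Your approach to (i) is fine: pass to a common refinement (which exists, since the union of two triangulations is again a triangulation), check that basic radial sets stay radial under refinement via the monomial rescaling of $\rho$, and then argue slice by slice; the only point to make explicit is that when you subdivide an edge at a crossing point of two $|k^\times|$-monomial functions, that point has radius coordinate in $|k^\times|$ (because $|k^\times|$ is divisible), hence is of type~2 and may legitimately be added to the triangulation. Part (ii) is also essentially right, but as written the order of choices on an edge piece is wrong: for a set $\{f_1(\tau_S(y))\bowtie_1\rho_S(y)\bowtie_2 f_2(\tau_S(y))\}$ the fibre condition may be unsatisfiable over part of $I$ (e.g.\ where $f_1>f_2$ or $f_1\ge 1$), so you cannot first pick an arbitrary type-2 point $t\in I$ and then look for an admissible value; you must first restrict to the locus of $t\in I$ where the condition is solvable in $[0,1]$ — a non-empty subinterval cut out by monomial inequalities, hence containing a type-2 point — and similarly in case (i) you should treat separately the degenerate situation $\tau_S^{-1}(x)=\{x\}$, where no attached disc exists but $x$ itself is the required type-2 point. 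These are one-line repairs.

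The genuine gap is in (iii), at points of type~2. At such a point $x$ there are infinitely many connected components of $X\setminus\Gamma_{S'}$ whose closure contains $x$ (they correspond to all but finitely many closed points of the residue curve $\cC_x$), and \emph{every} neighbourhood of $x$ contains all but finitely many of these discs \emph{entirely}: for $g$ analytic near $x$ one has $|g|\le|g(x)|$ on each attached disc by the maximum principle, with equality identically except on the finitely many discs whose branch is a zero or pole of the reduction of $g$ in $\widetilde{\sH(x)}$, so each basic open set $\{|g|<r\}$ or $\{|g|>s\}$ containing $x$, and hence every neighbourhood of $x$, contains cofinitely many attached discs in full. Your set $U_{\delta,\varepsilon}$ meets every attached disc only in the collar $\{\rho_{S'}>1-\delta\}$ and contains no attached disc entirely, so it is not a neighbourhood of $x$ (in particular it is not open); the announced ``direct check at each type of point'' fails precisely here. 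There is a second, smaller, illegality for $x$ of type~3: you propose to add $x$ to the triangulation, but triangulations are subsets of $X^{(1,2)}$; instead one should choose type-2 points $t_1,t_2$ on the edge on either side of $x$ (possible by density of $|k^\times|$) and use the full preimage $\tau^{-1}((t_1,t_2))$, which is radial after refining by $t_1,t_2$ and which does give a basis at a type-3 point because, close enough to $x$, the fibres over the skeleton contain no zeros of the finitely many functions defining a given basic open set.

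The correct construction at a type-2 point must therefore be ``mixed'': given an open $W\ni x$, cofinitely many of the attached discs lie entirely in $W$ and $W$ contains a collar of each of the finitely many exceptional ones, so the basis element should consist of $x$, the full attached discs outside a finite exceptional set, collars inside the exceptional discs, and preimages of initial segments of the adjacent edges. Such a set is \emph{not} basic radial with respect to $S'$, since a condition of type (i) in Definition~\ref{def:radialset_berk} treats all branches at $x$ uniformly; one must refine the triangulation once more by inserting a vertex inside each of the finitely many exceptional discs, after which the set becomes a finite union of pieces of the allowed shapes (full fibres, full edge preimages, and cylinders over the new edges). The strictly $k$-affinoid variant is obtained similarly with closed conditions. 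This ``all but finitely many branches in full'' phenomenon is the one real subtlety of part (iii), and it is missing from your proposal.
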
 

We now prove that the property of being radial is local. We start with a preliminary result.

\begin{lemma}\label{lem:radialsubcurve}
Let~$V$ be a compact analytic domain of~$X$. 
\begin{enumerate}[$i)$]
\item For each radial subset~$Y$ of~$X$, $Y \cap V$ is a radial subset of~$V$.
\item Each radial subset of~$V$ is a radial subset of~$X$. 
\end{enumerate}
\end{lemma}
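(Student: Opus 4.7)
The plan is to show that both assertions reduce to an easy verification once the triangulations on~$X$ and on~$V$ are aligned. The main technical tool is the observation, verified by the same kind of case analysis as in Lemmas~\ref{lem:intersectionalgebraic bricks} and~\ref{lem:complementalgebraic bricks}, that \emph{if~$S^\sharp$ is a triangulation refining~$S$, then every subset of~$X$ that is basic radial with respect to~$S$ is a finite union of basic radial sets with respect to~$S^\sharp$}. In particular, radiality is preserved under refinement of triangulations. Note also that every compact analytic domain~$V$ of~$X$ has finite boundary $\partial_{X} V$ in~$X$, and that~$\partial_{X} V$ consists of points of type~2.

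For part~$i)$, let~$Y$ be a radial subset of~$X$ and let~$S$ be a triangulation of~$X$ such that~$Y$ is radial with respect to~$S$. Using Theorem~\ref{thm:semistable} applied to~$\wt{X}$ followed by Lemma~\ref{lem:Swtov}, I can refine~$S$ to a triangulation~$S^\sharp$ of~$X$ that contains~$\partial_{X}V$, and~$Y$ remains radial with respect to~$S^\sharp$. Set $S_{V} := S^\sharp \cap V$; the conditions $\partial_{X} V \subseteq S^\sharp$ and the fact that $X \setminus \Gamma_{S^\sharp}$ is a disjoint union of open discs imply that~$S_{V}$ is a triangulation of~$V$ with skeleton $\Gamma_{S_{V}} = \Gamma_{S^\sharp} \cap V$, and that the retraction~$\tau_{S_{V}}$ and the radius function~$\rho_{S_{V}}$ agree with the restrictions of~$\tau_{S^\sharp}$ and~$\rho_{S^\sharp}$. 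Now a basic radial set~$Y_{0}$ for~$S^\sharp$ of tube type at a vertex $x\in S^{\sharp,(2)}$ has empty intersection with~$V$ if $x\notin V$, and otherwise $Y_{0}\cap V$ is a basic radial set of~$V$ at~$x\in S_{V}^{(2)}$ (the connected components of $\tau_{S^\sharp}^{-1}(x) \setminus\{x\}$ lying in~$V$ form exactly the corresponding components of $\tau_{S_{V}}^{-1}(x)\setminus\{x\}$). Similarly, for a basic radial set attached to an edge~$I$ of~$\Gamma_{S^\sharp}$, either $I\subseteq V$ and the set is already of the required form on~$V$, or $I\cap V=\emptyset$ and the intersection with~$V$ is empty. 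Taking the finite union, $Y\cap V$ is radial with respect to~$S_{V}$.

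For part~$ii)$, start from a triangulation~$S'$ of~$V$ with respect to which a given $Y \subseteq V$ is radial. I apply Theorem~\ref{thm:semistable} (via~Lemma~\ref{lem:Swtov}) to the curve~$X$ with the finite set $S'\cup \partial_{X}V \subseteq X^{(1,2)}$ as input, producing a triangulation~$S$ of~$X$ that contains $S' \cup \partial_{X} V$. By the same alignment argument as above, $S\cap V$ is a triangulation of~$V$ refining~$S'$, and the restriction to~$V$ of~$\tau_{S}$ and~$\rho_{S}$ coincides with~$\tau_{S\cap V}$ and~$\rho_{S\cap V}$. Refining~$S'$ into~$S\cap V$ keeps~$Y$ radial, and by the alignment, every basic radial subset of~$V$ attached to a vertex or edge of~$S\cap V$ is automatically a basic radial subset of~$X$ attached to the same vertex or edge of~$S$ (the only thing to check is that the tube or edge defining the set inside~$V$ coincides with the tube or edge defined inside~$X$, which follows from $\partial_{X} V \subseteq S$). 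Hence~$Y$ is radial in~$X$ with respect to~$S$.

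The only delicate step is the alignment claim: ensuring that after placing $\partial_{X} V$ inside the triangulation, the open discs appearing as connected components of $X\setminus \Gamma_{S^\sharp}$ and of $V\setminus \Gamma_{S_{V}}$ agree (so that the two radius functions~$\rho$ coincide on them). This follows from the elementary fact that a connected component of $V \setminus \Gamma_{S_{V}}$ adjacent to a boundary point $x\in \partial_{X}V$ is itself one of the open disc components of $X\setminus \Gamma_{S^\sharp}$ adjacent to~$x$, precisely because $x$ is a vertex of~$S^\sharp$ and~$V$ is an analytic domain. Once this is in hand, the rest is bookkeeping.
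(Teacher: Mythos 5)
Part~i) of your argument is essentially sound, and it rests on the same alignment idea as the paper's proof: once $\partial_{X}V\subseteq S^{\sharp}$, a connected component of $X\setminus\Gamma_{S^{\sharp}}$ meeting~$V$ cannot meet $\partial_{X}V$, hence is contained in~$V$; therefore $S^{\sharp}\cap V$ is a triangulation of~$V$, the maps $\tau_{S^{\sharp}}$ and $\rho_{S^{\sharp}}$ restrict to $\tau_{S^{\sharp}\cap V}$ and $\rho_{S^{\sharp}\cap V}$, and intersecting a basic radial subset of~$X$ with~$V$ gives a basic radial subset of~$V$. Two imprecisions, though. Your emphasized ``main technical tool'' is false as stated: the functions in Definition~\ref{def:radialset_berk} must take values in $[0,1]$, and after inserting a vertex $\eta_{a,s}$ inside a tube a constant bound $r>s$ becomes $r/t$ along the new edge, which exceeds~$1$ on part of it; no finite union of admissible bands over that whole edge reproduces the set, so one must also subdivide at the radius where the bound crosses~$1$. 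Since you get to choose the refinement this is harmless for your argument, but the tool needs to be stated as ``radiality persists after a suitably chosen refinement containing any prescribed finite set of type-2 points''. Also, refining (or extending) triangulations via Theorem~\ref{thm:semistable} and a compactification presupposes that~$X$ is nice, whereas in Section~\ref{sec:radiality} $X$ is an arbitrary strictly $k$-analytic curve with possibly infinite triangulations; adding finitely many type-2 points to an existing triangulation should be done directly.

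The genuine gap is in part~ii). From $\partial_{X}V\subseteq S$ you only obtain $\tau_{S\cap V}^{-1}(x)\subseteq\tau_{S}^{-1}(x)$ for $x\in(S\cap V)^{(2)}$: at a boundary vertex $x\in\partial_{X}V$ the tube $\tau_{S}^{-1}(x)$ may also contain disc components of $X\setminus\Gamma_{S}$ lying outside~$V$, and a basic radial set of~$X$ at~$x$ imposes its radius condition on \emph{all} discs of that tube simultaneously. Concretely, take $X=\PP^{1,\an}_{k}$, $V=\DD_{k}$, $S'=\partial_{X}V=\{\eta_{0,1}\}$ and $Y=\{y\in V :  \rho_{S'}(y)\leqslant r\}$ with $r\in(0,1)\cap|k^{\times}|$. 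Your construction may perfectly well return $S=\{\eta_{0,1}\}$, which contains $S'\cup\partial_{X}V$; but then $\tau_{S}^{-1}(\eta_{0,1})=\PP^{1,\an}_{k}$, and $Y$ is not a finite union of basic radial sets with respect to this~$S$, because any band containing a point of radius $\leqslant r$ inside~$V$ also contains the points of the same radius in the branch towards~$\infty$. So the parenthetical ``which follows from $\partial_{X}V\subseteq S$'' is exactly where the proof breaks. The missing requirement---and it is precisely the condition the paper's proof records---is that $S$ be chosen so that moreover $\tau_{S}^{-1}(\Gamma_{S}\cap V)=V$, i.e.\ at every boundary point of~$V$ each of the finitely many branches leaving~$V$ contains a vertex or an edge of~$\Gamma_{S}$, so that the outside discs retract away from $\Gamma_{S}\cap V$. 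Together with $S\cap V=S_{V}$ and $\Gamma_{S}\cap V=\Gamma_{S_{V}}$ this makes the tubes and edges attached to $S\cap V$ literally the same computed in~$V$ or in~$X$, and then your identification of basic radial sets does go through; with that correction your argument coincides with the paper's.
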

\begin{proof}
For each triangulation~$S_{V}$ of~$V$, there exists a triangulation~$S$ of~$X$ such that $S\cap V = S_{V}$. One may moreover require that $\Gamma_{S} \cap V = \Gamma_{S_{V}}$ and that $\tau_{S}^{-1}(\Gamma_{S} \cap V) = V$. The results follow from these observations.
\end{proof}

Let us recall the definition of G-topology from~\cite[Section~1.3]{bleu}. We say that a family~$\cU$ of analytic domains of~$X$ is a covering for the G-topology if, for each point $x\in X$, there exists a finite subset~$\cU_{x}$ of~$\cU$ such that $\bigcap_{U \in \cU_{x}} U$ contains~$x$ and $\bigcup_{U \in \cU_{x}} U$ is a neighborhood of~$x$ in~$X$.

\begin{lemma}\label{lem:radiallocal}
Let~$Y$ be a subset of~$X$. Assume that there exists a family~$\cU$ of analytic domains of~$X$ that is a covering for the G-topology and such that, for each $U\in \cU$, $Y\cap U$ is a radial subset of~$U$. Then, $Y$ is a radial subset of~$X$.
\end{lemma}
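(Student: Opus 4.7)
My approach is to construct a global triangulation $S$ of $X$ compatible with local radiality witnesses on the elements of a refined cover, and then verify radiality directly with respect to $S$. First, using that strictly $k$-analytic curves admit a G-basis of compact affinoid subdomains, I refine $\cU$ to a locally finite G-covering $\cU'$ by compact strictly affinoid subdomains. By Lemma~\ref{lem:radialsubcurve}(i), the hypothesis persists: for each $U' \in \cU'$, the set $Y \cap U'$ is radial in $U'$, witnessed by some triangulation $S_{U'}$ of $U'$.

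Next, I form $T := \bigcup_{U' \in \cU'} S_{U'}$, which is a locally finite subset of $X^{(1,2)}$ since $\cU'$ is locally finite and each $S_{U'}$ is finite. I then extend $T$ to a triangulation $S$ of $X$ by applying a locally finite version of Theorem~\ref{thm:semistable} (or by a direct argument using the fact that each connected component of $X\setminus T$ already lies inside a disc- or annulus-component of some $U'\setminus S_{U'}$), inserting additional type-$2$ points only inside components of $X\setminus T$ that are not already discs or annuli.

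The verification of radiality proceeds vertex-by-vertex on~$S$. Fix $x \in S^{(2)}$. The tube $\tau_{S}^{-1}(x)$ is covered, by local finiteness of $\cU'$, by finitely many $U' \in \cU'$; moreover each disc-branch of~$x$ in~$S$ lies inside some disc- or annulus-component of $U' \setminus S_{U'}$ for every such $U'$. The crucial observation is that $\rho_S$ is intrinsic to the disc-component structure of $X\setminus S$, so whenever $\rho_S$ and $\rho_{S_{U'}}$ are both defined at a point lying in a disc-branch of~$x$ common to both $S$ and $S_{U'}$, they agree. Hence restricting the finite basic radial decomposition of $Y \cap \tau_{S_{U'}}^{-1}(x)$ (guaranteed by the $U'$-local radiality) to $\tau_S^{-1}(x)$ yields a finite union of basic radial sets with respect to~$S$. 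The analogous reasoning, with $|k^\times|$-monomial radius functions on edges replacing constant radii, handles each edge $I \in E_{S}$.

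The main obstacle is the bookkeeping required when a vertex $x\in S^{(2)}$ does not lie in any $S_{U'}$ (having been inserted to extend $T$ into a triangulation), or when a single disc-branch of~$x$ straddles several elements of $\cU'$. In both situations one exploits the finite nature of the local decompositions, the intrinsic character of $\rho_S$ on disc-components of $X\setminus S$, and the closure of radial sets under finite boolean combinations (Lemma~\ref{lem:radialboolean}(i)) to patch the local radial descriptions into a single global basic radial decomposition on~$\tau_S^{-1}(x)$ (resp. on $\tau_S^{-1}(I)$).
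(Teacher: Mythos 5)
Your overall strategy — glue the local witnessing triangulations into a global one and check radiality piece by piece — is close in spirit to the paper's, but the central verification step does not work as written. The ``crucial observation'' only applies to disc-branches that are \emph{literally} common to $S$ and $S_{U'}$, and that is exactly what you cannot assume. Since $S$ contains $T=\bigcup_{U'}S_{U'}$ plus inserted points, a vertex $x\in S^{(2)}$ need not lie in $S_{U'}$ (nor in $\Gamma_{S_{U'}}$) for a given $U'$ meeting its tube; in that case the $S$-branches at $x$ contained in $U'$ sit strictly inside a single component $P$ of $U'\setminus\Gamma_{S_{U'}}$, the retraction $\tau_{S_{U'}}$ is constant on them, and on them one has $\rho_{S_{U'}}=r\cdot\rho_S$, where $r$ is the radius of $x$ inside $P$: the two radius functions do \emph{not} agree, and the local basic radial conditions only transfer after rescaling constants. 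More seriously, a basic radial set at $x$ in the sense of Definition~\ref{def:radialset_berk} imposes the \emph{same} bounds on $\rho_S$ over \emph{all} branches of $\tau_S^{-1}(x)$, whereas the branches of $\tau_S^{-1}(x)$ are distributed among several elements of $\cU'$, each contributing its own constants and its own rescaling factor. A set of the form ``branches lying in $U'_j$, intersected with the bounds coming from $U'_j$'' is in general \emph{not} a finite union of basic radial sets with respect to the fixed $S$; one must refine $S$ further (e.g.\ separate the finitely many exceptional branches by new vertices) before the definition is met. Your last paragraph defers precisely this to ``bookkeeping'' and to part~i) of Lemma~\ref{lem:radialboolean}, but that lemma only treats \emph{finite} boolean combinations, while the assembly here runs over the possibly infinitely many vertices and edges of $S$; it cannot replace the missing refinement argument. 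The same objection applies to your treatment of edges.

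There are also gaps upstream: the locally finite refinement of $\cU$ by compact strictly affinoid domains uses paracompactness of curves, which the paper never invokes; and extending $T$ to a triangulation is not covered by Theorem~\ref{thm:semistable} (stated for nice curves and finite sets), nor does the fact that each component of $X\setminus T$ lies inside a disc or annulus component of some $U'\setminus S_{U'}$ make it a disc or an annulus (removing a type-$2$ point lying off the skeleton of an annulus produces a component with three ends), so your insertion step needs its own argument, including local finiteness of the inserted points. The paper's proof avoids all of this by reversing the order of operations: it first fixes \emph{any} triangulation $S$ of $X$ whose complementary components are relatively compact, so that each tube $\tau_S^{-1}(x)$, and a compact enlargement of each edge annulus, is covered by finitely many members of the cover; by Lemmas~\ref{lem:radialsubcurve} and~\ref{lem:radialboolean} it then obtains, for each such compact piece, a \emph{finite} triangulation of that piece with respect to which $Y$ is radial, and finally takes the union of these per-piece triangulations as the new triangulation of $X$. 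Since the final radial description is written with respect to the new triangulation rather than the auxiliary $S$, no comparison between $\rho_S$ and the local $\rho_{S_{U'}}$ is ever needed. To repair your argument you should likewise postpone the choice of the final triangulation until after restricting to compact pieces, rather than fixing $S$ first and checking the definition against it.
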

\begin{proof}
Let~$U\in\cU$. For each $x\in U$, consider a compact analytic domain~$V_{x}$ of~$U$ that is a neighborhood of~$x$ in~$U$. The set $\cV = \{V_{x}  :  U\in\cU, x\in U\}$ is still a covering of~$X$ for the G-topology and, by Lemma~\ref{lem:radialsubcurve}, for each $V\in  \cV$, $Y\cap V$ is a radial subset of~$X$. 

Let~$S$ be a triangulation of~$X$ such that each connected component of~$X\setminus S$ is relatively compact in~$X$ (as in Ducros' original definition, see \cite[5.1.13]{Duc-book}). Up to refining~$S$, we may also assume that each connected component of~$X\setminus S$ that is an annulus has two distinct endpoints.

Let $x\in S^{(2)}$. The set~$\tau^{-1}_{S}(x)$ is compact, hence it may be covered by finitely many elements of~$\cV$. It follows that $\tau^{-1}_{S}(x) \cap Y$ is a radial subset of~$\tau^{-1}_{S}(x)$ with respect to some triangulation~$S_{x}$ of $\tau^{-1}_{S}(x)$ containing~$x$. Note that~$S_{x}$ is finite.

Let $I\in E_{S}$. The set $A_{I} := \tau^{-1}_{S}(I)$ is a relatively compact annulus and its boundary~$\partial A_{I}$ in~$X$ is made of two points of~$S$, say $x_{1}$ and~$x_{2}$.  
For each $i = 1,2$, denote by~$\cD_{i}$ the set of connected components of $\tau_{S}^{-1}(x_{i}) \setminus \{x_{i}\}$ (all of which are open discs) and pick a non-trivial cofinite subset~$\cD'_{i}$ of~$\cD_{i}$. Then, the set 
\[W := A_{I} \cup \bigcup_{i=1,2} \{x_{i}\} \cup \bigcup_{D\in \cD'_{i}} D\]
is a compact strictly $k$-analytic domain~$W$ of~$X$ containing~$A_{I}$ and whose topological skeleton is the closure of~$I$ in~$X$. As before, $W\cap Y$ is a radial subset of~$W$ with respect to some triangulation~$S_{W}$ of~$W$. 
Then, $S_{I} := S_{W} \cap A_{I}$ is a triangulation of~$A_{I}$ and $Y \cap A_{I}$ is a radial subset of~$A_{I}$ with respect to it. Note that~$S_{I}$ is finite.

Finally, set 
\[S' := S^{(1)} \cup \bigcup_{x\in S^{(2)}} S_{x} \cup \bigcup_{I\in E_{S}} S_{I}.\]
It is easy to check that~$S'$ is a triangulation of~$X$ and that~$Y$ is radial with respect to it.
\end{proof}

Let~$\cS$ be an $\cL$-facade of~$X$. Let $\DDD_k(X^\cS)$ denote the collection of definable subsets of $X^\cS$ over~$k$.

\begin{theorem}\label{thm:radialdef} The map 
\[
\delta_{k} \colon Y\in \Rad_k(X) \mapsto \varphi_\cS(Y^\D) \subseteq X^\cS
\]
induces a bijection from $\Rad_{k}(X)$ to $\DDD_{k}(X^\cS)$.
\end{theorem}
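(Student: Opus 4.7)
The strategy is to prove that $\delta_k$ is well-defined, surjective, and injective, reducing each step to an explicit comparison between basic radial subsets of $X$ and basic $\BB$-radial subsets of $X^{\cS'}$ for an appropriate refinement $\cS'$ of $\cS$, and then invoking Theorem~\ref{thm:definablecurves}.

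For well-definedness, let $Y \in \Rad_k(X)$ be witnessed by a triangulation $S_Y$. By Theorem~\ref{thm:semistable} we may find a finite triangulation $S'$ of $X$ containing $S \cup S_Y$, and by Remark~\ref{rem:facade-ref} followed by Lemma~\ref{lem:no_open_disc}, lift it to an $\cL$-facade $\cS'$ refining $\cS$ with $m_{\cS'} \equiv 0$. A first verification, by a short case analysis, shows that $Y$ remains radial with respect to the underlying triangulation of $\cS'$, since $\rho_{S'}$ and $\tau_{S'}$ together determine $\rho_S$ on each piece. Next we translate, piece by piece: on a tube $W_x = \tau_{\cS'}^{-1}(x)$ for $x \in (S')^{(2)}$, the map $\epsilon_x = (\rho_{W_x}, f_x)$ sends a basic radial condition $r_1 \bowtie_1 \rho_{S'}(y) \bowtie_2 r_2$ precisely to a basic $\BB$-radial set of the form of Definition~\ref{def:basicZ_x}; on an edge $I \in E_{S'}$, the explicit description $\rho_{S'}(\eta_{a,r}) = r/|a|$ and $\tau_{S'}(\eta_{a,r}) = \eta_{0,|a|}$ on the coordinate of $f_I$ shows that a condition $f_1(\tau_{S'}(y)) \bowtie_1 \rho_{S'}(y) \bowtie_2 f_2(\tau_{S'}(y))$ with monomial $f_i$ corresponds to an annulus cylinder of type~$R_2$ or~$R_3$, combined with the branch segments~$R_1$ and points~$R_0$ covering the intersection with the skeleton. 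Hence $\varphi_{\cS'}(Y^\D)$ is a $\BB$-radial subset of $X^{\cS'}$, and by Theorem~\ref{thm:definablecurves} its pullback $\varphi_\cS(Y^\D) = \id_{\cS\cS'}^{-1}(\varphi_{\cS'}(Y^\D))$ is $\cL$-definable.

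For surjectivity, given $Z \in \DDD_k(X^\cS)$, Theorem~\ref{thm:definablecurves} furnishes a refinement $\cS'$ of $\cS$ and a $\BB$-radial subset $Y' \subseteq X^{\cS'}$ with $Z = \id_{\cS\cS'}^{-1}(Y')$. Again we may assume $m_{\cS'} \equiv 0$. Inverting the translation of the previous paragraph, each basic $\BB$-radial piece of $Y'$ lying in some $Z_x^{\cS'}$, $f_I(V_I)^\D$ or $f_{x,i}(V_{x,i})^\D$ is the image of a basic radial subset of $W_x$, $\tau^{-1}(I)$, or $V_{x,i}$ respectively; taking their union yields $Y \in \Rad_k(X)$ with $\varphi_{\cS'}(Y^\D) = Y'$, hence $\delta_k(Y) = Z$. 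For injectivity, since $\varphi_\cS$ is a bijection, it suffices to show that $Y \mapsto Y^\D$ is injective on $\Rad_k(X)$. Suppose $Y_1^\D = Y_2^\D$ with both $Y_i$ radial; by taking a common refinement of the witnessing triangulations we may assume both are radial with respect to the same finite triangulation $S$. On each tube $\tau_S^{-1}(x)$, a radial subset is determined by a finite union $\Sigma \subseteq [0,1]$ of intervals with endpoints in $|k|$ (the allowed values of $\rho_S$); the analogous statement on an edge $\tau_S^{-1}(I)$ uses a finite union of regions parametrized by $\tau_S(y)$ with monomial boundary functions. Since type-1 and type-2 points realize exactly the values $\rho_S \in \{0\} \cup (|k^\times|\cap[0,1]) = [0,1]\cap|k|$, the set $Y_i^\D$ recovers $\Sigma \cap |k|$; by density of $|k^\times|$ in $\R_{>0}$, a finite union of intervals with endpoints in $|k|$ is determined by its intersection with $|k|$, and we conclude $Y_1 = Y_2$.

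The main obstacle is carrying out the explicit correspondence between, on the one hand, basic radial subsets defined in terms of $\rho_S$ and $|k^\times|$-monomial functions along the skeleton, and, on the other hand, the concrete polynomial-inequality presentations of basic $\BB$-radial sets ($R_0$--$R_7$) from Definition~\ref{def:radialset}; in particular one must identify the exponent shifts arising from $\rho_S(\eta_{a,r}) = r/|a|$ with the conditions of the form $\rho_1|x-a|^{g_1}\bowtie r$ in~$R_2,R_3$. A secondary subtlety is that $\Rad_k(X)$ is defined using possibly distinct triangulations for different radial sets, so one repeatedly needs to pass to common finite refinements compatible with the facade before applying Theorem~\ref{thm:definablecurves}.
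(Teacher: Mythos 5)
Your argument is correct and, for well-definedness and surjectivity, it is essentially the paper's proof: refine the facade so that its triangulation dominates the witnessing one, translate basic radial conditions through $\epsilon_x$ on tubes and through $\rho_{S}(\eta_{a,r})=r/|a|$ on edges into basic $\BB$-radial sets, and invoke Theorem~\ref{thm:definablecurves} (the paper uses Lemma~\ref{lem:facade-id} directly, which amounts to the same thing); in the converse direction the paper likewise leaves the inverse translation to the reader. Where you genuinely diverge is injectivity: the paper notes that $\delta_k$ preserves boolean combinations, so it suffices that a non-empty radial set has a non-empty type-$(1,2)$ part (Lemma~\ref{lem:radialboolean}~$ii)$), whereas you reconstruct the radial set from its type-$(1,2)$ points via density of $|k|$. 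Your route is fine and is really an unfolding of the same fact, but note that on an edge piece the recovery is two-dimensional: you must check that two finite unions of regions cut out by $|k^\times|$-monomial bounds in $(\tau_S,\rho_S)$ that agree on the ``grid'' of pairs realized by type-$(1,2)$ points coincide, which needs the density/divisibility argument in both coordinates, not only the interval statement you spell out for tubes; the boolean-difference trick of the paper packages this once and for all. Two further small cautions, both at the level of glossing the paper itself indulges in: in surjectivity, a basic $\BB$-radial piece such as a disc or annulus cylinder sitting inside a single residue disc of $f_I(V_I)$ is not the image of a basic radial set for the triangulation underlying $\cS'$ (radial conditions are symmetric over all branches at a skeleton point), so its preimage is only radial after a further refinement placing a vertex inside the relevant disc — harmless, since ``radial'' quantifies over triangulations and radial sets are closed under finite unions, but your phrasing is slightly too strong as stated; and your appeal to Theorem~\ref{thm:semistable} to enlarge $S\cup S_Y$ tacitly assumes the witnessing triangulation $S_Y$ is finite, an assumption the paper's own proof makes implicitly as well, even though Section~\ref{sec:radiality} nominally allows infinite triangulations.
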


\begin{proof} 
Let $Y \in \Rad_{k}(X)$. By definition, $Y$~is a finite union of basic radial sets with respect to some triangulation~$S_{0}$ of~$X$. Let~$\cS'$ be an $\cL$-facade of~$X$ refining~$\cS$ whose underlying triangulation refines~$S_{0}$. Then, for each subset~$Y_{0}$ of~$X$ that is basic radial with respect to~$S_{0}$, the set $\varphi_{\cS'}(Y_{0})$ is $\BB$-radial, hence $\cL_{\BB}$-definable. Using Lemma~\ref{lem:facade-id} and the compatibiliy of the map~$\delta_{k}$ with unions, we deduce that $\delta_{k}(Y)$ belongs to $\DDD_{k}(X^\cS)$.

Let us now show that $\delta_k$ is injective. First note that $\delta_k$ preserves boolean combinations as both the function $\varphi_\cS$ and the restriction $(\cdot)^\D$ preserve boolean combinations. Let $Y_1, Y_2\in \Rad_k(X)$ be such that $Y_1\neq Y_2$ and suppose for a contradiction that $\delta_k(Y_1)=\delta_k(Y_2)$. Without loss of generality we may assume that $Z:=Y_1\setminus Y_2$ is non-empty. We have $\delta_k(Y_1\setminus Y_2)=\delta_k(Y_1)\setminus \delta_k(Y_2)$ and hence $\delta_k(Z)=\emptyset$, which contradicts Lemma~\ref{lem:radialboolean}.  

Let us now show surjectivity. Let $H\in \DDD_k(X^\cS)$. By Theorem \ref{thm:definablecurves} there is a refinement~$\cS'$ of~$\cS$ and a $\BB$-radial set $H'$ such that $H=\id_{\cS\cS'}^{-1}(H')$. Thus $H'$ is a disjoint union of basic $\BB$-radial sets. Working in the refined triangulation underlying $\cS'$, we let the reader verify that every basic $\BB$-radial set (both in the sense of Definition \ref{def:radialset} and Definition \ref{def:basicZ_x}) has a radial pre-image almost by definition. 
\end{proof}

\subsection{Weakly stable fields}

In this section, we prove a technical result about the local ring~$\cO_{X,x}$ at a point~$x$ of type~2, 3 or~4 in a $k$-analytic curve~$X$, namely that it is weakly stable. Let us first recall the definition (see \cite[Definitions 3.5.2/1 and 2.3.2/2]{BGR}). 

\begin{definition}
A valued field~$K$ is said to be \emph{weakly stable} if, for each finite extension~$L$ of~$K$ endowed with the spectral norm, each sub-$K$-vector space of~$L$ is closed. 
\end{definition}

\begin{lemma}\label{lem:discTp}
Assume that $k$ is of characteristic $p>0$. Let $a\in k$ and $r \in \R_{>0}$. Then, each element $f\in \cO(\DD(a,r))$ may be uniquely written as
\[f = f_{0}+ f_{1} \cdot T + \dotsb + f_{p-1} \cdot T^{p-1},\]
where, for each $i\in \{0,\dotsc,p-1\}$, $f_{i}$ is an element of $\cO(\DD(a,r))$ of the form $f^\dag_{i}((T-a)^p)$. Moreover, we have
\[\max_{0\le i\le p-1} ( |f_{i}(\eta_{a,r})| \, r^i\, (r/s)^{p-1-i}  ) \le |f(\eta_{a,r})| \le \max_{0\le i\le p-1}( |f_{i}(\eta_{a,r})| \, s^i ),\]
where $s := \max(|a|,r)$. 
\end{lemma}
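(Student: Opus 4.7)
The plan is to pass through the Taylor expansion of~$f$ centered at~$a$, which is orthogonal for the Gauss norm at~$\eta_{a,r}$, and then perform a triangular change of basis from powers of~$(T-a)$ to powers of~$T$. Writing $f = \sum_{n \ge 0} c_n (T-a)^n$ with $|c_n|\, r^n \to 0$, I split by residue class modulo~$p$ and set $h_i := \sum_{q} c_{pq+i}(T-a)^{pq}$ for $0 \le i \le p-1$. Since $(T-a)^p = T^p - a^p$ in characteristic~$p$, each $h_i$ is of the form $h_i^\flat((T-a)^p)$ with $h_i^\flat \in \cO(\DD(0,r^p))$, and
\[f = \sum_{i=0}^{p-1} h_i\, (T-a)^i, \qquad |f(\eta_{a,r})| = \max_i |h_i(\eta_{a,r})|\, r^i.\]

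I then change basis using $(T-a)^i = \sum_{j=0}^i \binom{i}{j}(-a)^{i-j}\, T^j$ to obtain
\[f = \sum_{j=0}^{p-1} f_j\, T^j \qquad \text{with} \qquad f_j = \sum_{i=j}^{p-1} \binom{i}{j}(-a)^{i-j}\, h_i,\]
so each $f_j$ is again a series in~$(T-a)^p$. Conversely, $T^i = \sum_{j=0}^i \binom{i}{j}\,a^{i-j}\,(T-a)^j$ gives the inverse relation $h_j = \sum_{i\ge j} \binom{i}{j}\,a^{i-j}\, f_i$. Since both transformations are upper triangular with unit diagonal, this establishes a bijection between the two decompositions, and uniqueness of the~$f_j$ follows from the uniqueness of the Taylor coefficients.

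For the norm estimates, the upper bound is immediate from the ultrametric inequality applied to $f = \sum_i f_i T^i$ together with $|T(\eta_{a,r})| = s$. For the lower bound, the formula for~$f_i$ in terms of the~$h_j$'s yields $|f_i(\eta_{a,r})| \le \max_{i \le j \le p-1} |a|^{j-i}\, |h_j(\eta_{a,r})|$, so, combined with the orthogonality identity displayed above, it suffices to verify the numerical inequality
\[|a|^{j-i}\, r^i\, (r/s)^{p-1-i} \le r^j \qquad (i \le j \le p-1),\]
which rearranges to $(|a|/r)^{j-i} \le (s/r)^{p-1-i}$. A case analysis handles this: if $|a| \le r$ then $s = r$ and the left-hand side is $\le 1 =$ right-hand side; if $|a| > r$ then $s = |a|$ and both sides are powers of~$|a|/r > 1$, so the inequality reduces to $j - i \le p - 1 - i$, which holds since~$j \le p - 1$.

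The main difficulty lies in reconciling the two natural bases of $\cO(\DD(a,r))$ as a rank-$p$ free module over the subring of series in~$(T-a)^p$: the basis $\{(T-a)^i\}$ diagonalizes the Gauss norm at~$\eta_{a,r}$, while the statement is phrased in the basis~$\{T^i\}$. The weight factor $(r/s)^{p-1-i}$ in the lower bound quantifies precisely this discrepancy and, as the case analysis reveals, is sharp in the sense that it is exactly what allows one to still recover each $f_i$ from the norm of~$f$ alone.
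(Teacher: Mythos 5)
Your proposal is correct, and its first half (the Taylor expansion at $a$ split by residue class modulo $p$, the unit-triangular change of basis between $\{(T-a)^i\}$ and $\{T^i\}$, the orthogonality identity $|f(\eta_{a,r})|=\max_i|h_i(\eta_{a,r})|\,r^i$, and the ultrametric upper bound using $|T(\eta_{a,r})|=s$) coincides with the paper's argument, with your $h_i$ playing the role of the paper's $g_i$. Where you diverge is the lower bound. The paper first observes that the top coefficient satisfies $f_{p-1}=g_{p-1}$, so that $|f(\eta_{a,r})|\ge |f_{p-1}(\eta_{a,r})|\,r^{p-1}$ comes for free from orthogonality, and then reduces the case of a general index $i$ to this one by multiplying $f$ by $T^{p-1-i}$: rewriting $T^p=(T-a)^p+a^p$ keeps the coefficients in the subring of series in $(T-a)^p$, so $f_i$ becomes the coefficient of $T^{p-1}$ in the decomposition of $T^{p-1-i}f$, and applying the top-coefficient bound together with $|T(\eta_{a,r})|=s$ gives $s^{p-1-i}|f(\eta_{a,r})|\ge |f_i(\eta_{a,r})|\,r^{p-1}$. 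You instead bound $|f_i(\eta_{a,r})|\le\max_{j\ge i}|a|^{j-i}|h_j(\eta_{a,r})|$ directly from the triangular formula (using $|\binom{j}{i}|\le 1$) and then verify the elementary inequality $(|a|/r)^{j-i}\le(s/r)^{p-1-i}$ by the two cases $|a|\le r$ and $|a|>r$; this is a perfectly valid alternative, slightly more computational, trading the paper's multiplication-by-$T^{p-1-i}$ trick for an explicit case analysis, and it has the small expository benefit of making visible why the weight $(r/s)^{p-1-i}$ is the exact loss incurred by passing from the orthogonal basis $\{(T-a)^i\}$ to $\{T^i\}$.
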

\begin{proof}
Let $f\in \cO(\DD(a,r))$. It may be written as a series in powers of $T-a$, hence in the form 
\[ f = g^\dag_{0}((T-a)^p) + g^\dag_{1}((T-a)^p) \cdot (T-a) + \dotsb + g^\dag_{p-1}((T-a)^p) \cdot (T-a)^{p-1} \]
in a unique way. For each $i\in \{1,\dotsc,p-1\}$, the series $g_{i} := g^\dag_{i}((T-a)^p)$ is obtained from the series~$f$ by removing the coefficients of order different from~$i$ modulo~$p$ and dividing by $(T-a)^i$. Since the radius of convergence cannot decrease under these operations, $g_{i}$~converges on $\DD(a,r)$. The first part of the statement follows easily with, for each $i\in \{1,\dotsc,p-1\}$, 
\[ f_{i} = \sum_{j=i}^{p-1} {j \choose i} (-a)^{j-i} g_{j} .\]

Thanks to the non-archimedean inequality, we have
\[  |f(\eta_{a,r})| \le \max_{0\le i\le p-1}( |f_{i}(\eta_{a,r})| \, |T(\eta_{a,r})|^i ) = \max_{0\le i\le p-1}( |f_{i}(\eta_{a,r})| \, s^i ).\]
To prove the remaining inequality, note that we have $f_{p-1} = g_{p-1}$, hence 
\[|f(\eta_{a,r})| = \max_{0\le i\le p-1} (|g_{i}(\eta_{a,r})| \, r^i) \ge |f_{p-1}(\eta_{a,r})|\, r^{p-1}.\] 
Let $i \in \{1,\dotsc,p-1\}$. We have 
\begin{align*}
T^{p-1-i} f &= \sum_{j=i+1}^{p-1} T^p f_{j} \cdot T^{j-i-1} + \sum_{j=0}^{i-1} f_{j}  \cdot T^{j+ p -i-1} + f_{i} \cdot T^{p-1}\\
& = \sum_{j=i+1}^{p-1} ((T-a)^p + a^p) f_{j}  \cdot T^{j-i-1} + \sum_{j=0}^{i-1} f_{j}  \cdot T^{j + p-i-1} + f_{i} \cdot T^{p-1},
\end{align*}
which is the decomposition of the statement. Applying the inequality above, we get
\[ s^{p-1-i}\, |f(\eta_{a,r})| = |(T^{p-1-i} f)(\eta_{a,r})| \ge |f_{i}(\eta_{a,r})|\, r^{p-1}.\] 
The result follows.
\end{proof}

\begin{theorem}\label{thm:weaklystable}
Let~$X$ be a $k$-analytic curve and let $x\in X$ be a point of type~2, 3 or~4. Then, the field~$\cO_{X,x}$ is weakly stable.

In particular, for each finite valued extension~$L$ of~$\cO_{X,x}$, we have 
\[ [L \colon \cO_{X,x}] = [\hat L \colon \hat \cO_{X,x}].\]
\end{theorem}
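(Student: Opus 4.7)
The ``in particular'' statement is a standard consequence of weak stability (see \cite[3.6.2/4]{BGR}), so we focus on weak stability, equivalent to the equality $[L:K] = [\hat L : \hat K]$ for every finite valued extension $L/K$ where $K := \cO_{X,x}$. Since $x$ is of type 2, 3, or 4, the seminorm $|\cdot|_x$ is a norm on $\cO_{X,x}$, so $K$ is a valued field with completion $\sH(x)$; moreover, $K$ is Henselian (standard for stalks of Berkovich structure sheaves at non-rigid points). If $\mathrm{char}(k) = 0$, every finite extension $L/K$ is separable and the bijection between finite \'etale covers of $X$ at $x$ and finite separable extensions of $\hat K$ yields the desired equality.

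Assume now $\mathrm{char}(k) = p > 0$. By factoring $L/K$ as a separable extension followed by a purely inseparable one (the separable part being handled as above) and inducting on the degree (noting that for any point $y$ above $x$ in a finite cover, $\cO_{Y,y}$ is still a local ring of the same type), it suffices to handle $L = K(\alpha)$ with $\alpha^p = a \in K$ and $\alpha \notin K$. We must show $\alpha \notin \sH(x)$. Suppose otherwise and pick $g_n \in K$ with $g_n \to \alpha$, so $g_n^p \to a$ in $\sH(x)$. By Propositions~\ref{prop:type3an} and~\ref{prop:type4an} for types $3,4$, or Proposition~\ref{prop:type2generican} together with Lemma~\ref{lem:liftresiduemorphism} for type $2$ (using a finite morphism to $\DD_k$), we identify a chart around $x$ with an analytic domain of a disc $\DD(a_0, r)$ on which $a$ and eventually each $g_n$ are defined, with $x$ corresponding to $\eta_{a_0, r}$. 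Apply Lemma~\ref{lem:discTp} to write $a = \sum_{i=0}^{p-1} a_i T^i$ and $g_n = \sum_{i=0}^{p-1} g_{n,i} T^i$ with each component a function of $(T - a_0)^p$. In characteristic $p$, $g_n^p = \sum_i g_{n,i}^p T^{ip}$, which, regrouped in the decomposition of Lemma~\ref{lem:discTp}, has vanishing $T^i$-component for $1 \le i \le p-1$. Hence the $T^i$-component of $a - g_n^p$ equals $a_i$ for such $i$, and the lower bound in Lemma~\ref{lem:discTp} gives
\[
|a_i|_x \cdot r^i (r/s)^{p-1-i} \le |a - g_n^p|_x \to 0,
\]
forcing $|a_i|_x = 0$. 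Since $|\cdot|_x$ is a norm on $\cO_{X,x}$, we conclude $a_i = 0$ for $1 \le i \le p-1$, so $a$ depends only on $(T - a_0)^p$.

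Writing $a = b((T - a_0)^p)$ as a convergent power series, the perfectness of $k$ (algebraically closed of characteristic $p$) lets us extract a $p$-th root of each coefficient of $b$, producing a power series $c$ with $c^p = b$ converging on the same disc. Thus $a = (c((T-a_0)^p))^p$ is a $p$-th power in $\cO_{X,x}$, contradicting $\alpha \notin K$ and completing the argument. The principal obstacle is the type~2 case, where a tube at $x$ is not isomorphic to a disc: one must either invoke the transfer of weak stability under finite extensions (reducing to $\cO_{\AA^{1,\an}_{k}, \eta_{0,1}}$ via the finite morphism to $\DD_k$) or adapt Lemma~\ref{lem:discTp} to the Mittag--Leffler-type decomposition of functions on a tube, both of which require some care.
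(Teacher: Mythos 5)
Your core computation is sound and is in fact the same mechanism the paper uses for type~4 points: decompose into the components along $1,T,\dots,T^{p-1}$, use the lower bound of Lemma~\ref{lem:discTp} to kill the components of index $1\le i\le p-1$, and extract a $p$-th root coefficientwise using perfectness of~$k$. But there are genuine gaps. The main one is your chart reduction: for $x$ of type~2 or~3 there is no identification of a neighborhood of~$x$ with an analytic domain of a disc ``with $x$ corresponding to $\eta_{a_0,r}$'' on which the germs $a$ and $g_n$ become functions on $\DD(a_0,r)$. Elements of $\cO_{X,x}$ at such points are functions on annuli (Laurent series) or Swiss cheeses (Mittag--Leffler decompositions), e.g. $1/(T-b)$ with $|b-a_0|=r$ lies in $\cO_{\AA^{1,\an}_k,\eta_{a_0,r}}$ but is not analytic on $\DD(a_0,r)$, so Lemma~\ref{lem:discTp} simply does not apply as written. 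You flag this for type~2 at the very end, but not for type~3, and neither of your proposed remedies is carried out; moreover the first one (pass to the finite morphism onto $\DD_k$ and reduce to $\cO_{\AA^{1,\an}_k,\eta_{0,1}}$) is the correct direction of transfer of weak stability but leaves you facing exactly the same problem, since the local ring at $\eta_{0,1}$ still consists of Swiss-cheese functions. Even for type~4, where the argument can be salvaged, your statement that $x$ corresponds to $\eta_{a_0,r}$ is false: $x$ is a type~4 point of the disc, $|\cdot|_x$ is the infimum over a nested family of discs $\DD(a_i,r_i)$, and one needs a lower bound $r_i^{j}(r_i/s_i)^{p-1-j}\ge C>0$ uniform over the family (as in the paper, which normalizes the family so that $0\notin\DD(a_i,r_i)$) before the estimate yields $|a_j|_x=0$.

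There are also gaps in your reduction. The tower/induction argument requires knowing the degree-preservation statement not just for $K=\cO_{X,x}$ but for all intermediate fields, and your justification is the unproved parenthetical claim that every finite valued extension of $\cO_{X,x}$ is again $\cO_{Y,y}$ for a curve $Y$ and a point $y$ of the same type; this is true but needs Berkovich's results on germs of finite morphisms and $(h_*\cO_Y)_x=\prod_{y\mapsto x}\cO_{Y,y}$, none of which you invoke. (Your opening equivalence between weak stability and $[L:K]=[\hat L:\hat K]$ also needs the Henselianity of $\cO_{X,x}$, or a spectral-norm formulation, to be literally correct.) The paper's route avoids all of this: characteristic~0 is dispatched by \cite[Proposition~3.5.1/4]{BGR}; Noether normalization reduces to $x\in\AA^{1,\an}_k$ since weak stability passes to finite extensions; \cite[Theorem~3.5.3/1]{BGR} reduces weak stability in characteristic~$p$ to showing that $\cO_x^{1/p}$ with the spectral norm is weakly $\cO_x$-cartesian, which is exactly the uniform two-sided estimate of Lemma~\ref{lem:discTp} over the nested family of discs; and for types~2 and~3 the result is quoted from \cite[Th\'eor\`eme~4.3.13]{Duc-book}, with only a sketch of the Laurent/Mittag--Leffler adaptation. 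To make your proof complete you would have to either supply those adaptations of Lemma~\ref{lem:discTp} for annuli and Swiss cheeses, or, like the paper, cite the known type~2/3 case and restrict your computation to type~4, fixing the nested-family issue there.
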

\begin{proof}
Since a finite extension of a weakly stable field is still weakly stable, we may assume that $x$~belongs to~$\AA_k^{1,\an}$, by Noether normalization. By \cite[Proposition~3.5.1/4]{BGR}, fields of characteristic~0 are weakly stable, so we may assume that $k$~is of characteristic~$p>0$.

Assume that $x$ is of type~4. Then, there exists a nested family of closed discs $(\DD(a_{i},r_{i}))_{i\in I}$ such that the absolute value associated to~$x$ on~$k[T]$, where $T$ is a coordinate on~$\AA_k^{1,\an}$, is the infimum of the absolute values associated to the points~$\eta_{a_{i},r_{i}}$. 

The $r_{i}$'s are bounded above and below by a positive constant independently of~$i$. We can always remove some discs in the family so as to assume that none of them contains the point~0. In this case, we have $|a_{i}| \ge r_{i}$ and $|a_{i}|$ is actually independent of~$i$. For later use, let us choose $C,D \in \R_{>0}$ such that for all $i\in I$ and $j\in \{0,\dotsc,p-1\}$, we have 
\[C \le r_{i}^j (r_{i}/|a_{i}|)^{p-1-j} \textrm{ and } |a_{i}|^j \le D.\]

The family of discs $(\DD(a_{i},r_{i}))_{i\in I}$ forms a basis of neighborhoods of~$x$, so that the local ring $\cO_{x} := \cO_{\AA^{1,\an}_{k},x}$ at~$x$ may be written as
\[\cO_{x} = \varinjlim_{i\in I} \cO(\DD(a_{i},r_{i})).\]
Denote by $\cO_{x}^{1/p}$ the ring of $p^\textrm{th}$-roots of~$\cO_{x}$. By \cite[Theorem~3.5.3/1]{BGR}, to prove that~$\cO_{x}$ is weakly stable, it is enough to prove that $\cO_{x}^{1/p}$ endowed with the spectral norm is weakly $\cO_{x}$-cartesian, that is to say the spectral norm on~$\cO_{x}^{1/p}$ is equivalent to the norm induced by some $\cO_{x}$-basis (see \cite[Theorem~2.3.2/1]{BGR}).

Let $i\in I$. Consider the ring $\cO(\DD(a_{i},r_{i}))^{1/p}$ of $p^\textrm{th}$-roots of elements of $\cO(\DD(a_{i},r_{i}))$. Since $k$~is perfect, it consists of the series of the form 
\[\sum_{n\ge 0} c_{n} (T-a_{i})^{n/p} = \sum_{n\ge 0} c_{n} \big(T^{1/p}-a_{i}^{1/p}\big)^{n}\]
with coefficients in~$k$ such that $\lim_{n\to \infty} |c_{n}| r_{i}^{n/p} = 0$. We may identify it with $\cO(\DD(a_{i}^{1/p},r_{i}^{1/p}))$. Under this identification, its subring $\cO(\DD(a_{i},r_{i}))$ is sent to the ring of elements whose expansion only involves power of the variable that are multiples of~$p$. It now follows from Lemma~\ref{lem:discTp} that we have 
\[\cO(\DD(a_{i},r_{i}))^{1/p} = \bigoplus_{j=0}^{p-1} \cO(\DD(a_{i},r_{i})) \cdot T^{j/p}.\]
Moreover, denoting by~$\pi_{j}$ the projection onto the $j^\textrm{th}$ factor, for each $f \in \cO(\DD(a_{i},r_{i}))^{1/p}$, we have
\[ C \max_{0\le j\le p-1} \big(|\pi_{j}(f)(\eta_{a_{i},r_{i}})|)  \le |f(\eta_{a_{i}^{1/p},r_{i}^{1/p}})| \le D \max_{0\le j\le p-1} \big(|\pi_{j}(f)(\eta_{a_{i},r_{i}})|).\] 
Note that the direct sum decomposition is compatible with the restriction maps between the different discs. 

We have 
\[\cO_{x}^{1/p} = \varinjlim_{i\in I} \cO(\DD(a_{i},r_{i}))^{1/p} = \bigoplus_{j=0}^{p-1} \cO_{x} \cdot T^{j/p}.\]
The infimum of the norms of the discs $\DD(a_{i}^{1/p},r_{i}^{1/p})$ induces an absolute value on~$\cO_{x}^{1/p}$ that extends the absolute value on~$\cO_{x}$, hence it is the spectral nom of~$\cO_{x}^{1/p}$. The inequality above ensures that it is equivalent to the norm induced by the basis $(1,T^{1/p},\dotsc,T^{(p-1)/p})$, and the result follows.

\medbreak

We will not give a complete proof for~$x$ of type~2 or~3. Note that the result is already known in this case (see \cite[Th\'eor\`eme~4.3.13]{Duc-book}). For the sake of completeness, let us point out that it could be proven in a way that is completely similar to what we have done here. Let us only explain how to describe explicitly the local ring at such a point. 

Assume that $x$~is of type~3, hence of the form $\eta_{a,r}$. The closed annuli $\DD(a,s_{2}) \setminus \DD^-(a,s_{1})$, with $s_{1} < r < s_{2}$, form a basis of neighborhoods of~$x$ and, on each of those annuli, a global analytic function may be written as a power series
\[\sum_{n\in \Z} c_{n} (T-a)^n\]
with suitable convergence conditions.

Assume that $x$~is of type~2. Up to changing coordinates, we may assume that it is the point~$\eta_{1}$. Fix a family of representatives~$R$ of~$\tilde k$ in~$k^\circ$. The closed Swiss cheeses $\DD(0,s_{2}) \setminus \bigcup_{a \in R_{0}} \DD^-(a,s_{1})$, where $R_{0}$~is a finite subset of~$R$ and $s_{1} < 1 < s_{2}$, form a basis of neighborhoods of~$x$ and, on each of those Swiss cheeses, a global analytic function may be written as a power series
\[\sum_{n\ge 0} c_{n} \, T^n + \sum_{a\in R_{0}} \sum_{n\ge 1} c_{a,n} (T-a)^{-n}\]
with suitable convergence conditions (see \cite[Proposition~2.2.6]{FvdP}).

\medbreak

The last part of the statement follows from \cite[Proposition~2.3.3/6]{BGR}.
\end{proof}


\subsection{Base change} For any complete rank~1 valued extension~$L$ of~$k$, we denote by~$X_{L}$ the scalar extension of~$X$ to~$L$ and by $\pi_{L}\colon X_L\to X$ the natural projection morphism. 

Since~$k$ is algebraically closed, by \cite[Corollaire~3.14]{P}, for each~$x \in X$, the tensor norm on $\sH(x)\ho_{k} L$ is multiplicative, hence gives rise to a point of~$X_{L}$. We denote the latter by~$\sigma_{L}(x)$.

\begin{example}\label{ex:sigmaetaar}
If $X = \AA_k^{1,\an}$, for each $a\in k$ and $r\in \R_{\ge 0}$, we have $\sigma_{L}(\eta_{a,r}) = \eta_{a,r}$.
\end{example}

\begin{theorem}\label{thm:base_change} Let~$L$ be a complete rank~1 valued extension of~$k$. The map $\sigma_L\colon x\in X\mapsto \sigma_{L}(x) \in X_L$ is a section of~$\pi_{L}$ with the following properties:
\begin{enumerate}[$i)$]
\item if $x \in X^{(1)}$, then $\sigma_{L}(x) \in X_{L}^{(1)}$; 
\item if $x \in X^{(2)}$, then $\sigma_{L}(x) \in X_{L}^{(2)}$ and we have an isomorphism $\cC_{x} \otimes_{\tilde k} \tilde L \xrightarrow[]{\sim} \cC_{\sigma_L(x)}$, hence an isomorphism $\cC_{x}(\tilde L) \xrightarrow[]{\sim} \cC_{\sigma_L(x)}(\tilde L)$; 
\item if $S$ is a triangulation of $X$ with skeleton~$\Gamma_{S}$, then $\sigma_L(S)$ is a triangulation of $X_L$ with skeleton $\sigma_{L}(\Gamma_{S})$; 
\item if~$L$ is algebraically closed, given a facade $\cS$ of $X$, there is an associated facade $\cS_L$ of~$X_L$.
\end{enumerate}
\end{theorem}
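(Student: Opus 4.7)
The plan is to verify the four assertions in order, relying in each case on the explicit description of $\sigma_L(x)$ as the point associated to the multiplicative tensor seminorm on $\sH(x)\ho_k L$ (which, as recalled, is indeed multiplicative because $k$ is algebraically closed, by~\cite[Corollaire~3.14]{P}).

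For~(i), if $x \in X^{(1)}$ then $\sH(x) = k$, so $\sH(x) \ho_k L \simeq L$ and $\sigma_L(x)$ is of type~$1$. For~(ii), the core computation is the identification
\[\widetilde{\sH(x)\ho_k L} \;\simeq\; \widetilde{\sH(x)}\otimes_{\tilde k}\tilde L \mathand |\sH(x)\ho_k L|\;=\;|\sH(x)|\cdot |L|,\]
valid whenever the tensor norm is multiplicative. Since $x$ is of type~$2$, $\widetilde{\sH(x)}$ has transcendence degree~$1$ over $\tilde k$ and $|\sH(x)^{\times}| = |k^\times|$; base change then gives transcendence degree~$1$ over $\tilde L$ and $|\sH(\sigma_L(x))^{\times}| = |L^\times|$, so $\sigma_L(x)\in X_L^{(2)}$. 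The displayed isomorphism of residue function fields translates, at the level of projective models, into the required isomorphism $\cC_x\otimes_{\tilde k}\tilde L \simto \cC_{\sigma_L(x)}$ and hence the stated bijection of $\tilde L$-points.

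For~(iii), the key geometric input is that scalar extension is compatible with the decomposition $X\setminus S = \bigsqcup D_\alpha \sqcup \bigsqcup A_\beta$ into open discs and annuli. For each such $D_\alpha$ (resp. $A_\beta$), $\pi_L^{-1}(D_\alpha)$ (resp. $\pi_L^{-1}(A_\beta)$) is itself an open disc (resp. open annulus) over $L$, with skeleton sent to $\sigma_L(\Gamma_{A_\beta})$; this follows from Lemmas~\ref{lem:kpoints} and~\ref{lem:kpointsannuli} applied after extending scalars. At each $x \in S^{(2)}$ the fibre $\pi_L^{-1}(x) \setminus \{\sigma_L(x)\}$ decomposes, via~(ii) and the branch/closed-point bijection of Section~\ref{sec:residuecurve}, as a disjoint union of open discs indexed by the closed points of $\cC_{\sigma_L(x)}(\tilde L)$ which do not lift from $\cC_x(\tilde k)$. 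Gluing these pieces along $\sigma_L(S)$ realises $X_L \setminus \sigma_L(S)$ as a disjoint union of open discs and annuli with skeleton exactly $\sigma_L(\Gamma_S)$.

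For~(iv), assume $L$ is algebraically closed, so that~(i)--(iii) apply on the $L$-side and $L$ gives an $\cL$-structure in the categories of Theorem~\ref{thm:facade_existence}. Given a facade $\cS$ with skeleton $(S,E)$, define $\cS_L$ by taking $\sigma_L(S)$ as underlying triangulation and, for each edge $\sigma_L(I)$ (resp. tube at $\sigma_L(x)$, resp. leaf $V_{x,i}$), extending $f_I$ (resp. $f_x$, $f_{x,i}$) by scalar extension to $\pi_L^{-1}(V_I)$ (resp. the corresponding $L$-domains); the extra open discs identified in~(iii) are added as new leaves $V_{\sigma_L(x),j}$, each equipped with its natural coordinate on the open unit disc over $L$ coming from the isomorphism $\cC_x\otimes_{\tilde k}\tilde L \simto \cC_{\sigma_L(x)}$ of~(ii). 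The definability conditions~(c),(d),(e),(f) transfer from $\mathbf{k}$ to the induced structure $\mathbf{L}$ because the defining formulas are preserved by scalar extension.

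The main obstacle will be~(iii), specifically the accounting of the new branches at the images $\sigma_L(x)$ of type~$2$ vertices: one must check that these are exactly parametrised by $\cC_{\sigma_L(x)}(\tilde L) \setminus \beta_x(\cB_x)$ and that the attached pieces are genuine open discs with boundary point $\sigma_L(x)$ (not larger analytic domains), so that the combinatorics of the skeleton is $\sigma_L(\Gamma_S)$ and nothing more. Once this bookkeeping is done, (iv) is essentially a formal consequence of the preservation of definability under scalar extension to an algebraically closed complete valued field.
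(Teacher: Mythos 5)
Your overall strategy (argue everything by explicit base change) is reasonable, and it differs from the paper, which disposes of~ii) and~iii) by citing \cite[Lemma~1.15]{PV} and \cite[Corollary~2.16]{PP} and leaves~iv) as a routine verification. But as a proof your text is incomplete exactly at the points where the paper leans on those references. In~ii) the displayed identifications $\widetilde{\sH(x)\ho_k L}\simeq \widetilde{\sH(x)}\otimes_{\tilde k}\tilde L$ and $|\sH(x)\ho_{k}L| = |\sH(x)|\cdot|L|$ are asserted, not proved; they are essentially the content of the cited lemma, and one must also pass from this residue \emph{ring} to the residue \emph{field} of $\sH(\sigma_L(x))$ (the fraction field of that domain) before speaking of projective models. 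In~iii) the decisive step --- that $\pi_L^{-1}(x)\setminus\{\sigma_L(x)\}$, for $x$ of type~2, is a disjoint union of open discs with boundary $\{\sigma_L(x)\}$ indexed by the closed points of $\cC_{\sigma_L(x)}$ not coming from $\cC_x(\tilde k)$ --- is the whole point, and you explicitly flag it as ``the main obstacle'' without carrying it out. (Also, the appeal to Lemmas~\ref{lem:kpoints} and~\ref{lem:kpointsannuli} is misplaced: those lemmas describe images of discs under morphisms, whereas the fact that $\pi_L^{-1}(D_\alpha)=D_{\alpha}\times_k L$ is again an open disc of the same radius is immediate from the definition of base change.)

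There is moreover a concrete error in your construction for~iv). You propose to attach the ``extra open discs identified in~iii)'' as new leaves $V_{\sigma_L(x),j}$. Whenever $\tilde L\supsetneq\tilde k$ there are \emph{infinitely} many such discs (one for each closed point of $\cC_{\sigma_L(x)}$ not defined over $\tilde k$), while Definition~\ref{def:facade} demands an integer $m(x)\geqslant 0$ and a decomposition $\tau^{-1}(x)=W_x\sqcup\bigsqcup_{i=1}^{m(x)}V_{x,i}$ into the tube and \emph{finitely many} definable leaves; an infinite family of leaves is not allowed, and excising those discs from the tube would destroy its affinoidicity. In fact no restructuring is needed: since $x\in W_x$, all the new discs lie inside $\pi_L^{-1}(W_x)=(W_x)_L$, which is again a tube centered at $\sigma_L(x)$ (its Shilov boundary is $\{\sigma_L(x)\}$ because the tensor norm is multiplicative, and the connected components of $(W_x)_L\setminus\{\sigma_L(x)\}$ are open discs by the very statement of~iii)). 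The correct facade $\cS_L$ --- and the one the paper intends --- is obtained by taking $W_{\sigma_L(x)}:=(W_x)_L$, $V_{\sigma_L(x),i}:=(V_{x,i})_L$, $V_{\sigma_L(I)}:=(V_I)_L$ and the base-changed maps $f_\bullet$ and $\rho_{W_x}$ (using the isomorphism $\cC_x\otimes_{\tilde k}\tilde L\simto\cC_{\sigma_L(x)}$ of~ii) to identify the target of the reduction map), checking that conditions~(a)--(f) are preserved under extension of scalars; the new branches are absorbed into the tube rather than listed as leaves.
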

\begin{proof}
Point i) is clear. Point~ii) follows from \cite[Lemma~1.15]{PV}. Point~iii) follows from~\cite[Corollary~2.16]{PP}.  For point~iv), we let the reader check that we obtain a facade by base-changing all the data to~$L$.
\end{proof}

\begin{corollary}\label{cor:radial_commute} Let $\cS$ be an $\cL$-facade of $X$. Let $\mathbf{L}$ be a rank~1 elementary extension of~$\mathbf{k}$. 

For $x\in S^{(2)}$, the isomorphism $\cC_{x}(\tilde L) \xrightarrow[]{\sim} \cC_{\sigma_L(x)}(\tilde L)$ given by point~ii) in Theorem \ref{thm:base_change} provides a canonical identification of  $Z_x^\cS(\mathbf{L})$ and $Z_{\sigma_L(x)}^{\cS_L}$. Therefore, we have a canonical identification of $X^\cS(\mathbf{L})$ and $X_L^{\cS_L}$.

Moreover, the following diagram commutes:
\[
\begin{tikzcd}
X_L^{\D} \ar{r}{\varphi_{\cS_L}} & X^{\cS_L}\ar[equal]{r} &  X^{\cS}(\mathbf{L}) \\
X^{\D} \ar{u}{\sigma_L}  \ar{r}{\varphi_{\cS}} & X^{\cS} \ar{ru}{\subseteq}
\end{tikzcd}
\texto{.}{-1.5pc}
\]

If~$X'$ is a strictly $k$-analytic curve, $\cS'$ an $\cL$-facade of~$X'$ and $h \colon X \to X'$ a $k$-analytic morphism such that the map $h_{\cS\cS'} \colon X^\cS \to X'{}^{\cS'}$ is definable, then the map $h_{L,\cS_{L}\cS'_{L}} \colon X_{L}^{\cS_{L}} \to X_{L}'{}^{\cS'_{L}}$ is definable too and we have a canonical identification $h_{L,\cS_{L},\cS'_{L}} = h_{\cS\cS'}(\mathbf{L})$.
\end{corollary}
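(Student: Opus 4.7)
The plan is to unpack all definitions on both sides and to verify compatibility piece by piece, exploiting the fact (Example~\ref{ex:sigmaetaar} and Theorem~\ref{thm:base_change}) that $\sigma_L$ sends a point of type $i$ to a point of type $i$ and that on the Berkovich affine line it is simply $\eta_{a,r} \mapsto \eta_{a,r}$. First I would describe the facade $\cS_L$ explicitly: its triangulation is $\sigma_L(S)$, its edges are the $\sigma_L(I)$, and its structural morphisms $f_{I,L}$, $f_{x,L}$, $f_{x,i,L}$ are obtained from $f_I$, $f_x$, $f_{x,i}$ by base change, which is possible because the data defining~$\cS$ are algebraic/analytic morphisms to open discs and annuli in $\AA^{1,\an}_k$. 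Since $\DD_L^{\Def}$, $\AA^{1,\Def}_L$, etc., are precisely the $\mathbf{L}$-points of their $k$-counterparts viewed as $\cL$-definable sets, the pieces $f_I(V_I)^{\Def}$ and $f_{x,i}(V_{x,i})^{\Def}$ of $X^{\cS}$ and $X_L^{\cS_L}$ match canonically at the level of $\mathbf{L}$-points.

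For the piece $Z_x^{\cS}$ the identification is less tautological: taking $\mathbf{L}$-points in formula~\eqref{eq:Z} gives
\[Z_x^\cS(\mathbf{L}) = \{(\alpha,\eta_{a,r}) \in \cU_x(\tilde L) \times \DD_L^\Def  :  \tilde f_x(\alpha) = \red(\eta_{a,r})\},\]
while $Z_{\sigma_L(x)}^{\cS_L}$ uses $\cU_{\sigma_L(x)}(\tilde L)$ and the map $\tilde f_{\sigma_L(x),L}$. The isomorphism $\cC_x \otimes_{\tilde k} \tilde L \simto \cC_{\sigma_L(x)}$ from part~ii) of Theorem~\ref{thm:base_change} identifies $\cU_x(\tilde L)$ with $\cU_{\sigma_L(x)}(\tilde L)$, and the compatibility $\tilde f_{\sigma_L(x),L} = \tilde f_x \otimes \tilde L$ (coming from the base-change description of $\cS_L$) ensures the defining equations match. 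Glueing the identifications over the pieces of the disjoint union~\eqref{eq:defset} yields the canonical bijection $X^\cS(\mathbf{L}) = X_L^{\cS_L}$.

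To check the commutativity of the square, I would run through the decomposition of $X$. On points of type~$1$ or~$2$ belonging to~$S$, both $\varphi_\cS$ and $\varphi_{\cS_L}\circ \sigma_L$ send~$x$ to the canonical copy of $\eta_{1,0}$ or $\eta_{0,1}$. On $V_I^\Def$, $V_{x,i}^\Def$, and $(W_x\setminus\{x\})^\Def$, the relevant maps are $f_I$, $f_{x,i}$, and $\epsilon_x=(\rho_{W_x},f_x)$ respectively, all of which commute with $\sigma_L$ by construction of $\cS_L$ (for the residue-curve component use part~ii) of Theorem~\ref{thm:base_change}; for the morphisms to~$\DD$ use the fact that a morphism and its base change agree on rational points). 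The definability and identification for~$h$ follow formally: since $h_{\cS\cS'}$ is an $\cL$-definable map between $\cL$-definable sets, its interpretation in the elementary extension $\mathbf{L}$ is still defined by the same formula, and the previous diagram applied to both $X$ and $X'$ together with the base-changed morphism $h_L$ shows that this interpretation coincides with $h_{L,\cS_L\cS'_L}$.

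The main obstacle is essentially bookkeeping: ensuring that the isomorphism $\cC_x(\tilde L) \simeq \cC_{\sigma_L(x)}(\tilde L)$ genuinely intertwines $\tilde f_x$ and $\tilde f_{\sigma_L(x),L}$, so that the identification $Z_x^\cS(\mathbf{L}) = Z_{\sigma_L(x)}^{\cS_L}$ is literally an equality and not merely a bijection up to a natural transformation. This is ultimately a consequence of the naturality of the isomorphism in Theorem~\ref{thm:base_change}~ii) with respect to finite morphisms of residue curves, which is why the identification is canonical. Everything else is a routine, if somewhat lengthy, case analysis through the pieces of~\eqref{eq:defset}.
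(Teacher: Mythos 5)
Your proposal is correct and takes essentially the same route as the paper: identify the pieces of $X^\cS(\mathbf{L})$ and $X_L^{\cS_L}$ one by one using Theorem~\ref{thm:base_change} (part~ii) giving the residue-curve identification needed for $Z_x^\cS$), check commutativity of the square piecewise from the explicit description of $\sigma_L$, and deduce the statement about $h$ formally from elementarity and the previous identifications. The only small imprecision is that the commutation of $\sigma_L$ with the structural morphisms $f_I$, $f_x$, $f_{x,i}$ is needed at type-2 points of $X^\D$ as well, not only at rational points, but this is the standard compatibility $f_L\circ\sigma_L=\sigma_L\circ f$ that the paper also uses implicitly.
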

\begin{proof} The canonical identification $Z_x^\cS(\mathbf{L}) = Z_{\sigma_L(x)}^{\cS_L}$ follows readily form the identification of the points of the residue curves. To prove the identification $X^\cS(\mathbf{L}) = X_L^{\cS_L}$, note that, by Theorem~\ref{thm:base_change}, the identity is a bijection between $f_{I_L}^{\cS_L}(V_{I_L})$ and $f_I^\cS(V_I)(\mathbf{L})$ (and similarly for $f_{x,i}(V_{x,i}^\D)$). 

The commutativity of the diagram follows by the same kind of arguments, using the explicit description of~$\sigma_{L}$ on each subset of the partition of~$X$ associated to the facade~$\cS$ (see Example~\ref{ex:sigmaetaar}).

Finally, the statement about the morphism is a consequence of the previous statements and the definitions.
\end{proof} 

\begin{definition} Let $X$ be as above and let $\mathbf{L}$ be a rank 1 elementary extension of $\mathbf{k}$. Let $Y\subseteq X$ be a radial set. We define $Y_L$ to be the radial subset of $X_L$ defined by
\[
Y_L:=\delta_L^{-1}(\delta_k(Y)(\mathbf{L})). 
\]
\end{definition} 

\begin{remark}\phantomsection\label{rem:radial} 
\begin{enumerate}[$i)$]
\item The set $Y_L$ coincides with the radial subset of $X_L$ obtained by applying the same conditions used to define $Y$. This follows by inspection on basic radial sets and the fact that the maps $\delta_k$ and $\delta_L$ preserve disjoint unions. 
\item Corollary~\ref{cor:radial_commute} ensures that we have $\sigma_{L}^{-1}(Y_{L}) = Y$. 
\end{enumerate}
\end{remark}

\begin{lemma}\label{lem:degreesigma}
Let~$h\colon X\to Y$ be a morphism of $k$-analytic curves. Let $x\in X$ and let~$L$ be a complete rank 1 valued field extension of $k$. Then, we have $h_{L}(\sigma_{L}(x)) = \sigma_{L}(h(x))$. 

Set $y:= h(x)$. If $\dim_{x}(h) = 0$, then we have
\[ h_{L}^{-1}(\sigma_{L}(y)) \cap \pi_{L}^{-1}(x) = \{\sigma_{L}(x)\}. \] 
In particular, if~$h$ is of relative dimension~0, then we have
\[ h_{L}^{-1}(\sigma_{L}(y)) = \{\sigma_{L}(x')  :  x' \in h^{-1}(y)  \}. \] 
If~$h$ is finite and flat at~$x$, then we have $\deg_{\sigma_{L}(x)}(h_{L}) = \deg_{x}(h)$.
\end{lemma}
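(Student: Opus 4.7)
My plan is to establish the lemma in three stages, corresponding to its three assertions. First, the identity $h_{L}(\sigma_{L}(x)) = \sigma_{L}(h(x))$ will follow purely from the functorial definition of $\sigma_{L}$. By construction, $\sigma_{L}(x)$ is the point of $X_{L}$ corresponding to the (multiplicative) tensor norm on the completed tensor product $\sH(x)\ho_{k} L$, whose multiplicativity is guaranteed by \cite[Corollaire~3.14]{P} since $k$ is algebraically closed. The morphism~$h$ induces an isometric embedding $\sH(y) \hookrightarrow \sH(x)$ which, after completing the tensor product with~$L$, yields an isometric embedding $\sH(y)\ho_{k} L \hookrightarrow \sH(x)\ho_{k} L$; the corresponding map on Berkovich spectra is $h_{L}$, and it sends $\sigma_{L}(x)$ to $\sigma_{L}(y)$ by construction.

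For the second assertion, the key observation is that the topological fiber $\pi_{L}^{-1}(x)$ is canonically identified with $\cM(\sH(x)\ho_{k} L)$. Since the tensor norm is multiplicative, this spectrum is a singleton, equal to $\{\sigma_{L}(x)\}$. Combined with the first assertion, this immediately yields $h_{L}^{-1}(\sigma_{L}(y)) \cap \pi_{L}^{-1}(x) = \{\sigma_{L}(x)\}$. The ``in particular'' part is then a short diagram chase: for any $z \in h_{L}^{-1}(\sigma_{L}(y))$, the commutativity $h \circ \pi_{L} = \pi_{L} \circ h_{L}$ forces $\pi_{L}(z) \in h^{-1}(y)$, and applying the singleton-fiber description at $\pi_{L}(z)$ pins down $z = \sigma_{L}(\pi_{L}(z))$.

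Finally, for the degree statement I will argue locally. Shrink to affinoid neighborhoods $V = \cM(B) \ni x$ and $U = \cM(A) \ni y$ such that $h|_{V}\colon V \to U$ is finite flat, so that $B$ is a finite flat $A$-module, and such that $x$ is the unique preimage of~$y$ in~$V$. Then $\deg_{x}(h)$ equals the rank of this module, or equivalently the $\sH(y)$-length of the local Artinian algebra $B \otimes_{A} \sH(y)$. Base-changing to~$L$, the morphism $h_{L}|_{V_{L}}$ is still finite flat of the same rank, and by the ``in particular'' statement applied inside $V_{L}$, its fiber over $\sigma_{L}(y)$ is the singleton $\{\sigma_{L}(x)\}$; hence that rank is precisely $\deg_{\sigma_{L}(x)}(h_{L})$. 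The main obstacle in writing this up carefully is to justify the canonical identification $\pi_{L}^{-1}(x) = \cM(\sH(x)\ho_{k} L)$ and to check that the local Artinian fiber algebra base-changes cleanly to compute $\deg_{\sigma_{L}(x)}(h_{L})$; for points of type~2, 3, or~4, the underlying completed local ring computations rest on weak stability of the local ring (Theorem~\ref{thm:weaklystable}).
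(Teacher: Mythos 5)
The central step of your proposal fails. You claim that since the tensor norm on $\sH(x)\ho_{k}L$ is multiplicative, the fiber $\pi_{L}^{-1}(x)=\cM(\sH(x)\ho_{k}L)$ is a singleton. Multiplicativity of the tensor norm only says that this norm \emph{is} a point of the spectrum, i.e.\ that $\sigma_{L}(x)$ is well defined; it says nothing about uniqueness of multiplicative seminorms on $\sH(x)\ho_{k}L$ extending the given ones, and in fact the fiber is almost never a point. For instance, take $X=\AA_{k}^{1,\an}$, $x=\eta_{0,1}$ and $a\in L^{\circ}$ whose residue is transcendental over $\tilde k$: then every point $\eta_{a,r}$ with $0\le r\le 1$ restricts on $k[T]$ to the Gauss norm, so $\pi_{L}^{-1}(\eta_{0,1})$ contains a whole segment. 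This is also visible from the statement itself: if $\pi_{L}^{-1}(x)$ were always a singleton, the hypothesis $\dim_{x}(h)=0$ would be superfluous. Since your proof of the second assertion rests entirely on this false claim, and your proof of the degree equality invokes the ``in particular'' consequence of that assertion to identify the fiber of $h_{L}$ over $\sigma_{L}(y)$ with $\{\sigma_{L}(x)\}$, the last two parts of your argument collapse together (the degree part becomes circular). A smaller issue: the first identity $h_{L}(\sigma_{L}(x))=\sigma_{L}(h(x))$ is not true ``by construction''; it amounts to the statement that the natural map $\sH(y)\ho_{k}L\to\sH(x)\ho_{k}L$ is isometric for the tensor norms (a priori the tensor seminorm can only decrease under such an embedding), which needs a genuine argument or a citation, as the paper does via \cite[Lemma~2.9]{PP}.

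The paper's logic runs in the opposite direction, and this is the idea you are missing: one first proves the degree equality and then deduces the fiber statement from it. After localizing so that $h^{-1}(y)=\{x\}$ and $h$ is finite of degree $d$, one has $\deg_{\sigma_{L}(x)}(h_{L})\le \deg(h_{L})=d$; for the reverse inequality one uses weak stability of $\cO_{X,x}$ (Theorem~\ref{thm:weaklystable}) to write $\deg_{x}(h)=[\sH(x):\sH(y)]$ and checks that a basis of $\sH(x)$ over $\sH(y)$ remains linearly independent in $\sH(\sigma_{L}(x))$ over $\sH(\sigma_{L}(y))$ after applying $\ho_{k}L$ and completing. Once $\deg_{\sigma_{L}(x)}(h_{L})=d$, the local degree at the single point $\sigma_{L}(x)$ exhausts the total degree, and this is what forces $h_{L}^{-1}(\sigma_{L}(y))\cap\pi_{L}^{-1}(x)=\{\sigma_{L}(x)\}$. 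Finally, for $\dim_{x}(h)=0$ without finiteness at $x$ (boundary points), one cannot simply shrink to a finite flat affinoid model as you do; the paper factors $h$ locally as a finite morphism followed by a finite \'etale one using Berkovich's version of Zariski's Main Theorem. You would need to repair your argument along these lines.
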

\begin{proof}
By~\cite[Lemma~2.9]{PP}, we have $h_{L}(\sigma_{L}(x)) = \sigma_{L}(h(x))$. Let us now assume that~$h$ is of relative dimension~0.

Assume that~$x$ is of type~1. Then it is a $k$-rational point and the statement concerning the fiber follows from the fact that $\sH(x)=k$ is algebraically closed. The one about the degree follows from the fact that, in this case, it is equal to the ramification index, which is invariant by extension of scalars.

Assume that~$x$ is not of type~1. In order to prove the statements, we may localize around~$x$, hence assume that $h^{-1}(y) = \{x\}$. 

Assume, moreover, that~$h$ is finite at~$x$. Note that it is always flat at~$x$. Set $d:=\deg(h)$. We have $\deg_{x}(h) = d = \deg(h_{L})$, hence $\deg_{\sigma_{L}(x)}(h_{L}) \le d$.

By Theorem~\ref{thm:weaklystable}, we have
\[\deg_{x}(h) = [\cO_{X,x} \colon \cO_{Y,y}] = [\sH(x) \colon \sH(y)].\]
Let $(e_{1},\dotsc,e_{d})$ be a basis of~$\sH(x)$ over~$\sH(y)$. It induces a basis of $\sH(x)\ho_{k} L$ as a module over $\sH(y)\ho_{k} L$, hence a linearly independent family in~$\sH(\sigma_{L}(x))$ over $\sH(\sigma_{L}(y))$ by localizing and completing. It follows that $\deg_{\sigma_{L}(x)}(h_{L}) \ge d$, hence $\deg_{\sigma_{L}(x)}(h_{L}) = d$, as required. The statement about the fiber follows too.

Let us finally handle the case where $\dim_{x}(h) = 0$ without the assumption that~$h$ be finite at~$x$. By the Berkovich analytic version of Zariski's Main Theorem (see \cite[Th\'eor\`eme~3.2]{Ducros_Dimension}), there exist an affinoid neighborhood~$V$ of~$x$ in~$X$, an affinoid domain~$W$ of a $k$-analytic curve~$T$ such that the restriction of~$h$ to~$V$ factors as 
\[ V \to W \hookrightarrow T \to Y, \]
where $V \to W$ is finite and $T\to Y$ finite \'etale. We deduce that $h_{L}(\sigma_{L}(x)) = \sigma_{L}(h(x))$ since all the morphisms in the composition satisfy this property.
\end{proof}

\subsection{Radiality results}

\begin{lemma}\label{lem:radialimage} Let~$X$ and $Y$ be strictly $k$-analytic curves and $\cS, \cS'$ be~$\cL$-facades of~$X$ and $Y$ respectively. Let~$h\colon X\to Y$ be a morphism satisfying the hypothesis of Theorem \ref{thm:main}. Then 
\begin{enumerate}[$i)$]
\item if $A\subseteq X$ is a radial set so is $h(A)$ and  
\item if $A\subseteq Y$ is a radial set so is $h^{-1}(A)$. 
\end{enumerate}
\end{lemma}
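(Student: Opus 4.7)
The strategy would be to transport both assertions through the facade bijection of Theorem~\ref{thm:radialdef}, which identifies $\Rad_{k}(X)$ with $\DDD_{k}(X^\cS)$ via the map $\delta_{k}$, and analogously for~$Y$. Since $h_{\cS\cS'}\colon X^\cS \to Y^{\cS'}$ is $\cL$-definable by Theorem~\ref{thm:main}, both images and preimages of $\cL$-definable sets under~$h_{\cS\cS'}$ remain $\cL$-definable, and the result should follow upon matching $\delta_{k}$ with $h_{\cS\cS'}$.

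The key computation is to establish the identities
\[
\delta_{k}(h^{-1}(A)) = h_{\cS\cS'}^{-1}(\delta_{k}(A)) \quad (A \subseteq Y) \qquad \text{and} \qquad \delta_{k}(h(A)) = h_{\cS\cS'}(\delta_{k}(A)) \quad (A \subseteq X).
\]
I would derive both directly from the commutative square $\varphi_{\cS'}\circ h = h_{\cS\cS'}\circ \varphi_\cS$ of Definition~\ref{def:map}, once one checks that $h(X^\D) \subseteq Y^\D$ and $h^{-1}(Y^\D) \subseteq X^\D$. This is a type-preservation statement: under the hypotheses of Theorem~\ref{thm:main} (in particular $h$-compatibility of triangulations) $h$ is necessarily of relative dimension~$0$, so by Theorem~\ref{thm:weaklystable} the extension $\sH(x)|\sH(h(x))$ is finite at every point~$x$ of type~$(2,3,4)$; combined with the algebraic closedness of~$k$, this gives the required preservation of types~$1$ and~$2$ in both directions.

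With these identities in hand, if $A$ is radial then $\delta_{k}(A)$ is $\cL$-definable, so $h_{\cS\cS'}^{-1}(\delta_{k}(A))$ (resp.\ $h_{\cS\cS'}(\delta_{k}(A))$) is $\cL$-definable, and Theorem~\ref{thm:radialdef} would produce a unique radial set $B$ in~$X$ (resp.\ in~$Y$) whose $\delta_{k}$-image equals that definable set. By construction, $B$ already coincides with $h^{-1}(A)$ (resp.\ with $h(A)$) at every point of type~$(1,2)$.

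The main obstacle is to upgrade this coincidence to a pointwise equality on all of~$X$ (resp.~$Y$); since the bijection $\delta_{k}$ only sees type~$(1,2)$ points, it gives no direct information at types~$3$ and~$4$. I would argue by base change: for each type~$3$ or type~$4$ point $x\in X$, choose a sufficiently large algebraically closed complete extension $L/k$ such that both $\sigma_{L}(x)\in X_{L}^\D$ and $\sigma_{L}(h(x)) \in Y_{L}^\D$. By Corollary~\ref{cor:radial_commute} the whole picture over~$X^\cS$ base-changes to the corresponding picture over $X_{L}^{\cS_{L}}$, and the $X^\D$-identity applied over~$L$ yields $B_{L} \cap X_{L}^\D = h_{L}^{-1}(A_{L}) \cap X_{L}^\D$. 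Then Lemma~\ref{lem:degreesigma} ($h_{L} \circ \sigma_{L} = \sigma_{L} \circ h$) together with the identities $\sigma_{L}^{-1}(A_{L}) = A$ and $\sigma_{L}^{-1}(B_{L}) = B$ from Remark~\ref{rem:radial}(ii) pull the equality back to~$x$, giving $x \in B \Leftrightarrow x \in h^{-1}(A)$. The argument for~$h(A)$ would be entirely analogous.
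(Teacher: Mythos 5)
Your proposal is correct and follows essentially the same route as the paper: transfer $A$ through $\delta_k$ and the definable map $h_{\cS\cS'}$ (Theorems \ref{thm:radialdef} and \ref{thm:main}) to get a radial candidate agreeing with $h(A)$ (resp. $h^{-1}(A)$) on type $(1,2)$ points, then upgrade to a genuine equality via base change using Corollary \ref{cor:radial_commute}, Remark \ref{rem:radial}~ii) and Lemma \ref{lem:degreesigma}. The only cosmetic difference is that the paper performs a single base change to a maximally complete elementary extension with value group $\R_{>0}$ (so that all points of $X_L$ are of type 1 or 2), rather than your per-point choice of $L$ for each type 3 or 4 point.
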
 

\begin{proof} Let us show $(i)$. By Theorem \ref{thm:radialdef}, $\varphi_\cS(A^\D)$ is $\cL$-definable and hence also $h_{\cS\cS'}(\varphi_{\cS}(A^\D))$, by Theorem~\ref{thm:main}. By Theorem \ref{thm:radialdef}, there is $B\in \Rad_k(X)$ such that $\varphi_{\cS'}(B^\D)=h_{\cS\cS'}(\varphi_\cS(A^\D))$. By Theorem~\ref{thm:radialdef} again, we have $B^\D=h(A^\D)$. We claim that $B=h(A)$. Let $\mathbf{L}$ be an elementary extension of $\mathbf{k}$ that is maximally complete, with value group $\RR_{>0}$. By Corollary~\ref{cor:radial_commute}, we have
\[
\delta_L(B_L) =  (h_{\cS\cS'}(\varphi_\cS(A^\D)))(\mathbf{L})=h_{L, \cS_L\cS_L'}(\varphi_\cS(A^\D)(\mathbf{L})) = \varphi_{\cS_L'}(h_{L}(A_L)),
\]
and taking $\delta_L^{-1}$ on both sides we get $B_L=h_L(A_L)$. By Remark~\ref{rem:radial} ii), we have $\sigma_L^{-1}(B_L)=B$ and Lemma~\ref{lem:degreesigma} now ensures that we have $B=\sigma_L^{-1}(h_L(A_L))=h(A)$. 

The proof of~ii) is analogous noting that, by Lemma~\ref{lem:degreesigma}, $\sigma_L^{-1}(h_L^{-1}(A_L))=h^{-1}(A)$. 
\end{proof} 

\begin{proposition}\label{prop:radialdefinable} Let~$h\colon X\to Y$ be a morphism of strictly~$k$-affinoid curves of relative dimension~0. For each $d\in \NN$, the set 
\[
M_{h,d}:=\{x\in X :  h^{-1}(h(x)) \text{ has cardinality $d$}\}
\]
is radial. 
\end{proposition}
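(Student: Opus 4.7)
The plan is to realize $M_{h,d}$ as the radial set corresponding, via Theorem~\ref{thm:radialdef}, to a suitable definable subset of $X^\cS$, and then to verify the identification by extending scalars to a sufficiently large algebraically closed valued field.

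By Theorem~\ref{thm:facade_existence}, choose $\cL_\BB^\an$-facades $\cS$ of $X$ and $\cS'$ of $Y$. Since $h$ is of relative dimension~0, Lemma~\ref{lem:morphism_definable} ensures that the induced map $h_{\cS\cS'}\colon X^\cS \to Y^{\cS'}$ is $\cL_\BB^\an$-definable, and for each fixed $d$ the condition on $\xi \in X^\cS$ that the fiber $h_{\cS\cS'}^{-1}(h_{\cS\cS'}(\xi))$ has cardinality exactly $d$ is expressible by a single first-order formula (as an existential statement producing $d$ distinct preimages combined with a universal exhaustion clause). Hence the set
\[
D := \{\xi \in X^\cS : |h_{\cS\cS'}^{-1}(h_{\cS\cS'}(\xi))| = d\}
\]
is $\cL_\BB^\an$-definable, and by Theorem~\ref{thm:radialdef} there exists a unique $R \in \Rad_k(X)$ with $\delta_k(R) = D$. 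It remains to show that $R = M_{h,d}$.

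Let $\mathbf{L}$ be a rank~1 elementary extension of $\mathbf{k}$ with $L$ algebraically closed, maximally complete, and value group $|L^\times| = \RR_{>0}$. Because $L$ is maximally complete with divisible value group, the curve $X_L$ has no points of type~3 or~4, so $X_L^\D = X_L$. By Corollary~\ref{cor:radial_commute}, we have $\delta_L(R_L) = D(\mathbf{L})$ and $h_{L,\cS_L\cS'_L} = h_{\cS\cS'}(\mathbf{L})$. Applying $\varphi_{\cS_L}^{-1}$ and using the equality $X_L^\D = X_L$ together with the fact that $\varphi_{\cS_L}$ transports fibers of $h_{L,\cS_L\cS'_L}$ to fibers of $h_L$ intersected with $X_L^\D$, we deduce
\[
R_L = R_L \cap X_L^\D = \{x \in X_L : |h_L^{-1}(h_L(x))| = d\} = M_{h_L,d}.
\]

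Finally, Remark~\ref{rem:radial}(ii) yields $\sigma_L^{-1}(R_L) = R$, while Lemma~\ref{lem:degreesigma} yields $\sigma_L^{-1}(M_{h_L,d}) = M_{h,d}$, since $h_L^{-1}(\sigma_L(h(x))) = \{\sigma_L(x') : x' \in h^{-1}(h(x))\}$ preserves fiber cardinalities. Combining, $R = \sigma_L^{-1}(R_L) = \sigma_L^{-1}(M_{h_L,d}) = M_{h,d}$, which proves the proposition. The main technical subtlety is the faithful transport of cardinality conditions between $X^\cS$ and $X^\D$ via $\varphi_\cS$ and between $k$ and $L$ via $\sigma_L$; once these identifications are carefully unwound, the argument follows the same base-change template as the proof of Lemma~\ref{lem:radialimage}.
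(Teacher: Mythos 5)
Your proof is correct and follows essentially the same route as the paper: choose facades via Theorem~\ref{thm:facade_existence}, use Lemma~\ref{lem:morphism_definable} (with Theorem~\ref{thm:main}) to make the fiber-cardinality condition an $\cL_\BB^\an$-definable subset of $X^\cS$, pull back to a radial set via Theorem~\ref{thm:radialdef}, and identify it with $M_{h,d}$ by base change to a maximally complete $\mathbf{L}$ with value group $\RR_{>0}$ using Corollary~\ref{cor:radial_commute}, Remark~\ref{rem:radial}(ii) and Lemma~\ref{lem:degreesigma}. No substantive differences from the paper's argument.
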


\begin{proof} By Theorem \ref{thm:facade_existence}, there exist $\cL_\BB^\an$-facades $\cS$ and $\cS'$ be  of $X$ and $Y$ respectively. By Lemma~\ref{lem:morphism_definable} and Theorem~\ref{thm:main}, the map $h_{\cS\cS'}$ is $\cL_\BB^\an$-definable, hence the set 
\[
M_{h,d}^{\cS}:=\{x\in X^{\cS}  :  h_{\cS\cS'}^{-1}(h_{\cS\cS'}(x)) \text{ has cardinality $d$}\}
\]
is $\cL_\BB^\an$-definable over $k$. Furthermore, we have $\varphi_\cS(M_{h,d}^\D)=M_{h,d}^{\cS}$. By Theorem \ref{thm:radialdef}, there exists $B\in \Rad_k(X)$ such that $\varphi_{\cS}(B^\D)=M_{h,d}^{\cS}$. We claim that $B=M_{h,d}$. Let $\mathbf{L}$ be a rank~1 elementary extension of $\mathbf{k}$ that is maximally complete with value group $\RR_{>0}$. By Corollary \ref{cor:radial_commute}, we have
\[
\varphi_{\cS_L}(B_L)=\{x\in X_L^{\cS_L}  :  h_{L,\cS_L\cS_L'}^{-1}(h_{L,\cS_L\cS_L'}(x)) \text{ has cardinality $d$}\}.
\]
Since $X_L=X_L^\D$, $\varphi_{\cS_L}$ defines a bijection between $X_L$ and $X_{L}^{\cS_L}$, hence  
\[
B_L=\{x\in X_L  :  h_L^{-1}(h_{L}(x)) \text{ has cardinality $d$}\}.
\]
By Remark~\ref{rem:radial} ii), we have $B=\sigma_L^{-1}(B_L)$ and it now follows from Lemma~\ref{lem:degreesigma} that $B=M_{h,d}$. 
\end{proof}

\begin{corollary}\label{cor:localsize}
Let~$h\colon X\to Y$ be a morphism of strictly $k$-analytic curves of relative dimension~0 that is topologically proper. Then, for each $d\in \NN$, the set 
\[
M_{h,d}:=\{x\in X :  h^{-1}(h(x)) \text{ has cardinality $d$}\}
\]
is radial. 

If~$h$ is a compactifiable morphism of nice curves (\emph{e.g.} the analytification of an algebraic morphism of curves), then, for each $d\in \NN$, the set~$M_{h,d}$ is radial with respect to a finite triangulation of~$X$.
\end{corollary}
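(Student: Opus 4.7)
The plan is to reduce both assertions to the strictly $k$-affinoid case established in Proposition~\ref{prop:radialdefinable}, using the locality of radiality from Lemma~\ref{lem:radiallocal}.

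For the first assertion, I will build a $G$-covering of $X$ by strictly $k$-affinoid domains adapted to~$h$. Fix $x_{0}\in X$ and set $y_{0}:=h(x_{0})$. Since $\dim_{x_{0}}(h)=0$ and $h$ is topologically proper, the fiber $h^{-1}(y_{0})=\{x_{1}=x_{0},\dotsc,x_{m}\}$ is finite, and topological properness allows me to choose a strictly $k$-affinoid neighborhood~$W$ of~$y_{0}$ in~$Y$ small enough that $h^{-1}(W)$ splits as a disjoint union $U_{1}\sqcup\cdots\sqcup U_{m}$ of strictly $k$-affinoid neighborhoods~$U_{i}$ of~$x_{i}$ with $h(U_{i})\subseteq W$. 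For any $x'\in U_{i}$,
\[
|h^{-1}(h(x'))|=\sum_{j=1}^{m}|(h|_{U_{j}})^{-1}(h(x'))|.
\]
Proposition~\ref{prop:radialdefinable} applied to each morphism $h|_{U_{j}}\colon U_{j}\to W$ of strictly $k$-affinoid curves shows that $M_{h|_{U_{j}},e}$ is radial in~$U_{j}$; Lemma~\ref{lem:morphism_definable}(ii) together with Lemma~\ref{lem:radialimage}(i) then ensures that $N_{j,e}:=h|_{U_{j}}(M_{h|_{U_{j}},e})=\{y\in W:|(h|_{U_{j}})^{-1}(y)|=e\}$ is radial in~$W$. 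Therefore
\[
M_{h,d}\cap U_{i}=(h|_{U_{i}})^{-1}\bigg(\bigcup_{\substack{(d_{j})\in\NN^{m}\\ \sum d_{j}=d}}\;\bigcap_{j=1}^{m}N_{j,d_{j}}\bigg),
\]
which is radial in~$U_{i}$ by Lemma~\ref{lem:radialboolean} and Lemma~\ref{lem:radialimage}(ii). As $x_{0}$ varies over~$X$, the corresponding $U_{1}$'s form a $G$-covering of~$X$, so Lemma~\ref{lem:radiallocal} yields that $M_{h,d}$ is radial in~$X$.

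For the second assertion, I will take a compactification $\ov h\colon\ov X\to\ov Y$ of~$h$; by Remark~\ref{rem:flat} I may assume $\ov h$ is still of relative dimension~$0$. Since $\ov X$ is compact, $\ov h$ is topologically proper, so the first part gives that each $M_{\ov h,e}$ is radial in~$\ov X$. Moreover, the $G$-covering built above admits a finite subcover by compactness, yielding a finite triangulation of~$\ov X$ with respect to which all the $M_{\ov h,e}$'s are radial. Write $\ov X\setminus X=\{p_{1},\dotsc,p_{N}\}$ and set $I:=\{i:\ov h(p_{i})\in Y\}$. For $x\in X$,
\[
|\ov h^{-1}(\ov h(x))|=|h^{-1}(h(x))|+|\{i\in I:\ov h(p_{i})=h(x)\}|,
\]
and each $F_{i}:=h^{-1}(\ov h(p_{i}))=\ov h^{-1}(\ov h(p_{i}))\cap X$ is a finite set (contained in the finite fiber $\ov h^{-1}(\ov h(p_{i}))$), hence a radial subset of~$X$. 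Decomposing according to the subset $J:=\{i\in I:h(x)=\ov h(p_{i})\}$ gives
\[
M_{h,d}=\bigsqcup_{J\subseteq I}\bigg(\bigcap_{i\in J}F_{i}\setminus\bigcup_{i\in I\setminus J}F_{i}\bigg)\cap M_{\ov h,d+|J|}\cap X,
\]
which is a finite boolean combination of radial sets. Refining the finite triangulation of~$\ov X$ so that it contains $\{p_{1},\dotsc,p_{N}\}$ and restricting it to~$X$ via Lemma~\ref{lem:Swtov} shows that $M_{h,d}$ is radial in~$X$ with respect to a finite triangulation.

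The main obstacle will be justifying the clean decomposition of $h^{-1}(W)$ into strictly $k$-affinoid connected components $U_{1},\dotsc,U_{m}$ around the finite fiber $h^{-1}(y_{0})$, for $W$ sufficiently small; this is precisely where topological properness enters, via the standard local structure of proper morphisms of analytic spaces. The rest of the argument is a bookkeeping combination of Proposition~\ref{prop:radialdefinable} with the boolean closure (Lemma~\ref{lem:radialboolean}) and locality (Lemma~\ref{lem:radiallocal}) properties of radial sets.
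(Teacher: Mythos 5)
Your strategy is the same as the paper's: use topological properness to localize, reduce to Proposition~\ref{prop:radialdefinable} on strictly $k$-affinoid pieces, and conclude with Lemma~\ref{lem:radiallocal}; your treatment of the compactifiable case also fills in exactly what the paper leaves implicit (compactify, note that any triangulation of a compact curve is finite, and correct $M_{\ov h,e}\cap X$ by the fibers over the images of the finitely many boundary points, which are finite sets of $k$-rational points and hence radial after refining the triangulation). The implementation, however, is heavier than necessary. The paper does not split $h^{-1}(W)$ into pieces: it takes a single strictly $k$-affinoid neighborhood $U$ of the whole finite fiber inside $h^{-1}(V)$ and shrinks $W$ so that $h^{-1}(W)\subseteq U$; then for every $x'\in h^{-1}(W)$ one has $h^{-1}(h(x'))=h_{|U}^{-1}(h(x'))$, so $M_{h,d}\cap h^{-1}(W)=M_{h_{|U},d}\cap h^{-1}(W)$, which is radial by Proposition~\ref{prop:radialdefinable} together with Lemma~\ref{lem:radialsubcurve} (restriction to the compact analytic domain $h^{-1}(W)$). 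This avoids both the disjoint decomposition $h^{-1}(W)=U_1\sqcup\dotsb\sqcup U_m$ (the step you rightly flag as delicate; it can be arranged by taking disjoint strictly affinoid neighborhoods of the fiber points and shrinking $W$, but it is not needed) and the push-forward/pull-back through Lemma~\ref{lem:radialimage}.

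One genuine slip in your bookkeeping: the identity $N_{j,e}:=h_{|U_j}(M_{h_{|U_j},e})=\{y\in W:\#(h_{|U_j})^{-1}(y)=e\}$ fails for $e=0$, and you do need $e=0$ in the union over tuples $(d_j)$ with $\sum_j d_j=d$, since a fiber may miss some $U_j$ entirely: $M_{h_{|U_j},0}$ is empty (every point belongs to its own fiber), so its image is empty, whereas $\{y\in W:(h_{|U_j})^{-1}(y)=\emptyset\}=W\setminus h_{|U_j}(U_j)$ is in general not. The fix is immediate --- set $N_{j,0}:=W\setminus h_{|U_j}(U_j)$, which is radial because $h_{|U_j}(U_j)$ is the image of the radial set $U_j$ under a morphism to which Lemma~\ref{lem:radialimage} applies and radial sets are closed under complements (Lemma~\ref{lem:radialboolean}) --- but as written your displayed description of $M_{h,d}\cap U_i$ is not justified for tuples containing a zero entry.
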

\begin{proof}
Let $x\in X$. Let~$V$ be a strictly $k$-affinoid neighborhood of~$h(x)$ in~$Y$. Since the fiber~$h^{-1}(h(x))$ is finite, it admits a strictly $k$-affinoid neighborhood~$U$ in $h^{-1}(V)$. By topological properness, there exists an affinoid neighborhood~$W$ of~$h(x)$ in~$V$ such that $h^{-1}(W) \subseteq U$. In this situation, we have
\[
\{x\in h^{-1}(W) :  h^{-1}(h(x)) \text{ has cardinality $d$}\} = \{x\in h^{-1}(W) :  h_{|U}^{-1}(h(x)) \text{ has cardinality $d$}\}, 
\]
which is a radial subset of~$h^{-1}(W)$ by Proposition~\ref{prop:radialdefinable} and Lemma~\ref{lem:radialsubcurve}. The result now follows from Lemma~\ref{lem:radiallocal}.

The last statement follows from the first.
\end{proof}

\begin{remark}
In the situation of Corollary~\ref{cor:localsize}, one can obtain similar radiality results for the set $\{y\in Y :  h^{-1}(y) \text{ has cardinality $d$}\}$.
\end{remark}

We now prove similar results for the local degree function. Let us first extend slightly the usual definition. 

\begin{definition}
Let $h \colon X \to Y$ be a flat morphism of $k$-analytic curves of relative dimension~0 and let $x\in X$. 

If $x\in \mathrm{Int}(X/Y)$ (for instance if $x$ is of type~1 or~4), then $h$~is finite at~$x$ and we denote by $\deg_{h}(x)$ the usual degree at~$x$, \emph{i.e.} the rank of the free $\cO_{Y,h(x)}$-module $\cO_{X,x}$.

If $x\in \partial(X/Y)$ (which implies that~$x$ is of type~2 or~3), then we set 
\[\deg_{h}(x) := [\sH(x) \colon \sH(h(x))].\]
\end{definition}

\begin{remark}\label{rem:degreerestriction}
Let $h \colon X \to Y$ be a flat morphism of $k$-analytic curves of relative dimension~0. It follows from Theorem~\ref{thm:weaklystable} that, for each analytic domain~$V$ of~$X$ and each point $x\in V$, we have
\[\deg_{h_{|V}}(x) = \deg_{h}(x).\]
Note also that the local degree behaves multiplicatively under composition of maps.
\end{remark}

By reducing to the finite Galois case, we will now deduce from Proposition~\ref{prop:radialdefinable} that, given a flat morphism of strictly $k$-analytic curves $h \colon X \to Y$ of relative dimension~0, the set of points of~$X$ with a given prescribed multiplicity is radial. This result was first proven by Temkin in the case where the curves are quasi-smooth and the morphism is finite (which is the crucial case). Under those assumptions, Temkin actually proves in~\cite[Theorem~3.4.11]{temkin_2017_metric} that the morphism admits a so-called ``radializing skeleton'', but this is equivalent to our statement by \cite[Theorem~3.3.10]{temkin_2017_metric}, whose proof is not difficult.

\begin{theorem}\label{thm:radiality_curves}
Let $h\colon X \to Y$ be a flat morphism of strictly $k$-analytic curves of relative dimension~0. Then, for each $d\in \NN$, the set
\[N_{h,d} := \{x \in X  :  \deg_{x}(h) = d\}\]
is radial.

If~$h$ is a compactifiable morphism of nice curves (\emph{e.g.} the analytification of an algebraic morphisms of curves), then the set~$N_{h,d}$ is radial with respect to a finite triangulation of~$X$.
\end{theorem}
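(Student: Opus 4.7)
The plan is to reduce the statement to the case of a finite flat Galois morphism of strictly $k$-affinoid curves, in which the local degree at any point is directly controlled by the fibre cardinality, and then to invoke Proposition \ref{prop:radialdefinable}.

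First, since radiality is a local property by Lemma \ref{lem:radiallocal}, it would suffice to verify that every $x \in X$ has a neighbourhood $V$ on which $N_{h,d}\cap V$ is radial. For this I would apply Ducros's analytic Zariski Main Theorem (as in the proof of Lemma \ref{lem:degreesigma}) to factor $h|_{V}\colon V \xrightarrow{f_{1}} W \hookrightarrow T \xrightarrow{f_{2}} Y$, where $V$ is an affinoid neighbourhood of $x$, $f_{1}$ is finite, $W$ is an affinoid domain of a strictly $k$-analytic curve $T$, and $f_{2}$ is finite étale. Affinoid embeddings have local degree~$1$ everywhere and local degree is multiplicative (Remark \ref{rem:degreerestriction}), so $\deg_{x}(h|_{V}) = \deg_{x}(f_{1})\cdot \deg_{f_{1}(x)}(f_{2})$, and $N_{h|_{V},d}$ is a finite boolean combination of preimages of sets $N_{f_{1},d_{1}}$ and $N_{f_{2},d_{2}}$ with $d_{1}d_{2}=d$. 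By Lemmas \ref{lem:radialboolean} and \ref{lem:radialimage}, this reduces the theorem to the case of finite flat morphisms between strictly $k$-affinoid curves.

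Next, given such a finite flat $h\colon X \to Y$, the plan is to take a Galois closure: after decomposing into irreducible components, form a finite Galois cover $g\colon Z \to Y$ of group $G$ factoring as $g = h\circ \pi$ with $\pi\colon Z \to X$ finite flat, so that $\pi$ is itself Galois with group the subgroup $H \le G$ corresponding to $X = Z/H$. Such a $g$ exists because strictly $k$-affinoid algebras are excellent, so normalisations in finite Galois extensions of the fraction fields remain finite and affinoid. For the Galois morphism $g$ and $z \in Z$ with $y = g(z)$, the decomposition group $D_{z}\subseteq G$ acts as the Galois group of $\sH(z)/\sH(y)$; Theorem \ref{thm:weaklystable} ensures that completion does not change degrees, so $|D_{z}| = [\sH(z):\sH(y)] = \deg_{z}(g)$. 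Orbit--stabiliser then gives $\deg_{z}(g) = |G|/|g^{-1}(y)|$, and similarly $\deg_{z}(\pi) = |H|/|\pi^{-1}(\pi(z))|$, so that for any $z \in \pi^{-1}(x)$,
\[
\deg_{x}(h) \,=\, \frac{\deg_{z}(g)}{\deg_{z}(\pi)} \,=\, \frac{|G|\cdot|\pi^{-1}(x)|}{|H|\cdot|g^{-1}(h(x))|}.
\]

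Finally, $N_{h,d}$ is the union, over the finitely many pairs of positive integers $(e,f)$ satisfying $d\cdot |H|\cdot e = |G|\cdot f$, of the intersections $\{x\in X:|\pi^{-1}(x)|=f\}\cap h^{-1}\bigl(\{y\in Y:|g^{-1}(y)|=e\}\bigr)$. Each ingredient is radial: $\{x:|\pi^{-1}(x)|=f\}=\pi(M_{\pi,f})$ and $\{y:|g^{-1}(y)|=e\}=g(M_{g,e})$ are radial by Proposition \ref{prop:radialdefinable} combined with Lemma \ref{lem:radialimage}, and taking preimage by $h$ and finite boolean combinations again preserves radiality (Lemmas \ref{lem:radialimage} and \ref{lem:radialboolean}). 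For the ``moreover'' assertion, I would carry out the whole construction globally on the compactifications $\bar h\colon \bar X \to \bar Y$, taking the Galois closure algebraically (finite covers of compact strictly $k$-analytic curves being effectively algebraic via GAGA), so that all radial sets are associated with a single finite triangulation, which restricts to a finite triangulation of $X$. The hard part will be ensuring both the existence of the Galois closure in the affinoid setting and the orbit--stabiliser identity $\deg_{z}(g)=|D_{z}|$ for Galois covers; both should follow from the excellence of affinoid algebras and Theorem \ref{thm:weaklystable}, but require some care.
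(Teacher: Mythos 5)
Your proposal follows essentially the same architecture as the paper's proof: localization via Lemma \ref{lem:radiallocal}, the factorization coming from the analytic Zariski Main Theorem, multiplicativity of the local degree, reduction to a Galois situation in which the local degree is computed from fibre cardinalities, and then Proposition \ref{prop:radialdefinable} combined with Lemmas \ref{lem:radialboolean} and \ref{lem:radialimage}. There are, however, genuine gaps in your reduction to the Galois case. First, a Galois closure of $h$ exists only when $h$ is generically \'etale; since $k$ may have characteristic $p>0$, the extension of fraction fields can have a nontrivial purely inseparable part, and then neither the cover $g=h\circ\pi$ nor your identity $|D_z|=[\sH(z):\sH(y)]$ is available (for inseparable extensions the automorphism group is strictly smaller than the degree, so the decomposition-group argument is already delicate even in the separable wildly ramified case). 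The paper first factors $h$ into a generically \'etale morphism followed by a purely inseparable one, on which the local degree is constant (\cite[Lemma~3.1.4]{temkin_2017_metric}), and only then passes to the Galois closure (\cite[Lemma~3.1.2]{temkin_2017_metric}); you need this extra step or a substitute. For the identity $\deg_z(g)=|G|/\#\,g^{-1}(g(z))$ the safer argument is the one the paper uses in its Galois step: flatness makes the fibrewise sum of local degrees equal to $\deg(g)$, and $G$ acts transitively on fibres, so all local degrees in a fibre coincide.

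Second, two flatness/normality points are asserted but not justified. You need the finite piece $f_1\colon V\to W$ of your factorization to be flat in order to run the Galois step, and you need $\pi\colon Z\to X$ to be a Galois covering of $X$ with group $H$; but $X$ and $W$ need not be normal, $Z/H$ is the normalization of $X$ rather than $X$ itself, and $\pi$ need not be flat over the (finitely many) non-normal points, where the orbit--stabiliser count for $\pi$ also breaks down. The paper resolves these issues by base-changing the factorization along the normalization $Y'\to Y$ (local degrees are preserved under this base change, and the resulting image is handled by Lemma \ref{lem:radialimage}), so that $W$ and $T$ become normal and finite morphisms onto them are automatically flat; some such normalization step (or an explicit argument that flatness of $h$ plus unramifiedness of the \'etale piece forces $f_1$ flat, together with a correction at finitely many rigid points) is needed in your argument as well. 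Finally, for the ``moreover'' statement your GAGA plan does not apply: compactifications of nice curves are compact strictly $k$-analytic curves which may have boundary (e.g.\ closed discs) and are not algebraic. The paper's route is much shorter: extend $h$ to a flat morphism of compact curves via Remark \ref{rem:flat}, apply the general case, and note that a locally finite triangulation of a compact curve is finite.
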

\begin{proof}
We will prove the result in several steps.

\smallbreak

\textbf{Step 1:} \textit{$X$ and~$Y$ are strictly $k$-affinoid spaces and $h$ is a ramified Galois covering}

We may assume that~$X$ and~$Y$ are connected. In this case, for each $x\in X$, we have $\deg_{x}(h) = \deg(h)/\# h^{-1}(h(x))$ and the result follows from Proposition~\ref{prop:radialdefinable}. 

\smallbreak

\textbf{Step 2:} \textit{$X$ and~$Y$ are strictly $k$-affinoid spaces and $h$ is finite and flat}

By~\cite[Lemma~3.1.4]{temkin_2017_metric}, $h$ factors as the composition of a generically \'etale morphism and a purely inseparable morphism on which the degree is constant. As a consequence, we may assume that~$h$ is generically \'etale. By~\cite[Lemma~3.1.2]{temkin_2017_metric}, there exist a strictly $k$-affinoid space~$X'$ and a ramified Galois covering~$p$ such that $h' := h \circ p \colon X' \to Y$ is a ramified Galois covering. For $d\in \N$, set 
\[Q_{d} := \{x' \in X'  :  \deg_{x'}(h') = d \deg_{x'}(p)\}.\]
By Step~1 applied to~$h'$ and~$p$, this is a radial set. Since $p(Q_{d}) = N_{h,d}$, the result now follows from Lemma~\ref{lem:radialimage}.

\smallbreak

\textbf{Step 3:} \textit{The general case}

By Lemma~\ref{lem:radiallocal}, it is enough to prove that the result holds in the neighborhood of each point of~$x$. By Zariski's Main Theorem (see \cite[Th\'eor\`eme~3.2]{Ducros_Dimension}), there exist an affinoid neighborhood~$V$ of~$x$ in~$X$, an affinoid domain~$W$ of a $k$-analytic curve~$T$ such that the restriction of~$h$ to~$V$ factors as 
\[ V \to W \hookrightarrow T \to Y, \]
where $f \colon V \to W$ is finite and $g \colon T\to Y$ finite \'etale. Up to shrinking the spaces, we may assume that they are all strictly $k$-affinoid.

Let us denote by $n \colon Y' \to Y$ the normalization morphism of~$Y$ and consider the Cartesian diagram 
\[\begin{tikzcd}
V' := V \times_{k} Y' \arrow{r}{n'}\arrow{d}{h'} & V\arrow{d}{h}\\
Y' \arrow{r}{n}& Y
\end{tikzcd}
\texto{.}{-1.5pc}
\]
Let $d\in \NN$. Since the degree of a morphism at a point is preserved under base-change,  we have 
\[\{ x\in V \,\colon\, \deg_{x}(h) = d\} = n'(\{ x'\in V' \,\colon\, \deg_{x'}(h') = d\}).\]
By Lemma~\ref{lem:radialimage}, it is enough to prove that the set $\{ x'\in V' \,\colon\, \deg_{x'}(h') = d\}$ is radial, hence, up to replacing~$Y$, $V$, $W$ and~$T$ by their base-changes to~$Y'$ over~$Y$, we may assume that~$Y$ is normal. Note that this does not change the properties of the morphisms in the factorization of~$h$ above. Moreover, since~$Y$ is normal, $T$ and $W$ are normal too. In particular, the morphism $f \colon V \to W$ being finite, it is flat.

For $a\in \N$, set $V_{a} := \{ x\in V \,\colon\, \deg_{x}(f) = a\}$ and $T_{a} := \{ x\in T \,\colon\, \deg_{x}(g) = a\}$. Those sets are radial by Step~2. It now follows from Lemmas~\ref{lem:radialboolean}, \ref{lem:radialsubcurve} and~\ref{lem:radialimage} and Remark~\ref{rem:degreerestriction} that the set
\[\{x\in V \,\colon \, \deg_{x}(h) = d\} = \bigcup_{a+b = d} V_{a} \cap (f^{-1}(T_{b} \cap W))\]
is radial.

\smallbreak

\textbf{Step 4:} \textit{Refinement in the compactifiable case}

The case of a compactifiable morphism reduces to the general case with~$X$ and~$Y$ compact thanks to Remark~\ref{rem:flat}.
\end{proof}

\section{Comparison with Hrushovski-Loeser's spaces}\label{sec:HL}

In this section we establish the link with Hrushovski-Loeser theory of stably dominated types. For the reader's convenience we briefly recall some definitions although some familiarity with model theory will be assumed.  

\subsection{Brief review on Hrushovski-Loeser space $\widehat{X}$}

We work over a very large elementary extension $\cKK$ of $\mathbf{k}$ which should be thought as a universal domain in the sense of Weil. This is what model theorists called a \emph{monster model}. Any set of smaller cardinality than $\cKK$ will be called \emph{small} (so $k$ is small). In particular, we assume that every small valued field in consideration is embedded in $\cKK$. For a tuple of variables $x$, a \emph{global type} $p(x)$ is a type over $\cKK$. Let $\cL$ denote some expansion of $\cL_3$.  

\begin{definition} Let $C\subset\cKK$ be a small set and $p(x)$ be a global type. The type $p$ is \emph{$C$-definable} if for every $\cL$-formula $\varphi(x,y)$ (without parameters), there is an $\cL(C)$-formula $\psi_\varphi(y)$ (with parameters in $C$) such that for every $a\in \cKK^{|y|}$
\[
\varphi(x,a)\in p \Leftrightarrow \cKK\models \psi_\varphi(a)
\]
\end{definition} 

The map $\varphi(x,y)\mapsto \psi_\varphi(y)$ is called a \emph{scheme of definition for $p$}. If $p(x)$ is $C$-definable and $\cKK\subseteq A$, then we can extend $p(x)$ to a type over $A$, which we denote $p|A$, by following the rule given by the scheme of definition over a larger elementary extension $\cKK'$ of $\cKK$ containing $A$: 
\[
p|A:=\{\varphi(x,a) :  \cKK'\models\psi_\varphi(a), a\in A\}. 
\] 
For $C$-definable global types $p(x)$ and $q(y)$, the tensor product $p(x)\otimes q(y)$ is defined as follows: 
\[
p(x)\otimes q(y) = tp(a,b/\cKK), \textrm{ where } b\models q(y) \text{ and } a\models p|\cKK\cup\{b\}. 
\]
\begin{definition} A global $C$-definable type is \emph{generically stable} if 
\[
p(x)\otimes p(y) = p(y)\otimes p(x). 
\]
Given a definable set $X$ in $k$, we let $\widehat{X}(C)$ denote the set of $C$-definable generically stable types on $X$, that is, those types containing a formula which defines $X$.  
\end{definition} 

\begin{remark} There is a more general notion of generically stable type which coincides with the previous definition in NIP theories. Since ACVF is NIP, we will abuse terminology and use this restricted version hereafter. To know more about the general notion we refer the reader to \cite{simon}. 
\end{remark}

Note that the $\widehat{\cdot}$ operator is functorial in the following sense: if $X, Y$ are $C$-definable sets and $f\colon X\to Y$ is a $C$-definable function, then there is a function $\widehat{f}\colon\widehat{X}(C)\to \widehat{Y}(C)$ which extends $f$ defined by 
\[
\widehat{f}(p) = tp(f(a)/\cKK) \text{ for any $a\models p$}.
\]

\begin{theorem}\label{lem:domination} Let $X,Y$ be definable sets and $f\colon X\to Y$ be a definable function. Suppose everything is definable over $C$ and let $C\subseteq A$ be a small set. If $f$ is injective (resp. surjective) then $\widehat{f}\colon \widehat{X}(A)\to\widehat{Y}(A)$ is also injective (resp. surjective). 
\end{theorem}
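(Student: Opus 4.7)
My plan is to treat the two cases separately, observing that injectivity is essentially formal while surjectivity requires genuine model-theoretic input about generically stable types in ACVF.

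For injectivity, suppose $f\colon X\to Y$ is injective and $C$-definable. Then the formula $\theta(x,y):=f(x)=y$ defines, by injectivity, the graph of a partial function $g\colon f(X)\to X$, namely $g(y)=$ the unique $x$ with $f(x)=y$. The set $f(X)$ is $C$-definable and $g$ is a $C$-definable function satisfying $g\circ f=\mathrm{id}_X$. Applying the functor $\widehat{\cdot}$ (which sends compositions to compositions and the identity to the identity), we get $\widehat{g}\circ \widehat{f}=\mathrm{id}_{\widehat X(A)}$, so $\widehat f$ is injective on $\widehat X(A)$.

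For surjectivity, fix $q\in\widehat Y(A)$. The plan is to build $p\in\widehat X(A)$ with $\widehat f(p)=q$ by a ``fibered integration'' procedure. First I would realize $b\models q|A$ in a sufficiently saturated extension; since $f$ is surjective, the fiber $f^{-1}(b)$ is a nonempty $Ab$-definable set. The key tool (available in ACVF thanks to Haskell--Hrushovski--Macpherson) is that every nonempty definable set carries a generically stable (equivalently, stably dominated) type over any parameter set containing its parameters. Applying this to $f^{-1}(b)$, I obtain a generically stable $r\in\widehat{f^{-1}(b)}$ that is $\acl(Ab)$-definable. Then I define $p$ by the tensor product $p:=r\otimes q$: concretely, given an $\cL$-formula $\varphi(x,z)$, the set $\{(b,z):\varphi(x,z)\in r_b\}$ is definable by the scheme of $r$, and composing with the scheme of $q$ produces a scheme of definition for~$p$ over $A$. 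One verifies that $p$ is generically stable (this is standard for tensor products of generically stable types, using that stable domination is preserved under such operations) and that $\widehat f(p)=q$, since any realization of $p|\cKK$ projects under $f$ to a realization of $q|\cKK$.

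The main obstacle is justifying the uniform choice of the stably dominated type on the fibers, i.e.\ the passage from ``for each $b$, there exists a generically stable type on $f^{-1}(b)$'' to ``there exists a uniformly definable family $(r_b)_b$ of such types''. This is handled by the pro-definability of $\widehat{X}$ in ACVF (proved via elimination of imaginaries in the geometric language $\cL_{\mathcal G}$): the subset of $\widehat X$ consisting of generically stable types over a parameter is itself pro-definable, and in ACVF its fibers under $\widehat f$ are always nonempty when the base is nonempty, by orthogonality of the residue field and value group. Consequently the section $q\mapsto p$ exists at the level of $A$-points, which completes the proof.
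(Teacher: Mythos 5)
Your injectivity argument is correct and is essentially the half the paper dismisses as ``easy to check'': the $C$-definable retraction $g$ on $f(X)$ together with functoriality of $\widehat{\,\cdot\,}$ (pushforward preserves generic stability and definability over $A$) does the job.

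The surjectivity half, however, rests on a false ``key tool''. It is not true in ACVF that every nonempty definable set carries a generically stable type over any parameter set containing its parameters. Generically stable types are orthogonal to the value group: a generically stable type concentrating on a definable subset of $\mathbf{VG}$ (or on any $\Gamma$-internal set) is realized, so the nonempty $\emptyset$-definable set $\{r\in \mathbf{VG} :  r>1\}$ admits a generically stable type over $A$ only if it has a point in $\mathrm{dcl}(A)$; and even fibres inside the valued field can fail over poor bases (in residue characteristic zero, a punctured open unit ball defined over a trivially valued subfield $F$ carries no generically stable type over $\mathrm{acl}(F)$, since the only candidates are generics of closed sub-balls or realized points, all of which need a radius or a point not algebraic over $F$). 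Producing a stably dominated type on each fibre $f^{-1}(b)$, over the right base and uniformly in $b$, is precisely the content of \cite[Lemma~4.2.6]{HL}, which the paper cites instead of reproving; it uses the structure theory of definable sets in ACVF (Swiss cheeses/$C$-minimality in the valued-field coordinates, realized points for the $\Gamma$- and residue-field parts), and the richness of the base genuinely matters -- note that in all the paper's applications $A$ contains the model $k$, and some such hypothesis is what makes the statement over $A$ work.

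Your proposed repair does not close this gap. Pro-definability of $\widehat{X}$ does not tell you that the fibres of $\widehat{f}$ over $A$-points are nonempty -- that is exactly what is to be proved, and by the value-group phenomenon above it can fail when $A$ is too small -- and ``orthogonality of the residue field and value group'' is not the input that yields it. Two further technical points: writing $p=r\otimes q$ is not the right operation, since $r$ lives on the fibre over $b$ and depends on $b$; what is needed is the integral along $q$ of a definable family $(r_b)_b$ of stably dominated types on the fibres, and both the generic stability of that integral and its definability over $A$ (rather than over $\mathrm{acl}(Ab)$, where your fibre type naturally lives) are separate lemmas that your sketch leaves unaddressed. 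As it stands the surjectivity argument is not a proof; the honest options are to cite \cite[Lemma~4.2.6]{HL} or \cite{johnson} as the paper does, or to reproduce their argument, constructing the fibre types uniformly from the definable structure of the fibres and then verifying the properties of the integrated type.
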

\begin{proof}
Injectivity is easy to check. For surjectivity see \cite[Lemma 4.2.6]{HL} or \cite{johnson}. 
\end{proof}

In what follows we will work with $\widehat{X}(k)$ which for simplicity will be denoted simply by $\widehat{X}$ if no confusion arises. Note that $X(k)$ embeds in $\widehat{X}$ by sending $a\in X(k)$ to its type over $\cKK$. These types are called \emph{realized types over $k$}. 

Let us now provide the background for pro-definability. Let $(I,\leqslant)$ be a small directed partially ordered set. A \emph{$C$-definable directed system} is a collection $(X_i, f_{ij})$ such that 
\begin{enumerate}[$i)$]
\item for every $i\in I$, $X_i$ is a $C$-definable set; 
\item for every $i, j\in I$ such that $i\geqslant j$, $f_{ij}\colon X_i\to X_j$ is $C$-definable 
\item $f_{ii}$ is the identity on $X_i$ and $f_{ik} = f_{jk}\circ f_{ij}$ for all $i \geqslant j \geqslant k$.
\end{enumerate}  

A \emph{pro-$C$-definable} $X$ is the projective limit of a $C$-definable directed system $(X_i, f_{ij})$
\[
X:=\varprojlim_{i\in I} X_i.
\]
We say that $X$ is pro-definable if it is pro-$C$-definable for some small set of parameters $C$. 

For $X=\varprojlim_{i\in I} X_i$ and $Y=\varprojlim_{j\in J} Y_j$ two pro-$C$-definable sets with associated $C$-definable directed systems $(X_i,f_{ii'})$ and $(Y_j, g_{jj'})$, a \emph{pro-$C$-definable morphism} $\varphi\colon X\to Y$ is given by a function $n\colon J \to I$ and a family of $C$-definable functions $\{\varphi_{ij}\colon X_i\to Y_j  :  i\ge n(j)\}$ such that, for all $j\geqslant j'$ in $J$ and all $i \geqslant i'$ in $I$ with $i\ge n(j)$ and $i'\ge n(j')$, the following diagram commutes 
\[
\begin{tikzcd}
X_{i} \ar{d}{f_{ii'}} \ar{r}{\varphi_{ij}} & Y_{j} \ar{d}{g_{jj'}} \\
X_{i'} \ar{r}{\varphi_{i'j'}} & Y_{j'}
\end{tikzcd}
\texto{,}{-1.5pc}
\]
We denote by $\pi_i$ the canonical projection $\pi_i\colon X\to X_i$. 
\begin{enumerate}[$i)$]
\item $X$ is \emph{strict pro-definable} if it can be represented by a directed system $(X_i,f_{ij})$ where the transition maps $f_{ij}$ are surjective.
 \item $X$ is \emph{iso-definable} if it can be represented by a directed system strict $(X_i,f_{ij})$ where the transition maps $f_{ij}$ are bijections. Equivalently, $\widehat{X}$ is in pro-definable bijection with a definable set (see \cite[Corollary 2.2.4]{HL}). 
 \item If $X=\varprojlim_{i\in I} X_i$, a subset $Z\subseteq X$ is called \emph{relatively definable} if there is $i\in I$ and a definable subset $Z_i\subseteq X_i$ such that $Z=\pi_i^{-1}(Z_i)$. 
\end{enumerate}  

\begin{theorem}\label{thm:pro-def} Let $T$ be a complete NIP $\cL$-theory with elimination of imaginaries and $X$ be a definable set. Then for any small set of parameters, $\widehat{X}(C)$ is pro-definable in $\cL$. If $T$ is $Th(k,\cL_\mathcal{G})$ or $Th(k,(\cL_3^\an)^{\mathrm{eq}})$, then $\widehat{X}$ is in addition strict pro-definable. 
\end{theorem}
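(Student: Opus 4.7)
The plan is to follow the approach of Hrushovski--Loeser, using the scheme of definition of a generically stable type to obtain coordinates on $\widehat{X}(C)$. Since $T$ is NIP, every generically stable global type is definable, hence (being also $C$-invariant by generic stability) is $C$-definable. Concretely, for each $p \in \widehat{X}(C)$ and each $\cL$-formula $\varphi(x,y)$ with $x$ ranging over $X$, there is an $\cL(C)$-formula $\psi^p_\varphi(y)$ such that $\varphi(x,a) \in p$ if and only if $\models \psi^p_\varphi(a)$. By elimination of imaginaries, the definable set cut out by $\psi^p_\varphi$ has a canonical code $e^p_\varphi$ lying in some $\cL(C)$-definable set of parameters $E_\varphi$.

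To build the pro-definable structure, for each finite tuple $\bar\varphi=(\varphi_1,\dotsc,\varphi_n)$ of $\cL$-formulas, let $W_{\bar\varphi} \subseteq E_{\varphi_1}\times\dotsb\times E_{\varphi_n}$ consist of those tuples $(e_1,\dotsc,e_n)$ of the form $(e^p_{\varphi_1},\dotsc,e^p_{\varphi_n})$ for some $p \in \widehat{X}(C)$. The membership conditions---consistency of the partial scheme, partial boolean coherence, and the symmetry of the tensor product on the fragment (which by NIP is equivalent to generic stability)---can all be expressed by $\cL(C)$-formulas in the $e_i$, so $W_{\bar\varphi}$ is $\cL(C)$-definable. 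The restriction maps $W_{\bar\varphi}\to W_{\bar\varphi'}$ for $\bar\varphi'\subseteq\bar\varphi$ are definable projections. Hence $\widehat{X}(C)=\varprojlim_{\bar\varphi} W_{\bar\varphi}$ is pro-$\cL(C)$-definable, proving the first assertion.

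For the strict pro-definability in the second assertion, one must show that the projections $\widehat{X}(C)\to W_{\bar\varphi}$ are surjective: every finite fragment of a scheme satisfying the local compatibility conditions must extend to a full scheme, and thus to an actual generically stable type. For $T = \mathrm{Th}(k,\cL_\cG)$, this is precisely \cite[Chapter~3]{HL}, whose proof uses stable domination: a definable type is canonically extended using its \emph{stable part}, which in ACVF is governed by the residue field and value group together with the geometric sorts (whose elimination of imaginaries is established in \cite{HHM}). For $T = \mathrm{Th}(k,(\cL_3^\an)^{\mathrm{eq}})$, the same strategy applies by invoking the $C$-minimality of \cite{CL} (Theorem~\ref{thm:cmin}) and the o-minimality, respectively strong minimality, of the value group and residue field (Lemma~\ref{lem:min}), both of which remain stably embedded in the analytic expansion. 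The main obstacle, in either case, is verifying that the canonical extension produced by the stable-domination machinery yields a \emph{generically stable} type rather than merely a consistent collection of partial definitions; this is the point where the specific tameness of the geometric or analytic theories (beyond mere NIP + EI) is essential, and where a direct compactness argument is insufficient.
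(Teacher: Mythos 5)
The paper does not actually prove this statement: its ``proof'' is a citation of \cite[Theorem~3.1.1]{HL} (and of \cite[Chapter~7]{johnson} for the analytic case), so what you have written is a reconstruction of the cited argument. Your skeleton --- code the $\varphi$-definitions, take an inverse limit over finite sets of formulas, treat strictness as a separate surjectivity statement --- is indeed the Hrushovski--Loeser route, but there is a genuine gap at the first step. Elimination of imaginaries by itself does not produce a definable set $E_\varphi$ containing all the codes $e^p_\varphi$: different generically stable types may have $\varphi$-definitions of completely different syntactic shape, whose codes live in unrelated sorts and definable families. The missing ingredient is \emph{uniform} definability of the $\varphi$-definitions of generically stable types, and this is where NIP actually does the work: by generic stability together with the NIP alternation bound, $d_p\varphi(y)$ is equivalent to a majority-vote formula evaluated on a Morley sequence of $p$ of a fixed finite length $N(\varphi)$, so all $\varphi$-definitions lie in a single definable family $\theta_\varphi(y,z)$, and only then does elimination of imaginaries give codes inside one definable set $E_\varphi$. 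You invoke NIP only for ``generically stable $\Rightarrow$ definable'' and for the symmetry criterion; neither yields this uniformity, and without it $W_{\bar\varphi}$ cannot even be placed inside a definable ambient set.

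There is also a conflation that matters. You define $W_{\bar\varphi}$ as the exact image of $\widehat{X}(C)$ under the coordinate maps and assert it is definable because the coherence and symmetry conditions are first order. These are two different sets: the locus cut out by the finitely many coherence/symmetry conditions is definable (once the uniformity above is available), whereas the image is a priori only an infinite intersection of definable conditions, and its definability for every $\bar\varphi$ is \emph{precisely} the strict pro-definability assertion --- which fails for general NIP theories with elimination of imaginaries and is exactly the part requiring ACVF-specific input (stable domination in \cite{HL}, and \cite[Chapter~7]{johnson} in the analytic case, which is also all the paper itself invokes). Your second paragraph implicitly switches to the correct reading, treating surjectivity of $\widehat{X}(C)\to W_{\bar\varphi}$ as the content of strictness, so as written the first paragraph either begs the question or establishes pro-definability only by assuming strictness. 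The repair is to let $W_{\bar\varphi}$ be the definable set of locally coherent, symmetric tuples, verify $\widehat{X}(C)=\varprojlim_{\bar\varphi}W_{\bar\varphi}$ to get pro-definability for arbitrary NIP theories with elimination of imaginaries, and keep strictness as the separate statement for the two specific theories, proved in the references the paper cites.
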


\begin{proof} This corresponds to \cite[Theorem 3.1.1]{HL}. For $Th(k,(\cL_3^\an)^{\mathrm{eq}})$ details can be found in \cite[Chapter 7]{johnson}. 
\end{proof}

Among other things, the pro-definability of $\widehat{X}$ allows us to have a natural notion of definable subsets of $\widehat{X}$, namely relatively definable subsets. Also, note that given a definable map $f\colon X\to Y$, the function $\widehat{f}$ is pro-definable. In the case where $X$ is a curve, Hrushovski and Loeser proved the following stronger result.

\begin{theorem}[{\cite[Theorem 7.1.1]{HL}}]\label{thm:iso-def} Let $X$ be an algebraic curve over $k$. Then $\widehat{X}$ is iso-definable in $\cL_\mathcal{G}$. 
\end{theorem}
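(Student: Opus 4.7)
The plan is to deduce iso-definability from a stronger statement: for any $\cL_\BB$-facade $\cS$ of $X^\an$ (which exists by Theorem~\ref{thm:facade_existence}), there is a pro-definable bijection between the $\cL_\BB$-definable set $X^\cS$ and the stable completion $\widehat{X}$. Since $X^\cS$ is definable in $\cL_\BB \subset \cL_\cG$, this immediately gives iso-definability of $\widehat{X}$ in $\cL_\cG$. The overall shape of the argument mirrors the Hrushovski--Loeser strategy, but replaces their non-constructive iso-definability proof with the explicit combinatorial model provided by a facade.

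The first step is to define the comparison map $\Phi_\cS : X^\cS \to \widehat{X}$. Working in the monster $\mathbf{L}$, Corollary~\ref{cor:radial_commute} identifies $X^\cS(\mathbf{L})$ with $X_\mathbf{L}^{\cS_\mathbf{L}}$, and $\varphi_{\cS_\mathbf{L}}$ puts this set in bijection with $X_\mathbf{L}^{(1,2)}\subseteq X_\mathbf{L}^\an$. A Berkovich point $\bar\xi \in X_\mathbf{L}^\an$ determines a multiplicative seminorm on the structure sheaf of $X$, and hence a complete type on $X$ over $\mathbf{L}$ which, restricted to $k$-definable data, yields a $k$-definable generically stable type $p_\xi \in \widehat{X}$. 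Setting $\Phi_\cS(\xi) := p_\xi$ gives a well-defined map; invariance of $p_\xi$ under $\mathrm{Aut}(\mathbf{L}/k)$ reflects that $\xi$ itself lives in $X^\cS$ (a $k$-definable set) and that Berkovich points on quasi-smooth curves correspond to generically stable types by the standard C-minimality dictionary.

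The second (and crucial) step is verifying that $\Phi_\cS$ is pro-definable. A pro-definable morphism to $\widehat{X}$ amounts, for each $\cL_\cG$-formula $\varphi(y,z)$ with $y$ a $\mathbf{VF}$-variable, to a $k$-definable scheme $\psi_\varphi(x,z)$ on $X^\cS \times \mathbf{L}^{|z|}$ satisfying $\varphi(y,a) \in \Phi_\cS(\xi) \iff \models \psi_\varphi(\xi,a)$. For each fixed parameter tuple $a$, the set $W_a := \{y \in X : \varphi(y,a)\}$ is a definable subset of $X(k)$, and what is needed is a uniform-in-$a$ description of $\{\xi \in X^\cS : W_a \in \Phi_\cS(\xi)\}$. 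This is precisely the content of the classification of definable subsets of $X^\cS$: Theorem~\ref{thm:definablecurves} shows such a subset is, after refining $\cS$, a $\BB$-radial set; combining this with Theorem~\ref{thm:defsets} and the explicit form of $\BB$-radial building blocks (Definition~\ref{def:radialset}) gives a canonical first-order description of membership, which assembled together yields $\psi_\varphi$. Pro-definability in the reverse direction uses the fact that a generically stable type defines, for each definable subset of $X$, an indicator of whether that subset is ``large'' in the type---and this indicator data is already captured by $\BB$-radial subsets of $X^\cS$ after passing to a refinement.

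The remaining step is bijectivity. Injectivity follows from the observation that distinct $\xi,\xi' \in X^\cS$ give distinct Berkovich points in $X_\mathbf{L}^\an$, hence distinct multiplicative seminorms on $k(X)$ (by Lemma~\ref{lem:separationpoints} applied in $X_\mathbf{L}$), and therefore distinct types. Surjectivity requires that every $k$-definable generically stable type on $X$ arise from some Berkovich point that, for a sufficiently fine facade, lies in the image of $\varphi_{\cS_\mathbf{L}}$; since every Berkovich point is either a type-2 point (hence lies in $S^{(2)}$ after refining $S$ to contain it) or lies in the fiber of $\tau$ over an edge or a type-2 vertex, and by Lemma~\ref{lem:facade-id} all refinements give definably compatible presentations, this follows from Theorem~\ref{thm:facade_existence} applied over $\mathbf{L}$ together with the density/existence of generically stable types argument standard in C-minimal contexts. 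The main obstacle will be the pro-definability verification in the second step, because one must check by hand that each piece of the facade partition (pieces of $\BB$ of the form $f_I(V_I)^\D$ or $f_{x,i}(V_{x,i})^\D$, vertex fibers $Z_x^\cS$, and isolated points) transforms formulas $\varphi(y,a)$ into uniformly definable conditions on~$\xi$; the combinatorial input for this is already packaged into the analysis carried out in Sections~\ref{sec:definable} and~\ref{sec:def_of_curves}.
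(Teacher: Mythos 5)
Your overall route is the paper's: one first proves that $X^{\cS}$ is in pro-definable bijection with $\widehat{X}$ (this is Theorem~\ref{thm:prodef_facade}) and then reads off iso-definability since $X^{\cS}$ is $\cL_\BB$-definable. However, the way you propose to construct and verify that bijection has genuine gaps. First, the comparison map is built through Berkovich points of ``$X^{\an}_{\mathbf L}$'' with $\mathbf L$ the monster: the monster is not a complete rank-$1$ valued field, so there is no analytification of $X$ over it, and Corollary~\ref{cor:radial_commute} is only available for rank-$1$ elementary extensions. Even after replacing $\mathbf L$ by a maximally complete rank-$1$ extension (as the paper does in Corollary~\ref{cor:main}), the passage from a Berkovich point to a ($k$-definable, generically stable) type is exactly the content of \cite[Lemma 14.1.1]{HL} together with a descent-to-$k$ argument; you neither prove it nor cite it precisely (``the standard C-minimality dictionary''), whereas the paper's proof of Theorem~\ref{thm:prodef_facade} is deliberately independent of that correspondence, which it only invokes later, in Corollary~\ref{cor:main}.

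Second, and more seriously, your pro-definability verification does not work as described. A pro-definable map into $\widehat{X}$ requires, for each formula $\varphi(y,z)$, a single $k$-definable $\psi_\varphi(x,z)$ uniform in both the point $\xi\in X^{\cS}$ and the parameter $z$; Theorem~\ref{thm:definablecurves} (and Theorem~\ref{thm:defsets}) classify individual definable subsets of $X^{\cS}$, possibly after refining $\cS$ in a way depending on the set, and give no such uniformity in parameters, so they cannot serve as the ``scheme of definition'' you need. The paper avoids producing schemes of definition altogether: it partitions $X(k)$ along the facade and builds the bijection from $\widehat{\cdot}$-images of definable maps ($\widehat{f_I}$, $\widehat{f_{x,i}}$, $\widehat{\epsilon_x}$ and the projection $\pi$), which are pro-definable for free by Theorem~\ref{thm:pro-def} and Theorem~\ref{lem:domination}. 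Relatedly, you never confront the one point where real work is needed: on a tube $W_x$, $\widehat{W_x(k)}$ lies in $(\cU_x(\tilde k)\times D(0,1))^{\widehat{\ }}$, which is \emph{not} the product of the stable completions, and the paper's Case~3 proves injectivity of $\pi\circ\widehat{\epsilon_x}$ via Corollary~\ref{cor:opendisc} (orthogonality between the residue curve and $\BB$) and then identifies the image using strong minimality of $\cU_x(\tilde k)$. That step, which is the heart of the argument, is absent from your proposal; without it (or the full strength of \cite[Chapter 14]{HL}), neither the well-definedness of your identification on the vertex fibers $Z_x^{\cS}$ nor surjectivity is established.
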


The previous theorem states that, when $\widehat{X}=\varprojlim_{i\in I} X_i$, we can identify $\widehat{X}$ with some (any) definable set $X_i$ for $i$ large enough. Moreover, relative definable subsets of $\widehat{X}$ are in definable bijection with definable subsets of $X_i$ for $i$ large enough. The proof of Theorem \ref{thm:iso-def} is non-constructive and relies both on the elimination of imaginaries assumption from Theorem \ref{thm:pro-def} and Riemann-Roch's theorem. Although we know that $\widehat{X}$ is in pro-definable bijection with an $\cL_\mathcal{G}$-definable set, we cannot point at any such definable set. In the following subsection we show how to use the results in Sections \ref{sec:bricks} and \ref{sec:functorial} to provide a constructive proof of Theorem \ref{thm:iso-def}. In addition, our proof makes no additional use of Riemann-Roch (besides the potential uses contained in the existence of triangulations, Theorem \ref{thm:semistable}). 

\subsubsection{Pro-definable bijection between $\widehat{X}(k)$ and $X^\cS$}

Recall that, in what follows, $\widehat{X}$ denotes $\widehat{X}(k)$ for any definable set~$X$ over $k$. 

As observed in \cite[Section 3.2]{HL}, $\widehat{k}$ is in pro-definable bijection with $\BB$ (or, to be pedantically precise, $\widehat{\mathbf{VF}}(k)$ is in pro-definable bijection with $\BB(\mathbf{k})$). In what follows we will thus respectively identify $\widehat{k}$ with $\BB$ and $\widehat{D(0,1)}$ with $\DD_k^\D$. The goal of this section is to show that, given an $\cL_\BB$-facade $\cS$ of $X$, there is a pro-definable bijection between $\widehat{X}$ and $X^\cS$. 

\begin{theorem}\label{thm:prodef_facade} Let $X$ be a $k$-analytic curve and~$\cS$ be an $\cL$-facade of~$X$ (where $\cL$ denotes either $\cL_\BB$ or $\cL_\BB^\an$). Then there is a pro-definable bijection in $\cL^{\mathrm{eq}}$ between $\widehat{X}(k)$ and $X^\cS$. If $\cL$ is $\cL_\BB$, the pro-definable bijection can be taken over $\cL_\mathcal{G}$.
\end{theorem}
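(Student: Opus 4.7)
The plan is to construct the pro-definable bijection piece by piece via the facade decomposition. The $\cL$-definable partition
\[X(k) \;=\; S^{(1)} \;\sqcup\; \bigsqcup_{I \in E} V_I(k) \;\sqcup\; \bigsqcup_{x \in S^{(2)}} \Big[W_x(k) \;\sqcup\; \bigsqcup_{i=1}^{m(x)} V_{x,i}(k)\Big]\]
(noting that a type-$2$ vertex $x$ is not a $k$-rational point, so $W_x(k)$ really means the $k$-points of the tube) induces a parallel disjoint-union decomposition of $\widehat{X}(k)$, since any stably dominated type on $X(k)$ concentrates on exactly one of these definable pieces. Matching this against the definition of $X^\cS$, it suffices to exhibit pro-definable bijections
\[\widehat{V_I(k)} \simeq f_I(V_I)^\Def, \qquad \widehat{V_{x,i}(k)} \simeq f_{x,i}(V_{x,i})^\Def, \qquad \widehat{W_x(k)} \simeq \{\eta_{0,1}\} \sqcup Z_x^\cS,\]
as the $S^{(1)}$ pieces give matching singletons on both sides trivially.

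For the disc and annulus pieces, the maps $f_I\colon V_I(k) \to k$ and $f_{x,i}\colon V_{x,i}(k) \to k$ are $\cL$-definable bijections onto their images (facade axioms (ii)(d) and (iii)(f)), so Theorem~\ref{lem:domination} yields pro-definable bijections $\widehat{V_I(k)} \simeq \widehat{f_I(V_I) \cap k}$ and $\widehat{V_{x,i}(k)} \simeq \widehat{f_{x,i}(V_{x,i}) \cap k}$. The Hrushovski-Loeser identification $\widehat{k} \simeq \BB$ recalled at the start of this section restricts to $\widehat{D \cap k} \simeq D^\Def$ for any Berkovich open disc or open annulus $D$ with $k$-rational center and radii in $|k^\times|$, yielding the desired identifications on these pieces.

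The central case, and the main obstacle, is the tube piece $W_x$. The map $\epsilon_x$ of \eqref{eq:epsilon} restricts to an $\cL$-definable bijection $W_x(k) \to Z_x^\cS(k)$, where $Z_x^\cS(k) := \{(\alpha, a) \in \cU_x(\tilde k) \times k : \tilde f_x(\alpha) = \res(a)\}$, reducing the task to identifying $\widehat{Z_x^\cS(k)}$ with $\{\eta_{0,1}\} \sqcup Z_x^\cS$. I would invoke orthogonality between the residue field sort and the valued field sort. Given $p \in \widehat{Z_x^\cS(k)}$, its pushforward $\pi_{1,*} p$ to $\cU_x(\tilde k)$ is a type on an algebraic curve over $\tilde k$; by strong minimality of $\tilde k$ (Lemma~\ref{lem:min}), this pushforward is either realized at some $\alpha \in \cU_x(\tilde k)$ or is the generic type of an irreducible component of $\cU_x$. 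In the realized case, $p$ concentrates on the $\alpha$-fibre $\res^{-1}(\tilde f_x(\alpha))$, which is a residue disc; the HL identification applied to this residue disc yields a pro-definable bijection onto the fibre $(Z_x^\cS)_\alpha$, and collecting over $\alpha$ recovers $Z_x^\cS$. In the generic case, $\tilde f_x(\alpha)$ is transcendental over $\tilde k$, so the constraint $\tilde f_x(\alpha) = \res(a)$ forces $a$ to realize the unique generic type of the closed unit disc $D(0,1)$, which under the HL identification corresponds to $\eta_{0,1}$, supplying the extra singleton.

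For the parenthetical assertion about $\cL_\BB$, I would work throughout in $\cL_\mathcal{G}$, in which the Haskell-Hrushovski-Macpherson theorem gives elimination of imaginaries, ensuring via Theorem~\ref{thm:pro-def} that $\widehat{X}(k)$ is strict pro-definable in $\cL_\mathcal{G}$; all the constructions above being intrinsic, the glued bijection is then pro-definable over $\cL_\mathcal{G}$. The hardest step is the orthogonality/domination argument in the tube case, where one must carefully disentangle the residue-field and valued-field contributions to types on $Z_x^\cS(k)$ and verify the pro-definability of the resulting piecewise bijection.
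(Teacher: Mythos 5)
Your proposal follows essentially the same route as the paper: the facade decomposition of $X(k)$, Theorem~\ref{lem:domination} applied to the definable bijections $f_I$, $f_{x,i}$ and $\epsilon_x$, the identification $\widehat{k}\simeq\BB$ restricted to discs and annuli, and a realized-versus-generic case split on the pushforward to $\cU_x(\tilde{k})$ for the tube pieces; the $\cL_\mathcal{G}$ refinement is likewise handled as in the paper via elimination of imaginaries and Theorem~\ref{thm:pro-def}.

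The one step that does not yet close is the crux you yourself flag. In the generic case you only determine the two marginals of $p$: the pushforward to $\cU_x(\tilde{k})$ is the generic type of $\cC_x$, and the pushforward to $D(0,1)$ is the generic type corresponding to $\eta_{0,1}$. That does not by itself ``supply the extra singleton'': a priori two distinct stably dominated types on $Z_x^\cS(k)$ could share this pair of marginals, since the stable completion of a product need not be the product of the stable completions. What is needed is injectivity of $p\mapsto(\pi_{1,*}p,\pi_{2,*}p)$ on $\widehat{Z_x^\cS(k)}$, i.e. that no formula over the monster model $\cKK$ separates two pairs $(\alpha_0,a_0)$ and $(\alpha_1,a_1)$ with the same marginals. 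This is exactly the Claim in the paper's proof, and the mechanism that closes it is Corollary~\ref{cor:opendisc} applied with $h=\tilde{f}_x$: any definable $H\subseteq Z_x^\cS$ has, outside a finite set of $\alpha$'s, fibres $H_\alpha$ that are either all empty or all equal to the full residue disc $\res^{-1}(\tilde{f}_x(\alpha))$, so membership in $H$ of a pair with generic first coordinate is decided by the marginals alone. Your appeal to ``orthogonality of the residue field and valued field sorts'' is the right principle --- Lemma~\ref{lem:opendisc} and Corollary~\ref{cor:opendisc} are precisely its incarnation for residue curves versus $\BB$ --- but as written it is used only to classify the first marginal, not to pin down the joint type. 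Invoking Corollary~\ref{cor:opendisc} at this point (the realized case being immediate, since the first coordinate is then constant) completes the tube case, and packaging the whole comparison as the single pro-definable map $\pi\circ\widehat{\epsilon_x}$, as the paper does, also disposes of the pro-definability of your fibrewise ``collecting over $\alpha$'' step.
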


\begin{proof} Recall that the data of the facade $\cS$ induces the following definable partition of $X(k)$:  
\[
X(k):=\bigsqcup_{\eta_{a,0}\in S^{(1)}} \{a\} \sqcup \bigsqcup_{I\in E} V_I(k)\sqcup \bigsqcup_{x\in S^{(2)}} \left(W_x(k) \sqcup \bigsqcup_{i=1}^{m(x)} V_{x,i}(k) \right).  
\]
Let us show that
\begin{enumerate}
\item[$\bullet$] $\widehat{\{a\}}$ is in pro-definable bijection with $\eta_{1,0}$, 
\item[$\bullet$] $\widehat{V_I(k)}$ is in pro-definable bijection with $f_I(V_I)^{\Def}$ (which is a relative definable subset of $\BB$), 
\item[$\bullet$] $\widehat{W_x(k)}$ is in pro-definable bijection with $Z_x^\cS\cup\eta_{0,1}$, 
\item[$\bullet$] $\widehat{V_{x,i}(k)}$ is in pro-definable bijection with $f_{x,i}(V_I)^{\Def}$,   
\end{enumerate}
which will establish that $\widehat{X}$ is in pro-definable bijection with $X^\cS$. 

\textbf{Case 1:} We have that $\widehat{\{a\}}=\{a\}$ which is trivially in pro-definable bijection with $\eta_{0,1}$. 
 
\

\textbf{Case 2:} We have a definable morphism $f_I\colon X(k)\to k$ which restricted to $V_I(k)$ is a bijection onto its image. Then, by Lemma \ref{lem:domination}, $\widehat{f_I}\colon \widehat{V_I(k)}\to \widehat{f_I(V_I(k))}$ is a bijection.  

\

\textbf{Case 3:} We have a bijective $\cL$-definable map $\epsilon_x\colon W_x(k)\to Z_x^\cS(k)$ given by $\rho_{W_x}\times f_x$ (where $Z_x^\cS(k)$ is as in \eqref{eq:Z(k)}). By Lemma \ref{lem:domination}, the map $\widehat{\epsilon_x} \colon \widehat{W_x(k)} \to \widehat{Z_x^\cS(k)}$ is a bijection. Since $Z_x^\cS(k)\subseteq \cU_x(k)\times D(0,1)$, $\widehat{Z_x^\cS(k)}\subseteq (\cU_x(k)\times D(0,1))^{\widehat{}}$. In general, the stable completion of a product is not the product of the stable completions so, $(\cU_x(k)\times D(0,1))^{\widehat{}}$ is not equal to  $\widehat{\cU_x(k)}\times \widehat{D(0,1)}$. Nevertheless, in our situation this will almost be the case as we now show. Let 
\[
\pi\colon (\cU_x(k)\times D(0,1))^{\widehat{}}\to \widehat{\cU_x(k)}\times \widehat{D(0,1)}
\] 
be the pro-definable map given by 
\[
tp(\alpha,a/L) \mapsto (tp(\alpha/L), tp(a/L)). 
\]

\begin{claim} The map $\pi\circ\widehat{\epsilon_x}$ is injective. 
\end{claim}

Let $r$ and $r'$ be two elements in $\widehat{W_x(k)}$ such that $\pi(\widehat{\epsilon_x}(r)) = \pi(\widehat{\epsilon_x}(r'))$. Let $p(x,y):=tp(\alpha_0,a_0/\cKK)$ and $p'(x,y):=tp(\alpha_1,a_1/\cKK)$ be two elements in $(\cU_x(k)\times D(0,1))^{\widehat{}}$ such that $\widehat{\epsilon_x}(r)=p$, $\widehat{\epsilon_x}(r')=p'$. 
We have $tp(\alpha_0/\cKK)=tp(\alpha_1/\cKK)$ and $tp(a_0/\cKK)=tp(a_1/\cKK)$. Since $\widehat{\epsilon_x}$ is a bijection, it suffices to show that $p=p'$. We split in cases depending on whether $tp(\alpha_0/\cKK)$ is realized in $\cKK$ or corresponds to the generic type of $\cC_x$, which we denote by $\eta_{\cC_x}$. 

\

\emph{Case 3.1:} Suppose that $\alpha_0\in \cU_x(\tilde{L})$. Then $\alpha_0=\alpha_1$ and $tp(\alpha_0,a_0/\cKK)=tp(\alpha_0,a_1/\cKK)$ since $tp(a_0/\cKK)=tp(a_1/\cKK)$. 

\

\emph{Case 3.2:} Suppose that $\alpha_0\notin \cU_x(\tilde{L})$. Hence, $tp(\alpha_0/\cKK)$ (resp. $tp(\alpha_1/\cKK)$) is the generic type $\eta_{\cC_x}$ of $\cC_x$. Let $\cKK'$ be an elementary extension of $\cKK$ containing $\alpha_0,\alpha_1,a_0 $ and $a_1$. Suppose for a contradiction that $tp(\alpha_0,a_0/\cKK)\neq tp(\alpha_1,a_1/\cKK)$. Hence, there is a formula $\varphi(x,y)$ over $\cKK$ such that $\cKK'\models\varphi(\alpha_0,a_0)\wedge \neg\varphi(\alpha_1,a_1)$. By assumption, both $(\alpha_0,a_0)$ and $(\alpha_1,a_1)$ are contained in $Z_x^\cS(L')$. Therefore, we may suppose that $\varphi(x,y)$ defines in $\cKK'$ a set $H\subseteq Z_x^\cS(L')$. By Corollary \ref{cor:opendisc} (applied to $h=\tilde{f}_x$), there is a finite set $F\subseteq \cU_x(\tilde{L})$ such that either $H_\alpha$ is empty for all $\alpha\in \cU_x(\tilde{L})\setminus F$ or $H_\alpha=\res^{-1}(\tilde{f}_x(\alpha))$ for all $\alpha\in \cU_x(\tilde{L})\setminus F$. Clearly, the same statement holds for all $\alpha\in \cU_x(\tilde{L}')\setminus F$. By assumption, neither $\alpha_0$ not $\alpha_1$ is contained in $F$. Since $\varphi(\alpha_0,a_0)$ holds, $H_\alpha$ must be non-empty for all $\alpha\in \cU(\tilde{L}')\setminus F$. In particular, $a_0$ satisfies the formula over $\cKK$
\[
(\forall x)(x\notin F \rightarrow \varphi(x,a_0)). 
\]
But since $tp(a_0/\cKK)=tp(a_1/\cKK)$, $a_1$ must also satisfy such formula, which implies that $\varphi(\alpha_1, a_1)$ holds, a contradiction. This shows the claim.  

\

By the claim we have 
\[
\pi\circ\widehat{\epsilon_x}(\widehat{W_x}) =\{(tp(\alpha/\cKK),tp(a/\cKK))\in \widehat{\cU_x(\tilde{L})}\times \widehat{D(0,1)}  :  \tilde{f_x}(\alpha)=\res(a)\}.
\]

To conclude, it remains to note that $\widehat{\cU_x(\tilde{k})}=(\eta_{\cC_x}\sqcup \cU_x(\tilde{k}))$ and therefore 
\[
\pi\circ\widehat{\epsilon_x}(\widehat{W_x}) =\{(\eta_{\cC_x},\eta_{0,1})\}\cup \{(tp(\alpha/\cKK),tp(a/\cKK))\in \cU_x(\tilde{L})\times \widehat{D(0,1)}  :  \tilde{f_x}(\alpha)=\res(a)\}.
\]
The set $\{(\eta_{\cC_x},\eta_{0,1})\}$ is trivially in pro-definable bijection with $\{\eta_{0,1}\}$ and since $\widehat{D(0,1)}$ is in pro-definable bijection with $\DD_k^\D$, we obtain that $\pi\circ\widehat{\epsilon_x}(\widehat{W_x}) \setminus \{\eta_{\cC_x},\eta_{0,1}\}$ is in pro-definable bijection with 
\[
\{(\alpha,\eta_{a,r})\in \cU_x(L)\times \DD_k^\D  :  \tilde{f_x}(\alpha)=\red(a)\}=Z_x^\cS(\cKK).
\]

\

\textbf{Case 4:} Analogous to Case 2.  

\end{proof}

Given an $k$-algebraic curve $X$ and an $\cL$-facade $\cS$ of $X$, we let $\psi_\cS\colon \widehat{X(k)}\to X^\cS$ be the pro-definable bijection given by Theorem \ref{thm:prodef_facade}. The following gives us a different proof of Theorem \ref{thm:main} for the analytification of a morphism between algebraic curves:

\begin{corollary}\label{cor:main} Let $h\colon X\to Y$ be a morphism of algebraic curves over $k$. Let $\cS$ and $\cS'$ be $\cL_\BB$-facades of $X^\an$ and $Y^\an$ respectively (which exist by Theorem \ref{thm:facade_existence}). Then the map $h_{\cS\cS}'$ making the following diagram commute
\[
\begin{tikzcd}
\widehat{X(k)} \ar{d}{\psi_{\cS}} \ar{r}{\widehat{h}} & \widehat{Y(k)} \ar{d}{\psi_{\cS'}} \\
X^{\cS} \ar{r}{h_{\cS\cS'}'} & Y^{\cS} 
\end{tikzcd}
\texto{,}{-1.5pc}
\]
is $\cL_\BB$-definable. In addition, $h_{\cS\cS'}'=h_{\cS\cS'}$, where $h_{\cS\cS'}$ is the map given by Definition \ref{def:map}.   
\end{corollary}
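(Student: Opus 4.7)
The plan is to split the proof into two claims: the $\cL_\BB$-definability of $h_{\cS\cS'}'$, and its coincidence with $h_{\cS\cS'}$. For the first claim, I would invoke Theorem~\ref{thm:prodef_facade}: the maps $\psi_\cS$ and $\psi_{\cS'}$ are pro-definable bijections between the pro-definable sets $\widehat{X(k)}$, $\widehat{Y(k)}$ and the definable sets $X^\cS$, $Y^{\cS'}$. Since $\widehat{h}$ is pro-definable as the functorial lift of the definable map $h\colon X(k)\to Y(k)$, the composition $h_{\cS\cS'}' = \psi_{\cS'}\circ \widehat{h}\circ \psi_\cS^{-1}$ is a pro-definable map between the iso-definable sets $X^\cS$ and $Y^{\cS'}$; any such map is already definable, since through the iso-definable presentations both sides reduce to a single pair of definable sets.

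For the equality $h_{\cS\cS'}' = h_{\cS\cS'}$, I would appeal to the uniqueness built into Definition~\ref{def:map}: $h_{\cS\cS'}$ is the unique function making the diagram involving $\varphi_\cS$, $\varphi_{\cS'}$ and $h$ commute. So it suffices to check that $h_{\cS\cS'}'$ satisfies the same commutation. For this, I would introduce the natural embedding $\iota_X\colon X^{(1,2)}\hookrightarrow \widehat{X(k)}$ sending a type-$1$ point $a$ to its realized type over $\cKK$, and a type-$2$ point $x$ to the unique generically stable type supported on $x$ (which in the notation of the proof of Theorem~\ref{thm:prodef_facade} is encoded, for $x\in S^{(2)}$, by the pair $(\eta_{\cC_x},\eta_{0,1})$ identified with $\eta_{0,1}\in X^\cS$). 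A piece-by-piece inspection, following the facade partition
\[
X = S^{(1)}\sqcup\bigsqcup_{I\in E}V_I\sqcup\bigsqcup_{x\in S^{(2)}}\Bigl(W_x\sqcup\bigsqcup_{i=1}^{m(x)} V_{x,i}\Bigr),
\]
shows that $\psi_\cS\circ \iota_X = \varphi_\cS$, and analogously $\psi_{\cS'}\circ \iota_Y = \varphi_{\cS'}$.

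Next, I would verify that $\widehat{h}\circ \iota_X = \iota_Y\circ h$ on $X^{(1,2)}$: on type-$1$ points this is the standard functoriality of the stable-completion functor ($\widehat{h}$ sends a realized type to a realized type), and on type-$2$ points it follows from the fact that $h$ preserves the type-$2$ stratum (since morphisms of algebraic curves of relative dimension~$0$ map type-$2$ points to type-$2$ points) and sends the generically stable type at $x$ to the one at $h(x)$. Chaining these identities, for every $x\in X^{(1,2)}$,
\[
h_{\cS\cS'}'(\varphi_\cS(x)) = \psi_{\cS'}(\widehat{h}(\psi_\cS^{-1}(\varphi_\cS(x)))) = \psi_{\cS'}(\widehat{h}(\iota_X(x))) = \psi_{\cS'}(\iota_Y(h(x))) = \varphi_{\cS'}(h(x)),
\]
so $h_{\cS\cS'}'$ fits into the defining square of $h_{\cS\cS'}$ and the uniqueness clause yields $h_{\cS\cS'}' = h_{\cS\cS'}$.

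The main obstacle I expect lies in verifying $\psi_\cS\circ\iota_X = \varphi_\cS$ on the tube pieces $W_x$: one has to trace through the construction involving $\widehat{\epsilon_x}$ and the projection $\pi$ used in the proof of Theorem~\ref{thm:prodef_facade}, and confirm that the identification of the ``generic pair'' $(\eta_{\cC_x},\eta_{0,1})$ with the single point $\eta_{0,1}\in X^\cS$ matches the image of the center $x$ under $\varphi_\cS$. This is a book-keeping task that introduces no new mathematics but must be carried out carefully on each of the four strata of the facade partition.
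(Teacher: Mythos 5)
Your first half (the $\cL_\BB$-definability of $h'_{\cS\cS'}$) is essentially the paper's argument: by Theorem~\ref{thm:prodef_facade} the maps $\psi_\cS,\psi_{\cS'}$ are pro-definable bijections, so $h'_{\cS\cS'}=\psi_{\cS'}\circ\widehat h\circ\psi_\cS^{-1}$ is a pro-definable map between definable sets, hence definable (the paper is slightly more careful: the pro-definability is a priori in $\cL_\mathcal{G}$, and one passes to $\cL_\BB$ using that the domain and codomain are $\cL_\BB$-definable; keep that step). For the equality $h'_{\cS\cS'}=h_{\cS\cS'}$ you genuinely depart from the paper: the paper base-changes to a maximally complete elementary extension $\mathbf{L}$ with value group $\R$, where \cite[Lemma~14.1.1 and Proposition~14.1.3]{HL} identify $\widehat{X(k)}(L)$ with $X_L^{\an}$ compatibly with $\widehat h$ and $h_L$, and then combines this with Theorem~\ref{thm:main} and the descent facts (Corollary~\ref{cor:radial_commute}, Lemma~\ref{lem:degreesigma}, the section $\sigma_L$). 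You instead want to stay over $k$, embed $X^{\D}$ into $\widehat{X(k)}$ by a map $\iota_X$, and invoke the uniqueness clause of Definition~\ref{def:map}. That strategy can be made to work, but as written it has a gap.

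The gap is that the two key identities about $\iota_X$ are asserted rather than proved. First, $\iota_X(x)$ is defined for $x$ of type~2 as ``the unique generically stable type supported on $x$'', and the identity $\widehat h\circ\iota_X=\iota_Y\circ h$ is justified by saying that $h$ ``sends the generically stable type at $x$ to the one at $h(x)$'' --- but that is exactly the statement to be proved, so the step is circular as it stands. To repair it you must isolate and prove a uniqueness statement, e.g.\ that a $k$-definable generically stable type on a curve is determined by its associated seminorm (true, via quantifier elimination and orthogonality to the value group, but nowhere in the paper and in need of an argument); only then does the easy functoriality of seminorms under $\widehat h$ yield the identity, and then the appeal to relative dimension~0 --- which is not a hypothesis of the corollary, since $h$ may be constant on components --- becomes unnecessary. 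Second, the identity $\psi_\cS\circ\iota_X=\varphi_\cS$ is more than book-keeping on the tube pieces: at the center $x$ one has to show that $\widehat{\rho_{W_x}}(\iota_X(x))$ is the generic type $\eta_{\cC_x}$ of the residue curve and that $\widehat{f_x}(\iota_X(x))$ is the Gauss type, and for $z\in W_x\setminus\{x\}$ that $\iota_X(z)$ concentrates on the component of $W_x\setminus\{x\}$ containing $z$; these are genuine (if short) seminorm arguments, and carrying them out amounts to re-deriving over $k$ the identification that the paper imports from \cite{HL} after base change. Until the uniqueness statement and these verifications are supplied, the second half of your proof is incomplete.
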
 

\begin{proof} By Theorem \ref{thm:prodef_facade}, $\psi_\cS$ and $\psi_{\cS'}$ are pro-definable bijections, hence $h_{\cS\cS}'\colon X^{\cS} \to Y^{\cS'}$ is pro-definable (in $\cL_{\mathcal{G}}$). Since a pro-definable map between definable sets is definable, $h_{\cS\cS'}'$ is $\cL_{\mathcal{G}}$-definable. Its domain and codomain are $\cL_\BB$-definable, which shows that $h_{\cS\cS'}'$ is $\cL_\BB$-definable. 

To show that $h_{\cS\cS'}'=h_{\cS\cS'}$, let $\mathbf{L}$ be a maximally complete elementary extension of $\mathbf{k}$ with value group $\RR$. Consider the following diagram
\[
\begin{tikzcd}
X^\cS(\mathbf{L})  \ar[equal]{r} &  X_L^{\cS_L} \ar{d}{h_{L\cS_L\cS_L'}} & \ar{l}{\varphi_{\cS_L}}	X_L^\an \ar{d}{h_L} \ar{r}{\beta_X} &	\widehat{X(k)}(L) \ar{d}{\widehat{h}} \ar{r}{\psi_{\cS_L}} & X^\cS(\mathbf{L}) \ar{d}{h_{\cS\cS'}'}\\
Y^\cS(\mathbf{L}) \ar[equal]{r} &  Y_L^{\cS_L'} & \ar{l}{\varphi_{\cS_L'}}	Y_L^\an \ar{r}{\beta_Y} &	\widehat{Y(k)}(L) \ar{r}{\psi_{\cS_L'}} & Y^{\cS'}(\mathbf{L})
\end{tikzcd}
\texto{,}{-1.5pc}
\]
where $\beta_X$ and $\beta_Y$ are bijections given by \cite[Lemma 14.1.1]{HL} (noted by $\pi_X$ and $\pi_Y$ in \cite{HL}) and the equalities are given by Corollary \ref{cor:radial_commute}. By \cite[Proposition 14.1.3]{HL} and Theorem \ref{thm:main} every square in the diagram is commutative and therefore $h_{L\cS_L\cS_L'}= h_{\cS\cS_L'}'$. Now by Lemma \ref{lem:degreesigma} and Corollary \ref{cor:radial_commute} the restriction of $h_{L\cS_L\cS_L'}'$ to $X^\cS$ (\emph{i.e.} to $X^\cS(\mathbf{k})$) is $h_{\cS\cS}$, therefore $h_{\cS\cS'}'=h_{\cS\cS'}$. 
\end{proof}

We finish with two remarks.

\begin{remark} 
Let $X$ be a $k$-algebraic curve and $\cS$ be an $\cL$-facade of $X$. Using Theorem \ref{thm:prodef_facade}, Hrushovski-Loeser's main theorem (or the curve version \cite[Theorem 7.5.1]{HL}) translates into the existence of a definable deformation retraction for $X^\cS$ (since pro-definable maps between definable sets are definable) onto a $\Gamma$-internal subset of $X^\cS$ (for the definition of $\Gamma$-internal, see \cite[Section~2.8]{HL}). However, as the $\cL$-facade~$\cS$ already carries all needed information to define such a map, it is not difficult to obtain this result early in our theory (see Remark \ref{rem:deformation}). 
\end{remark}

\begin{remark} Let $h\colon X\to Y$ be a morphism of $k$-analytic curves, $\cS$ and $\cS'$ be $\cL_\BB^\an$-facades of $X$ and $Y$ respectively and suppose that the restriction $h\colon X(k)\to Y(k)$ is $\cL_\BB^\an$-definable. In order to extend Corollary \ref{cor:main} to this setting, one needs to show the existence of bijections $\beta_X$ and $\beta_Y$ which make the diagram commute. 

Assuming such bijections exist and the result also holds in this more general setting, it would seem that this proof removes the $h$-compatibility assumption (condition~$(i)$) in Theorem \ref{thm:main} . Nevertheless, Lemma \ref{lem:morphism_definable} indicates that such a hypothesis is always verified in the cases we are considering.  
\end{remark}

\end{document}